%
%
%
%
\documentclass[10pt]{amsart}

\usepackage{graphicx}
\usepackage{geometry}
\usepackage{appendix}
\usepackage{color}
\usepackage{tikz-cd}

\definecolor{refblue}{RGB}{0, 0, 153}
\definecolor{citegreen}{RGB}{0, 115, 0}
\definecolor{linkred}{RGB}{191, 26, 61}

\usepackage{leftidx}
\geometry{
  left=2.5cm,
  right=2.5cm,
  top=4cm,
  bottom=4cm,
  bindingoffset=0mm
}
\usepackage[all]{xy}
\usepackage[colorlinks,linkcolor=refblue,citecolor=citegreen,urlcolor=linkred,bookmarks=false,hypertexnames=true]{hyperref} 
\usepackage[nameinlink]{cleveref}
\usepackage{amssymb}
\usepackage{setspace}
\usepackage{stmaryrd}
\usepackage{amsthm}
\usepackage{colonequals}

\usepackage{titletoc}
\titlecontents{section}[4em]{\normalfont\bfseries}
    {\contentslabel{2em}}{\hspace{-2em}}{\titlerule*[1pc]{.}\contentspage}
\titlecontents{subsection}[7em]{\normalfont}
    {\contentslabel{3.2em}}{}{\titlerule*[1pc]{.}\contentspage}
\usepackage{titlesec}
\titleformat{\section}{\normalfont\scshape\center}{\thesection}{1em}{}
\titleformat{\subsection}{\normalfont\bfseries}{\thesubsection}{1em}{}

\newtheorem{theorem}{Theorem}[section]
\newtheorem{lemma}[theorem]{Lemma}

\newtheorem{proposition}[theorem]{Proposition}
\newtheorem{corollary}[theorem]{Corollary}
\newtheorem{maintheorem}{Theorem}

\theoremstyle{definition}
\newtheorem{definition}[theorem]{Definition}
\newtheorem*{definition*}{Definition}
\newtheorem{example}[theorem]{Example}

\theoremstyle{remark}
\newtheorem{remark}[theorem]{Remark}

\numberwithin{equation}{section}

\setlength\marginparwidth{50pt}

\newcommand{\stackcite}[1]{\cite[\href{https://stacks.math.columbia.edu/tag/#1}{#1}]{stk}}

\usepackage{tikz}
\usetikzlibrary{shapes.geometric, arrows}



\newcommand{\bbC}{\mathbb C}

\newcommand{\bbG}{\mathbb G}

\newcommand{\bbN}{\mathbb N}

\newcommand{\bbQ}{\mathbb Q}

\newcommand{\bbX}{\mathbb X}

\newcommand{\bbZ}{\mathbb Z}



\newcommand{\calA}{\mathcal A}
\newcommand{\calB}{\mathcal B}
\newcommand{\calC}{\mathcal C}

\newcommand{\calE}{\mathcal E}
\newcommand{\calF}{\mathcal F}
\newcommand{\calG}{\mathcal G}

\newcommand{\calI}{\mathcal I}

\newcommand{\calL}{\mathcal L}
\newcommand{\calM}{\mathcal M}

\newcommand{\calO}{\mathcal O}
\newcommand{\calP}{\mathcal P}

\newcommand{\calT}{\mathcal T}

\newcommand{\calV}{\mathcal V}



\newcommand{\frakm}{\mathfrak m}

\newcommand{\frakp}{\mathfrak p}


\newcommand{\GL}{\operatorname{GL}} 
\newcommand{\SL}{\operatorname{SL}} 
\newcommand{\Sp}{\operatorname{Sp}} 
\newcommand{\GSp}{\operatorname{GSp}} 
\newcommand{\OO}{\operatorname{O}} 
\newcommand{\Gm}{\operatorname{\bbG}_{\mathrm m}} 



\newcommand{\Gpd}{\operatorname{Gpd}} 
\newcommand{\Grp}{\operatorname{Grp}} 
\newcommand{\Rep}{\operatorname{Rep}} 
\newcommand{\Sch}{\operatorname{Sch}} 
\newcommand{\Set}{\operatorname{\mathrm{Set}}} 

\newcommand{\Quot}{\operatorname{Quot}} 
\newcommand{\Spec}{\operatorname{Spec}} 


\newcommand{\Alg}{\operatorname{Alg}} 
\newcommand{\CAlg}{\operatorname{CAlg}}
\newcommand{\Mod}{\operatorname{Mod}} 

\newcommand{\ab}{\operatorname{ab}} 
\newcommand{\Ad}{\operatorname{Ad}} 
\newcommand{\Aut}{\operatorname{Aut}} 
\newcommand{\chara}{\operatorname{char}} 
\newcommand{\Det}{\operatorname{Det}} 
\newcommand{\diag}{\operatorname{diag}} 
\newcommand{\End}{\operatorname{End}} 
\newcommand{\Hom}{\operatorname{Hom}} 
\newcommand{\id}{\operatorname{id}} 
\newcommand{\map}{\operatorname{Map}} 
\newcommand{\Nat}{\operatorname{\mathrm{Nat}}} 
\newcommand{\Nrd}{\mathrm{Nrd}} 
\newcommand{\Prd}{\mathrm{Prd}} 
\newcommand{\Norm}{\mathrm{N}} 
\newcommand{\ob}{\operatorname{ob}} 
\newcommand{\op}{\operatorname{\mathrm{op}}} 
\newcommand{\PC}{\operatorname{PC}} 
\newcommand{\PGL}{\operatorname{PGL}} 
\newcommand{\pr}{\operatorname{pr}} 
\newcommand{\Rad}{\operatorname{rad}} 
\newcommand{\rank}{\operatorname{rank}} 
\newcommand{\sep}{\operatorname{sep}} 
\newcommand{\sgn}{\operatorname{sgn}} 
\newcommand{\SpDet}{\operatorname{SpDet}} 
\newcommand{\wSpDet}{\operatorname{w-SpDet}} 
\newcommand{\GSpDet}{\operatorname{GSpDet}} 
\newcommand{\wGSpDet}{\operatorname{w-GSpDet}} 
\newcommand{\Sym}{\operatorname{Sym}} 
\newcommand{\tr}{\operatorname{tr}} 
\newcommand{\Pf}{\operatorname{Pf}} 
\newcommand{\pf}{\operatorname{pf}} 
\newcommand{\CH}{\operatorname{CH}} 
\newcommand{\SpRep}{\operatorname{SpRep}} 
\newcommand{\GO}{\operatorname{GO}} 
\newcommand{\PGSp}{\operatorname{PGSp}} 
\newcommand{\PSp}{\operatorname{PSp}} 

\newcommand{\Inner}{\operatorname{Int}} 
\newcommand{\red}{\operatorname{red}} 
\newcommand{\et}{\operatorname{\acute{e}t}} 
\newcommand{\sh}{\operatorname{sh}} 
\newcommand{\eqto}{\xrightarrow{\sim}} 

\setlength\parindent{0pt}
\setlength{\parskip}{0.5em plus4mm minus0mm}


\newenvironment{enum}{
\begin{enumerate}
  \setlength\itemsep{6pt}
  \setlength{\parskip}{0pt}
  \setlength{\parsep}{0pt}
}{\end{enumerate}}

\usepackage{microtype}

\usepackage{mathrsfs}

\lineskiplimit=-1pt

\begin{document}

\title{Symplectic determinant laws and invariant theory}

\author{Mohamed Moakher}
\email{moakher@math.univ-paris13.fr}

\author{Julian Quast}
\email{julian.quast@uni-due.de, me@julianquast.de}
\thanks{The second author is funded by the Deutsche Forschungsgemeinschaft (DFG, German Research Foundation) – Project-ID 444845124 – TRR 326 and Project-ID 517234220}

\subjclass[2000]{Primary 14J10,	13A50; Secondary 14L24,16R10,14M35}

\date{\today}


\keywords{Pseudorepresentation, pseudocharacter, determinant law, symplectic representation, invariant theory}

\begin{abstract} We introduce the notion of \emph{symplectic determinant laws} by analogy with Chenevier's definition of determinant laws. Symplectic determinant laws are a way to define pseudorepresentations for symplectic representations of algebras with involution over arbitrary $\bbZ[\tfrac{1}{2}]$-algebras. We prove that this notion satisfies the properties expected from a good theory of pseudorepresentations, and we compare it to V. Lafforgue's $\Sp_{2d}$-pseudocharacters. In the process, we compute generators of the invariant algebras $A[M_d^m]^{G}$ and $A[G^m]^G$ over an arbitrary commutative ring $A$ when $G \in \{\Sp_d, \mathrm O_d, \GSp_d, \GO_d\}$,  generalizing results of Zubkov.
\end{abstract}

\maketitle

\tableofcontents



\section{Introduction}

Let $R$ be an algebra over an algebraically closed field $k$ of characteristic $p\nmid d!$, and let $\rho : R \to M_d(k)$ be a $k$-algebra homomorphism into the algebra of $d \times d$-matrices $M_d(k)$ over $k$. By the Brauer-Nesbitt theorem, the trace map
$$ \tr\rho : R \to M_d(k), ~r \mapsto \tr(\rho(r)) $$
determines $\rho$ up to semisimplification. Motivated by this fact, Wiles (in \cite{Wiles88} for $d=2$) and Taylor (in \cite{Tay91} for general $d$) introduced the notion of a \emph{$d$-dimensional pseudocharacter}, which is a function $T\colon R\rightarrow k$ that satisfies certain properties that mimic the trace function $\tr(\rho)$. They used this notion to construct Galois representations associated to certain automorphic forms and to study their properties. In fact, relying on results of Procesi (\cite{Pro87}), Taylor showed that over an algebraically closed field $k$ of characteristic zero, a $d$-dimensional pseudocharacter is always the trace of a semisimple representation. This result was extended to characteristic $p\nmid d!$ by Rouquier (see \cite{Rouquier}).

If $p \mid d!$, then the semisimplification of $\rho$ is no longer determined by the trace. However, it was apparent from the work of Procesi that for a characteristic free approach, the determinant should replace the trace. Building upon this perspective, Chenevier \cite{MR3444227} introduced the notion of a \emph{$d$-dimensional determinant law} over a general commutative ring $A$. For a commutative $A$-algebra $R$, a determinant law $D\colon R\rightarrow A$ is a homogenious of degree $d$ multiplicative polynomial law from $R$ to $A$. In essence, it is the datum of a characteristic polynomial $\chi^D(r,t)\in A[t]$ for each $r\in R$, that behave analogously to the characteristic polynomials of a representation $\rho\colon R\rightarrow \GL_d(A)$. 
This theory has proved to be fruitful in the study of Galois deformation rings (\cite{BPI}) and Hecke algebras at Eisenstein primes (\cite{WWE20} and \cite{WWE21}). Furthermore, it plays a significant role in the proof of modularity lifting theorems in the residually reducible case (\cite{ANT} and \cite{Thorne15}), and in the proof of the Bloch-Kato conjecture for the adjoint of automorphic Galois representations (\cite{NT23}).

The goal of this paper is to define and study \emph{symplectic determinant laws} of involutive algebras $(R,*)$ over arbitrary commutative $\bbZ[\tfrac{1}{2}]$-algebras $A$. These are the analogue of determinant laws with respect to symplectic representations of algebras with involution. Inspired by the  invariant theory of tuples of matrices under the action of the symplectic group (see \cite[\S 10]{Pro}), this is done by adding to the determinant law a square root, defined on the set of symmetric elements $R^+$ of the algebra. The main definition is the following:
\begin{definition*}[\Cref{defsympldetlaw}]
    Let $(R,*)$ be an involutive $A$-algebra. A \emph{$2d$-dimensional symplectic determinant law} on $(R,*)$ is the datum of a pair $(D,P)$, where $D \colon R \rightarrow A$ is a $2d$-dimensional determinant law which is invariant under the involution, and $P\colon R^+\rightarrow A$ is a $d$-dimensional homogeneous polynomial law such that $P^2=D|_{R^+}$, $P(1)=1$, and $\CH(P) \subseteq \ker(D)$.
\end{definition*}
The last condition, that the Cayley-Hamilton ideal $\CH(P)$ associated to $P$ lies inside the kernel of $D$, is added for technical reasons.
We distinguish between \emph{weak symplectic determinant laws}, where this condition is omitted and symplectic determinant laws as above.
We conjecture that $\CH(P) \subseteq \ker(D)$ holds for weak symplectic determinant laws.

An important special case is that of a group algebra $R=A[\Gamma]$ equipped with an appropriate involution, where our theory of symplectic determinant laws leads to a notion of "pseudorepresentation" for the symplectic similitude group $\GSp_{2d}$ (see \Cref{gspdet}). In this case, all of our results have an analogue where we replace $\Sp_{2d}$ by $\GSp_{2d}$. 

We also introduce the notion of symplectic Cayley-Hamilton algebras, which are involutive $A$-algebras equipped with a symplectic determinant law $(D,P)$ such that every symmetric element of $R$ satisfies its characteristic polynomial associated to $P$. Matrix algebras equipped with a symplectic involution are a basic example (see \Cref{charpf}). It is worth noting that the properties of Cayley-Hamilton algebras proved in \cite{MR3444227} and \cite{MR3167286} are easily transferred to this context.

Naturally, a symplectic representation gives rise to a symplectic determinant law. Alternatively, we can ask when a given symplectic determinant law arises from a symplectic representation. An important property that one would expect is that over algebraically closed fields, this association provides a bijection between symplectic determinant laws and conjugacy classes of semisimple representations. We verify this for weak symplectic determinant laws, in which case the corresponding statement for symplectic determinant laws is immediate.

\begin{maintheorem}[\Cref{reconsalgcl}] \label{introductionreconsalgcl} Let $k$ be an algebraically closed field with $2 \in k^{\times}$, let $(R,*)$ be an involutive $k$-algebra and let $(D,P)\colon R \to k$ be a weak symplectic determinant law of dimension $2d$. Then there exists a semisimple symplectic representation $(R,*) \to (M_{2d}(k), \mathrm j)$, unique up to conjugation by $\Sp_{2d}(k)$, whose associated symplectic determinant law is $(D,P)$.
\end{maintheorem}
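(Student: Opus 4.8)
The plan is to bootstrap from Chenevier's reconstruction theorem for ordinary determinant laws and then use the square-root datum $P$ to promote the resulting representation to a symplectic one. First, applying reconstruction over an algebraically closed field to $D$ (\cite{MR3444227}, in the spirit of Taylor--Rouquier) yields a semisimple representation $\rho\colon R\to M_{2d}(k)$, unique up to conjugation by $\GL_{2d}(k)$, with $\det\circ\rho = D$. Since $D$ is $*$-invariant, $\ker(D)$ is $*$-stable, so I may pass to the Cayley--Hamilton quotient $\bar R = R/\ker(D)$, which inherits an anti-involution; here $2\in k^{\times}$ ensures $R^{+}\twoheadrightarrow\bar R^{+}$, and $P$ descends to $\bar R^{+}$ because a polynomial square root of $D|_{R^{+}}$ is insensitive to translation by $\ker(D)$ (a one-variable polynomial squaring to a constant is constant). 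Thus I may assume $\rho$ is injective with semisimple image $\bar R\cong\prod_j M_{n_j}(k)$. Then the contragredient $\rho^{\vee}(r) = {}^{t}\rho(r^{*})$ is semisimple with $\det\circ\rho^{\vee} = D\circ * = D$, hence $\rho^{\vee}\cong\rho$ by uniqueness, so there is $b\in\GL_{2d}(k)$ with $\rho(r^{*}) = b^{-1}\,{}^{t}\rho(r)\,b$ for all $r$; that is, $\rho$ is compatible with the non-degenerate form $\langle x,y\rangle = {}^{t}x\,b\,y$. Decomposing $\rho = \bigoplus_i\rho_i^{\oplus m_i}$ into isotypic components, $*$ permutes the $\rho_i$ via $\rho_i\mapsto\rho_i^{\vee}$; by Schur, each self-dual $\rho_i$ is of orthogonal or symplectic type, and the restriction of $b$ to its isotypic block is governed by an invertible "multiplicity matrix" in $M_{m_i}(k)$, which must be antisymmetric (orthogonal case) resp.\ symmetric (symplectic case) exactly when $b$ is alternating there. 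Hence $b$ can be chosen alternating if and only if every orthogonal-type $\rho_i$ occurs with even multiplicity.

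The heart of the proof is to show $P$ forces precisely that. Given an orthogonal-type constituent $\rho_i$ of dimension $n_i$ and multiplicity $m_i$, I would choose $r\in R^{+}$ whose image in $\bar R$ is block-diagonal, equal to a generic symmetric $n_i\times n_i$ matrix $x$ (with respect to the form on $\rho_i$) in the $i$-th self-dual block and to non-zero scalars $\lambda I$ on the remaining blocks (valid since identities are $*$-fixed). Then $D(r) = \det(x)^{m_i}\cdot\lambda^{c}$ as a polynomial in the entries of $x$ and the auxiliary scalars, and $P(r)$ is a polynomial square root of it in that polynomial ring. Because the determinant of a generic symmetric matrix is an irreducible polynomial (and is simply $x$ when $n_i = 1$), $\det(x)^{m_i}$ is a square only if $m_i$ is even; so the mere existence of $P$ forces all orthogonal-type multiplicities to be even. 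Given that, one writes down an explicit non-degenerate alternating $\rho$-invariant form (antisymmetric multiplicity matrices on orthogonal blocks, symmetric ones on symplectic blocks, hyperbolic pairings on the dual pairs), takes $b$ to be it, and conjugates by a matrix carrying $b$ to the standard form $\mathrm j$, producing a symplectic representation $(R,*)\to(M_{2d}(k),\mathrm j)$.

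It remains to match the $P$-part and to prove uniqueness. For a symplectic representation into $(M_{2d}(k),\mathrm j)$ the associated square root is $r\mapsto\Pf(\mathrm j\,\rho(r))$ on $R^{+}$, well-defined because $\mathrm j\,\rho(r)$ is alternating when $r = r^{*}$, and its square is $\det(\mathrm j)\det\rho(r) = D(r)$ while its value at $r = 1$ is $\Pf(\mathrm j) = 1$. Now the ring of polynomial laws $R^{+}\to k$ is a directed colimit of polynomial rings over finite-dimensional subspaces, hence a domain; in it $P$ and $r\mapsto\Pf(\mathrm j\,\rho(r))$ are two square roots of $D|_{R^{+}}$, so they differ by a sign, and since both equal $1$ at $r = 1$ and $1\ne -1$ (as $2\in k^{\times}$) they coincide. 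For uniqueness, if $(\rho_1,\mathrm j)$ and $(\rho_2,\mathrm j)$ are symplectic representations with symplectic determinant law $(D,P)$, reconstruction gives $\rho_2 = g\rho_1 g^{-1}$ for some $g\in\GL_{2d}(k)$; then $\mathrm j$ and ${}^{t}g\,\mathrm j\,g$ are both non-degenerate alternating $\rho_1$-invariant forms, and the centralizer $\End_R(\rho_1)^{\times}$ --- a product of general linear groups over the isotypic blocks --- acts transitively on such forms over the algebraically closed field $k$, so adjusting $g$ within it produces an element of $\Sp_{2d}(k)$ conjugating $\rho_1$ to $\rho_2$.

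\emph{The main obstacle} is the step just described as the heart of the proof: showing that the square-root datum $P$ genuinely obstructs self-dual orthogonal constituents of odd multiplicity, i.e.\ that it forces the symplectic structure to exist. This is precisely where the invariant theory enters --- $P$ is the pseudo-representation shadow of the Pfaffian generator of the symplectic invariant ring --- and the essential external input is the classical irreducibility of the determinant of a generic symmetric matrix. A secondary technical nuisance is the sign-rigidity of square roots of polynomial laws, used both in the descent of $P$ past $\ker(D)$ and in the matching of the $P$-part; it relies on $2\in k^{\times}$ together with the factoriality (or at least domain property) of the relevant rings of polynomial laws.
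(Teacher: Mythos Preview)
Your approach is correct and runs closely parallel to the paper's, though you organize the argument around the isotypic decomposition of the representation $\rho$ rather than the involutive Artin--Wedderburn decomposition of $R/\ker(D)$ that the paper uses (its \Cref{invAW} and \Cref{exampleforms}). Both routes isolate the same crucial constraint---that orthogonal-type self-dual constituents must occur with even multiplicity---but the paper obtains it by a more elementary device: evaluating on the diagonal element $\operatorname{diag}(t,1,\dots,1)$ in an orthogonal block, which forces $t^{m_i}$ to be a square in $k[t]$ and hence $m_i$ to be even. This avoids invoking irreducibility of the determinant of a generic symmetric matrix. Your uniqueness argument via transitivity of the centralizer $\End_R(\rho_1)^{\times}$ on non-degenerate invariant alternating forms is a clean repackaging of the paper's case-by-case analysis on indecomposable symplectic summands.

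Two technical steps deserve tightening. First, the descent of $P$ through $\ker(D)$: your one-line ``a one-variable polynomial squaring to a constant is constant'' requires the coefficient ring to be a domain, which fails for general test algebras $B$; the paper instead uses the recursion of \Cref{pfaffianunique} expressing $P$ polynomially in the $\Lambda_i^D$, which manifestly descends (this is \Cref{strongHomTheorem}). Second, matching $P$ with the Pfaffian law: your domain argument is valid once you restrict to finite-dimensional subspaces $V\subseteq R^{+}$ containing $1$, where polynomial laws become honest polynomials in the integral domain $k[V]$ (here $k$ being infinite matters); alternatively, \Cref{pfaffianunique} again gives this uniqueness directly without any factoriality considerations.
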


Another important property to expect is that a symplectic determinant law over a Henselian local ring, which is residually absolutely irreducible, should arise from a symplectic representation. We prove this result in \Cref{henselianlemma}.

More generally, we introduce the notion of symplectic GMAs (Generalized Matrix Algebras) following \cite{BC}, which we equip with a canonical symplectic determinant law. This allows us to characterize residually multiplicity-free symplectic Cayley-Hamilton algebras over Henselian local rings, as we show in the following theorem.

\begin{maintheorem}[\Cref{hensCH}]\label{introductionhensCH}
   Suppose that $A$ is a Henselian local ring, and let $(R,*,D,P)$ be a finitely generated $2d$-dimensional symplectic Cayley-Hamilton $A$-algebra with involution. If $D$ is residually  multiplicity-free, then $(R,*)$ admits a symplectic $\text{GMA}$ structure.
\end{maintheorem}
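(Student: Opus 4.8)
The strategy is to imitate the construction of GMA structures on residually multiplicity-free Cayley-Hamilton algebras from \cite{BC} (and its refinement by Chenevier), carrying along the extra datum of the involution and the square-root $P$. First I would reduce to understanding the residual situation: since $D$ is residually multiplicity-free, the base change $\bar R = R \otimes_A k$ (with $k$ the residue field) carries the reduction $(\bar D, \bar P)$, and by \Cref{introductionreconsalgcl} there is a semisimple symplectic representation $\bar\rho\colon (\bar R, *) \to (M_{2d}(\bar k), \mathrm j)$ whose isotypic decomposition is multiplicity-free. The symplectic form pairs each irreducible constituent either with itself (the symplectic/orthogonal-type pieces) or with its dual; analyzing how $\mathrm j$ permutes the blocks gives a preferred involution-stable idempotent decomposition $1 = \sum_i \bar e_i$ of $\bar R$ adapted to $*$. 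The first real step is then to lift these orthogonal idempotents to an involution-compatible family $1 = \sum_i e_i$ in $R$ itself; here one uses Henselian-ness to lift idempotents through the (Cayley-Hamilton, hence integral) algebra $R$, and then an averaging/symmetrization argument — legitimate since $2 \in A^\times$ — to arrange that $*$ either fixes each $e_i$ or swaps $e_i \leftrightarrow e_{i'}$ in the prescribed pattern.

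Next I would equip $R$ with the GMA data: set $\calA_i = e_i R e_i$, which is a Cayley-Hamilton $A$-algebra of smaller dimension with residually absolutely irreducible determinant law, so by the Henselian absolutely-irreducible case (\Cref{henselianlemma}, already invoked in the excerpt) each $\calA_i$ is isomorphic to a matrix algebra $M_{d_i}(A)$; the off-diagonal bimodules $\calA_{ij} = e_i R e_j$ then become the "Hom" pieces, and the multiplication maps $\calA_{ij}\otimes \calA_{jk}\to \calA_{ik}$ furnish the GMA structure in the sense of \cite{BC}. The involution $*$ restricts to anti-isomorphisms $\calA_{ij} \xrightarrow{\sim} \calA_{j'i'}$, and on the self-paired blocks it descends to a symplectic (or orthogonal) involution on the matrix algebra $M_{d_i}(A)$; together with the identification forced by $P$ on symmetric elements, this is exactly the structure of a symplectic GMA in the sense introduced just before the theorem. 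One must check that the canonical symplectic determinant law of this symplectic GMA agrees with $(D,P)$: on $D$ this follows from the Cayley-Hamilton hypothesis plus the GMA theory of \cite{BC} (the determinant law of a Cayley-Hamilton GMA is the product of the block determinants), and on $P$ it follows because $P^2 = D$ together with $P(1)=1$ pins $P$ down uniquely among $d$-dimensional polynomial laws once $D$ is known and $A$ is connected (or after passing to connected components).

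I expect the main obstacle to be the compatibility of the idempotent lift with the involution in the mixed case, i.e. when the residual decomposition contains both self-dual constituents and pairs $\{\pi, \pi^\vee\}$. Lifting idempotents is standard over a Henselian ring, but making the lift simultaneously orthogonal \emph{and} $*$-equivariant requires care: one natural approach is to lift only a set of representatives of the $*$-orbits of residual idempotents, then define the partners by applying $*$, and finally correct by a unit conjugation to restore orthogonality — but one has to verify this conjugation can be chosen symmetric (using $2 \in A^\times$) so as not to destroy $*$-equivariance, which is where the bulk of the technical work will sit. A secondary subtlety is checking that, on a self-paired block, the induced involution on $M_{d_i}(A)$ is of symplectic type precisely when the residual pairing is alternating; this is a Henselian deformation argument (the type of the involution is detected by whether a certain symmetric/antisymmetric unit exists, which again lifts from the residue field), and it is exactly what is needed for the GMA to be \emph{symplectic} rather than merely a GMA with involution. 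Once these points are settled, the verification that $(D,P)$ coincides with the canonical symplectic determinant law of the resulting symplectic GMA is formal.
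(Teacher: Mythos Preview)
Your approach is essentially the same as the paper's, and your outline is correct. A few remarks on where your expectations diverge from the actual execution.

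The $*$-compatible idempotent lifting that you flag as ``the bulk of the technical work'' is in fact a one-line citation in the paper: \cite[Lemma 1.8.2]{BC} already lifts a $*$-stable family of orthogonal idempotents through any two-sided ideal contained in the Jacobson radical, so no ad hoc averaging or symmetric-conjugation argument is needed. What does require care (and is established earlier as \Cref{radR}) is the identification $\Rad(R)=\ker\big(R\to \overline R/\ker(\overline D)\big)$, which is what makes that lemma applicable.

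Your ``secondary subtlety''---checking that the involution on a self-paired diagonal block is of symplectic type---is likewise absorbed into \Cref{henselianlemma}: that result already produces an isomorphism $e_iRe_i\cong M_{d_i}(A)$ intertwining $*$ with $\mathrm j$, so no separate deformation-of-type argument is required. Conversely, for the non-self-paired blocks you cannot invoke \Cref{henselianlemma} (which is a statement about symplectic Cayley--Hamilton algebras); the paper instead applies the $\GL$-version \cite[Theorem 2.22 (i)]{MR3444227} to one block of each swapped pair and then \emph{defines} $\psi_{\sigma(i)}:=\top\circ\psi_i\circ *$. Your sketch conflates these two cases slightly.

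Finally, the verification $(D,P)=(D_{\calE},P_{\calE})$ is done in the paper by checking $T_{\calE}=\Lambda^D_{1,A}$ and invoking \cite[Proposition 1.27]{MR3444227}; your proposed route via the uniqueness statement \Cref{pfaffianunique} would also work.
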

Following \cite{CWE} and \cite{MR3167286}, we undertake the study of the moduli space of symplectic representations of a finitely generated involutive $A$-algebra $(R,*)$, when $A$ is Noetherian. We introduce the space $\SpRep_{(R,*)}^{\square,2d}$ of $2d$-dimensional symplectic representations  of conjugacy classes of $(R,*)$, and we compare it to the space $\SpDet^{2d}_{(R,*)}$ of $2d$-dimensional symplectic determinant laws on $(R,*)$. Given that two conjugate symplectic representations give rise to the same symplectic determinant law, we have the following diagram
\begin{align*}
    \xymatrix{
        \left[\SpRep^{\square,2d}_{(R,*)}/\Sp_{2d}\right]\ar[d] \ar[dr] \\
        \SpDet^{2d}_{(R,*)} & \SpRep^{\square, 2d}_{(R,*)} \sslash \Sp_{2d} \ar[l]^{\nu}
    }
\end{align*}
We prove that the map $\nu$ is close to being an isomorphism. In fact, the coordinate ring map is power-surjective and its kernel consists of $\bbZ$-torsion nilpotent elements. We also show that it is an isomorphism on neighborhoods of points corresponding to multiplicity free representations. Our result is the following.

\begin{maintheorem}[\Cref{adequate}, \Cref{isoonmultiplicityfreelocus}]\label{introductionadequate}The map $
\nu$ is a finite adequate homeomorphism. Moreover, there exsists a Zariski open subset of $\SpRep^{\square, 2d}_{(R,*)} \sslash \Sp_{2d}$, containing the points corresponding to multiplicity free symplectic representations, on which $\nu$ is an isomorphism.
\end{maintheorem}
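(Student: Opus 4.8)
The plan is to study the coordinate ring map
$\nu^{\sharp}\colon \mathcal O\big(\SpDet^{2d}_{(R,*)}\big)\to \mathcal O\big(\SpRep^{\square,2d}_{(R,*)}\big)^{\Sp_{2d}}$
directly and to extract the facts that, packaged together, say exactly that $\nu$ is a finite adequate homeomorphism: namely that $\nu^{\sharp}$ is module-finite, that $\nu$ is a universal homeomorphism, and that $\nu^{\sharp}$ is power-surjective with $\bbZ$-torsion nilpotent kernel; the last assertion of the theorem is then a separate, local statement. The crucial input is the explicit generating set of $A[M_{2d}^m]^{\Sp_{2d}}$ (and of its $\GSp$-variant) obtained earlier in the style of Zubkov: these generators are coefficients of characteristic polynomials of words in the matrix variables and their $\mathrm j$-adjoints, together with Pfaffian-type invariants of symmetric words, and each of them is to be matched — up to a bounded power — against the pair $(D,P)$ carried by the universal symplectic determinant law.

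\emph{Finiteness.} First I would note that $\SpRep^{\square,2d}_{(R,*)}$ is affine of finite type over $A$ (a closed subscheme of a product of matrix spaces, cut out by the relations of $R$ and by $\mathrm j$-compatibility of the involution), so $\mathcal O\big(\SpRep^{\square,2d}_{(R,*)}\big)^{\Sp_{2d}}$ is a finitely generated $A$-algebra, generated concretely by the invariant-theoretic generators above. Likewise $\SpDet^{2d}_{(R,*)}$ is affine of finite type, being a closed subscheme of a finite affine space of characteristic-polynomial-plus-Pfaffian coordinates cut out by the polynomial-law axioms tested on a finite generating set of $R$. Substituting the explicit invariant generators into the defining relations of a symplectic determinant law shows each of them is integral over $\im(\nu^{\sharp})$; together with finite generation this makes $\nu^{\sharp}$ module-finite, i.e. $\nu$ finite.

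\emph{Universal homeomorphism, power-surjectivity, and the kernel.} Over an algebraically closed field $k$ with $2\in k^{\times}$, \Cref{introductionreconsalgcl} identifies the $k$-points of $\SpDet^{2d}_{(R,*)}$ with semisimple symplectic representations of $(R,*)$ up to $\Sp_{2d}(k)$-conjugacy, while the $k$-points of $\SpRep^{\square,2d}_{(R,*)}\sslash \Sp_{2d}$ are the closed $\Sp_{2d}(k)$-orbits, i.e. again semisimple symplectic representations up to conjugacy; $\nu$ intertwines the two descriptions, so it is bijective on geometric points over every field. Being finite (hence integral) and universally injective and surjective, $\nu$ is a universal homeomorphism. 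For power-surjectivity it then suffices to exhibit a power of each invariant-theoretic generator in $\im(\nu^{\sharp})$: the characteristic-polynomial coefficients of words lie in the image of $D$ on the nose, and the Pfaffian-type invariants of symmetric words are recovered from $P$ after inverting $2$ (equivalently after passing to a suitable $2^{n}$-th, or in residue characteristic $p$ a $p^{n}$-th, power) — this is the source both of the "power" and of the $\bbZ$-torsion. For the kernel: an $f$ killed by $\nu^{\sharp}$ vanishes at every symplectic representation of every $(R,*)$-algebra; specializing along all geometric points and invoking the bijectivity just established forces $f$ nilpotent, and keeping track of which integers were inverted in the matching of generators shows $f$ is annihilated by a power of an integer.

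\emph{Isomorphism on the multiplicity-free locus.} The absolutely multiplicity-free condition is open on $\SpRep^{\square,2d}_{(R,*)}\sslash \Sp_{2d}$ by semicontinuity of the number of Jordan--H\"older constituents, so it carves out a Zariski open $U$ containing the multiplicity-free points. To see $\nu$ is an isomorphism over (a possibly smaller open inside) $\nu(U)$, I would work at the Henselization $\mathcal O_y^{h}$ of a point $y$ in the image: the universal symplectic Cayley--Hamilton algebra over $\mathcal O_y^{h}$ is residually multiplicity-free, so by \Cref{introductionhensCH} it acquires a symplectic GMA structure; the GMA structure yields a genuine symplectic representation over $\mathcal O_y^{h}$, hence a local section of $\nu^{\sharp}$ near $y$, and one checks this representation is the universal one, so $\nu^{\sharp}$ becomes an isomorphism after Henselization at $y$. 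Since a universal homeomorphism that induces an isomorphism on all Henselian local rings over an open is an isomorphism over that open, $\nu$ is an isomorphism over the corresponding open. I expect this to be the main obstacle: one must check that the symplectic GMA structure of \Cref{introductionhensCH} rigidifies to an actual representation compatibly with the square root $P$ and not merely with $D$, and that the resulting representation is universal — in effect a deformation-rigidity statement for residually multiplicity-free symplectic Cayley--Hamilton algebras, the symplectic analogue of the analysis of Bella\"iche--Chenevier and Wang--Erickson, where the bookkeeping around the Pfaffian (and the standing hypothesis $2\in A^{\times}$) is the delicate point; the power-surjectivity computation, although in principle routine, is similarly sensitive to which primes get inverted and should be carried out with care.
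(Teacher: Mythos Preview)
Your outline has the right shape but contains a genuine gap in the finiteness/power-surjectivity step and a misreading of what the symplectic GMA structure gives you on the multiplicity-free locus.

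\textbf{The first part.} You assert that $\mathcal O(\SpRep^{\square,2d}_{(R,*)})^{\Sp_{2d}}$ is ``generated concretely by the invariant-theoretic generators above,'' i.e.\ by the images of the Zubkov-type generators of $A[M_{2d}^m]^{\Sp_{2d}}$. This is the heart of your finiteness and power-surjectivity arguments, but it is not available: the representation scheme is a \emph{quotient} of $M_{2d}^m$, and taking $\Sp_{2d}$-invariants is not right exact in positive characteristic (precisely because $\Sp_{2d}$ is not linearly reductive there). So you do not know that those elements generate the invariant ring of the quotient, and hence you have neither the integrality nor the explicit matching of generators with values of $(D,P)$ that you need. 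The paper does not attempt to write down generators of this invariant ring over $A$. Instead it gets the bijection on geometric points as you do, proves that $\nu\otimes\bbQ$ is an isomorphism using the characteristic-zero converse Cayley--Hamilton theorem (this is where the ``adequate'' part, i.e.\ local isomorphism at characteristic-$0$ points, comes from), and establishes integrality by the valuative criterion for universal closedness: given a symplectic determinant law over a complete DVR $B$ with algebraically closed residue field, one lifts to a symplectic representation over a finite extension by first getting one over the fraction field (via the reconstruction theorem over a finite extension of $K$), then finding an $R$-stable lattice, and finally rescaling so that the symplectic form is non-degenerate on the lattice. This DVR/lattice argument is what is missing from your plan.

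\textbf{The multiplicity-free locus.} Your claim that ``the GMA structure yields a genuine symplectic representation over $\mathcal O_y^{h}$, hence a local section of $\nu^{\sharp}$'' is not correct as stated. The symplectic GMA structure on the Cayley--Hamilton quotient $E$ over $\mathcal O_y^{\sh}$ produces an \emph{adapted} representation into $M_{2d}(B)$ for $B$ the universal adapted-representation ring, not into $M_{2d}(\mathcal O_y^{\sh})$; the off-diagonal pieces $\mathcal A_{i,j}$ need not be free of rank one, so there is no section of $\nu$ in the sense you describe. What the paper does instead is show (i) an equivalence of stacks $[\Rep^{\square}_{\Ad}(E,*,\calE)/Z(\calE)]\simeq \Rep_{(E,*,D^u,P^u)}$ and (ii) that the $Z(\calE)$-invariants of the adapted-representation ring are exactly $\mathcal O_y^{\sh}$; together these exhibit $\Rep_{(E,*,D^u,P^u)}\to \Spec(\mathcal O_y^{\sh})$ as a good moduli space. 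Comparing with the adequate-moduli-space structure of $\phi$ (base-changed along $V_x\to X$, which one checks is cartesian using that $\nu$ is already a finite universal homeomorphism) and invoking uniqueness of adequate moduli spaces then forces $\nu_x\colon V_x\to U_y$ to be an isomorphism. Your Henselian localisation and use of \Cref{introductionhensCH} are the right first moves, but the conclusion is via moduli-space uniqueness, not via constructing a section.
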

This shows that the symplectic determinant space is, in some sense, a good approximation of the GIT quotient of $\SpRep^{\square, 2d}_{(R,*)}$ by $\Sp_{2d}$. To prove this theorem, we need to extend the main result of \cite{Pro87}, which states that every Cayley-Hamilton algebra over a characteristic zero field embeds in a matrix algebra compatibly with the trace. This is our modification of the result.

\begin{maintheorem}[\Cref{converseCH}]\label{introconverseCH}
Suppose that $A$ is a commutative $\bbQ$-algebra, and let $(R,*,D\colon R\to A,P\colon R^+\to A)$ be a symplectic Cayley-Hamilton $A$-algebra of degree $2d$. Then there is a commutative $A$-algebra $B$, and an injective symplectic $A$-linear representation
\begin{equation*}
    \rho\colon (R,*)\hookrightarrow (M_{2d}(B),\mathrm{j})
\end{equation*}
inducing $(D,P)$.
\end{maintheorem}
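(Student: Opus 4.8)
The plan is to adapt Procesi's proof of the ``formal inverse to the Cayley--Hamilton theorem'' \cite{Pro87} to the symplectic setting, making systematic use of the structural results proved earlier in the paper. Since every $A$-linear symplectic representation of $(R,*)$ inducing $(D,P)$ factors through the universal one, it suffices to construct a commutative $A$-algebra $B$ carrying a \emph{universal} symplectic representation $\rho\colon(R,*)\to(M_{2d}(B),\mathrm j)$ inducing $(D,P)$, and to prove that $\rho$ is injective. The algebra $B$ is built formally: fix a generating set of $R$ closed under $*$, introduce indeterminate matrix entries, and quotient the polynomial ring over $A$ by the relations expressing that the resulting assignment is an $A$-algebra homomorphism, intertwines $*$ with $\mathrm j$, and induces $(D,P)$. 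A direct-limit argument over finitely generated sub-$\bbQ$-algebras of $A$ and finitely generated involutive subalgebras of $R$ (using exactness of filtered colimits) reduces faithfulness of $\rho$ to the case where $A$ is Noetherian and $(R,*)$ is finitely generated; in that case the first fundamental theorem for the $\Sp_{2d}$-action on tuples of matrices, established in this paper, shows that $B$ is a finitely generated, hence Noetherian, $A$-algebra.

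Faithfulness is then proved in two stages, exactly paralleling the non-symplectic argument. First, reducing modulo primes of $B$ and passing to algebraically closed residue fields $L$: over each such $L$, \Cref{reconsalgcl} shows that $(D,P)\otimes_A L$ is induced by a \emph{semisimple} symplectic representation $\bar\rho_L$, and since each $\bar\rho_L$ factors through $\rho$, one gets $\ker\rho\subseteq I$, where $I\subseteq R$ is the ideal cut out by the kernels of the $\bar\rho_L$. Thus $\rho$ restricted to the semisimple symplectic Cayley--Hamilton algebra $R/I$ is already faithful, and it remains to separate the elements of $I$. This is where the hypothesis that $A$ is a $\bbQ$-algebra enters, as in \cite{Pro87}: because the $\bar\rho_L$ induce $D$, the ideal $I$ lies in the radical of the trace pairing $(x,y)\mapsto\tr(xy)$, and in a Cayley--Hamilton algebra of degree $2d$ over a characteristic-zero field this radical is nilpotent, the trace form being nondegenerate on the semisimple quotient (over a Noetherian base one reduces this to residue fields). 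Hence $I$ is a nilpotent ideal.

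It remains to detect a nonzero $a\in I$, say $a\in I^{j}\setminus I^{j+1}$. Decomposing each $\bar\rho_L$ into isotypic components — a decomposition constrained by self-duality, so that self-dual constituents carry alternating pairings and the remaining irreducibles occur in mutually dual pairs $V\oplus V^{\vee}$ — and lifting the corresponding idempotents compatibly with the symplectic form, one equips $(R,*)$ with a symplectic GMA structure of the kind used for \Cref{hensCH}. The attached $I$-adic filtration then produces a symplectic, $(D,P)$-compatible representation $(R,*)\otimes_L L[t]/(t^{j+1})\to(M_{2d}(L[t]/(t^{j+1})),\mathrm j)$ that sends $a$ to a matrix with nonzero $t^{j}$-term; as $L[t]/(t^{j+1})$ is a commutative $A$-algebra and this representation factors through $\rho$, it follows that $a\notin\ker\rho$. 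Combined with $\ker\rho\subseteq I$ this gives $\ker\rho=0$, so $\rho$ is the desired injective symplectic representation, with $B$ as constructed.

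The main obstacle is the last step: producing enough \emph{non-semisimple} symplectic representations to separate the nilpotent ideal $I$ while keeping every representation compatible with the involution $\mathrm j$ and with $(D,P)$. Concretely, one must arrange the idempotents cutting out the isotypic blocks and the $I$-adic filtration of $R$ to be simultaneously compatible with $*$ and with an alternating pairing on the ambient $M_{2d}$ — a symplectic refinement of the GMA machinery underlying \Cref{hensCH}, now required beyond the residually multiplicity-free case and over the auxiliary truncated bases $L[t]/(t^{j+1})$. A secondary technical point is the finite generation of the universal ring $B$, which rests on the invariant-theoretic computations of the paper. (One might hope to shortcut the argument by applying Procesi's non-symplectic theorem to $(R,D)$ and then symmetrising: the presence of $P$ makes the resulting embedding $\rho_0$ self-dual, and over an algebraically closed field yields $g$ with $\rho_0(r^{*})=g^{-1}\rho_0(r)^{\mathrm T}g$ which, because $P$ rather than an orthogonal square root is present, may be chosen alternating; but such a $g$ need not exist over a general base, which is exactly the difficulty the universal-ring construction circumvents.)
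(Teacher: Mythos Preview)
Your proposal has a genuine gap at precisely the step you flag as ``the main obstacle'': separating the elements of the nilpotent ideal $I$. The symplectic GMA machinery of \Cref{hensCH} is only developed under a residually multiplicity-free hypothesis, and you do not carry out the promised extension beyond that case. Even in the multiplicity-free situation your argument is incomplete: the phrase ``the attached $I$-adic filtration then produces a symplectic, $(D,P)$-compatible representation over $L[t]/(t^{j+1})$'' hides all the work. One has to construct an adapted representation whose off-diagonal pieces are weighted so that the image of a fixed $a\in I^j\setminus I^{j+1}$ has nonzero $t^j$-term, check that the resulting map respects the involution $\mathrm j$, and verify that it still induces $(D,P)$; none of this is done. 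There is also a scope confusion: you pass to an algebraically closed residue field $L$ of $B$, but then speak of equipping $(R,*)$ itself (over $A$) with a GMA structure and producing a representation of $(R,*)\otimes_L L[t]/(t^{j+1})$ --- the base ring over which the idempotent lifting and the GMA are supposed to live is never pinned down. A minor point: finite generation of the universal ring $B=A[\SpRep^{\square,2d}_{(R,*,D,P)}]$ is immediate (it is a quotient of a polynomial ring), and does not require the first fundamental theorem.

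The paper's proof avoids all of this by a direct invariant-theoretic argument in the spirit of \cite{Pro87}. The key preparatory step (\Cref{kerpi}) identifies the \emph{free} symplectic Cayley--Hamilton algebra on a set $S$ with $R_S(2d)=M_{2d}(A[M_{2d}^S])^{\Sp_{2d}}$, using the first and second fundamental theorems for the conjugation action of $\Sp_{2d}$ on tuples of matrices in characteristic zero. Writing an arbitrary symplectic Cayley--Hamilton algebra as $R=R_S(2d)/I$ and setting $\calB=M_{2d}(A[M_{2d}^S])$, the universal representation is injective iff $\calB I\calB\cap R_S(2d)=I$. This is proved with the Reynolds operator: for $a\in \calB I\calB\cap R_S(2d)$ choose a fresh generic matrix $\bbX^{(s)}$ independent of $a$, apply $\mathcal R_{\Sp_{2d}}$ to $\tr(a\bbX^{(s)})$, and use the explicit generators of $R_S(2d)$ (\Cref{generatorsofinv}) together with nondegeneracy of the trace pairing to exhibit $a$ as an element of $I$. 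No GMA structures, no residue-field reductions, and no case analysis on multiplicities are needed.
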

Finally, we compare our notion of symplectic determinant laws for a group algebra $A[\Gamma]$ to Lafforgue's pseudocharacters (see\cite{Laf}). 
To begin, we provide an explicit description of Lafforgue pseudocharacters for the groups $\Sp_{2d}$ and $\GSp_{2d}$ over an arbitrary commutative ring. This is derived from the following result in invariant theory, which may be of interest beyond the scope of this paper.
\begin{maintheorem}[\Cref{invZ}, \Cref{gspZ}] \label{introductioninvtheorem} \phantom{a}
\begin{enum}
    \item $\bbZ[\Sp_{d}^m]^{\Sp_{d}}$ is essentially generated by coefficients of characteristic polynomials.
    \item $\bbZ[\GSp_{d}^m]^{\GSp_{d}}$ is essentially generated by coefficients of characteristic polynomials and the inverse symplectic similitude character.
\end{enum}
\end{maintheorem}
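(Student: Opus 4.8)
The plan is to deduce this from the corresponding statements for $\rmO_d$ and $\GL_d$ (which by the phrasing are established earlier in the paper, generalizing Zubkov), combined with the classical device that ties symplectic invariants of $m$-tuples to orthogonal invariants of $2m$-tuples via transposition against the symplectic form $J$. Concretely, fix the standard alternating form $J$ on the free module $A^d$ (with $d=2d'$), so that $\Sp_d = \{g \in \GL_d : g^{\mathrm t} J g = J\}$. For $X \in M_d$, write $X^{*} = J^{-1} X^{\mathrm t} J$ for the symplectic involution; then $\Sp_d$ acts on $M_d^m$ by simultaneous conjugation, and the key observation is that $A[M_d^m]^{\Sp_d}$ is computed by the same generators as the first fundamental theorem for $\Sp_d$ acting on tuples of vectors, namely traces of words in the $X_i$ together with the $X_i^{*}$. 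Since $X^{*}$ is a polynomial (with coefficients in $A$) in the entries of $X$ and in the fixed matrix $J$, every such trace is a polynomial law in the $X_i$; so the first step is to reduce, using Zubkov-type arguments over $\bbZ$ (Donkin's theorem / its integral refinements), the generation of $A[M_d^m]^{\Sp_d}$ to traces of words in $X_1,\dots,X_m,X_1^{*},\dots,X_m^{*}$, i.e.\ to the $\GL_d$-invariants of the $2m$-tuple $(X_1,\dots,X_m,X_1^{*},\dots,X_m^{*})$ — but one must be careful that the relations $X_i^{*\,*}=X_i$ and $\tr(X) = \tr(X^{*})$ are respected, which is exactly where the \emph{orthogonal} rather than general linear group enters.

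The second and main step is to pass from traces to coefficients of characteristic polynomials. Over $\bbZ[\frac 1{d!}]$ this is trivial by Newton's identities, but the point of the theorem is that this works integrally. Here I would invoke the analogue, proved earlier in the paper for $\rmO_d$ (\Cref{invZ} presumably covers $\Sp_d, \rmO_d$ uniformly), that $\bbZ[M_d^m]^{\rmO_d}$ — equivalently, by the first fundamental theorem for $\rmO_d$, the ring generated by traces of words in matrices and their transposes subject to $X^{\mathrm t\mathrm t}=X$ — is essentially generated by coefficients of characteristic polynomials of words. The symplectic case is formally parallel: one replaces the transpose $X \mapsto X^{\mathrm t}$ (the involution fixed by $\rmO_d$) with the symplectic involution $X \mapsto X^{*}$ (fixed by $\Sp_d$), and the combinatorial manipulations of Zubkov/Donkin — rewriting a trace of a long word via the Cayley--Hamilton identity and the multiplication map $\Lambda^i \otimes \Lambda^j \to \Lambda^{i+j}$ on the exterior powers, all done over $\bbZ$ — go through verbatim because they never used more than the Cayley--Hamilton theorem and the compatibility of the involution with trace. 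So the substance is: (i) set up the symplectic involution as an $A$-polynomial law; (ii) state the first fundamental theorem for $\Sp_d$ over $\bbZ$ (words in $X_i, X_i^{*}$); (iii) run the Zubkov reduction from traces of words to $\sigma_r$-functions (coefficients of characteristic polynomials) of words, citing the orthogonal case for the exact bookkeeping.

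For part (2), the group $\GSp_d$ sits in an exact sequence $1 \to \Sp_d \to \GSp_d \xrightarrow{\mu} \Gm \to 1$, where $\mu$ is the similitude character. I would argue: $A[\GSp_d^m]^{\GSp_d}$ maps to $A[\GSp_d^m]^{\Sp_d}$, and $\GSp_d^m$ is, Zariski-locally on the $\mu$-factors, $\Sp_d^m \times (\Gm)^m$ up to the choice of a square root issue, so that $A[\GSp_d^m]^{\Sp_d} = A[\Sp_d^m]^{\Sp_d}[\mu(g_1)^{\pm 1},\dots,\mu(g_m)^{\pm 1}]$ after trivializing — more precisely, one uses that $g_i \mapsto \mu(g_i)^{-1} g_i$ is \emph{not} valued in $\Sp_d$ over $A$ unless $\mu(g_i)$ is a square, so instead one works with $g_i g_j^{-1}$ or with $\mu(g_i)^{-1} g_i g_j$-type combinations, or simply notes $\mu(g)^{-1}g^{*}g = \mathrm{id}$ forces $\mu(g) = $ (a coefficient of the char.\ poly of $g^{*}g$, which is $\mu(g)^d$), giving $\mu(g_i)$ as an invariant after inverting it. Then part (1) applied to these twisted elements finishes it. The main obstacle is genuinely step (ii)--(iii) of part (1): making the integral Zubkov/Donkin combinatorics for the symplectic involution airtight rather than merely "parallel to the orthogonal case" — in particular handling the exterior-power multiplication maps and the extra relation $\tr(X^{*}) = \tr(X)$ over $\bbZ$, including the prime $2$, without ever dividing. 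If the paper has already done $\rmO_d$ carefully, this should be a controlled adaptation; if not, this is where the real work lies.
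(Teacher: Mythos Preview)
Your proposal misidentifies the main difficulty and does not contain a viable route to the result.

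First, the content of the theorem is \emph{not} the passage from traces of words to coefficients $\sigma_i$ of characteristic polynomials. Zubkov's theorem (cited in the paper as \Cref{SOinv}) already gives the $\sigma_i$-generators over any algebraically closed field, in every characteristic. The substance of \Cref{invZ} is the step from algebraically closed fields to $\bbZ$: one must show that the $\bbZ$-subalgebra $R$ generated by the $\sigma_i$-functions coincides with the full invariant ring $\bbZ[\Sp_{2d}^m]^{\Sp_{2d}}$, not merely that $R \otimes K$ surjects onto $K[\Sp_{2d}^m]^{\Sp_{2d}}$ for every field $K$. Your ``Zubkov reduction'' step (iii) does not address this at all.

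Second, your proposed reduction of the symplectic case to the orthogonal case has no clear mechanism. The map $X \mapsto X^{*}$ identifies $\Sp_{2d}$-conjugation invariants with invariants of certain words, but this is exactly Zubkov's statement over fields, not a reduction of one group to the other. The orthogonal case over $\bbZ$ is not established independently in the paper in a way you could invoke; rather, the paper proves the symplectic case directly and remarks that the same argument handles the orthogonal case.

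The paper's actual proof is a good-filtration argument. For $\bbZ[M_{2d}^m]^{\Sp_{2d}}$ (\Cref{matrixinvariants}) one uses the multigrading by $\bbN^m$: each graded piece $\bbZ[M_{2d}^m]_\alpha$ has a good filtration as an $\Sp_{2d}$-module (via Donkin's results), so $\dim_K K[M_{2d}^m]_\alpha^{\Sp_{2d}}$ is independent of the algebraically closed field $K$, and a rank comparison with Zubkov's field result forces $R_\alpha = \widetilde R_\alpha$. For $\bbZ[\Sp_{2d}^m]^{\Sp_{2d}}$ (\Cref{invZ}) there is no such grading; the new idea is to use Jantzen's truncation functors $O_{\pi_n^m}$ for an exhaustion by finite saturated subsets $\pi_n \subset X^+(T)$, producing an ascending filtration of $\bbZ[\Sp_{2d}^m]$ by finite-rank $\Sp_{2d}$-modules with good filtrations. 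The same characteristic-free dimension count, combined with Zubkov over fields, then gives $R_n = O_{\pi_n^m}(\bbZ[\Sp_{2d}^m])^{\Sp_{2d}}$ for all $n$.

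For part (2), your idea is in the right direction but the execution is muddled. The paper simply uses the surjection $\pi : \Sp_{2d} \times \Gm \to \GSp_{2d}$, which induces an injection of coordinate rings, and observes
\[
\bbZ[\GSp_{2d}^m]^{\GSp_{2d}} \hookrightarrow \bbZ[(\Sp_{2d}\times\Gm)^m]^{\Sp_{2d}\times\Gm} = \bbZ[\Sp_{2d}^m]^{\Sp_{2d}} \otimes \bbZ[\Gm^m],
\]
which is visibly surjective; part (1) then finishes it.
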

The proof of this theorem relies on the work of Donkin \cite{Don}, but involves a new idea which generalizes to arbitrary semisimple groups once we know the result over algebraically closed fields. Notably, the same theorem for the groups $\OO_d$ and $\GO_d$ ($d \geq 1$) is true thanks to the results in \cite{Zubkov}.

In \cite{emerson2023comparison}, the authors show that the notion of determinant laws is equivalent to Lafforgue's pseudocharacters for $\GL_d$ over arbitrary rings. In our case, we show that this bijection restricts to an injection of the space of Lafforgue's pseudocharacters for $\Sp_{2d}$ (and $\GSp_{2d}$) into the space of symplectic determinant laws. Moreover, we obtain a that this injection is a bijection in some cases.   

\begin{maintheorem}[\Cref{lafforguecomp}]\label{introductionlafforguecomp} If $A$ is a reduced commutative $\bbZ[\frac{1}{2}]$-algebra or if it is an arbitrary $\bbQ$-algebra, then there is a bijection $\PC^\Gamma_{\Sp_{2d}}(A)\xrightarrow{\sim}\SpDet^{(A[\Gamma],*)}_{2d}(A)$ between the space of Lafforgue's pseudocharacters for $\Sp_{2d}$ and the space of $2d$-dimensional symplectic determinant laws on $(A[\Gamma],*)$. 
\end{maintheorem}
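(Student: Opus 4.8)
The plan is to realise both functors as points of affine schemes and to identify the sought map with the comparison morphism $\nu$ of \Cref{introductionadequate}. Since the involution on $A[\Gamma]$ is $\gamma\mapsto\gamma^{-1}$, a symplectic representation of $(A[\Gamma],*)$ into $(M_{2d},\mathrm j)$ is the same as a group homomorphism $\Gamma\to\Sp_{2d}$, so $\SpRep^{\square,2d}_{(A[\Gamma],*)}$ is the scheme $\Hom(\Gamma,\Sp_{2d})$; unwinding Lafforgue's definition (in the spirit of \cite{emerson2023comparison}) identifies $\PC^{\Gamma}_{\Sp_{2d}}$ with the functor of points of the GIT quotient $\SpRep^{\square,2d}_{(A[\Gamma],*)}\sslash\Sp_{2d}$, whose coordinate ring is assembled from the invariant algebras $\bbZ[\Sp_{2d}^{m}]^{\Sp_{2d}}$ of \Cref{introductioninvtheorem} (for $\Gamma$ not finitely generated one takes the evident limit over finitely generated subgroups). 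Under these identifications $\nu$ is the desired natural transformation $\PC^{\Gamma}_{\Sp_{2d}}\to\SpDet^{(A[\Gamma],*)}_{2d}$. Concretely it sends $\Theta$ to $(D,P)$, where $D$ is the $2d$-dimensional determinant law attached via \cite{emerson2023comparison} to the $\GL_{2d}$-pseudocharacter obtained by restriction along $\Sp_{2d}\hookrightarrow\GL_{2d}$ --- it is $*$-invariant because $g^{-1}=J^{-1}g^{t}J$ is $\GL_{2d}$-conjugate to $g^{t}$, hence has the same characteristic polynomial, for every $g\in\Sp_{2d}$, so $D$ and $D\circ *$ agree on all products of group elements and therefore coincide as polynomial laws --- and $P$ comes from the Pfaffian: on the generic symmetric element $\xi=\sum_{i}a_{i}(\gamma_{i}+\gamma_{i}^{-1})$ the quantity $\Pf\bigl(J\cdot\sum_{i}a_{i}(g_{i}+g_{i}^{-1})\bigr)$ is homogeneous of degree $d$ in the $a_{i}$ (its argument being skew-symmetric) with coefficients that are $\Sp_{2d}$-conjugation-invariant regular functions of $(g_{1},\dots,g_{m})$ (because $Jh=(h^{-1})^{t}J$ and $\Pf(M^{t}SM)=\det(M)\Pf(S)$ with $\det h=1$ for $h\in\Sp_{2d}$), to which $\Theta$ assigns values; that the resulting $(D,P)$ is a genuine symplectic determinant law --- in particular $P^{2}=D|_{R^{+}}$ from $\Pf(M)^{2}=\det M$ and $\det J=1$, $P(1)=\Pf(J)=1$, and $\CH(P)\subseteq\ker D$ --- is again part of \Cref{introductionadequate}, since $\nu$ lands in $\SpDet$.

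By \Cref{introductionadequate} the coordinate-ring map $\nu^{\ast}$ is power-surjective, has kernel consisting of $\bbZ$-torsion nilpotent elements, and restricts to an isomorphism over a dense open subscheme of its source. The $\bbQ$-algebra case is then essentially formal: $\nu^{\ast}\otimes_{\bbZ}\bbQ$ is injective (the $\bbZ$-torsion kernel dies) and surjective, because over a field of characteristic zero the invariant algebras $\bbQ[\Sp_{2d}^{m}]^{\Sp_{2d}}$ are genuinely generated by coefficients of characteristic polynomials of words --- this is the classical invariant theory (Procesi) that also underlies \Cref{introconverseCH} --- and all such coefficients already lie in the image of $\nu^{\ast}$ (they are part of the datum $D$). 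Hence $\nu^{\ast}\otimes_{\bbZ}\bbQ$ is an isomorphism and $\PC^{\Gamma}_{\Sp_{2d}}(A)\xrightarrow{\sim}\SpDet^{(A[\Gamma],*)}_{2d}(A)$ for every $\bbQ$-algebra $A$. Alternatively, and perhaps more transparently, one proves surjectivity directly: a symplectic determinant law over a $\bbQ$-algebra $A$ presents a symplectic Cayley-Hamilton quotient of $(A[\Gamma],*)$, which by \Cref{introconverseCH} embeds into some $(M_{2d}(B),\mathrm j)$; the associated symplectic representation $\Gamma\to\Sp_{2d}(B)$ has a Lafforgue pseudocharacter whose values on characteristic-polynomial invariants are computed by $D$ and hence lie in $A$, so by Procesi's generation it descends to a pseudocharacter over $A$ inducing $(D,P)$.

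For a reduced $\bbZ[\tfrac{1}{2}]$-algebra $A$ one argues by reduction to algebraically closed fields. Embed $A\hookrightarrow\prod_{\mathfrak p}\overline{\kappa(\mathfrak p)}$, the product over the minimal primes, $\overline{\kappa(\mathfrak p)}$ an algebraic closure of the residue field. Given $(D,P)\in\SpDet^{(A[\Gamma],*)}_{2d}(A)$, base-change to each $\overline{\kappa(\mathfrak p)}$; by \Cref{introductionreconsalgcl}, applied to the underlying weak symplectic determinant law, $(D,P)_{\overline{\kappa(\mathfrak p)}}$ is induced by a semisimple symplectic representation $\Gamma\to\Sp_{2d}(\overline{\kappa(\mathfrak p)})$, unique up to $\Sp_{2d}$-conjugacy, and thus gives a well-defined $\overline{\kappa(\mathfrak p)}$-point $\Theta_{\mathfrak p}$ of $\PC^{\Gamma}_{\Sp_{2d}}$ over $(D,P)_{\overline{\kappa(\mathfrak p)}}$ (the representation is canonical only up to conjugacy, but the pseudocharacter is conjugation-invariant). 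Their product $\Theta=(\Theta_{\mathfrak p})_{\mathfrak p}$ is a pseudocharacter over $\prod_{\mathfrak p}\overline{\kappa(\mathfrak p)}$ lying over $(D,P)$, and it remains to see that $\Theta$ descends to $A$. On the subalgebra generated by coefficients of characteristic polynomials of words $\Theta$ is $A$-valued, since there it is computed by $D$; by \Cref{introductioninvtheorem}(1) this subalgebra is essentially all of $\bbZ[\Sp_{2d}^{m}]^{\Sp_{2d}}$, the discrepancy being governed by Frobenius (any invariant has a $p$-th power inside it in characteristic $p$), and since $A$ is reduced this forces $\Theta$ to be $A$-valued everywhere, hence to descend. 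Injectivity in this case follows from the injection established in general (cf. the discussion preceding the theorem) together with the same reducedness argument, two lifts of a given $(D,P)$ differing only by nilpotents.

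The crux, and the step I expect to be most delicate, is this descent in the reduced case: passing from a pseudocharacter over $\prod_{\mathfrak p}\overline{\kappa(\mathfrak p)}$ (or over the ring $B$ furnished by \Cref{introconverseCH}) down to $A$ --- equivalently, promoting the ``essential generation'' of \Cref{introductioninvtheorem} to genuine control over ring homomorphisms into $A$. This is precisely where the hypotheses are used and cannot be dropped: over a $\bbQ$-algebra the generation is honest by classical invariant theory, and over a reduced $\bbZ[\tfrac{1}{2}]$-algebra the only obstruction to honesty is a Frobenius twist, invisible to a reduced test ring, whereas for a general $\bbZ[\tfrac{1}{2}]$-algebra neither remedy is available and one should expect only the injection. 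One must also keep track of the Lafforgue compatibility axioms when passing through the matrix model and handle a non-finitely generated $\Gamma$ by a colimit, but these points are routine.
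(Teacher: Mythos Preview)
Your overall strategy is right and matches the paper's, but two points of your framing are off and one step is more robust than you realise.

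\textbf{The identification of $\PC$ with the GIT quotient.} You assert that unwinding Lafforgue's definition identifies $\PC^{\Gamma}_{\Sp_{2d}}$ with the functor of points of $\SpRep^{\square,2d}_{(A[\Gamma],*)}\sslash\Sp_{2d}$, and then take the comparison map to be $\nu$. This identification is only known over $\bbQ$ (this is \cite[Proposition 2.11(i)]{emerson2023comparison}); over $\bbZ$ it is again only an adequate homeomorphism, and only for finitely generated $\Gamma$. So you cannot literally write the comparison map as $\nu$ in general, and you cannot appeal to \Cref{introductionadequate} (which also assumes $A$ Noetherian and $R$ finitely generated) outside the $\bbQ$-case without the colimit argument you mention. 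In practice this does not damage your proof: your explicit formula for the map via $\det$ and $\Pf$ is exactly the paper's construction (\Cref{LafChenMap}), which makes no reference to GIT, and your $\bbQ$-algebra argument only uses the identification where it is valid. But the framing should be separated: construct the map directly, and only invoke the GIT comparison over $\bbQ$ where it becomes \Cref{iso0}.

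\textbf{``Essentially generated'' and the descent step.} You read \Cref{introductioninvtheorem} as giving generation only up to a Frobenius twist, and then argue that reducedness of $A$ kills the obstruction. This is a misreading: the actual result (\Cref{invZ}) gives \emph{honest} generation of $\bbZ[\Sp_{2d}^m]^{\Sp_{2d}}$ by coefficients of characteristic polynomials of words in the generic matrices and their symplectic transposes---the word ``essentially'' in the introduction refers to the need to allow words and transposes, not to a power-surjectivity caveat. Consequently your descent is simpler than you fear: once you know $\Theta$ is $A$-valued on these generators (because they are computed by $D$), it is $A$-valued everywhere, full stop. No Frobenius gymnastics are needed, and no extra use of reducedness at this point.

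\textbf{Comparison with the paper's reduced-case argument.} The paper organises the reduced case slightly differently and a bit more cleanly. Rather than building $\Sp_{2d}$-pseudocharacters over each $\overline{\kappa(\mathfrak p)}$ and then descending the assembled object to $A$, it first uses the Emerson--Morel bijection to produce a $\GL_{2d}$-pseudocharacter $\Theta$ over $A$ directly from $D$, and then checks the single condition that each $\Theta_m$ kills $\ker\bigl(\bbZ[\GL_{2d}^m]^{\GL_{2d}}\twoheadrightarrow\bbZ[\Sp_{2d}^m]^{\Sp_{2d}}\bigr)$. That condition is visibly stable under injective ring maps, so one reduces to algebraically closed fields and invokes \Cref{reconsalgcl}. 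This avoids any descent step entirely. Your route is correct too, but the paper's reformulation makes transparent why reducedness enters exactly once (for the embedding $A\hookrightarrow\prod_{\mathfrak p}\overline{\kappa(\mathfrak p)}$) and why no further hypothesis on $\Gamma$ or $A$ is needed.
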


Even though Lafforgue's theory of $G$-pseudocharacters works in arbitrary characteristic, our theory of symplectic determinant laws is more explicit, making it more amenable to computations. For instance, the structure of Cayley-Hamilton algebras for $\GL_d$ and GMAs had been used by Bellaïche and Chenevier (\cite{BC}) to produce extensions of Galois representations, and by Wake and Wang-Erickson (\cite{WWE18}) to study the geometry of the eigencurve. Since our theory admits analogs of these constructions, unlike Lafforgue's theory, we expect that our results can be used in a similar way for the symplectic similitude groups $\GSp_{2d}$.

\noindent\textbf{Organization of the paper.} In \Cref{secsymplrep}, we recall basic results on symplectic representations of algebras with involution. In \Cref{secdefsympldetlaw}, we give the definition and some basic properties of symplectic determinant laws. \Cref{secsympldetlawfields} offers a characterization of symplectic determinant laws over fields which leads to \Cref{introductionreconsalgcl}. The notion of symplectic Cayley-Hamilton algebras is introduced in  \Cref{subsecsymplCHAlg}. The subsequent
\Cref{CH0} is dedicated to the proof of \Cref{introconverseCH}. In 
\Cref{SpDetHenselian}, we study residually multiplicity free symplectic Cayley-Hamilton algebras over Henselian local rings using the theory of symplectic GMAs and prove \Cref{introductionhensCH}. 
In \Cref{GITcomp}, we compare the space of symplectic determinant laws with the GIT quotient of the space of symplectic representation by the conjugate action of the symplectic group. This analysis is consolidated in \Cref{introductionadequate}, which requires results from \Cref{modulisection} and \Cref{CH0}.
\Cref{secinvthy} being self-contained, is dedicated to the proof of \Cref{introductioninvtheorem}. Finally, the comparison between Lafforgue's pseudocharacters for $\Sp_{2d}$ and $\GSp_{2d}$ with symplectic determinant laws, as explicated in \Cref{introductionlafforguecomp}, is done in \Cref{secLafforgue}.

\noindent\textbf{Acknowledgements.} The first author would like to express his heartfelt gratitude to Stefano Morra and James Newton for their invaluable guidance and constant support throughout the entirety of this project. Their insights and constant encouragement greatly shaped the direction of this work. He would also like to thank Claudio Procesi for inviting him to the Sapienza University of Rome in March 2022, for the inspiring discussions they had, and for providing him with the key idea behind the proof of \Cref{reconsalgcl}. The second author wants to thank his advisor Gebhard Böckle for continuous support and advice during the writing of this paper. He would also like to thank Ariel Weiss and Stephen Donkin for helpful conversations about classical invariant theory and good filtrations. The debt that this paper owes to the contributions of Joël Bellaïche, Gaëtan Chenevier, Claudio Procesi, and Carl Wang-Erickson is deeply acknowledged and evident throughout the text.

\noindent\textbf{Notation.} Throughout this paper, $A$ will denote a commutative ring with $2\in A^\times$ unless stated otherwise, and $d$ will denote an integer $\ge 1$. We use the following notation:

\begin{enum}
    \item 
    $J \colonequals \left(\begin{array}{cc}
        0 & \id_d \\
        -\id_d & 0
    \end{array}\right) \in M_{2d}(A)$.
    \item Transposition of matrices in $M_d(A)$ is $(-)^{\top}$. It is also called the \emph{orthogonal standard involution} of $M_d(A)$.
    \item If $A=(a_{i,j})_{1\le i,j \le 2d}\in M_{2d}(A)$ is an alternating matrix (i.e. $A^{\top}=-A$), we define its \emph{Pfaffian} by the formula
    $$ \Pf(A)\colonequals\frac{1}{2^d d!}\sum_{\sigma\in \mathfrak{S}_{2d}}\sgn(\sigma)\prod_{i=1}^d a_{\sigma(2i-1),\sigma(2i)}, $$
    where $\mathfrak{S}_{2d}$ is the symmetric group on $2d$ elements. The same definition applies, when $A$ is a quasi-coherent commutative $\calO_S$-algebra on a scheme $S$.
    \item The \emph{symplectic standard involution} $(-)^{\mathrm{j}} \colon M_{2d}(A) \to M_{2d}(A)$ is defined by $M^{\mathrm{j}} \colonequals  JM^{\top}J^{-1}$.
    \item We define the following linear algebraic groups:
    \begin{itemize}
        \item[-] The \emph{symplectic group} $\Sp_{2d}(A) \colonequals  \{M \in \GL_{2d}(A) \mid M^{\mathrm{j}}M = 1\}$.
        \item[-] The \emph{general symplectic group} $\GSp_{2d}(A) \colonequals  \{M \in \GL_{2d}(A)\ | \ M^{\mathrm{j}}M = \lambda(M)\cdot \id, \ \lambda(M)\in A^\times\}$.
        \item[-] The \emph{projective symplectic group} $\PSp_{2d}(A)\colonequals \Sp_{2d}(A)/\{\pm \id\}$.
    \end{itemize}
    \item If $(R,*)$ is an involutive ring, let $R^+ \colonequals  \{x \in R \mid x^* = x\}$ and $R^- \colonequals  \{x \in R \mid x^* = -x\}$. We say, that the elements of $R^+$ are \emph{symmetric} and the elements of $R^-$ are \emph{antisymmetric}.
    \item The \emph{swap involution} is defined as
    $$\mathrm{swap} \colon M_d(A) \times M_d(A) \to M_d(A) \times M_d(A), ~ (a,b) \mapsto (b^{\top}, a^{\top}).$$
    \item If $S$ is a set, we write $(A\langle S\rangle,*)$ for the free (non-commutative) $A$- algebra with involution generated by the symbols $x_s$ and $x_s^*$ for $s\in S$.
    \item $\CAlg_A$ is the category of commutative $A$-algebras.
    \item $\Grp$ is the category of groups.
    \item $\Gpd$ is the category of groupoids.
    \item $\Sch$ is the category of schemes.
    \item If $\calC$ is a category and $X$ is an object of $\calC$, we write $\calC_{/X}$ for the slice category over $X$.
    \item $\Sym_A(M)\colonequals \oplus_{k\ge 0} \Sym^k_A(M)$ is the symmetric $A$-algebra of an $A$-module $M$, on which we denote the product by $\odot$.
    \item If $G$ is an affine $A$-group scheme and $M$ is a rational $G$-module, as defined in \cite[\S I.2.7]{Jantzen2003}, we define $$M^G := \{x \in M \mid \forall B \in \CAlg_A : \forall x \in M \otimes_A B : \Delta_M(x) = x \otimes 1\}$$ 
    where $\Delta_M : M \to M \otimes_A A[G]$ is the coaction map (see \cite[§I.2.10]{Jantzen2003}).
    \item If $G$ is an affine $A$-group scheme acting on an affine $\Spec(A)$-scheme $X=\Spec(B)$, we will write $X\sslash G= \Spec(B^G)$ for the $\emph{ GIT quotient}$ of $X$ by $G$.
\end{enum}

\section{Symplectic representations and their moduli spaces}\label{secsymplrep}
\subsection{Symplectic representations}\label{subsymprep} Let $(R,*)$ be an $A$-algebra with involution.
\begin{definition}\label{defsymplrep} Let $B$ a commutative $A$-algebra. A \emph{symplectic representation} of $(R,*)$ is a homomorphism of involutive $A$-algebras $(R,*) \to (M_{2d}(B), \mathrm{j})$. We denote by $ \SpRep_{(R,*)}^{\square,2d}$ the functor 
\begin{align*}
    \SpRep_{(R,*)}^{\square,2d}\colon \CAlg_A &\rightarrow \Set
     \\ B &\mapsto \{\text{symplectic representations }(R,*) \to (M_{2d}(B), \mathrm{j})\}
\end{align*}
of symplectic representations of $(R,*)$.
\end{definition}

We are particularly interested in the case where $R$ is a group algebra over $A$ of a group $\Gamma$. In fact, a representation $\Gamma \to \Sp_{2d}(B)$ can be identified with a symplectic representations $(A[\Gamma], *) \to (M_{2d}(B), \mathrm j)$, where $\gamma^* = \gamma^{-1}$ for $\gamma\in \Gamma$. More generally, for a fixed character $\lambda : \Gamma \to A^{\times}$, a representation $\Gamma \to \GSp_{2d}(B)$ with similitude character $\lambda$ can be identified with a symplectic representation $(A[\Gamma], *) \to (M_{2d}(B), \mathrm j)$, where $\gamma^* = \lambda(\gamma)\gamma^{-1}$ for $\gamma\in \Gamma$.

\begin{lemma}\label{spreprepr}
The functor $\SpRep_{(R,*)}^{\square,2d}$ is representable by a commutative $A$-algebra $A[\SpRep_{(R,*)}^{\square,2d}]$. We let $\rho ^{u}\colon  (R,*)\rightarrow (M_{2d}(A[\SpRep_{(R,*)}^{\square,2d}]),\mathrm{j})$ be the universal representation. If $R$ is a finitely generated $A$-algebra, then $A[\SpRep_{(R,*)}^{\square,2d}]$ is a finitely generated $A$-algebra.
\end{lemma}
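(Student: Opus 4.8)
**Proof proposal for Lemma 1.4 (representability of $\SpRep_{(R,*)}^{\square,2d}$):**

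The plan is to present $\SpRep_{(R,*)}^{\square,2d}$ explicitly as the spectrum of a quotient of a polynomial ring and then verify the universal property. First I would reduce to the case of a free involutive algebra. Write $(R,*)$ as a quotient of a free $A$-algebra with involution $(A\langle S \rangle, *)$ (using the notation introduced in the Notation section), say $R = A\langle S\rangle / I$ where $I$ is a $*$-stable two-sided ideal. A symplectic representation of $(R,*)$ into $(M_{2d}(B),\mathrm j)$ is the same as a symplectic representation $\varphi$ of $(A\langle S\rangle, *)$ such that $\varphi(I) = 0$, and a symplectic representation of the free algebra is in turn just a set-theoretic choice of matrices $\varphi(x_s) \in M_{2d}(B)$ for $s \in S$, with $\varphi(x_s^*)$ then forced to equal $\varphi(x_s)^{\mathrm j}$. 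So the functor $\SpRep_{(A\langle S\rangle, *)}^{\square,2d}$ is just $B \mapsto M_{2d}(B)^S$, which is represented by the polynomial ring $A[Y_{s,i,j} : s \in S,\ 1 \le i,j \le 2d]$; this is the $S$-indexed tensor power of the coordinate ring of $\mathbb A^{4d^2}$.

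Next I would cut out the closed subfunctor corresponding to the relations. The universal collection of matrices $(Y_s)_{s\in S}$ over the polynomial ring $A[Y_{s,i,j}]$ extends uniquely to an $A$-algebra homomorphism $\widetilde\varphi \colon A\langle S\rangle \to M_{2d}(A[Y_{s,i,j}])$ that is compatible with involutions (sending $x_s^* \mapsto Y_s^{\mathrm j}$). Let $\frak a$ be the ideal of $A[Y_{s,i,j}]$ generated by all matrix entries of $\widetilde\varphi(r)$ as $r$ ranges over (a generating set of) $I$. Then for any commutative $A$-algebra $B$, a homomorphism $A[Y_{s,i,j}] \to B$ factors through $A[Y_{s,i,j}]/\frak a$ if and only if the induced symplectic representation of $A\langle S\rangle$ kills $I$, i.e. if and only if it descends to $(R,*)$. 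Hence $A[\SpRep_{(R,*)}^{\square,2d}] := A[Y_{s,i,j} : s\in S]/\frak a$ represents the functor, with the universal representation $\rho^u$ obtained from $\widetilde\varphi$ by composing with the projection $M_{2d}(A[Y_{s,i,j}]) \to M_{2d}(A[\SpRep_{(R,*)}^{\square,2d}])$ and descending along $R = A\langle S\rangle/I$.

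For the finiteness claim: if $R$ is a finitely generated $A$-algebra, choose $S$ finite. Then the polynomial ring $A[Y_{s,i,j}]$ has finitely many generators ($4d^2 \cdot |S|$ of them), and $A[\SpRep_{(R,*)}^{\square,2d}]$ is a quotient of it, hence is a finitely generated $A$-algebra. (One does not even need $I$ to be finitely generated as an ideal for this conclusion, only that $S$ is finite.) I expect the main point requiring care — though it is routine rather than deep — is checking that imposing the involution-compatibility is automatic on the free algebra (so that no extra equations beyond the $Y_{s,i,j}$ are needed to parametrize representations of the free involutive algebra, because $x_s^*$ must go to $Y_s^{\mathrm j}$ and this already defines a $*$-homomorphism out of $A\langle S\rangle$), and that the ideal $\frak a$ is well-defined independently of the chosen generators of $I$, which follows because $\widetilde\varphi$ is an $A\langle S\rangle$-bimodule map into $M_{2d}(A[Y_{s,i,j}])$ so $\widetilde\varphi(I)$ spans the same submodule regardless of presentation.
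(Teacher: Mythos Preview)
Your proof is correct and follows essentially the same approach as the paper: reduce to the free involutive algebra $A\langle S\rangle$, where the representing ring is the polynomial algebra $A[M_{2d}^S]$, and then quotient by the ideal coming from the entries of $\widetilde\varphi(I)$. The paper phrases the second step as observing that the two-sided ideal of $M_{2d}(A[M_{2d}^S])$ generated by $\rho^u(I)$ has the form $M_{2d}(J)$, but your $\frak a$ is exactly this $J$.
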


\begin{proof}
If $R=A\langle S\rangle=A\langle x_s,x_s^* \ | \ s\in S\rangle$ is the free $A$-algebra with involution on a set $S$, then clearly $A[\SpRep_{(R,*)}^{\square,2d}]$ is equal to the polynomial algebra $A[M_{2d}^S]=A[\bbX_{h,k}^{(s)}\ | \ 1\le h,k\le 2d,\ s\in S]$ and $\rho^{u}(x_s)=\bbX^{(s)}=(\bbX_{h,k}^{(s)})_{h,k}$.

For a general $A$-algebra with involution $R$, there is a presentation $R=A\langle S\rangle/I$ for some involution-stable two-sided ideal $I$ of $A\langle S\rangle$. Then $\rho^{u}(I)$ generates a two-sided ideal in $M_{2d}(A[M_{2d}^S])$ which, as any two-sided ideal in a matrix algebra, is of the form $M_{2d}(J)$ with $J$ an ideal of $A[M_{2d}^S]$. Thus, the universal map for $R$ is given by
\begin{center}
    \begin{tikzcd}
A\langle S\rangle \arrow[r] \arrow[d] &  M_{2d}(A[M_{2d}^S]) \arrow[d]
\\ R  \arrow[r, "\rho^{u}"] & M_{2d}(A[M_{2d}^S]/J)
\end{tikzcd}
\end{center}
By the universal property, $M_{2d}(A[M_{2d}^S]/J)$ is independent of the presentation of $R$.
\end{proof}

Consider the action of $\Sp_{2d,A}$ on the matrices by conjugation, given for every commutative $A$-algebra $B$ by
\begin{align*}
    \Sp_{2d}(B)\times M_{2d}(B)&\to M_{2d}(B)
    \\ (g,M)&\mapsto i_g(M)\colonequals g M g^{-1}.
\end{align*}
This induces an action of $\Sp_{2d,A}$ on $\SpRep^{\square,2d}_{(R,*)}$ such that for a commutative $A$-algebra $B$, we have
\begin{align*}
    \Sp_{2d}(B)\times \SpRep_{(R,*)}^{\square,2d}(B)&\rightarrow \SpRep_{(R,*)}^{\square,2d}(B)
    \\ (g, u)&\mapsto i_g\circ u
\end{align*}
The action of $\Sp_{2d}(B)$ on $\SpRep_{(R,*)}^{\square,2d}$ is by scheme-automorphisms over $B$. In particular, for every $g \in \Sp_{2d}(B)$ we get an induced $B$-algebra automorphism $\widehat g \colon  B[\SpRep^{\square,2d}_{(R,*)}] \to C[\SpRep^{\square,2d}_{(R,*)}]$.

Using this action, we can  equip $M_{2d}(A[\SpRep_{(R,*)}^{\square,2d}])$ with the structure of a $\Sp_{2d,A}$-module by setting the action for each commutative $A$-algebra $B$ to be
\begin{align*}
    \Sp_{2d}(B) \times M_{2d}(B[\SpRep_{(R,*)}^{\square,2d}])&\rightarrow M_{2d}(B[\SpRep_{(R,*)}^{\square,2d}])= B[\SpRep_{(R,*)}^{\square,2d}] \otimes_B M_{2d}(B)
    \\ (g,M) &\mapsto ({\widehat{g}}^{-1}\otimes i_g)(M).
\end{align*}
Unraveling the definition of this action, we get the following lemma.
\begin{lemma}\label{imageofR1}
The image of $R$ by universal representation
$$ \rho^{u} \colon (R,*)\longrightarrow (M_{2d}(A[\SpRep_{(R,*)}),\mathrm{j})$$
lies inside $M_{2d}(A[\SpRep_{(R,*)}^{\square,2d}])^{\Sp_{2d}}$.
\end{lemma}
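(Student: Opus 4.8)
The plan is to unravel the two $\Sp_{2d,A}$-actions in play — the one on $\SpRep^{\square,2d}_{(R,*)}$ and the induced one on $M_{2d}(A[\SpRep^{\square,2d}_{(R,*)}])$ — and reduce the assertion to the single identity
$$ \widehat g_*\big(\rho^{u}(r)\big) \;=\; g\,\rho^{u}(r)\,g^{-1} \qquad\big(B\in\CAlg_A,\ g\in\Sp_{2d}(B),\ r\in R\big), $$
where $C \colonequals A[\SpRep^{\square,2d}_{(R,*)}]$, $C_B\colonequals C\otimes_A B$, and $\widehat g_*\colon M_{2d}(C_B)\to M_{2d}(C_B)$ denotes the entrywise application of the $B$-algebra automorphism $\widehat g\colon C_B\to C_B$ attached to $g$. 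Everything else is a two-line cancellation.

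To prove the identity I would invoke the universal property of $\SpRep^{\square,2d}_{(R,*)}$. Base-changing $\rho^{u}$ along $C\to C_B$ gives the point $\rho^{u}_{C_B}\in\SpRep^{\square,2d}_{(R,*)}(C_B)$ classified by $\id_{C_B}$. On one hand, by the definition of the action on points, $\sigma_g(\rho^{u}_{C_B}) = i_g\circ\rho^{u}_{C_B}$, which is the representation $r\mapsto g\,\rho^{u}(r)\,g^{-1}$. On the other hand, since the comorphism of $\sigma_g$ is $\widehat g$, the point $\sigma_g(\rho^{u}_{C_B})$ is classified by $\widehat g\colon C_B\to C_B$, hence is the representation $r\mapsto \widehat g_*(\rho^{u}(r))$. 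Comparing the two gives the identity. (Alternatively one can check it on algebra generators: for $R=A\langle S\rangle$ free, $\rho^{u}(x_s)=\bbX^{(s)}$ and $\widehat g_*(\bbX^{(s)})=g\bbX^{(s)}g^{-1}$ straight from the definition of the conjugation action on matrix coordinates, and the general case follows because a presentation $R=A\langle S\rangle/I$ makes $\rho^{u}_R$ factor through $\rho^{u}_{A\langle S\rangle}$ compatibly with the actions.)

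Granting the identity, I would conclude as follows. By definition the action of $g\in\Sp_{2d}(B)$ on $M\in M_{2d}(C_B)=C_B\otimes_B M_{2d}(B)$ is $(\widehat g^{-1}\otimes i_g)(M)=\widehat g^{-1}_*\big(gMg^{-1}\big)$ (the entrywise map $\widehat g^{-1}_*$ commutes with conjugation by $g$ since $g$ has entries in $B$ and $\widehat g^{-1}$ is a $B$-algebra map). Therefore
$$ g\cdot\rho^{u}(r) \;=\; \widehat g^{-1}_*\big(g\,\rho^{u}(r)\,g^{-1}\big) \;=\; \widehat g^{-1}_*\big(\widehat g_*(\rho^{u}(r))\big) \;=\; \rho^{u}(r). $$
Since this holds for every $B$ and every $g\in\Sp_{2d}(B)$ — equivalently, applying it to $B=A[\Sp_{2d}]$ and the tautological point $g$ and reading off the coaction — the element $\rho^{u}(r)$ lies in $M_{2d}(C)^{\Sp_{2d}}$ for every $r\in R$. (One could also reduce to $r$ in a generating set of $R$, since $\rho^{u}$ is an $A$-algebra map and $M_{2d}(C)^{\Sp_{2d}}$ is an $A$-subalgebra of $M_{2d}(C)$, but the computation above applies uniformly.)

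The only real content — and the main place to be careful — is the key identity: keeping the direction of the comorphism $\widehat g$ straight relative to the action $u\mapsto i_g\circ u$ on points, and the bookkeeping that allows one to apply $\widehat g$ "entrywise" to $\rho^{u}(r)$ while $g$ itself has entries in $B$, so that $\widehat g^{-1}_*$ and conjugation by $g$ genuinely commute. Once that is pinned down, the lemma is immediate.
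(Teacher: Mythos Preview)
Your proof is correct and is precisely the ``unraveling of the definition'' that the paper alludes to: the paper does not spell out any argument beyond that phrase, and your key identity $\widehat g_*(\rho^{u}(r)) = g\,\rho^{u}(r)\,g^{-1}$ together with the cancellation against $\widehat g^{-1}\otimes i_g$ is exactly the content being unraveled. Your care about the direction of the comorphism and the passage from pointwise invariance to the coaction condition (via $B=A[\Sp_{2d}]$) is all sound.
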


\subsection{Azumaya algebras with involution}\label{secazumaya} In this subsection, we review a few facts about Azumaya algebras with involution, which will be useful in subsequent discussions.  Recall that an \emph{Azumaya algebra} over a scheme $Y$ is a quasi-coherent unital $\calO_Y$-algebra $\calA$ such that there is an étale covering $\{f_i \colon Y_i \to Y\}_{i \in I}$ of $Y$ so that all $i \in I$, the $\calO_{Y_i}$-algebra $f_i^* \calA$ is isomorphic to a matrix algebra of positive rank over $\calO_{Y_i}$. We see the rank of $\calA$ as a locally constant function $\rank(\calA) \colon |Y| \to \bbZ_{\geq 1}$, where for $y \in Y$, we define $\rank(\calA)(y) \colonequals  \dim_{\kappa(y)}(\calA_y)$.
\begin{definition}
    Let $Y$ be a scheme and let $\calA$ be an Azumaya algebra over $Y$.
    We say, that an $\calO_Y$-linear involution $\sigma \colon \calA \to \calA$ is an \emph{involution of the first kind}.
    If $y \in Y$ and $2 \in \kappa(y)^{\times}$, we say that $\calA$ is \emph{symplectic} (\emph{orthogonal}) at $y$, if the involution on $\calA_y$ is induced by a symplectic (orthogonal) bilinear form.
\end{definition}
From the theory of central simple algebras over fields, it is known that $\calA_y$ is either symplectic or orthogonal when $2 \in \kappa(y)^{\times}$.
In characteristic $2$ every alternating bilinear form is also symmetric, and we won't apply the terminology in these cases.

Let $\calA$ be an Azumaya algebra of constant rank $d^2$ with an involution $\sigma$ over a scheme $S$ with $2 \in \Gamma(Y, \calO_Y)^{\times}$.
Assume for the moment that $Y = \Spec(A)$ is affine. Then $\calA$ is associated to an $A$-algebra $R$. Let $B$ be a faithfully flat $A$-algebra, such that we have a splitting $\alpha\colon B \otimes_A R\xrightarrow{\sim} M_{d}(B)$ of $R$ over $B$ and let $\widetilde{\sigma}=\alpha(1\otimes \sigma)\alpha^{-1}$ be the induced involution on $M_{d}(B)$. The map $x\mapsto\widetilde{\sigma}(x^{\top})$ is an automorphism of $M_d(B)$. We can choose $B$ so that $\widetilde{\sigma}(x)=u(x^{\top})u^{-1}$ for some suitably chosen $u\in \GL_d(B)$ and all $x \in M_d(B)$. The fact that $\widetilde{\sigma}^2=\id$ implies that $u^{\top} = \epsilon u$ for some $\epsilon\in \mu_2(B)$. By \cite[III. Lemma 8.1.1]{knus}, one can choose $B$ so that $\epsilon\in \mu_2(A)$ and this element is independent of the choice of $B$. By descent, we obtain for general schemes $S$ with $2 \in \Gamma(S, \calO_S)^{\times}$ an element $\epsilon \in \mu_2(S)$. We call it the \emph{type} of the involution $\sigma$ on $\calA$.

By our assumption that $2 \in \Gamma(Y, \calO_Y)^{\times}$, $\mu_2$ is a constant group scheme over $Y$ and $\mu_2(Y)$ identifies with the set of locally constant maps $|Y| \to \{\pm 1\}$. In particular, the type of an Azumaya algebra with involution is Zariski-locally constant.
An involution of constant type $1$ is called an \emph{orthogonal involution}, and an involution of constant type $-1$ is called a \emph{symplectic involution}. 
Equivalently an orthogonal (symplectic) involution on $\calA$ is an involution such that for all $y \in Y$, the involution is orthogonal (symplectic) on $\calA_y$.
\begin{lemma}\label{standardformoverschemes} Let $(\mathcal{A}, \sigma)$ be an Azumaya algebra of constant rank $d^2$ over $A$ with involution of the first kind. Then étale locally over $A$, $(\mathcal{A},\sigma)$ has one of the following two forms.
\begin{enum}
    \item $(M_d(A), \mathrm j)$, if $\sigma$ is symplectic.
    \item $(M_d(A), \top)$, if $\sigma$ is orthogonal.
\end{enum}
\end{lemma}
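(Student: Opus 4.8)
The plan is to reduce the statement to the local structure theory of Azumaya algebras with involution over a field, which is classical, and then to lift this splitting étale-locally. Here is how I would organize it.

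\textbf{Step 1: Splitting the Azumaya algebra étale-locally.} Since $\calA$ is an Azumaya algebra of constant rank $d^2$ over $A$, by definition there is an étale covering $\{A \to A_i\}$ such that each $A_i \otimes_A \calA \cong M_d(A_i)$ as $A_i$-algebras. Replacing $A$ by such an $A_i$, we may assume $\calA = M_d(A)$ as an algebra, and $\sigma$ is some $A$-linear involution of $M_d(A)$. By the Skolem--Noether theorem (in the Azumaya form, possibly after a further étale extension to trivialize a relevant $\mu_2$- or $\Gm$-torsor), the composite $x \mapsto \sigma(x^{\top})$ is an automorphism of $M_d(A)$, hence inner, so $\sigma(x) = u x^{\top} u^{-1}$ for some $u \in \GL_d(A)$ after a suitable étale localization (this is exactly the descent argument recalled just before the lemma, using \cite[III. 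Lemma 8.1.1]{knus}). The condition $\sigma^2 = \id$ forces $u^{\top} = \epsilon u$ with $\epsilon \in \mu_2(A)$, and $\epsilon$ is the type of $\sigma$: it is $-1$ in the symplectic case and $+1$ in the orthogonal case by hypothesis.

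\textbf{Step 2: Normalizing the form $u$.} Now I would bring $u$ into standard form. In the orthogonal case ($u^{\top} = u$), the element $u$ defines a nondegenerate symmetric bilinear form on $A^d$; since $2 \in A^{\times}$, such a form is, étale-locally on $A$, isometric to the standard one, i.e. there is $g \in \GL_d(A)$ (after a further étale extension) with $g^{\top} u g = \id_d$. Conjugating the splitting isomorphism by $g$ replaces $\sigma$ by $x \mapsto g^{-1} u^{-1} (g^{-1})^{\top} \cdot g^{\top} x^{\top} g \cdot (\dots)$, which unwinds to $x \mapsto x^{\top}$, i.e. $(M_d(A), \top)$. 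In the symplectic case ($u^{\top} = -u$), $u$ is an alternating nondegenerate matrix; since $\Pf(u) \in A^{\times}$ (this is where nondegeneracy and constant rank enter), the standard structure theory of alternating forms over a commutative ring in which $2$ is invertible shows that, étale-locally, $u$ is equivalent to $J$, i.e. there is $g \in \GL_d(A)$ with $g^{\top} u g = J$ (note $d$ is necessarily even here). Conjugating by $g$ as before turns $\sigma$ into $x \mapsto J x^{\top} J^{-1} = x^{\mathrm{j}}$, i.e. $(M_d(A), \mathrm j)$.

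\textbf{Step 3: Assembling.} Combining Steps 1 and 2, after a finite chain of étale localizations of $A$ we obtain an isomorphism of involutive $A$-algebras $(\calA, \sigma) \cong (M_d(A), \top)$ when $\sigma$ is orthogonal and $(\calA, \sigma) \cong (M_d(A), \mathrm j)$ when $\sigma$ is symplectic, which is the claim. (Strictly, to phrase this over a not-necessarily-affine $Y$ one works Zariski-locally to reduce to the affine case; but the lemma is stated for $A$, so this is not needed.)

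\textbf{Main obstacle.} The essential content is Step 2, the étale-local normalization of the bilinear or alternating form $u$: over a field this is elementary linear algebra (Gram--Schmidt / symplectic basis theorem), but over a general commutative ring with $2$ invertible one must argue that the relevant moduli — the isometry class of a nondegenerate (alternating) form of fixed rank — is étale-locally trivial, which is where invertibility of the Pfaffian (resp. of a discriminant after passing to a further cover to add a square root) is used. This is standard material (e.g. Knus, \emph{Quadratic and Hermitian Forms over Rings}), and the computation that conjugating the splitting by $g$ transports $\sigma$ to the standard involution is a routine matrix manipulation that I would not spell out in full.
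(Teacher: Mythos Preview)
Your proof is correct and complete in spirit; the minor imprecision in Step~2 about which matrix to conjugate by (it should be $h$ with $huh^{\top}$ equal to the standard form, so $h=g^{\top}$ rather than $g$) is exactly the ``routine matrix manipulation'' you flag and does not affect the argument.

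The paper's proof, by contrast, is a one-line appeal to the cohomological classification: Azumaya algebras with symplectic involution of rank $d^2$ are classified by $\check H^1_{\et}(A,\PGSp_d)$ (citing \cite[III.~\S8.5]{knus}), and any étale torsor is tautologically trivial over some étale cover; the orthogonal case is the same with $\PGSp_d$ replaced by the appropriate projective orthogonal group. Your Steps~1--2 are precisely the explicit unwinding of this torsor-triviality: Step~1 trivializes the image in $\check H^1_{\et}(A,\PGL_d)$, and Step~2 kills the remaining ambiguity in the fiber (the isometry class of the form $u$). What the cohomological argument buys is brevity and a clear conceptual reason why étale localization is exactly the right topology; what your hands-on argument buys is that it makes visible where each localization is actually needed---in particular that the alternating case needs no further cover beyond splitting $\calA$ (symplectic basis theorem over any ring), while the symmetric case genuinely needs an étale cover to extract square roots of the diagonal entries.
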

\begin{proof}
    We know that Azumaya algebras of symplectic type are classified by $\check H^1_{\et}(A, \PGSp_d)$ (see \cite[III. §8.5]{knus}).
    It is then sufficient to find an étale trivialization of the cohomology class associated to $(\mathcal{A}, \sigma)$.
    The orthogonal case is treated similarly.
\end{proof}

\subsection{Moduli of symplectic representations}\label{modulisection}
In this subsection, we suppose that $A$ is Noetherian, and write $Y \colonequals \Spec(A)$. We let $(R,*)$ be an $A$-algebra with involution. In analogy to \cite[Definition 2.1]{CWE}, we define the following functors on $Y$-schemes.

\begin{definition}\label{defstacks} \phantom{a}
\begin{enum}
    \item $\SpRep_{(R,*)}^{\square,2d} \colon \Sch_{/Y}^{\op} \to \Set$ is defined by
    \begin{align*}
        \SpRep_{(R,*)}^{\square,2d} (X) \colonequals \left\{
            \begin{array}{l}
                A\text{-algebra morphisms } (R,*)\rightarrow (M_{2d}(\Gamma(X,\mathcal{O}_X)),\mathrm{j}) \\
                \text{respecting the involution}
            \end{array}
        \right\}
    \end{align*}
    \item $\SpRep_{(R,*)}^{2d} \colon \Sch_{/Y}^{\op} \to \mathrm{Gpd}$ is defined by
    \begin{align*}
        \ob \SpRep_{(R,*)}^{2d}(X) \colonequals \left\{ 
            \begin{array}{l} 
                V_{/X} \text{ a rank }2d\text{ vector bundle,} \\
                b\colon  V\times V \rightarrow \calO_X \text{ a non-singular skew-symmetric $\calO_X$-bilinear form,} \\
                \text{and an }  A\text{-algebra morphism } \rho \colon (R,*)\rightarrow (\Gamma(X,\End_{\mathcal{O}_X}(V)),\sigma_b) \\
                \text{respecting the involution}
            \end{array} 
        \right\}
    \end{align*}
    An isomorphism of two objects $(V, b, \rho)$ and $(V', b', \rho')$ is an isomorphism $\alpha \colon V \to V'$, such that $b' \circ (\alpha \times \alpha) = b$ and $\Gamma(X, \End_{\calO_X}(\alpha)) \circ \rho = \rho'$.
    \item $\overline{\SpRep}_{(R,*)}^{2d} \colon \Sch_{/Y}^{\op} \to \mathrm{Gpd}$ is defined by
    \begin{align*}
        \ob \overline{\SpRep}_{(R,*)}^{2d}(X) \colonequals \left\{
            \begin{array}{l}
                 (\mathcal{E},\sigma) \text{ a rank }4d^2 \text{ Azumaya algebra over $X$}  \\
                 \text{equipped with a symplectic involution,} \\
                 \text{and an } A\text{-algebra morphism } \rho \colon (R,*)\rightarrow (\Gamma(X,\mathcal{E}),\sigma) \\
                 \text{respecting the involution}
            \end{array}
        \right\}
    \end{align*}
    An isomorphism of two objects $(\calE, \sigma, \rho)$ and $(\calE', \sigma', \rho')$ is an isomorphism $\alpha \colon \calE \to \calE'$ of Azumaya algebras over $\calO_X$, such that $\alpha \circ \rho = \rho'$.
\end{enum}
\end{definition}

The functor $\SpRep_{(R,*)}^{\square,2d}$ is representable by an affine scheme, which is of finite type over $Y$ if $R$ is finitely generated over $A$.
The functors $\SpRep_{(R,*)}^{2d}$ and $\overline{\SpRep}_{(R,*)}^{2d}$ are (2-)representable by categories fibered in groupoids over $\Sch_{/Y}$.

\begin{lemma}\label{SpVBandSptorsors} Let $X$ be a scheme.
\begin{enum}
    \item There is a natural bijection of pointed sets between the set of symplectic vector bundles of rank $2d$ on $\Sch_{/X}$ up to isomorphism and the set of étale $\Sp_{2d}$-torsors on $\Sch_{/X}$ up to isomorphism.
    \item There is a natural bijection of pointed sets between the set of Azumaya algebras of rank $4d^2$ equipped with a symplectic involution on $\Sch_{/X}$ up to isomorphism and the set of étale $\PGSp_{2d}$-torsors on $\Sch_{/X}$ up to isomorphism.
\end{enum}
\end{lemma}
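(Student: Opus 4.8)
\textbf{Proof plan for \Cref{SpVBandSptorsors}.} Both statements are instances of the general principle that, for a scheme $X$ and an fppf (here étale) sheaf of groups $G$ on $\Sch_{/X}$, the pointed set of isomorphism classes of objects in a category on which $G$ acts in a ``locally transitive'' way — i.e.\ a category of ``forms'' of a fixed object with automorphism sheaf $G$ — is naturally in bijection with $\check{H}^1_{\et}(X, G)$, the set of isomorphism classes of étale $G$-torsors. The plan is to make this precise in each of the two cases by exhibiting the relevant category as a gerbe banded by $G$, or more concretely by using the standard twisting construction.

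\textbf{Part (1).} First I would fix the ``split'' symplectic vector bundle $(V_0, b_0) = (\calO_X^{2d}, J)$ with its standard alternating form. Any symplectic vector bundle $(V,b)$ of rank $2d$ on $X$ is étale-locally isomorphic to $(V_0,b_0)$: indeed étale-locally $V$ is free, and a nonsingular alternating form on a free module over a ring in which $2$ is invertible admits a symplectic basis (this is the local structure theory of alternating forms, e.g.\ \cite[I.3.2]{knus}). Hence the étale sheaf $\underline{\isom}((V_0,b_0),(V,b))$ of isomorphisms of symplectic vector bundles is an étale torsor under $\underline{\Aut}(V_0,b_0) = \Sp_{2d}$, and this assignment $(V,b) \mapsto \underline{\isom}((V_0,b_0),(V,b))$ is the desired map. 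Conversely, given an étale $\Sp_{2d}$-torsor $T$, the contracted product (twist) $T \times^{\Sp_{2d}} (V_0,b_0)$ is a symplectic vector bundle of rank $2d$, since étale-locally on a trivializing cover it becomes $(V_0,b_0)$ and the transition cocycle lands in $\Sp_{2d}$, hence preserves $b_0$; the form glues to a nonsingular alternating form on the twisted bundle. One then checks these two constructions are mutually inverse on isomorphism classes and that the split object is sent to the trivial torsor and vice versa — this is the routine bookkeeping of the twisting formalism and I would only indicate it. (Strictly, since $\Sp_{2d}$ is smooth affine, every fppf torsor is étale-locally trivial, so there is no ambiguity between fppf and étale torsors.)

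\textbf{Part (2).} The argument is formally identical with $(V_0,b_0)$ replaced by the split Azumaya algebra with symplectic involution $(M_{2d}(\calO_X), \mathrm j)$, whose automorphism sheaf as an $\calO_X$-algebra-with-involution is $\PGSp_{2d}$: an $\calO_X$-algebra automorphism of $M_{2d}$ is inner by Skolem--Noether, given by conjugation by a unit well-defined up to $\Gm$, and it commutes with $\mathrm j$ precisely when that unit normalizes $J$ up to scalar, i.e.\ lies in $\GSp_{2d}$, so $\underline{\Aut}(M_{2d}(\calO_X),\mathrm j) = \GSp_{2d}/\Gm = \PGSp_{2d}$. By \Cref{standardformoverschemes} (the orthogonal/symplectic standard-form lemma), every rank-$4d^2$ Azumaya algebra with symplectic involution on $X$ is étale-locally isomorphic to $(M_{2d}(\calO_X),\mathrm j)$, so $(\calE,\sigma) \mapsto \underline{\isom}((M_{2d}(\calO_X),\mathrm j),(\calE,\sigma))$ defines an étale $\PGSp_{2d}$-torsor; the inverse is again the twist $T \mapsto T \times^{\PGSp_{2d}} (M_{2d}(\calO_X),\mathrm j)$, which makes sense because $\PGSp_{2d}$ acts on $(M_{2d},\mathrm j)$ by automorphisms of algebras-with-involution. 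Naturality in $X$ is immediate since all constructions are defined by pullback-compatible universal recipes.

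\textbf{Main obstacle.} The only genuine input, as opposed to formal torsor yoga, is the local triviality statement: in Part (1) that alternating nonsingular forms over a local (or strictly Henselian) ring with $2$ invertible are split, and in Part (2) the corresponding fact for Azumaya algebras with symplectic involution, which is exactly \Cref{standardformoverschemes}. Everything else — that the twist of a local model by a $G$-torsor is again an object of the category, and that the two assignments are mutually inverse pointed bijections — is the standard equivalence between forms of a fixed object and torsors under its automorphism sheaf, which I would cite (e.g.\ \cite[III.\S8]{knus}) rather than reprove. A minor point to be careful about is to work consistently with the big étale site $\Sch_{/X}$ as the statement demands, but since the groups involved are smooth and affine this causes no difference between the étale, fppf, or fpqc notions of torsor.
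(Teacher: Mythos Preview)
Your proposal is correct and follows essentially the same approach as the paper: both directions are given by the Isom-sheaf and contracted-product constructions, with the only substantive input being étale-local triviality (structure theory of alternating forms for Part (1), and \Cref{standardformoverschemes} for Part (2)). The paper additionally remarks that symplectic vector bundles coincide in the Zariski and étale topologies, but otherwise your argument matches.
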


\begin{proof} We first observe, that symplectic vector bundles are the same in the Zariski and in the étale topology.
This follows from the equivalence of categories \stackcite{03DX}, which is also used in the proof of Hilbert's Theorem 90 \stackcite{03P7} in the case of line bundles.

The bijection between étale symplectic vector bundles and étale $\Sp_{2d}$-torsors is now the standard one:
take an étale symplectic vector bundle $(\calV,\sigma)$ to the étale $\calI som$-sheaf
$$ \calI som((\calV,\sigma), (\calO_X^{2d}, \mathrm{std}))(U) \colonequals  \mathrm{Isom}((\calV,\sigma)|_U, (\calO_X^{2d}, \mathrm{std})|_U) $$
with $\Sp_{2d}$-action induced by the standard action on $\calO_X^{2d}$.
It follows directly from local triviality of $(\calV,\sigma)$, that $\calI som((\calV,\sigma), (\calO_X^{2d}, \mathrm{std}))$ is an $\Sp_{2d}$-torsor.

Take an étale $\Sp_{2d}$-torsor $\calT$ to the étale sheaf quotient $\calT \times^{\Sp_{2d}} \calO_X^{2d} \colonequals  (\calT \times^{\Sp_{2d}} \calO_X^{2d})/\Sp_{2d}$, which by local triviality of $\calT$ is again easily seen to be an étale symplectic vector bundle.

By the same argument using \Cref{standardformoverschemes}, we see that the groupoid of Azumaya algebras with symplectic involution is equivalent to the groupoid of étale $\PGSp_{2d}$-torsors.
\end{proof}

\begin{theorem}\label{equivofstk}
The canonical functors
$$ [\SpRep_{(R,*)}^{\square,2d}/\Sp_{2d}] \eqto \SpRep_{(R,*)}^{2d} \quad \text{ and } \quad [\SpRep_{(R,*)}^{\square,2d}/\PGSp_{2d}] \eqto \overline{\SpRep}_{(R,*)}^{2d} $$ 
are equivalences of étale stacks on $\Sch_{/Y}$. On the left hand sides we take the étale stack quotient.
\end{theorem}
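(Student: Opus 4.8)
The plan is to reformulate both statements in terms of étale torsors via \Cref{SpVBandSptorsors}, after which the two sides become the ``rigidified'' and ``non-rigidified'' versions of a single moduli problem. Since a morphism of stacks is an equivalence as soon as it induces an equivalence of groupoids on $X$-points for every $Y$-scheme $X$, functorially in $X$, it suffices to work fiber category by fiber category. Recall that an $X$-point of $[\SpRep^{\square,2d}_{(R,*)}/\Sp_{2d}]$ is an étale $\Sp_{2d}$-torsor $\calT$ over $X$ together with an $\Sp_{2d}$-equivariant morphism $\calT\to\SpRep^{\square,2d}_{(R,*)}$, and that an $X$-point of $\SpRep^{2d}_{(R,*)}$ is a triple $(V,b,\rho)$ with $(V,b)$ a rank $2d$ symplectic vector bundle and $\rho\colon(R,*)\to(\Gamma(X,\End_{\calO_X}(V)),\sigma_b)$ an involution-compatible representation. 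By \Cref{SpVBandSptorsors}(1), $(V,b)\mapsto\calI som((V,b),(\calO_X^{2d},\mathrm{std}))$ is an equivalence from symplectic vector bundles to étale $\Sp_{2d}$-torsors, with quasi-inverse the associated bundle $\calT\mapsto\calT\times^{\Sp_{2d}}\calO_X^{2d}$. The canonical functor is the one induced by the $\Sp_{2d}$-equivariant morphism $\SpRep^{\square,2d}_{(R,*)}\to\SpRep^{2d}_{(R,*)}$, $\rho\mapsto(\calO_X^{2d},\mathrm{std},\rho)$; equivariance here is the statement that $g\in\Sp_{2d}(X)$ furnishes an isomorphism $(\calO_X^{2d},\mathrm{std},\rho)\eqto(\calO_X^{2d},\mathrm{std},g\rho g^{-1})$ of triples, so the morphism descends to the stack quotient.

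I would construct a quasi-inverse explicitly. Given $(V,b,\rho)$, choose an étale cover $\{U_i\to X\}$ on which $(V,b)$ admits symplectic trivializations; each trivialization over $U_i$ converts $\rho|_{U_i}$ into a symplectic representation $(R,*)\to(M_{2d}(\Gamma(U_i,\calO_{U_i})),\mathrm j)$, i.e. a $U_i$-point of $\SpRep^{\square,2d}_{(R,*)}$, and over overlaps two trivializations differ by an element of $\Sp_{2d}$ acting by conjugation. Packaged invariantly, this is precisely an $\Sp_{2d}$-equivariant morphism $\calI som((V,b),(\calO_X^{2d},\mathrm{std}))\to\SpRep^{\square,2d}_{(R,*)}$, hence an $X$-point of $[\SpRep^{\square,2d}_{(R,*)}/\Sp_{2d}]$; the two round trips are identified by unwinding the associated bundle construction and the definition of $\calI som$. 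Essential surjectivity of the canonical functor on fiber categories is then immediate, and full faithfulness is a descent statement: an isomorphism in $[\SpRep^{\square,2d}_{(R,*)}/\Sp_{2d}](X)$ is étale-locally given by an element of $\Sp_{2d}$ conjugating one symplectic representation into another, and such local data glue to, and are precisely the local form of, isomorphisms of the associated triples, using once more that symplectic vector bundles are étale- (indeed Zariski-)locally trivial.

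For the second equivalence I would run the same argument with \Cref{SpVBandSptorsors}(2) replacing part (1) and $\PGSp_{2d}$ replacing $\Sp_{2d}$. Two extra observations are needed. First, $\PGSp_{2d}\cong\underline{\Aut}(M_{2d},\mathrm j)$ as group schemes: by Skolem--Noether every automorphism of $M_{2d}$ is of the form $x\mapsto gxg^{-1}$ for an étale-local unit $g$, and such an automorphism commutes with $\mathrm j$ exactly when $gJg^{\top}$ is a scalar multiple of $J$, i.e. when $g\in\GSp_{2d}$, while the scalar matrices act trivially; hence the conjugation action of $\Sp_{2d}$ on $\SpRep^{\square,2d}_{(R,*)}$ extends canonically to a $\PGSp_{2d}$-action. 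Second, by \Cref{standardformoverschemes} an Azumaya algebra of rank $4d^2$ with symplectic involution is étale-locally isomorphic to $(M_{2d}(\calO_X),\mathrm j)$. With these in hand, the associated-algebra construction $\calT\mapsto\calT\times^{\PGSp_{2d}}M_{2d}(\calO_X)$ plays the role of the associated bundle, and $\rho$ is recovered by étale descent exactly as before.

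The main obstacle I expect is bookkeeping rather than mathematics: verifying that the equivariant morphism $\calI som((V,b),(\calO_X^{2d},\mathrm{std}))\to\SpRep^{\square,2d}_{(R,*)}$ built from local trivializations is independent of the chosen cover and trivializations and is genuinely equivariant, and checking that all of the above constructions are compatible with base change along $Y$-morphisms, so that they really assemble into morphisms of stacks over $\Sch_{/Y}$ rather than merely equivalences of fiber categories over individual $X$. No input beyond \Cref{SpVBandSptorsors}, \Cref{standardformoverschemes}, and Skolem--Noether should be required.
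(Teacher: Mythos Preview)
Your proposal is correct and follows essentially the same approach as the paper: both work fiber category by fiber category, both construct the quasi-inverse via the $\calI som$-sheaf of symplectic trivializations of $(V,b)$ (together with the observation that local trivializations convert $\rho$ into a point of $\SpRep^{\square,2d}_{(R,*)}$), and both reduce the second equivalence to the same argument with $\PGSp_{2d}$ in place of $\Sp_{2d}$ using \Cref{standardformoverschemes}. Your explicit identification $\PGSp_{2d}\cong\underline{\Aut}(M_{2d},\mathrm j)$ via Skolem--Noether is a useful elaboration that the paper leaves implicit.
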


The proof follows closely the proof of \cite[Theorem 1.4.1.4]{MR3167286}.
We remark, that the result is a version of \cite[Theorem 1.4.4.6]{MR3167286} for representations of algebras instead of groups.

\begin{proof}
By \stackcite{003Z}, it is enough to show that the functors induce equivalences of fiber groupoids.
For each $Y$-scheme $t \colon T \to Y$, the stack $[\SpRep_{(R,*)}^{\square,2d}/\Sp_{2d}]$ parametrizes pairs
\begin{equation*}
    (f \colon \calG \to T, ~\calG \to \SpRep_{(R,*)}^{\square,2d}) \in [\SpRep_{(R,*)}^{\square,2d}/\Sp_{2d}](T),
\end{equation*}
where $\calG$ is an étale $\Sp_{2d}$-torsor over $T$ and $\calG \to \SpRep_{(R,*)}^{\square,2d}$ is an $\Sp_{2d}$-equivariant map of $S$-schemes.

Using \Cref{SpVBandSptorsors}, we attach to $\calG$ a symplectic vector bundle $(V, b)$ on $T$.
Since $\calG(\calG)$ contains $\id_{\calG}$, $(V,b)$ is canonically trivialized over $\calG$. The composition
$$ f^* t^*R \to (M_{2d}(\calO_{\calG}), \mathrm j) \to \End_{\calO_{\calG}}(f^*V, \sigma_b) $$
can be descended to a map $t^*R \to \End_{\calO_{\calG}}(V, \sigma_b)$ using $\Sp_{2d}$-equivariance of $\calG \to \SpRep_{(R,*)}^{\square,2d}$.
The functor $\calG \mapsto (V,b)$ realizes the identification \Cref{SpVBandSptorsors} between symplectic vector bundles and $\Sp_{2d}$-torsors.
In particular, it induces an equivalence between the groupoid of symplectic vector bundles and the groupoid of $\Sp_{2d}$-torsors.

To show that the functor $[\SpRep_{(R,*)}^{\square,2d}/\Sp_{2d}](T) \to \SpRep_{(R,*)}^{2d}(T)$ is an equivalence, we give a functor in the other direction. It is then formal to verify that this realizes an equivalence of groupoids.

An object of $\SpRep_{(R,*)}^{2d}(T)$ is a triple $(V, b, \rho)$ as in \Cref{defstacks}. We define an $\Sp_{2d}$-torsor $\calG$ over $T$ by setting
$$ \calG(X) \colonequals  \mathrm{Isom}_{\calO_X}((x^*V, b), ~(\calO_X^{\oplus 2d}, b_{\mathrm{std}})) $$
for all $T$-schemes $x \colon X \to T$. Here $b_{\mathrm{std}}$ is the standard symplectic form and isomorphisms shall preserve the bilinear forms.
The functor $\calG$ is represetable by a flat scheme $f \colon \calG \to T$ of finite presentation over $T$ \cite[Theorem 3.24]{youcis}.
The identity map in $\calG(\calG)$ corresponds to an isomorphism $f^* V \eqto \calO_{\calG}^{\oplus 2d}$ compatible with $b$ and $b_{\mathrm{std}}$. The composition
$$ (f^*t^* R, *) \overset{\rho}{\to} (\End_{\calO_{\calG}}(f^*V), \sigma_b) \eqto (\End_{\calO_{\calG}}(\calO_{\calG}^{\oplus 2d}), \mathrm j) $$
defines a representation in $\SpRep_{(R,*)}^{\square,2d} (\calG)$, so we obtain a map $\calG \to \SpRep_{(R,*)}^{\square,2d}$. The latter is $\Sp_{2d}$-equivariant, for the action of $\Sp_{2d}$ realizes a change of basis. We have constructed an object of $[\SpRep_{(R,*)}^{\square,2d}/\Sp_{2d}](T)$.
The equivalence $[\SpRep_{(R,*)}^{\square,2d}/\PGSp_{2d}] \eqto \overline{\SpRep}_{(R,*)}^{2d}$ follows by an analogous argument.
\end{proof}

\section{Symplectic determinant laws}\label{Spdetlaw}
\subsection{Polynomial laws}

Chenevier's definition \cite[§1.5 Definition]{MR3444227} of determinant laws is based on the notion of polynomial laws.
The main references are \cite{Roby, BC, MR3444227, MR3167286}.
We give the basic definitions and explain how to introduce the structure of an algebra with involution on the graded pieces of a divided power algebra.
In this subsection, we suppose that $A$ is an arbitrary commutative ring.

\begin{definition}\label{defpolynomiallaws} Let $M$ and $N$ be any $A$-modules and let $R$ and $S$ be not necessarily commutative $A$-algebras.
\begin{enum}
    \item An \emph{$A$-polynomial law} $P \colon M \to N$ is a collection of maps $P_B \colon M \otimes_A B \to N \otimes_A B$ for each commutative $A$-algebra $B$, such that for each homomorphism $f \colon B \to B'$ of commutative $A$-algebras, the diagram
    \begin{align*}
        \xymatrix{
            M \otimes_A B \ar[d]^{\id \otimes f} \ar[r]^{P_B} & N \otimes_A B \ar[d]^{\id \otimes f} \\
            M \otimes_A B' \ar[r]^{P_{B'}} & N \otimes_A B'
        }
    \end{align*}
    commutes.
    In other words, an $A$-polynomial law is a natural transformation $\underline{M} \to \underline{N}$, where $\underline{M}(B) \colonequals  M \otimes_A B$ is the \emph{functor of points} of $M$. We denote the set of $A$-polynomial laws from $M$ to $N$ by $\calP_A(M,N)$.
    \item A polynomial law $P \colon M \to N$ is called \emph{homogeneous of degree $d \in \bbN_0$} or \emph{$d$-homogeneous}, if for all commutative $A$-algebras $B$, all $b \in B$ and all $x \in M \otimes_A B$ we have $P_B(bx) = b^d P_B(x)$. We denote the set of $d$-homogeneous $A$-polynomial laws from $M$ to $N$ by $\calP_A^d(M,N)$.
    \item A polynomial law $P \colon R \to S$ is called \emph{multiplicative}, if for all commutative $A$-algebras $B$, we have $P_B(1_{R \otimes_A B}) = 1_{S \otimes_A B}$ and for all $x, y \in R \otimes_A B$, we have $P_B(xy) = P_B(x)P_B(y)$. We denote the set of $d$-homogeneous multiplicative $A$-polynomial laws from $R$ to $S$ by $\calM_A^d(R,S)$.
    \item If $R$ and $S$ are equipped with $A$-linear involutions, both denoted by $*$, we say that a polynomial law $P\colon R\to S$ \emph{preserves the involution} if $P_B(x^*)=P_B(x)^*$ for every commutative $A$-algebra $B$, and all $x\in R\otimes_A B$. Here the $A$-linear involution on $R$ (resp. S) is canonically extended to a $B$-linear involution on $R \otimes_A B$ (resp. $S \otimes_A B$).
    \item A \emph{$d$-dimensional determinant law} on $R$ is a $d$-dimensional homogeneous multiplicative polynomial law $D \colon R \to A$.
    \item If $* \colon R \to R$ is an $A$-linear involution, a $d$-dimensional \emph{$*$-determinant law} on $(R,*)$ is a $d$-dimensional determinant law $D \colon R \to A$, which preserves the involution 
 (here $A$ is equipped with the trivial involution).
    \item Let $P\colon M\rightarrow N$ be an $A$-polynomial law. We define the \emph{kernel of $P$} as the $A$-submodule $\ker(P) \subseteq M$ consisting of the elements $m\in M$ such that for every commutative $A$-algebra $B$, $b\in B$ and $m'\in M\otimes_A B$, we have $P(m\otimes b + m')=P(m')$.
\end{enum}
\end{definition}

\begin{remark}\label{polylawsoverschemes}
    \Cref{defpolynomiallaws} (1) naturally extends to the case when $A = \calO_S$ is the structure sheaf of a scheme $S$, $M$ and $N$ are quasi-coherent $\calO_S$-modules and $B$ varies over the category of commutative quasi-coherent $\calO_S$-algebras. The properties defined in (2), (3) and (4) can be defined by evaluation on open subsets of $S$, e.g. $P : M \to N$ is \emph{homogeneous of degree $d$}, if for every open subset $U \subseteq S$, every commutative quasi-coherent $\calO_S$-algebra $B$, every $b \in B(U)$ and every $x \in (M \otimes_{\calO_S} B)(U)$, we have $P_B(U)(bx) = b^dP_B(U)(x)$, where $P_B(U) : (M \otimes_{\calO_S} B)(U) \to (N \otimes_{\calO_S} B)(U)$ and $b \in B(U)$ acts on $(M \otimes_{\calO_S} B)(U)$ through the sheafification map $M(U) \otimes_{\calO_S(U)} B(U) \to (M \otimes_{\calO_S} B)(U)$. Definitions (5) and (6) evidently extend to schemes. It follows, that these properties are local for the Zariski topology.
\end{remark}

\begin{definition}\label{charpfaffianalpha}
Let $R$ be an $A$-algebra and $P\colon R\rightarrow A$ be a $d$-homogeneous $A$-polynomial law. 
\begin{enum}
    \item For a commutative $A$-algebra $B$ and an element $r\in R\otimes_A B$, we define its \emph{characteristic polynomial} by
    $$ \chi^P(r,t)\colonequals P_{B[t]}(t-r)\in B[t]. $$
    We understand $\chi^P(\cdot, t)$ as an $A$-polynomial law $R \to A[t]$.
    \item For an integer $n\ge 1$, $r_1,\dots,r_n\in R$ and an ordered tuple of integers $\alpha=(\alpha_1,\dots,\alpha_n)$, we consider the function $\chi^{P}_\alpha\colon R^n\to R$ defined by
    $$ \chi^P(t_1r_1+\cdots+t_nr_n,t_1r_1+\cdots t_nr_n)= \sum_{\alpha} \chi^{P}_\alpha(r_1,\dots,r_n)t^{\alpha}\in R[t]$$
    where $t^\alpha=\prod_{i=1}^n t^{\alpha_i}$. Note that by homogeneity, we have $\chi^{P}_\alpha\equiv 0$ if $\sum_i \alpha_i\neq d$.
    \item We let $\CH(P)\subseteq R$ be the two-sided ideal of $R$ generated by the set  
    $$\left\{\chi^P_\alpha(r_1,\dots,r_n) \ \middle| \ n\in \mathbb{N}_{\ge 1}, \ r_1,\dots,r_n\in R,  \ \alpha=(\alpha_1,\dots,\alpha_n)\in \mathbb{N}^n \text{ with }\sum_{i}\alpha_i=d \right\}$$
    We refer to $\CH(P)$ as the \emph{Cayley-Hamilton ideal} of $P$.
\end{enum}
\end{definition}

\begin{lemma}
If $D\colon R\to A$ is a determinant law, then $\CH(D)\subseteq \ker(D)$.
\end{lemma}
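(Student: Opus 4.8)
The plan is to reduce the statement to the case where $R$ is a free algebra, and then to a universal matrix computation via the classical Cayley-Hamilton theorem. Recall that by \cite[§1.5]{MR3444227} a $d$-dimensional determinant law $D \colon R \to A$ corresponds, up to a universal recipe, to the coefficients $\Lambda_i^D(r) \in A$ of its characteristic polynomials $\chi^D(r,t) = t^d - \Lambda_1^D(r) t^{d-1} + \cdots + (-1)^d \Lambda_d^D(r)$, and these satisfy $\chi^D(r,r) \in \ker(D)$; more precisely, the $\chi^D_\alpha$ from \Cref{charpfaffianalpha}(2) are obtained by polarizing $r \mapsto \chi^D(r,r)$. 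So it suffices to show that for every commutative $A$-algebra $B$, every $n$, every $r_1, \dots, r_n \in R \otimes_A B$, and every multi-index $\alpha$ with $\sum_i \alpha_i = d$, the element $\chi^D_\alpha(r_1, \dots, r_n)$ lies in $\ker(D_B)$ (using that $\ker(D)$ behaves well under base change, or simply working with $\ker(D_B)$ throughout).

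First I would set $r \colonequals t_1 r_1 + \cdots + t_n r_n \in (R \otimes_A B)[t_1, \dots, t_n] = R \otimes_A B[t_1,\dots,t_n]$, so that $\chi^D(r, r) = \sum_\alpha \chi^D_\alpha(r_1,\dots,r_n) t^\alpha$. Since $\ker(D)$ (for the base-changed law over $B[t_1,\dots,t_n]$) is an $A$-submodule and its formation is compatible with extracting coefficients of polynomials — an element of $R \otimes_A B[\underline t]$ lies in $\ker$ iff each of its $t$-coefficients does, which follows from the definition of the kernel by plugging in appropriate scalars, or from the fact that $B[\underline t] \to B$ evaluations detect coefficients — it is enough to prove the single statement $\chi^D(r,r) \in \ker(D_{B[\underline t]})$. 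Thus the whole claim collapses to: \emph{for any commutative $A$-algebra $C$ and any $r \in R \otimes_A C$, one has $\chi^D(r,r) \in \ker(D_C)$.}

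Now I would prove this last assertion. By naturality it suffices to treat the universal case: take $R = A\langle x \rangle = A[x]$ the free (commutative, in one generator) $A$-algebra, $D$ the universal $d$-dimensional determinant law, and $r = x$; the general case follows by applying the $A$-algebra map $A[x] \to R \otimes_A C$, $x \mapsto r$, together with the functoriality of determinant laws and of their kernels. For the universal case one uses Chenevier's structure theory: $\Gamma^d_A(A[x])$ is (a quotient of) a polynomial ring, $D$ factors through the "reduced" Cayley-Hamilton quotient, and there $x$ satisfies its own characteristic polynomial by the classical Cayley-Hamilton theorem applied after a faithfully flat base change that splits $D$ as (a twist of) the determinant of an honest matrix representation $\rho \colon A[x] \to M_d(\widetilde A)$ — concretely, $\chi^D(x,t)$ base-changes to the usual characteristic polynomial of $\rho(x)$, so $\chi^D(x,x)$ maps to $\chi_{\rho(x)}(\rho(x)) = 0$. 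To conclude $\chi^D(x,x) \in \ker(D)$ rather than merely that it dies in a splitting, one invokes \cite[Lemma 1.19(3)]{MR3444227} (or the analogous statement in \cite{MR3167286}): an element $a \in R$ with $D(a x' + y) = D(y)$ for all $x', y$ after faithfully flat base change lies in $\ker(D)$, because faithfully flat descent identifies $\ker(D)$ with the descent of $\ker(D_{\widetilde A})$, and in the split case $\ker$ of the determinant law is exactly the kernel of $\rho$, which contains $\chi^D(x,x)$.

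The main obstacle is the bookkeeping in the second paragraph: verifying that $\ker(D)$ commutes with base change and with passage to polynomial coefficients. The first point is not formal from the bare \Cref{defpolynomiallaws}(7) definition — one wants the equivalent description of $\ker(D)$ as the largest ideal $I$ with $D$ factoring through $R/I$ (equivalently $\ker(D) = \ker(\rho)$ in the split Cayley-Hamilton picture), which is \cite[Lemma 1.19]{MR3444227}; given that, base change compatibility of $\ker(D)$ reduces to the statement that Cayley-Hamilton quotients are stable under base change. Once those two reductions are in place, the remainder is the classical Cayley-Hamilton theorem plus faithfully flat descent, which is routine. An alternative that avoids invoking the representation-theoretic description is to prove $\chi^D(r,r) \in \ker(D)$ directly from the multiplicativity and homogeneity of $D$ by mimicking Chenevier's original argument for \cite[Lemma 1.12(2)]{MR3444227}; I would fall back on that if the cited lemmas are not convenient to quote in the precise form needed.
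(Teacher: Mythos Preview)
The paper simply defers to \cite[Lemma~1.21]{MR3444227}. Chenevier's proof there is a short direct computation: using the characterization of $\ker(D)$ from \cite[Lemma~1.19]{MR3444227}, one verifies $D(1 + r'\chi^D_\alpha(r_1,\dots,r_n)) = 1$ by manipulating the multiplicative polynomial law $D$, with no passage to matrix algebras or faithfully flat splittings.

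Your approach has a genuine gap at the step ``the general case follows by applying the $A$-algebra map $A[x] \to R\otimes_A C$, $x \mapsto r$, together with the functoriality of determinant laws and of their kernels''. Kernels of determinant laws are \emph{not} functorial in the direction you need: if $\phi\colon S \to R$ is an $A$-algebra map and $D$ is a determinant law on $R$, then $\phi(\ker(D\circ\phi))$ is in general not contained in $\ker(D)$. Concretely, take $R = M_2(k)$ with $D = \det$, $S = k[x]$, and $\phi(x) = E_{12}$ the elementary matrix. Then $(D\circ\phi)(f) = \det(f(E_{12})) = f(0)^2$, so $\ker(D\circ\phi) = (x)$; but $\phi(x) = E_{12} \notin \ker(\det) = 0$, since for instance $\det(E_{12} + E_{21}) = -1 \neq 0 = \det(E_{21})$. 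The point is that membership in $\ker(D)$ is tested against \emph{all} elements of $R\otimes_A B$, while from $\ker(D\circ\phi)$ you only control the test against elements in the image of $\phi_B$. Reducing to the commutative one-variable algebra $C[x]$ therefore discards precisely the test elements you need. Your faithfully flat splitting argument for $C[x]$ itself is fine, but it does not help once this reduction fails; to make a reduction-to-free-algebras strategy work you would have to carry along enough extra free variables to perform the kernel test, at which point the algebra is no longer commutative and the easy diagonal splitting is unavailable. (The mention of ``$D$ factors through the reduced Cayley--Hamilton quotient'' in the same paragraph is also circular, since that is the statement being proved.)

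Your closing fallback---arguing directly from multiplicativity and homogeneity of $D$ as in Chenevier---is exactly the correct route and is what the cited lemma does.
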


\begin{proof} See \cite[Lemma 1.21]{MR3444227}.
\end{proof}

We will now describe a few representability results for polynomial laws, that are already explained in \cite{MR3444227}.
Recall that for any commutative ring $A$ and any $A$-module $M$, the \emph{divided power algebra} $\Gamma_A(M)$ is the commutative graded $A$-algebra generated by the symbols $m^{[i]}$ in degree $i$ for $m\in M$, $i\in \bbN_0$ subject to the following relations:
\begin{enum}
    \item $m^{[0]}=1$ for all $m\in M$.
    \item $(am)^{[i]}=a^i m^{[i]}$ for all $a\in A$, $m\in M$.
    \item $m^{[i]}m^{[j]}=\frac{(i+j)!}{i!j!}m^{[i+j]}$ for all $i,j\in \mathbb{N}_0$, $m\in M$.
    \item $(m+m')^{[i]}= \sum_{p+q=i}m^{[p]}{m'}^{[q]}$ for all $i\in \mathbb{N}_0$, $m,m'\in M$.
\end{enum}
We denote by $\Gamma_A^d(M)$ the $d$-th graded piece of $\Gamma_A(M)$. It represents the functor $\mathcal{P}^d_A(M,-) \colon \Mod_A \to \Set$ with the universal $d$-homogeneous polynomial law given by $P^{u}\colon M \rightarrow \Gamma^d_A(M), \ m \mapsto m^{[d]}$.
We have $\mathcal{P}^d_A(M,N) \cong \Hom_A(\Gamma_A^d(M), N)$ for any $A$-module $N$.

For an $A$-algebra $R$, we can equip $\Gamma_A^d(R)$ with the structure of an $A$-algebra as follows:
the map $R\oplus R \rightarrow R\otimes_A R, \ (r,r') \mapsto r\otimes r'$ is homogeneous of degree $2$ and is compatible with $-\otimes_A B$ for any $B\in \CAlg_A$. Thus it gives rise to a $2$-homogeneous $A$-polynomial law. Composing this map with the universal $d$-homogeneous polynomial law $R\otimes_A R \rightarrow \Gamma_A^d(R\otimes_A R)$, we obtain a $2d$-homogeneous polynomial law $R \oplus R \to \Gamma_A^d(R\otimes_A R)$. By the universal property of $\Gamma_A^{2d}(R\oplus R)$, we get a morphism of $A$-modules
$$ \eta\colon \Gamma_A^{2d}(R\oplus R) \to \Gamma_A^d(R\otimes_A R) $$
There is canonical isomorphism $\Gamma_A^{2d}(R \oplus R) \cong \bigoplus_{p+q=2d}\Gamma_A^p(R)\otimes_A \Gamma_A^q(R)$ (see \cite[§1.1.11]{MR3167286}) and $\eta$ kills $\Gamma_A^p(R)\otimes_A \Gamma_A^q(R)$ for $p\neq q$. From the multiplication map $\theta\colon R\otimes_A R \rightarrow R $, we obtain an $A$-linear map
$$ \Gamma_A^d(R)\otimes_A \Gamma_A^d(R) \xrightarrow{\eta} \Gamma_A^d(R\otimes_A R) \xrightarrow{\Gamma_A^d(\theta)} \Gamma_A^d(R) $$
defining the structure of an $A$-algebra on $\Gamma_A^d(R)$. In fact, we have a natural isomorphism $\mathcal{M}_A^d(R,S)\cong\Hom_{\Alg_A}(\Gamma_A^d(R),S)$ for any commutative $A$-algebra $S$.

When $R$ is equipped with an $A$-linear involution $*$, we want to equip $\Gamma_A^d(R)$ with an induced involution. For this, let $R^\text{op}$ be the opposite algebra of $R$. Then $*$ induces an isomorphism $R\cong R^\text{op}$. We define the $A$-linear maps $s\colon R\oplus R \to R\oplus R, \ (a,b)\mapsto (b,a)$ and $s'\colon R\otimes_A R \to R \otimes_A R, \ a\otimes b \to b\otimes a$, and we have a commutative diagram
\begin{center}
    \begin{tikzcd}
    \Gamma_A^d(R) \otimes \Gamma_A^d(R) \arrow[r, "\eta"] \arrow[d,"\Gamma^d_A(s)"] & \Gamma_A^d(R\otimes_A R) \arrow[d," \Gamma_A(s') "]  \arrow[r, " \Gamma^d(\theta^\text{op})"] & \Gamma_A^d(R) \arrow[d,"\text{id}"] 
    \\ \Gamma_A^d(R) \otimes \Gamma_A^d(R) \arrow[r, "\eta"]  & \Gamma_A^d(R\otimes_A R) \arrow[r, " \Gamma^d(\theta)"] & \Gamma_A^d(R) 
    \end{tikzcd}
\end{center}
which shows that we have a canonical isomorphism $\Gamma_A^d(R^\text{op})\cong \Gamma_A^d(R)^\text{op}$. Here $\theta^{\op} \colon R \otimes_A R \to R, a \otimes b \mapsto ba$ is the multiplication of $R^{\op}$ and $\Gamma_A^d(R) \otimes \Gamma_A^d(R)$ is identified with a submodule of $\Gamma_A^{2d}(R\oplus R)$.

\begin{definition}
Let $(R,*)$ be an $A$-algebra with involution. We define the involution $*$ on $\Gamma_A^d(R)$ by the isomorphism $$\Gamma_A^d(R)\xrightarrow{\Gamma_A^d(*)} \Gamma_A^d(R^\text{op})\cong \Gamma_A^d(R)^\text{op}$$
\end{definition}
Since the above diagram is compatible with tensoring with any $B\in \CAlg_A$, the isomorphism $\Gamma_A^d(R)\otimes_A B \cong \Gamma_B^d(R\otimes_A B)$ is compatible with the involution.

\subsection{Symplectic determinant laws}
\label{secdefsympldetlaw}

The definition of symplectic determinant laws is based on the following observation. Let $M \in M_{2d}(A)$ be a matrix with $M^{\mathsf j} = M$. Then we can write
$$ M = \begin{pmatrix} D & B \\ C & D^{\top} \end{pmatrix}, $$
where $D \in M_d(A)$ is arbitrary and $B, C \in M_d(A)$ are antisymmetric. The matrix
$$ MJ = \begin{pmatrix} -B & D \\ -D^{\top} & C \end{pmatrix} = JM^{\top} = -J^{\top}M^{\top} = -(MJ)^{\top}  $$
is alternating and therefore the Pfaffian $\Pf(MJ)$ is well defined. We have $\det(M) = \det(MJ) = \Pf(MJ)^2$.

\begin{definition}\label{defsympldetlaw} Let $(R,*)$ be an involutive $A$-algebra.
\begin{enum}
    \item A \emph{weak $2d$-dimensional symplectic determinant law} on $(R,*)$ is a pair $(D,P)$, where $D \colon R \rightarrow A$ is a $2d$-dimensional $*$-determinant law and $P\colon R^+\rightarrow A$ is a $d$-dimensional homogeneous polynomial law, such that $P^2=D|_{R^+}$ and $P(1)=1$.
    \item A \emph{$2d$-dimensional symplectic determinant law} on $(R,*)$ is a weak  $2d$-dimensional symplectic determinant law $(D,P)\colon (R,*)\to A$, that satisfies $\CH(P) \subseteq \ker(D)$.
\end{enum}
\end{definition}

\begin{remark} We believe that definitions (1) and (2) of \Cref{defsympldetlaw} are equivalent, but we are not able to prove it at the moment.
To prove that the condition $\CH(P) \subseteq \ker(D)$ holds for weak symplectic determinant laws, we would need to prove a version of Amitsur's formulae for the Pfaffian (compare with \cite[§1.10]{MR3444227}), or to prove a symplectic version of Vaccarino's comparison theorem between determinant laws and invariants of generic matrices (see \cite{vac08}, \cite{VACCARINO20091283}, and \cite{DeConPro17} for a detailed exposition of the proof). This would require knowledge of a second fundamental theorem of invariant theory over $\bbZ[\tfrac{1}{2}]$ for the action of the symplectic group by conjugation on tuples of matrices.
\end{remark}

\begin{remark}\label{stabilityCH}
    The conditions in (1) are formal and easily seen to be stable under base extension of polynomial laws.
    For (2), we have to see that the condition $\CH(P) \subseteq \ker(D)$ is stable under base extension. Actually, we have for every commutative $A$-algebra $B$, a surjection $\CH(P)\otimes_A B \twoheadrightarrow \CH(P\otimes_A B)$ (see \cite[Lemma 1.1.8.6]{CWE}).
\end{remark}

\begin{example}\label{reptodet}Let $\rho \colon (R,*) \to (M_{2d}(A), \mathsf j)$ be symplectic representation. For any commutative $A$-algebra $B$, we get a representation $\rho_B\colon (R\otimes_A B,*)\to(M_{2d}(B),\mathsf j)$. We define:
\begin{enum}
    \item $D_B \colon R \otimes_A B \to M_{2d}(B)$ by $D_B(r) \colonequals  \det (\rho_B(r))$,
    \item $P_B \colon R^+ \otimes_A B \to M_{2d}(B)$ by $P_B(r) \colonequals  \Pf (\rho_B(r)J)$.
\end{enum}
Then $(D,P)$ is a symplectic determinant law of $(R,*)$ over $A$. The fact that the condition $\CH(P)\subseteq \ker(D)$ holds follows from the Pfaffian Cayley-Hamilton theorem, which is the content of the following lemma. 
\end{example}
\begin{lemma}\label{charpf}
Suppose that $A$ is an arbitrary commutative ring. Let $M\in M_{2d}(A)$, such that $M^\mathrm{j}=M$. The \emph{Pfaffian characteristic polynomial} of $M$
$$ \Pf_M(\lambda)=\Pf((\lambda \cdot \id- M)J) \in A[\lambda] $$
is annihilated by $M$, i.e. $\Pf_M(M)=0$.
\end{lemma}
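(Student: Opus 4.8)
The plan is to prove this as a "Pfaffian analogue" of the classical Cayley–Hamilton theorem, and the cleanest route is to reduce to the ordinary Cayley–Hamilton theorem together with the defining relation $\det(M) = \Pf(MJ)^2$ observed just before the statement. First I would pass to the universal case: let $A_0 = \mathbb Z[\tfrac12][b_{ij}, c_{ij}, d_{ij}]$ be the polynomial ring whose variables parametrize a generic $M$ with $M^{\mathrm j} = M$, i.e. $M = \begin{pmatrix} D & B \\ C & D^{\top}\end{pmatrix}$ with $B, C$ alternating, so it suffices to prove $\Pf_M(M) = 0$ over $A_0$. Since $A_0$ is a domain of characteristic zero, I may further extend to its fraction field and then to an algebraic closure $k$; over $k$ it is enough to prove the identity on a Zariski-dense subset of the space of symmetric (for $\mathrm j$) matrices, and I will choose the dense subset of matrices with $2d$ distinct eigenvalues.

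**The diagonalizable case.**

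Here is the key computation. Over $k$, a $\mathrm j$-symmetric matrix $M$ satisfies $(MJ)^{\top} = -MJ$, and $\Pf_M(\lambda) = \Pf((\lambda\,\id - M)J)$. Squaring, $\Pf_M(\lambda)^2 = \det((\lambda\,\id - M)J) = \det(\lambda\,\id - M)\cdot\det(J) = \chi_M(\lambda)$, the ordinary characteristic polynomial of $M$ (using $\det(J) = 1$). So $\Pf_M(\lambda)$ is a monic degree-$d$ polynomial whose square is the degree-$2d$ characteristic polynomial of $M$; in particular $\chi_M$ is a perfect square in $k[\lambda]$. When $M$ is diagonalizable, write $\chi_M(\lambda) = \prod_{i}(\lambda - \mu_i)$; the perfect-square condition forces the eigenvalues to come in pairs, and $\Pf_M(\lambda) = \pm\prod_{j}(\lambda - \nu_j)$ where the $\nu_j$ are half of the eigenvalues. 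Either way, every eigenvalue of $M$ is a root of $\Pf_M$ (each eigenvalue of $M$ appears as a root of $\chi_M$, hence, being a square, as a root of $\Pf_M$ of half the multiplicity, which is still $\ge 1$). Therefore $\Pf_M$ annihilates $M$ on each eigenspace, hence $\Pf_M(M) = 0$. This handles the dense locus, so by continuity (the entries of $\Pf_M(M)$ are polynomials in the entries of $M$) it holds over $k$, hence over $A_0$, hence — by the universal property, specializing the generic matrix to any $M \in M_{2d}(A)$ with $M^{\mathrm j} = M$ — over every commutative ring $A$ with $2 \in A^{\times}$; and in fact the identity has $\mathbb Z[\tfrac12]$-coefficients, so it holds over an arbitrary commutative ring $A$ as claimed.

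**The main obstacle and a cleaner alternative.**

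The subtle point is the sign/branch issue: $\Pf_M(\lambda)$ is a single well-defined polynomial (not just defined up to sign), so I must make sure the argument that "every eigenvalue of $M$ is a root of $\Pf_M$" is correct rather than merely that every eigenvalue is a root of $\Pf_M^2$. This is fine because if $\mu$ is a root of $\chi_M = \Pf_M^2$ then $\mu$ is a root of $\Pf_M$ — that implication is immediate, so there is actually no obstacle here, only the need to state it carefully. A second, perhaps more conceptual, route avoiding any appeal to diagonalizability: one can try to mimic Chenevier's proof of $\CH(D) \subseteq \ker(D)$ directly, by establishing a "Pfaffian Amitsur formula" expressing $\Pf_M(M)$ in terms of the polynomial law $P$; but as the authors themselves note in the preceding remark, a full Amitsur-type formula for the Pfaffian is exactly what they cannot prove, so I would not pursue that — the reduction-to-diagonalizable-matrices argument over $\mathbb Q$ via the universal generic matrix is the right level of generality and is self-contained. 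I expect the only genuinely delicate bookkeeping to be (i) checking that the $\mathrm j$-symmetric matrices with distinct eigenvalues are Zariski-dense in the affine space of all $\mathrm j$-symmetric matrices (a dimension count, or exhibiting one such matrix with a nonvanishing discriminant, suffices), and (ii) confirming that $\Pf_M(M)$, viewed entrywise, is genuinely a polynomial with $\mathbb Z[\tfrac12]$-coefficients in the entries of $M$ so that the specialization argument applies over arbitrary $A$.
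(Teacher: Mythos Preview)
Your approach is essentially the same as the paper's: reduce to the universal generic $\mathrm{j}$-symmetric matrix, embed the polynomial ring into an algebraically closed field of characteristic zero, and use density of diagonalizable matrices together with $\Pf_M^2 = \chi_M$ and Cayley--Hamilton. The paper's endgame is marginally slicker --- instead of eigenvalue bookkeeping, it notes that $\Pf_M(M)$, being a polynomial in the diagonalizable matrix $M$, is itself diagonalizable, and a diagonalizable matrix squaring to zero vanishes; also, one small correction: the Pfaffian has \emph{integer} coefficients, so the universal ring should be $\bbZ[x_{ij}]$ rather than $\bbZ[\tfrac12][\ldots]$, and your final clause ``$\bbZ[\tfrac12]$-coefficients $\Rightarrow$ holds over arbitrary $A$'' is a non sequitur as written.
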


\begin{proof}
Using the ring homomorphism $\mathbb{Z}[x_{ij} \ | \ 1\le i,j\le 2d]\rightarrow A, ~x_{ij}\mapsto m_{ij}$, where $M=(m_{i,j})_{1\le i,j\le 2d}$ is as in the statement, and an injection $\mathbb{Z}[x_{ij} \ | \ 1\le i,j\le 2d] \hookrightarrow \bbC$,
we can assume that $A=\mathbb{C}$. By density, we can assume that $M$ is diagonalizable. Since $\Pf_M(M)$ is a polynomial in $M$, it is diagonalizable, and by the Cayley-Hamilton theorem we have that $\Pf_M(M)^2=0$. Therefore, $\Pf_M(M)=0$.
\end{proof}
\begin{example}
Let $\Gamma$ be a group. Let $\lambda\colon\Gamma\to A^\times$ be a group homomorphism, and let $R\colonequals A[\Gamma]$ be the group algebra of $\Gamma$ over $A$. We equip $R$ with the $A$-linear involution $*$ defined for $\gamma\in \Gamma$ as follows:
$$ \gamma^*=\lambda(\gamma)\gamma^{-1}. $$
A representation $\rho\colon \Gamma\to \GSp_{2d}(A)$ with similitude character $\lambda$ induces a symplectic representation $\rho\colon (R,*)\to (M_{2d}(A),\mathrm{j})$.
By \Cref{reptodet}, we get a symplectic determinant law $(D,P)\colon (R,*)\to A$.
\end{example}

This example leads us to consider the following definition.
\begin{definition}\label{gspdet}
    Let $\Gamma$ be a group. A \emph{$2d$-dimensional (weak) general symplectic determinant law} of $\Gamma$ over $A$ is a triple $(D,P,\lambda)$, where $\lambda : \Gamma \to A^{\times}$ is a character and $(D,P)$ is a $2d$-dimensional (weak) symplectic determinant law $(D,P) : A[\Gamma] \to A$, where $A[\Gamma]$ carries the $A$-linear involution given by $\gamma^* = \lambda(\gamma) \gamma^{-1}$ for $\gamma\in \Gamma$.
\end{definition}

The following Proposition ensures existence and uniqueness of the reduced Pfaffian on symplectic Azumaya algebras of constant rank over general base schemes.

\begin{proposition}\label{reducedpfaffian}
    Let $Y$ be a scheme with $2 \in \calO_Y(Y)^{\times}$ and let $(\calA, \sigma)$ be a symplectic Azumaya algebra over $Y$ of constant rank $4d^2$.
    Then there is a unique $\calO_S$-linear map $\Prd \colon \calA^+ \to \calO_Y$, such that for every morphism of schemes $T \to Y$ with $(f^* \calA, \sigma)$ isomorphic to $(M_{2d}(\calO_Y), \mathrm j)$, the induced map $(M_{2d}(\calO_Y), \mathrm j)^+ \cong f^* \calA^+ \overset{\Prd}{\to} \calO_T$ is equal to $M \mapsto \Pf(MJ)$. We call $\Prd$ the reduced Pfaffian of $(\calA, \sigma)$.
\end{proposition}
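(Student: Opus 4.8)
The plan is to construct $\Prd$ by étale descent from the explicit local formula $M\mapsto\Pf(MJ)$; the one substantial point is that this formula is invariant under the conjugation action of $\GSp_{2d}$, everything else being routine descent. Note first that, since $2\in\calO_Y(Y)^{\times}$ and $\sigma$ is $\calO_Y$-linear, the complementary idempotents $\tfrac{1\pm\sigma}{2}$ split $\calA=\calA^+\oplus\calA^-$; as $\calA$ is locally free of finite rank so is $\calA^+$, and its formation commutes with arbitrary base change $f\colon T\to Y$, i.e.\ $f^*(\calA^+)=(f^*\calA)^+$. Applying \Cref{standardformoverschemes} over an affine open cover of $Y$ and assembling, we obtain an étale cover $\{g_i\colon Y_i\to Y\}_{i\in I}$ together with isomorphisms of $\calO_{Y_i}$-algebras with involution $\phi_i\colon (g_i^*\calA,\sigma)\eqto(M_{2d}(\calO_{Y_i}),\mathrm j)$. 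On $Y_i$ I define the homogeneous degree-$d$ polynomial law $\Prd_i\colonequals \big(M\mapsto\Pf(MJ)\big)\circ\phi_i^{+}\colon (\calA^+)|_{Y_i}\to\calO_{Y_i}$, where $\phi_i^{+}$ is the induced isomorphism on symmetric parts; here $MJ$ is alternating because $M^{\mathrm j}=M$ forces $(MJ)^{\top}=-MJ$, so its Pfaffian is defined.

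\textbf{Key computation.} Let $B$ be any commutative ring with $2\in B^{\times}$ and $g\in\GSp_{2d}(B)$, so $g^{\mathrm j}g=\lambda(g)\id$; using $J^{-1}=-J$ this rewrites as $gJg^{\top}=\lambda(g)J$, and one also has $\det(g)=\lambda(g)^{d}$ (reduce to a generic symplectic similitude matrix over $\bbC$ and use connectedness of $\GSp_{2d}(\bbC)$, exactly as in the proof of \Cref{charpf}). From $gJg^{\top}=\lambda(g)J$ one gets $g^{-1}J=\lambda(g)^{-1}Jg^{\top}$, hence for $M\in M_{2d}(B)$ with $M^{\mathrm j}=M$
$$ (gMg^{-1})J=\lambda(g)^{-1}\,g\,(MJ)\,g^{\top}, $$
which is again alternating, so by the classical identities $\Pf(gNg^{\top})=\det(g)\Pf(N)$ and $\Pf(cN)=c^{d}\Pf(N)$ (valid over any commutative ring),
$$ \Pf\!\big((gMg^{-1})J\big)=\lambda(g)^{-d}\det(g)\,\Pf(MJ)=\Pf(MJ). $$
Thus $M\mapsto\Pf(MJ)$ is $\GSp_{2d}$-invariant on $(M_{2d},\mathrm j)^{+}$; as the action factors through $\PGSp_{2d}$ and $\GSp_{2d}\to\PGSp_{2d}$ is an fppf surjection, it is also $\PGSp_{2d}$-invariant.

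\textbf{Gluing and uniqueness.} On $Y_{ij}\colonequals Y_i\times_Y Y_j$ the isomorphisms $\phi_i,\phi_j$ both trivialize $(\calA,\sigma)$, so $\phi_j\circ\phi_i^{-1}$ is an automorphism of $(M_{2d}(\calO_{Y_{ij}}),\mathrm j)$; after a further étale refinement it becomes inner (the automorphism functor of the Azumaya algebra $M_{2d}$ is $\PGL_{2d}$, whose sections étale-locally lift to $\GL_{2d}$), say conjugation by $g\in\GL_{2d}$, and preservation of $\mathrm j$ forces $g\in\GSp_{2d}$. The computation above then gives $\Prd_i=\Prd_j$ on this refinement, hence on $Y_{ij}$, because homogeneous degree-$d$ $\calO_{Y_{ij}}$-polynomial laws $(\calA^+)|_{Y_{ij}}\to\calO_{Y_{ij}}$ are the global sections of the quasi-coherent sheaf $\sheafhom_{\calO_{Y_{ij}}}\!\big(\Gamma^d_{\calO_{Y_{ij}}}((\calA^+)|_{Y_{ij}}),\calO_{Y_{ij}}\big)$ (using that $\calA^+$ is locally free of finite rank) and so satisfy étale descent. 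By the same descent the $\Prd_i$ glue to a homogeneous degree-$d$ polynomial law $\Prd\colon\calA^+\to\calO_Y$; it restricts to $M\mapsto\Pf(MJ)$ on each $Y_i$, and then on any $f\colon T\to Y$ over which $(f^*\calA,\sigma)\cong(M_{2d}(\calO_T),\mathrm j)$, since $\{T\times_Y Y_i\to T\}$ is an étale cover of $T$ on which both trivializations are available and differ by $\GSp_{2d}$. Uniqueness is immediate: any map with the stated property agrees with $\Prd$ over $\{Y_i\}$, hence everywhere. The only non-formal ingredient is the $\GSp_{2d}$-invariance of $\Pf(\,\cdot\, J)$ (resting on $\det(g)=\lambda(g)^d$ and on the fact that involution-preserving automorphisms land in $\GSp_{2d}$); the remainder is standard étale descent.
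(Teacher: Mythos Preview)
Your proof is correct and follows essentially the same approach as the paper: both construct $\Prd$ by étale descent from the local formula $M\mapsto\Pf(MJ)$, the key point being invariance of this formula under $\GSp_{2d}$-conjugation. The only difference is in how that invariance is established: you compute directly using $(gMg^{-1})J=\lambda(g)^{-1}g(MJ)g^{\top}$ together with $\det(g)=\lambda(g)^{d}$ (the latter via connectedness over $\bbC$), whereas the paper argues that $(\Prd\circ\alpha)^{2}=\det\circ\alpha=\det=\Prd^{2}$, reduces to an integral domain to get $\Prd\circ\alpha=\pm\Prd$, and then invokes connectedness of $\GSp_{2d}$ to fix the sign. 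Both routes ultimately rest on the same connectedness input, so the arguments are equivalent in substance; your explicit identity is perhaps more transparent, while the paper's squaring trick avoids the intermediate formula $\det(g)=\lambda(g)^{d}$.
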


\begin{proof}
    From $(\calA, \sigma)$ we obtain a descent datum of quasi-coherent involutive $\calO_Y$-algebras on the small étale site of $Y$: by \Cref{standardformoverschemes} there is an étale covering $f \colon \tilde Y \to Y$ and an isomorphism $\psi\colon (M_{2d}(\calO_{\tilde S}), \mathrm j) \cong (f^* \calA, \sigma)$. So we obtain an isomorphism $\varphi \colon (M_{2d}(\calO_{\tilde Y \times_Y \tilde Y}), \mathrm j) \to (M_{2d}(\calO_{\tilde Y \times_Y \tilde Y}), \mathrm j)$ satisfying the axioms of a descent datum. To define $\Prd$ by effectivity of étale descent for maps of vector bundles, we have to a give a morphism of descent data from $\varphi$ to the descent datum of $\calO_Y$ induced by the covering $\tilde Y \to Y$. We define $\Prd_{\tilde Y} \colon M_{2d}(\calO_{\tilde Y})^+ \to \calO_{\tilde Y}$ by $M \mapsto \Pf(MJ)$ using the Leibniz formula for $\Pf$. To check that $\Prd_{\tilde Y}$ gives a morphism of descent data and to prove at the same time that $\Prd$ satisfies the desired property stated in the Proposition (which also implies uniqueness), we have to show, that for an arbitrary scheme $T$ and an arbitrary isomorphism of Azumaya algebras $\alpha \colon (M_{2d}(\calO_T), \mathrm j) \to (M_{2d}(\calO_T), \mathrm j)$ over $\calO_T$, we have $\Prd_T \circ \alpha = \Prd_T$ where $\Prd_T \colon M_{2d}(\calO_T)^+ \to \calO_T$ is defined by $M \mapsto \Pf(MJ)$. For that we may assume that $T = \Spec(A)$ for a local ring $A$. By the Skolem-Noether theorem (which holds over local rings, see e.g. \cite[Remark 3.4.19]{AGPR}), $\alpha$ is given by conjugation by a matrix in $\GSp_{2d}(A)$. To show $\Prd_T \circ \alpha = \Prd_T$ is now entirely formal, hence we may also assume that $A$ is an integral domain. We have $(\Prd_T \circ \alpha)^2 = \det \circ \alpha = \det = \Prd_T^2$, hence $\Prd_T \circ \alpha = \pm \Prd_T$. It follows, that $\Prd_T \circ \alpha = \Prd_T$, since $\GSp_{2d}$ is a connected group scheme over $A$.
\end{proof}

\begin{corollary}
    Let $(\calA, \sigma)$ be a symplectic Azumaya algebra over $A$ of constant rank $4d^2$.
    For every commutative $A$-algebra $B$, we define $\det_B : \calA \otimes_{A} B \to B$ to be the reduced determinant of the Azumaya algebra $\calA \otimes_{A} B$ and we define $\pf_B : \calA^+ \otimes_{A} B \to B$ to be its reduced Pfaffian. Then $(\det, \pf) : \calA \to A$ is a symplectic determinant law.
\end{corollary}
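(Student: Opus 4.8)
The strategy is to reduce every assertion to the split model $(M_{2d}(-),\mathrm{j})$ by faithfully flat descent. Since $\Spec A$ is quasi-compact, \Cref{standardformoverschemes} (with $2d$ in place of the integer $d$ there) supplies a single faithfully flat étale $A$-algebra $\tilde A$ and an isomorphism $\psi\colon(M_{2d}(\tilde A),\mathrm{j})\xrightarrow{\sim}(\calA\otimes_A\tilde A,\sigma)$, together with the étale descent datum --- an automorphism $\varphi$ of the involutive algebra $(M_{2d}(\tilde A\otimes_A\tilde A),\mathrm{j})$ --- from which $(\calA,\sigma)$ is recovered. The ordinary determinant on $M_{2d}$ and the law $M\mapsto\Pf(MJ)$ on $M_{2d}^{+}$ are invariant under every $\calO$-algebra automorphism of $(M_{2d},\mathrm{j})$; for the Pfaffian this invariance is established within the proof of \Cref{reducedpfaffian}. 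In particular they are $\varphi$-invariant, hence glue to $A$-polynomial laws $\det\colon\calA\to A$, homogeneous of degree $2d$, and $\pf\colon\calA^{+}\to A$, homogeneous of degree $d$, whose formation commutes with every base change $A\to B$. Consequently the maps $\det_B$ and $\pf_B$ of the statement are the base changes of $\det$ and $\pf$; so $(\det,\pf)$ is a pair of $A$-polynomial laws, and it remains to verify the axioms of \Cref{defsympldetlaw}.

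After base change to $\tilde A$ the situation is fully explicit: via $\psi$ the pair $(\det_{\tilde A},\pf_{\tilde A})$ is identified with $\bigl(M\mapsto\det M,\ M\mapsto\Pf(MJ)\bigr)$ on $(M_{2d}(\tilde A),\mathrm{j})$, which is exactly the symplectic determinant law attached by \Cref{reptodet} to the identity symplectic representation of $(M_{2d}(\tilde A),\mathrm{j})$. Hence over $\tilde A$ every condition of \Cref{defsympldetlaw} holds: $\det_{\tilde A}$ is a $2d$-dimensional $*$-determinant law, $\pf_{\tilde A}$ is $d$-homogeneous with $\pf_{\tilde A}^{\,2}=\det_{\tilde A}|_{M_{2d}(\tilde A)^{+}}$ and $\pf_{\tilde A}(1)=1$, and $\CH(\pf_{\tilde A})\subseteq\ker(\det_{\tilde A})$, the last inclusion being the Pfaffian Cayley-Hamilton theorem \Cref{charpf} as used in \Cref{reptodet}.

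It remains to descend these statements from $\tilde A$ to $A$. Multiplicativity of $\det$, the $*$-invariance $\det(x^{*})=\det(x)$, the identity $\pf^{2}=\det|_{\calA^{+}}$, the normalisation $\pf(1)=1$, and the prescribed homogeneity degrees are all statements about $A$-polynomial laws; via the classification of polynomial laws by morphisms out of the divided power modules $\Gamma^{\bullet}_A(\calA)$ and $\Gamma^{\bullet}_A(\calA^{+})$, whose formation is compatible with the base change $A\to\tilde A$, together with faithful flatness of $\tilde A$ over $A$, each holds over $A$ because it holds over $\tilde A$. This already shows that $(\det,\pf)$ is a weak symplectic determinant law. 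For the remaining condition $\CH(\pf)\subseteq\ker(\det)$: by \Cref{stabilityCH} the image of $\CH(\pf)\otimes_A\tilde A$ in $\calA\otimes_A\tilde A$ equals $\CH(\pf_{\tilde A})$; and from the definition of the kernel of a polynomial law, together with faithful flatness of $A\to\tilde A$ and compatibility of $\det$ with base change, an element $x\in\calA$ lies in $\ker(\det)$ if and only if $1\otimes x\in\ker(\det_{\tilde A})$. Combining these with the split inclusion $\CH(\pf_{\tilde A})\subseteq\ker(\det_{\tilde A})$ gives $\CH(\pf)\subseteq\ker(\det)$, as required.

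The one genuinely delicate step is this last descent of the Cayley-Hamilton inclusion: one must check that both the ideal $\CH(\pf)$ and the submodule $\ker(\det)$ interact correctly with the faithfully flat base change $A\to\tilde A$, the former via \Cref{stabilityCH} and the latter by a short direct argument with the definition of the kernel of a polynomial law. Everything else is a routine transport, through the descent datum, of the split-case verification recorded in \Cref{reptodet}.
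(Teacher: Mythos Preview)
Your proof is correct and follows the same approach as the paper, which simply states that ``all desired properties follow from \Cref{reducedpfaffian} and étale descent.'' You have spelled out in detail exactly how that descent works, including the one point that genuinely requires care, namely the descent of the inclusion $\CH(\pf)\subseteq\ker(\det)$ via \Cref{stabilityCH} and the faithful-flatness argument for $\ker(\det)$.

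One small remark: for the $\CH$ step you only need that the image of $\CH(\pf)$ in $\calA\otimes_A\tilde A$ is \emph{contained in} $\CH(\pf_{\tilde A})$, which is immediate from the definitions (the generators $\chi^{\pf}_\alpha(r_1,\dots,r_n)$ map to $\chi^{\pf_{\tilde A}}_\alpha(r_1\otimes 1,\dots,r_n\otimes 1)$); the surjectivity asserted in \Cref{stabilityCH} is not needed here.
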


\begin{proof}
    All desired properties follow from \Cref{reducedpfaffian} and étale descent.
\end{proof}

For an $A$-algebra with involution $(R,*)$, and a weak symplectic determinant $(D,P)\colon (R,*)\to A$, we introduce the polynomial laws
\begin{align*}
    \Lambda_i^D&\colon R\to A \quad \text{ for } 1\le i \le 2d,
    \\ \mathcal{T}_j^P&\colon R^+\to A \quad \text{ for }1\le j \le d,
\end{align*}
defined for any $A$-algebra $B$ by the formulas
\begin{align*}
    \chi^D(r,t)\colonequals D_B(t-r)= \sum_{i=0}^{2d}(-1)^i\Lambda_{i,B}^D(r)t^{2d-i}, \quad r\in R\otimes_A B,
    \\ \chi^P(r,t)\colonequals P_B(t-r)= \sum_{i=0}^{d}(-1)^i\mathcal{T}_{i,B}^P(r)t^{d-i}, \quad r\in R^+\otimes_A B.
\end{align*}
The following result explains how the characteristic polynomial of $P$ is related to the characteristic polynomial of $D$ when restricted to symmetric elements.
In particular, we see that a weak symplectic determinant law $(D,P)$ is determined by $D$.
\begin{proposition}\label{pfaffianunique} If $D \colon R \to A$ and $P, P' \colon R^+ \to A$ are polynomial laws, such that $(D,P)$ and $(D,P')$ are weak symplectic determinant laws, then $P = P'$. Further, we have the recursion formula
$$ \Lambda_{i}^D|_{R^+} = \sum_{j=0}^i \calT^P_{j} \calT^P_{i-j} $$
for $1 \leq i \leq 2d$ with $\calT^P_i = 0$ for $i > d$.
\end{proposition}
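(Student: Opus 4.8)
The plan is to transport the problem to an identity of characteristic polynomials, where the relation $P^2=D|_{R^+}$ becomes tractable. Fix a commutative $A$-algebra $B$ and an element $r\in R^+\otimes_A B$; then $t\cdot 1-r$ lies in $R^+\otimes_A B[t]$, so I may evaluate the defining identities $P^2=D|_{R^+}=(P')^2$ over the $A$-algebra $B[t]$ at $t-r$. This yields at once
\[
\chi^P(r,t)^2 \;=\; \chi^D(r,t) \;=\; \chi^{P'}(r,t)^2 \qquad\text{in } B[t].
\]
Note that $\chi^P(r,t)$ and $\chi^{P'}(r,t)$ are monic of degree $d$, the usual consequence of $P(1)=P'(1)=1$ together with $d$-homogeneity, just as Chenevier's $\Lambda^D_0=1$ follows from $D(1)=1$ \cite[\S 1]{MR3444227}.

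For the recursion formula I would simply expand and compare coefficients: writing $\chi^P(r,t)=\sum_{j=0}^{d}(-1)^j\calT^P_j(r)\,t^{d-j}$ and $\chi^D(r,t)=\sum_{i=0}^{2d}(-1)^i\Lambda^D_i(r)\,t^{2d-i}$, squaring the first and matching the coefficient of $t^{2d-i}$ against the second gives $\Lambda^D_i(r)=\sum_{j=0}^{i}\calT^P_j(r)\calT^P_{i-j}(r)$ for every $B$ and every $r\in R^+\otimes_A B$; since both sides are natural in $B$, this is the asserted identity of polynomial laws on $R^+$, where the convention $\calT^P_j=0$ for $j>d$ absorbs the range $d<i\le 2d$.

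For uniqueness, the only step that is not pure formalism — and thus the main obstacle — is cancelling the square in $\chi^P(r,t)^2=\chi^{P'}(r,t)^2$, which is delicate because $B$ is an arbitrary commutative $A$-algebra, possibly with zero divisors. I would use that $f:=\chi^P(r,t)$ and $g:=\chi^{P'}(r,t)$ are both monic of degree $d$, so $f+g$ has leading coefficient $2\in B^\times$ and is therefore a non-zero-divisor in $B[t]$ (an element annihilating $f+g$ must annihilate its leading coefficient, hence vanish). From $(f-g)(f+g)=0$ we then get $\chi^P(r,t)=\chi^{P'}(r,t)$ in $B[t]$, for all $B$ and all $r\in R^+\otimes_A B$. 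Applying the $A$-algebra map $B[t]\to B$, $t\mapsto 0$, and invoking naturality of polynomial laws gives $P_B(-r)=P'_B(-r)$, whence $P_B(r)=P'_B(r)$ by $d$-homogeneity; as $B$ and $r$ were arbitrary, $P=P'$. The remaining ingredients — legitimacy of substituting $t-r$ into the relations, the coefficient bookkeeping, and recovering $P$ from its characteristic polynomial by setting $t=0$ — are routine base-change manipulations with polynomial laws.
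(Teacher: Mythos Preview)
Your proof is correct. The recursion $\Lambda^D_i|_{R^+}=\sum_{j}\calT^P_j\calT^P_{i-j}$ is obtained exactly as in the paper, by squaring $\chi^P(r,t)$ and matching coefficients against $\chi^D(r,t)$.

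For uniqueness the paper argues a little differently: rather than cancelling globally in $B[t]$, it uses the recursion itself as an inductive scheme, writing $\Lambda^D_i|_{R^+}=2\calT^P_i+\sum_{j=1}^{i-1}\calT^P_j\calT^P_{i-j}$ and solving for $\calT^P_i$ (via $2\in A^\times$) in terms of $\Lambda^D$ and the already-determined lower $\calT^P_j$; the same computation for $P'$ then yields $\calT^P_i=\calT^{P'}_i$ for all $i$. Your argument --- $\chi^P(r,t)+\chi^{P'}(r,t)$ has unit leading coefficient $2$, hence is a non-zero-divisor in $B[t]$, so $(\chi^P-\chi^{P'})(\chi^P+\chi^{P'})=0$ forces equality --- repackages the same use of $2\in A^\times$ into a single cancellation step. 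The inductive version has the mild bonus of simultaneously exhibiting each $\calT^P_i$, hence $P$ itself, as a universal polynomial in the $\Lambda^D_j|_{R^+}$ over $\bbZ[\tfrac12]$, a fact the paper invokes later; but this follows equally well from your recursion formula by the same induction, so nothing is lost.
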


\begin{proof} Since $1 = P(1) = P'(1)$, we have $\calT^P_0 = \calT^{P'}_0$. By comparing the coefficients $\calT_i^P$ and $\calT_i^{P'}$ and the coefficients $\Lambda_i^D$ using $\chi^D(\cdot,t)|_{R^+} = \chi^P(\cdot, t)^2 = \chi^{P'}(\cdot, t)^2$ we obtain
$$ \Lambda_{i}^D|_{R^+} = \sum_{j=0}^i \calT_{j}^P \calT_{i-j}^P = \sum_{j=0}^i \calT^{P'}_{j} \calT^{P'}_{i-j} $$
for $1 \leq i \leq 2d$ and $\calT^P_{d} = P$ and $\calT^{P'}_{d} = P'$. For $i=0$, we know, that $1 = \Lambda_0 = {\calT^P_0}^2 = {\calT^{P'}_0}^2$.

By induction over the above equations and using $2 \in A^{\times}$, we obtain $\calT_i' = \calT_i$ for all $0 \leq i \leq d$, in particular $P' = P$.
\end{proof}

Taking $r=1$, we see that $\calT^P_i(1) = \pm \binom{d}{i}$ for $0 \leq i \leq d$ and the assumption $P(1) = \calT^P_d(1) = 1$ implies that $\calT^P_i(1) = \binom{d}{i}$ by downward induction.

\begin{example}\label{exd=2}
 For $d=4$, one finds that
$$ P= \frac{1}{2}\Lambda_4^D-\frac{1}{4}\Lambda_1^D\Lambda_3^D+\frac{1}{16}(\Lambda_1^D)^2\Lambda_2^D+\frac{1}{8}(\Lambda_2^D)^2-\frac{3}{128}(\Lambda_1^D)^4. $$
In particular, we see that the recursion formulas of \Cref{pfaffianunique} provide us with a way to define $P$ as a $d$-homogeneous $A$-polynomial law on the entire algebra $R$ for every $2d$-dimensional determinant law $D$ when $2 \in A^{\times}$.
\end{example}

\begin{example}\label{SL2example}
Let $\Gamma$ be a group. By \cite[Lemma 1.9]{MR3444227}, the datum of a $2$-dimensional determinant law $D \colon A[\Gamma]\to A$ is equivalent to the datum of a pair of functions $(d,t)\colon\Gamma\to A$ such that $d \colon \Gamma\to A^\times$ is a group homomorphism, and $t$ is a function satisfying $t(1)=2$ and for all $\gamma,\gamma'\in \Gamma$ the following two equations:
\begin{itemize}
    \item[(a)] $t(\gamma\gamma')=t(\gamma'\gamma)$,
    \item[(b)] $d(\gamma)t(\gamma^{-1}\gamma')-t(\gamma)t(\gamma')+t(\gamma\gamma')=0$.
\end{itemize}
Here the functions $t$ and $d$ are obtained from the determinant law $D$ by considering the characteristic polynomial $\chi^D(x,\gamma)=x^2-t(\gamma)x+d(\gamma) \in A[x]$ for all $\gamma \in \Gamma$. In particular, they are defined as functions $t,d \colon A[\Gamma] \to A$, and we have the usual polarization formula
\begin{align}
    d(r)=\frac{t(r)^2-t(r^2)}{2},\label{polDT}
\end{align}
for all $r \in A[\Gamma]$.

We are interested in the case, that $D$ is a symplectic determinant law in the sense of \Cref{defsympldetlaw}.
Note, that this means that $D$ is a determinant law for $\Sp_2 = \SL_2$.
So we require that there exists a $1$-homogeneous $A$-polynomial law $P \colon A[\Gamma]^+ \to A$ with $P^2 = D|_{A[\Gamma]^+}$ and $P(1)=1$. So let us assume such a $P$ exists.
By \cite[Example 1.2 (i)]{MR3444227} $P$ is determined by the $A$-linear map $P_A \colon A[\Gamma]^+ \to A$.
By \Cref{pfaffianunique}, we have $P_A(r)=\frac{1}{2}t(r)$ for all $r\in A[\Gamma]^+$.
Evaluating the equation $P_A^2 = d|_{A[\Gamma]^+}$ at $\gamma+\gamma^{-1}$ for some $\gamma \in \Gamma$, we thus obtain
\begin{align}
    \frac{1}{4}t(\gamma+\gamma^{-1})^2=d(\gamma+\gamma^{-1}). \label{TDeqn}
\end{align} 
\Cref{polDT} gives
\begin{align}
    d(\gamma+\gamma^{-1})=d(\gamma)+d(\gamma^{-1})+t(\gamma)t(\gamma^{-1})-2. \label{deqn}
\end{align}
Combining \Cref{deqn} with \Cref{TDeqn} we get
$$ \frac{1}{4}t(\gamma+\gamma^{-1})^2 = d(\gamma)+d(\gamma^{-1})+t(\gamma)t(\gamma^{-1})-2,$$
and thus
$$ t(\gamma)^2+2t(\gamma)t(\gamma^{-1})+t(\gamma^{-1})^2-2t(\gamma^2)-2t(\gamma^{-2})-8=0. $$
In \Cref{defsympldetlaw}, we also require that the determinant law $D$ is invariant for the $A$-linear involution on $A[\Gamma]$ extending inversion $\Gamma \to \Gamma, ~\gamma \mapsto \gamma^{-1}$. This implies that the functions $t,d$ are invariant under the inversion map. So we have
$$ 4t(\gamma)^2-4t(\gamma^{2})-8=0,  $$
and hence $d(\gamma)=1$ by \Cref{polDT}. 

On another note, for $\gamma,\gamma'\in \Gamma$  we have
$$ t(\gamma'\cdot (\gamma+\gamma^{-1}-t(\gamma)))=t(\gamma'\gamma)+t(\gamma'\gamma^{-1})-t(\gamma)t(\gamma')=0, $$
by (b) since $d(\gamma)=1$. If $A$ is an infinite domain, this is saying that $\CH(P)\subseteq\ker(D)$.
\end{example}

\begin{lemma}\label{propertyofP}
Let $(R,*)$ be an $A$-algebra with involution equipped with a weak symplectic determinant $(D,P)$. Then for every commutative $A$-algebra $B$, any $x\in R\otimes_A B$, and any $y\in R^+\otimes_A B$ such that  $P_B(y)$ is a non-zero divisor,  we have that
$$ P_B(xyx^*)=D_B(x)P_B(y). $$
\end{lemma}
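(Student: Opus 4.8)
The identity is the polynomial-law incarnation of the classical $\Pf(X M X^{\top}) = \det(X)\,\Pf(M)$, and the plan is to mimic its proof: check that both sides have equal squares, then pin down the sign by deforming $x$ to $1$. First I would record that $xyx^{*}$ is symmetric, $(xyx^{*})^{*} = x\,y^{*}\,x^{*} = xyx^{*}$ (since $y^{*}=y$), so $xyx^{*} \in (R\otimes_A B)^{+} = R^{+}\otimes_A B$ — the last equality because $2\in A^{\times}$ — and $P_B(xyx^{*})$ is defined. As $D$ is a $*$-determinant law and $A$ carries the trivial involution, its base change $D_B$ lands in $B$ with its trivial involution and satisfies $D_B(\xi^{*}) = D_B(\xi)$; combining this with multiplicativity of $D$ and $P^{2}=D|_{R^{+}}$ gives
\[ P_B(xyx^{*})^{2} = D_B(xyx^{*}) = D_B(x)\,D_B(y)\,D_B(x^{*}) = D_B(x)^{2}D_B(y) = \bigl(D_B(x)P_B(y)\bigr)^{2}. \]
So, writing $a := P_B(xyx^{*})$ and $b := D_B(x)P_B(y)$, we have $(a-b)(a+b)=0$, and it remains to show $a=b$. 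The obstacle is that $B$ need not be reduced or a domain, and even over a domain the sign still has to be fixed.

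To fix the sign uniformly I would rerun the computation over $B[s]$ with $x$ replaced by the path $u := (1-s)+sx \in R\otimes_A B[s]$ (so $u^{*}=(1-s)+sx^{*}$), which specializes to $1$ under $s\mapsto 0$ and to $x$ under $s\mapsto 1$. Setting $a(s):=P_{B[s]}(uyu^{*})$ and $b(s):=D_{B[s]}(u)\,P_B(y)$ in $B[s]$, the same chain of equalities over $B[s]$ (the conditions defining a weak symplectic determinant law being stable under base change, cf.\ \Cref{stabilityCH}) yields $a(s)^{2}=b(s)^{2}$. Compatibility of polynomial laws with the map $s\mapsto 0$ gives $a(0)=P_B(1\cdot y\cdot 1)=P_B(y)$ and $b(0)=D_B(1)P_B(y)=P_B(y)$, hence $a(0)=b(0)$. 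Therefore $a(s)-b(s)=s\,c(s)$ and $a(s)+b(s)=2P_B(y)+s\,e(s)$ for some $c(s),e(s)\in B[s]$, and since $s$ is a non-zero-divisor in $B[s]$, the relation $(a(s)-b(s))(a(s)+b(s))=0$ forces $c(s)\bigl(2P_B(y)+s\,e(s)\bigr)=0$. If $c(s)\neq 0$, write $c(s)=s^{m}c'(s)$ with $c'(0)\neq 0$; dividing by $s^{m}$ and comparing constant terms yields $2\,c'(0)\,P_B(y)=0$, which contradicts $2\in A^{\times}$ together with $P_B(y)$ being a non-zero-divisor. Hence $c(s)=0$, i.e.\ $a(s)=b(s)$, and specializing at $s=1$ (using $u|_{s=1}=x$) gives $P_B(xyx^{*})=D_B(x)P_B(y)$.

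I expect the only delicate points to be the bookkeeping that the target of $D_B$ carries the trivial involution (so that $D_B(x^{*})=D_B(x)$) and the identification $(R\otimes_A B)^{+}=R^{+}\otimes_A B$, both of which rest on $2\in A^{\times}$; everything else is the elementary cancellation over $B[s]$, whose sole purpose is to transport the normalization at $s=0$ to the value at $s=1$.
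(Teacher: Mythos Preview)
Your proof is correct and follows essentially the same route as the paper: square both sides using $P^2=D|_{R^+}$, multiplicativity of $D$, and $D(x^*)=D(x)$; then deform $x$ to $1$ along $1+s(x-1)$ to fix the sign. The paper phrases the cancellation step a bit more cleanly by observing directly that $a(s)+b(s)$ has constant term $2P_B(y)$ and is therefore a non-zero-divisor in $B[s]$ (equivalently $B[[s]]$), so $(a-b)(a+b)=0$ forces $a=b$ without needing your factorization $c(s)=s^{m}c'(s)$; but this is only a stylistic difference.
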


\begin{proof}
For a fixed $y$ as in the statement, consider the polynomial laws $Q_1\colon R\otimes_A B\to B, \ x\mapsto P(xyx^*)$ and $Q_2\colon  R\otimes_A B\to B, \ x\mapsto D(x)P(y)$. Then, it is clear that $Q_1^2=Q_2^2$, and so evaluating at the formal power series ring $B[[t]]$, we have  $$(Q_{1}(tx-t+1)-Q_{2}(tx-t+1))(Q_{1}(tx-t+1)+Q_{2}(tx-t+1))=0.$$
The evaluation of the second summand at $t=0$ gives $2P_B(y)$, thus $Q_{1}(tx-t+1)+Q_{2}(tx-t+1)$ is a non zero divisor. And so, $Q_{1}(tx-t+1)=Q_{2}(tx-t+1)$ whose evaluation at $t=1$ gives the result.
\end{proof}

If $x,y\in R^+$, then we do not generally have that $xy\in R^+$. In fact, this happens if and only if $x$ and $y$ commute. In this case, $P$ turns out to be multiplicative as recorded by the following Lemma (which was discovered in \cite[Proposition 3.1]{THC&NBC}). We will make use of it in \Cref{SCHidempotent}.
\begin{lemma}\label{Pcommutes} Let $(R,*)$ be an $A$-algebra with involution equipped with a weak  $2d$-dimentional symplectic determinant law $(D,P)$. Then, for any commutative $A$-algebra $B$ any commuting elements $x,y\in R^+\otimes_A B$, we have that $xy\in R^+\otimes_A B$ and
$$ P_B(xy)=P_B(x)P_B(y). $$
\end{lemma}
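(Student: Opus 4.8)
The plan is to prove the two assertions separately: the membership $xy \in R^+ \otimes_A B$ is purely formal, and the multiplicativity $P_B(xy) = P_B(x)P_B(y)$ comes from $P^2 = D|_{R^+}$ together with a deformation to the identity, carried out to fix a sign, exactly in the spirit of the proof of \Cref{propertyofP}.

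First I would record that, since $2 \in A^\times$, there is an $A$-module decomposition $R = R^+ \oplus R^-$, and hence $R^+ \otimes_A B = (R \otimes_A B)^+$ for every commutative $A$-algebra $B$; moreover the $B$-linear extension of $*$ to $R \otimes_A B$ restricts to the identity on $R^+ \otimes_A B$. Thus $(xy)^* = y^* x^* = yx = xy$ because $x$ and $y$ commute, so $xy \in (R \otimes_A B)^+ = R^+ \otimes_A B$. Multiplicativity of $D$ and the identity $P^2 = D|_{R^+}$ then give $P_B(xy)^2 = D_B(xy) = D_B(x) D_B(y) = \bigl(P_B(x) P_B(y)\bigr)^2$, so $P_B(xy) = \pm P_B(x) P_B(y)$ locally on $\Spec B$. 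The real content is that the sign is globally $+$.

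To pin down the sign I would pass to the polynomial ring $B[s]$ and put $x' := sx + (1-s)\cdot 1$ and $y' := sy + (1-s)\cdot 1$ in $R^+ \otimes_A B[s]$. Since $x$, $y$, $1$ commute pairwise, $x'$ and $y'$ commute, so $x'y' \in R^+ \otimes_A B[s]$ by the first step. Set $F := P_{B[s]}(x'y')$ and $G := P_{B[s]}(x')\,P_{B[s]}(y')$ in $B[s]$; as above $F^2 = G^2$, hence $(F-G)(F+G) = 0$. Specializing $s \mapsto 0$ and using naturality of polynomial laws together with $x'|_{s=0} = y'|_{s=0} = 1$ and $P(1) = 1$ gives $F(0) = G(0) = 1$, so $F+G$ has constant term $2 \in B^\times$. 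A polynomial in $B[s]$ with invertible constant term is a non-zero-divisor (compare coefficients from the bottom up), so $F = G$, and specializing $s \mapsto 1$ yields $P_B(xy) = P_B(x)P_B(y)$. The only genuinely subtle point is this last maneuver: $B[s]$ need not be a domain and a priori the sign could vary over $\Spec B$, and deforming to $s = 0$ (where both sides equal $P(1)^2 = 1$) is precisely what excludes that.
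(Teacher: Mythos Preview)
Your proof is correct and follows the same approach as the paper's: deform $x$ and $y$ to $1$ and use that $P^2=D|_{R^+}$ together with the normalization $P(1)=1$ forces the correct sign. The only difference is packaging — the paper deforms via $(1+t_1x)(1+t_2y)$ in two variables, proves $P((1+t_1x)(1+t_2y))=P(1+t_1x)P(1+t_2y)$ in $B[[t_1,t_2]]$ by uniqueness of square roots with constant term $1$, and then reads off $P_B(xy)$ as the coefficient of $t_1^d t_2^d$, whereas your single-variable deformation with specialization at $s=1$ is slightly more direct and mirrors the proof of \Cref{propertyofP}.
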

\begin{proof}
The fact that $xy\in R^+\otimes_A B$ is immediate. Now we introduce the commuting elements $1+t_1x,1+t_2y\in R^+\otimes_A B[t_1,t_2]$, and the polynomials
$$ Q_x=P_B(1+t_1x), \quad Q_y=P_B(1+t_2y), \quad Q_{xy}=P_B((1+t_1x)(1+t_2y))$$
in $B[t_1,t_2]$. The $Q_x$ is a polynomial in $t_1$ of degree at most $d$ whose coefficient of $t^d$ is $P_B(x)$. Similarly $Q_y$ is a polynomial in $t_2$ of degree at most $d$ whose coefficient of $t^d$ is $P_B(y)$, and $Q_{xy}$ is a polynomial in $t_1,t_2$ whose coefficient of $t_1^dt_2^d$ is $P_{B}(xy)$. Thus, to prove the statement, it suffices to show the equality $Q_xQ_y=Q_{xy}$, which can be checked inside the power series ring $B[[t_1,t_2]]$. 
\\ Note that for every power series $g\in B[[t_1,t_2]]^\times$ with $g(0,0)\in B^\times$ and every square root $f_0\in B^\times$ of $g(0,0)$, there exists a unique power series $f\in B[[t_1,t_2]]^\times$ with $f(0,0)=f_0$ such that $f^2=g$. This can be seen by considering the power series expansion of the square root function at $1$. Using this fact, the equality $Q_{xy}^2=Q_{x}^2Q_{y}^2$ (coming from multiplicativity of $D$), and $Q_{xy}(0,0)=Q_{y}(0,0)=Q_{x}(0,0)$, we ge that $Q_xQ_y=Q_{xy}$ as desired.
\end{proof}

\begin{lemma}\label{strongHomTheorem} Let $(R,*)$ be an $A$-algebra with involution equipped with a (weak) symplectic determinant law $(D,P)$. Then $\ker(D)$ is stable under $*$ and $\ker(D) \cap R^+ \subseteq \ker(P)$. In particular for every $*$-ideal $I \subseteq \ker(D)$, $(D,P)$ factors uniquely through a (weak) symplectic determinant law $(\overline{D},\overline{P})\colon (R/I,*) \to A$.
\end{lemma}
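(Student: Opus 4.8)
The plan is to establish the three assertions in turn: $*$-stability of $\ker(D)$, the inclusion $\ker(D)\cap R^+\subseteq\ker(P)$, and the factorization. The first is purely formal. Unwinding \Cref{defpolynomiallaws}(7), $m\in\ker(D)$ means $D_B(m\otimes b+m')=D_B(m')$ for every commutative $A$-algebra $B$, every $b\in B$ and every $m'\in R\otimes_A B$. Given such an $m$ and a test triple $(B,b,m')$, I would apply the $B$-linear involution: since $D$ is a $*$-determinant law and the involution on $A$ is trivial, $D_B(x^*)=D_B(x)^*=D_B(x)$ for all $x\in R\otimes_A B$, and $(m^*\otimes b+m')^*=m\otimes b+(m')^*$, so
$$ D_B(m^*\otimes b+m')=D_B\big(m\otimes b+(m')^*\big)=D_B\big((m')^*\big)=D_B(m'), $$
the middle equality being the defining property of $m\in\ker(D)$. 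Hence $m^*\in\ker(D)$.

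For $\ker(D)\cap R^+\subseteq\ker(P)$ — which I expect to be the main obstacle — the naive approach fails: from $P_B(m\otimes b+y)^2=D_B(m\otimes b+y)=D_B(y)=P_B(y)^2$ one cannot conclude $P_B(m\otimes b+y)=P_B(y)$, because a square root in $B[t]$ of the constant $P_B(y)^2$ need not be constant when $P_B(y)$ is a zero-divisor, and the non-zero-divisor hypothesis that makes the square-root trick of \Cref{propertyofP} and \Cref{Pcommutes} work is unavailable here. Instead I would use \Cref{pfaffianunique}: since $\calT^P_0=1$ and $2\in A^\times$, the recursion there rearranges, for $1\le j\le d$, to an identity of polynomial laws on $R^+$
$$ \calT^P_j=\tfrac{1}{2}\Big(\Lambda^D_j|_{R^+}-\sum_{k=1}^{j-1}\calT^P_k\calT^P_{j-k}\Big). $$
I would first note two things: (a) for $m\in\ker(D)$ one has $\Lambda^D_{i,B}(m\otimes b+y)=\Lambda^D_{i,B}(y)$ for all $i,B,b$ and all $y\in R\otimes_A B$, by comparing characteristic polynomials — apply the kernel property over $B[t]$ to $t\cdot 1-(m\otimes b+y)=m\otimes(-b)+(t\cdot 1-y)$; (b) if $m,y$ are symmetric then so is $m\otimes b+y$. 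Then, fixing $m\in\ker(D)\cap R^+$ and a test triple $(B,b,y\in R^+\otimes_A B)$, I would prove $\calT^P_{j,B}(m\otimes b+y)=\calT^P_{j,B}(y)$ by induction on $0\le j\le d$: the case $j=0$ is trivial, and in the inductive step the rearranged recursion evaluated at $m\otimes b+y$ turns into the same expression at $y$, using (a) for $\Lambda^D_j$ and the inductive hypothesis for $\calT^P_k$ and $\calT^P_{j-k}$ with $1\le k\le j-1$. Taking $j=d$ (where $\calT^P_d=P$) gives $P_B(m\otimes b+y)=P_B(y)$, i.e. $m\in\ker(P)$.

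Finally, let $I\subseteq\ker(D)$ be a $*$-ideal. Iterating the kernel property of $D$ shows $D_B$ is invariant under translation by the image of $I\otimes_A B$ in $R\otimes_A B$, so it descends along $R\otimes_A B\twoheadrightarrow(R/I)\otimes_A B$ to a $2d$-homogeneous multiplicative polynomial law $\overline{D}$ on $R/I$; it is a $*$-determinant law since $I$ is $*$-stable (so $R/I$ carries the quotient involution and the projection is involution-equivariant) and $D$ preserves the involution. Since $2\in A^\times$, symmetrizing via $s\mapsto\tfrac12(s+s^*)$ shows $R^+\to(R/I)^+$ is surjective with kernel $R^+\cap I$, so $(R/I)^+\otimes_A B$ is the quotient of $R^+\otimes_A B$ by the image of $(R^+\cap I)\otimes_A B$; as $R^+\cap I\subseteq\ker(D)\cap R^+\subseteq\ker(P)$ by the previous step, the same descent produces $\overline{P}$ on $(R/I)^+$ with $\overline{P}^2=\overline{D}|_{(R/I)^+}$ and $\overline{P}(1)=1$, so $(\overline{D},\overline{P})$ is a weak symplectic determinant law; uniqueness is clear because the projections are surjective on $B$-points. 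If moreover $\CH(P)\subseteq\ker(D)$, I would extend $P$ to all of $R$ via the recursion as in \Cref{exd=2}; being a polynomial in the $\Lambda^D_i$, this extension descends to $R/I$, so $\chi^{\overline{P}}_\alpha(\bar r_1,\dots,\bar r_n)$ is the image of $\chi^{P}_\alpha(r_1,\dots,r_n)$ and hence $\CH(\overline{P})$ is the image of $\CH(P)$; then $\CH(\overline{P})=\pi(\CH(P))\subseteq\pi(\ker(D))=\ker(D)/I\subseteq\ker(\overline{D})$, the last inclusion being immediate from the defining property of $\overline{D}$.
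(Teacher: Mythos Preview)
Your proof is correct and follows essentially the same approach as the paper: both use \Cref{pfaffianunique} to express $P$ (equivalently the $\calT^P_j$) as polynomials in the $\Lambda^D_i$ and then invoke the translation-invariance of the $\Lambda^D_i$ along $\ker(D)$. Your write-up is in fact more detailed than the paper's --- you spell out the induction on $j$ and explicitly verify that the condition $\CH(P)\subseteq\ker(D)$ passes to the quotient, which the paper subsumes under ``satisfying the desired properties.''
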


\begin{proof} Since $D$ is $*$-invariant, it follows that $\ker(D)$ is a $*$-ideal. 
Using  \cite[Lemma 1.19]{MR3444227} we have that
$$ \ker(D)= \left\{r\in R \ | \ \forall B \in \CAlg_A, \ \forall m\in R\otimes_A B, \ \forall i \ge 1, \ \Lambda_i(rm)=0 \right\} $$
By \Cref{pfaffianunique}, we know that $P$ can be expressed as a polynomial in the $\Lambda_i$, thus to show that $r\in\ker(D)\cap R^+$ is in $\ker(P)$, it suffices to show that $\Lambda_i(r\otimes b + m) = \Lambda_i(m)$ for all commutative $A$-algebras $B$, $b\in B$ and $m\in R^+\otimes_A B$. But this follows from the definition of the $\Lambda_i$ and the definition of $\ker(D)$.

Since $2 \in R^{\times}$, we have a surjection $R^+ \twoheadrightarrow (R/I)^+$ and $(R/I)^+$ is identified with $R^+/(I \cap R^+)$. Since $I \cap R^+ \subseteq \ker(D) \cap R^+ \subseteq \ker(P)$, $P$ descends to a well-defined $A$-polynomial law $\overline P \colon (R/I)^+ \to A$ satisfying the desired properties.
\end{proof}
We can define direct sums of (weak) symplectic determinant laws.
On the level of representations, it corresponds to the orthogonal direct sum of symplectic spaces carrying an equivariant group action.
We will use the direct sum to state the structure theorem \Cref{symplecticreconstructionthmoverfields} for weak symplectic determinant laws over fields.

\begin{lemma} Let $A$ be a commutative ring, let $(R,*)$ be an involutive $A$-algebra and let $(D_1,P_1)$ and $(D_2, P_2)$ be (weak) symplectic determinant laws of $(R,*)$ over $A$ of dimension $2d_1$ and $2d_2$ respectively. Then $(D_1D_2, P_1P_2)$ is a (weak) symplectic determinant law of dimension $2(d_1+d_2)$.
\end{lemma}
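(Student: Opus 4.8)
The plan is to treat the ``weak'' part first, where every requirement is formal, and then upgrade to the statement with the Cayley--Hamilton condition.

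\textbf{The weak case.} Writing $D \colonequals D_1 D_2$ and $P \colonequals P_1 P_2$ for the product polynomial laws, I would verify the five defining conditions of \Cref{defsympldetlaw}(1) one at a time. That $D$ is a $2(d_1+d_2)$-dimensional determinant law is the standard fact that the product of a $2d_1$-homogeneous and a $2d_2$-homogeneous multiplicative polynomial law is $2(d_1+d_2)$-homogeneous and multiplicative (i.e. the pairing $\calM_A^{2d_1}(R,A)\times\calM_A^{2d_2}(R,A)\to\calM_A^{2(d_1+d_2)}(R,A)$, cf. \cite{MR3444227}). The $*$-invariance of $D$ follows from $D_B(x^*) = D_{1,B}(x^*)D_{2,B}(x^*) = D_{1,B}(x)D_{2,B}(x) = D_B(x)$ for every $B\in\CAlg_A$ and $x\in R\otimes_A B$. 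Likewise $P$ is $(d_1+d_2)$-homogeneous on $R^+$ as a product of homogeneous polynomial laws, $P^2 = P_1^2 P_2^2 = D_1|_{R^+}D_2|_{R^+} = D|_{R^+}$, and $P(1) = P_1(1)P_2(1) = 1$.

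\textbf{The Cayley--Hamilton condition.} Now assume $\CH(P_i)\subseteq\ker(D_i)$; the task is to show $\CH(P)\subseteq\ker(D)$. First I would extend $P_1, P_2$ and $P$ to $d_1$-, $d_2$- and $(d_1+d_2)$-homogeneous polynomial laws on all of $R$ via the recursion of \Cref{pfaffianunique} (cf. \Cref{exd=2}); comparing the generating series $\sum_i\Lambda_i^{D_i}u^i = \bigl(\sum_j\calT_j^{P_i}u^j\bigr)^2$ shows that the extension of $P$ is still the product of the extensions of $P_1$ and $P_2$, whence for the associated characteristic polynomials one gets the factorisation $\chi^{P}(r,t) = \chi^{P_1}(r,t)\,\chi^{P_2}(r,t)$ in $B[t]$ for all $B\in\CAlg_A$ and $r\in R\otimes_A B$, coming from $(P_1P_2)_{B[t]}(t-r)=P_{1,B[t]}(t-r)P_{2,B[t]}(t-r)$. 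Fix $n\ge 1$ and $r_1,\dots,r_n\in R$, set $C\colonequals A[t_1,\dots,t_n]$ and $x\colonequals t_1r_1+\dots+t_nr_n\in R\otimes_A C$. Substituting the central variable $t\mapsto x$ (a ring homomorphism $C[t]\to R\otimes_A C$) into the factorisation gives $\chi^{P}(x,x) = \chi^{P_1}(x,x)\,\chi^{P_2}(x,x)$ in $R\otimes_A C$. Since $\chi^{P_1}(x,x)$ has all of its $t$-coefficients in $\CH(P_1)$ by definition, it lies in the two-sided ideal $\CH(P_1)\otimes_A C$ of $R\otimes_A C = R[t_1,\dots,t_n]$, and therefore so does the product $\chi^{P}(x,x)$; that is, $\chi^{P}_\alpha(r_1,\dots,r_n)\in\CH(P_1)$ for every $\alpha$. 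Letting $n$ and the $r_i$ vary gives $\CH(P)\subseteq\CH(P_1)$, and by symmetry $\CH(P)\subseteq\CH(P_2)$.

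\textbf{Conclusion and the point to watch.} Combining these, $\CH(P)\subseteq\CH(P_1)\cap\CH(P_2)\subseteq\ker(D_1)\cap\ker(D_2)$, and it remains only to note that $\ker(D_1)\cap\ker(D_2)\subseteq\ker(D)$: if $r$ lies in both kernels, then for all $B\in\CAlg_A$, $b\in B$ and $m\in R\otimes_A B$ we have $D_B(r\otimes b+m) = D_{1,B}(r\otimes b+m)D_{2,B}(r\otimes b+m) = D_{1,B}(m)D_{2,B}(m) = D_B(m)$, so $r\in\ker(D)$. This yields $\CH(P)\subseteq\ker(D)$, as required. There is no real obstacle here; the only subtlety is that $\ker(D_1)$ \emph{alone} is not contained in $\ker(D_1D_2)$ (multiplicativity of $D$ is used crucially in the last display), so one genuinely needs the symmetric inclusion $\CH(P)\subseteq\CH(P_2)$ as well, and one must be careful to carry along the auxiliary polynomial variables $t_1,\dots,t_n$ when unwinding the definition of the Cayley--Hamilton ideal.
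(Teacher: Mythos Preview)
Your approach is the paper's: factor $\chi^P=\chi^{P_1}\chi^{P_2}$ and conclude that each generator $\chi^P_\alpha$ of $\CH(P)$ lies in $\ker(D_1)\cap\ker(D_2)\subseteq\ker(D)$. The paper just writes out the convolution $\chi^P_\alpha=\sum_{\alpha'+\alpha''=\alpha}\chi^{P_1}_{\alpha'}\chi^{P_2}_{\alpha''}\in\CH(P_1)\cdot\CH(P_2)\subseteq\ker(D_1)\cdot\ker(D_2)$ explicitly, which is your ideal argument unpacked.

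There is, however, a genuine slip in the extension step. The generating-series identity $\sum_i\Lambda_i^{D_k}u^i=\bigl(\sum_j\calT_j^{P_k}u^j\bigr)^2$ holds on $R^+$ because $P_k^2=D_k|_{R^+}$, but it fails on $R$: the recursion of \Cref{pfaffianunique} determines $\calT_j^{P_k}$ for $j\le d_k$ from $\Lambda_1^{D_k},\dots,\Lambda_{d_k}^{D_k}$ only, and the coefficients of $u^i$ for $d_k<i\le 2d_k$ in $\bigl(\sum_j\calT_j^{P_k}u^j\bigr)^2$ need not equal $\Lambda_i^{D_k}$ off $R^+$ (already for $d_k=1$ this would force $\Lambda_2^{D_k}=(\Lambda_1^{D_k})^2/4$ on all of $R$). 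So the claimed identity $P=P_1P_2$ on $R$, and hence the factorisation of $\chi^P$ for arbitrary $r_i\in R$, is unjustified. The repair is painless: by definition $\CH(P)$ is generated by the $\chi^P_\alpha(r_1,\dots,r_n)$ with $r_i\in R^+$, so take $r_i\in R^+$ throughout; then $x\in R^+\otimes_A C$, the factorisation $\chi^P(x,t)=\chi^{P_1}(x,t)\chi^{P_2}(x,t)$ follows directly from $P=P_1P_2$ on $R^+$, the coefficients $\chi^{P_1}_{\beta}(r_1,\dots,r_n)$ lie in the original $\CH(P_1)\subseteq\ker(D_1)$, and the remainder of your argument goes through verbatim.
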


\begin{proof} As in \cite[§2.1]{MR3444227}, $D_1D_2$ is a determinant law of dimension $2(d_1+d_2)$ and one checks, that it is a $*$-determinant law. Similarly $P_1P_2 \colon R^+ \to A$ is homogeneous of degree $d_1+d_2$. Further $(P_1P_2)^2 = D_1|_{R^+}D_2|_{R^+}$ and $(P_1P_2)(1) = 1$. This proves the claim for weak symplectic determinant laws.

Now suppose that $\CH(P_i) \subseteq \ker(D_i)$. We will show that $\CH(P_1P_2) \subseteq \ker(D_1D_2)$. Let $P \colonequals  P_1P_2$ and $D \colonequals  D_1D_2$. Recall \Cref{charpfaffianalpha} of the functions $\chi_{\alpha}^{P_i}$, where $\alpha \in \bbN_0^n$ with $\sum\nolimits_{j=1}^n \alpha_j = d_i$. For $r_1, \dots, r_n \in R$, the equation
$$ \chi^P(r_1t_1 + \dots + r_nt_n) = \chi^{P_1}(r_1t_1 + \dots + r_nt_n) \chi^{P_2}(r_1t_1 + \dots + r_nt_n) $$
in $R[t_1, \dots, t_n]$ implies
$$ \chi_{\alpha}^P(r_1, \dots, r_n) = \sum_{\alpha' + \alpha'' = \alpha} \chi_{\alpha'}^{P_1}(r_1, \dots, r_n) \chi_{\alpha''}^{P_2}(r_1, \dots, r_n) $$
by comparing the coefficients of $t^{\alpha}$. To check that $D(1 + \chi_{\alpha}^P(r_1, \dots, r_n)r) = 1$
for all $r \in R\otimes_A B$, it suffices to check that $D_i(1 + \chi_{\alpha}^P(r_1, \dots, r_n)r) = 1$ for all $r \in R\otimes_A B$. This follows from \cite[Lemma 1.19]{MR3444227}, since
$$ \sum_{\alpha' + \alpha'' = \alpha} \chi_{\alpha'}^{P_1}(r_1, \dots, r_n) \chi_{\alpha''}^{P_2}(r_1, \dots, r_n)  \in \ker(D_1)\cdot\ker(D_2)\subset\ker(D_1) \cap \ker(D_2). $$
\end{proof}

\begin{remark} Let $A$ be a commutative ring and $(R,*)$ be an involutive $A$-algebra. If $(D_1,P_1)$ and $(D_2,P_2)$ are the symplectic determinant laws attached respectively to the symplectic representations $\rho_1 \colon (R,*) \to (M_{2d_1}(A), \mathsf j)$ and $\rho_2 \colon (R,*) \to (M_{2d_2}(A), \mathsf j)$ then $(D_1D_2,P_1P_2)$ is the symplectic determinant law attached to $\rho_1 \oplus \rho_2$.
\end{remark}

We now introduce the space of (weak) symplectic determinant laws and show its representability.

\begin{proposition}\label{representability}
Let $(R,*)$ be an $A$-algebra with involution and assume $2 \in A^{\times}$. Then the functor
\begin{align*}
    \wSpDet_{(R,*)}^{2d}\colon \CAlg_A &\to \Set
    \\ B &\mapsto \{\text{weak symplectic determinant laws } (D,P)\colon R \to B\}
\end{align*}
resp.,
\begin{align*}
    \SpDet_{(R,*)}^{2d}\colon \CAlg_A &\to \Set
    \\ B &\mapsto \{\text{symplectic determinant laws } (D,P)\colon R \to B\}
\end{align*}
is represented by a commutative $A$-algebra denoted by $A[\wSpDet_{(R,*)}^{2d}]$ (resp. $A[\SpDet_{(R,*)}^{2d}]$) .
If $R$ is a finitely generated $A$-algebra, then $A[\wSpDet_{(R,*)}^{2d}]$ (resp. $A[\SpDet_{(R,*)}^{2d}]$) is a finitely generated $A$-algebra.
\end{proposition}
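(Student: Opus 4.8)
The plan is to realize both $\wSpDet_{(R,*)}^{2d}$ and $\SpDet_{(R,*)}^{2d}$ as closed subfunctors of the functor of $2d$-dimensional determinant laws on $R$, using the fact --- visible from \Cref{pfaffianunique} and \Cref{exd=2} --- that, since $2 \in A^\times$, a weak symplectic determinant law $(D,P)$ is completely determined by $D$. The finite generation assertions then reduce to the corresponding statement for ordinary determinant laws, because quotients of finitely generated algebras are finitely generated.

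First I would recall that the functor $B \mapsto \{2d\text{-dimensional determinant laws } R \to B\} = \calM_A^{2d}(R,-)$ is representable by a commutative $A$-algebra $C$, which is finitely generated over $A$ whenever $R$ is (Chenevier, \cite{MR3444227}); let $D^u \colon R \to C$ denote the universal determinant law. Using the recursion of \Cref{pfaffianunique}, I would attach to every $2d$-dimensional determinant law $D$ a base-change-compatible family of $A$-polynomial laws $\calT_0^D, \dots, \calT_d^D \colon R \to A$ --- the polynomial expressions in $\Lambda_1^D, \dots, \Lambda_d^D$ that the recursion provides once $2$ is inverted --- homogeneous of degrees $0, 1, \dots, d$, with $\calT_0^D = 1$ and $\sum_{j=0}^i \calT_j^D\calT_{i-j}^D = \Lambda_i^D$ for $0 \le i \le d$. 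A Vandermonde identity gives $\calT_i^D(1) = \binom{d}{i}$, so that $P^D \colonequals \calT_d^D|_{R^+}$ automatically satisfies $P^D(1) = 1$; comparing the coefficients of $\chi^{P^D}(\cdot,t)^2$ with those of $\chi^D(\cdot,t)$ on $R^+$ then shows that $(D, P^D)$ is a weak symplectic determinant law if and only if (i) $D$ is $*$-invariant and (ii) for every $d < i \le 2d$ the polynomial law $\big(\sum_j \calT_j^D\calT_{i-j}^D - \Lambda_i^D\big)\big|_{R^+}$ (with $\calT_j^D = 0$ for $j > d$) vanishes. By \Cref{pfaffianunique} every weak symplectic determinant law is of this form with $P = P^D$, so $\wSpDet_{(R,*)}^{2d}$ is precisely the subfunctor of $\calM_A^{2d}(R,-)$ cut out by (i) and (ii).

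Then I would verify that (i) and (ii) are closed conditions on $\Spec(C)$. The polynomial law $R \to C$, $r \mapsto D^u(r^*)$, is again a $2d$-dimensional determinant law, classified by an $A$-algebra endomorphism $\iota$ of $C$, and $*$-invariance of the universal law is imposed by passing to the quotient of $C$ by the ideal generated by $\{\iota(c) - c \mid c \in C\}$; for fixed $i$, condition (ii) is the vanishing of an $A$-polynomial law $R^+ \to C$, equivalently of the corresponding $A$-linear map $\Gamma_A^{2d}(R^+) \to C$, and is imposed by quotienting by its image. This produces $A[\wSpDet_{(R,*)}^{2d}]$ as a quotient of $C$, hence a finitely generated $A$-algebra when $R$ is --- with no hypothesis needed on $R^+$ as a module. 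For the strong functor, set $W \colonequals A[\wSpDet_{(R,*)}^{2d}]$ with universal pair $(D^u,P^u)$, where $P^u$ is extended to all of $R \otimes_A W$ by the recursion; by \Cref{stabilityCH}, for a homomorphism $\varphi \colon W \to B$ the image of $\CH(P^u) \subseteq R \otimes_A W$ in $R \otimes_A B$ is $\CH(P^u \otimes_W B)$, and since the kernel of a determinant law is a two-sided ideal, $(D^u,P^u)\otimes_W B$ is a symplectic determinant law exactly when $\varphi$ carries each generator $c = \chi^{P^u}_\alpha(r_1,\dots,r_n)$ of $\CH(P^u)$ into $\ker(D^u\otimes_W B)$. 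For a fixed such $c$, the description of the kernel of a determinant law in \cite[Lemma 1.19]{MR3444227} (already used in the proof of \Cref{strongHomTheorem}) shows that the image of $c$ in $R \otimes_A B$ lies in that kernel if and only if $\varphi$ kills the images of the $W$-linear maps $\Gamma_W^i(R\otimes_A W)\to W$ associated with the $W$-polynomial laws $m \mapsto \Lambda_i^{D^u}(cm)$, $i \ge 1$; quotienting $W$ by the ideal these images generate yields $A[\SpDet_{(R,*)}^{2d}]$, again a quotient of a finitely generated algebra when $R$ is finitely generated.

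The step I expect to require the most care is the second paragraph: checking that conditions (i) and (ii) describe weak symplectic determinant laws exactly, in particular that the recursion-defined $P^D$ --- a priori only a polynomial law on all of $R$ --- restricts on $R^+$ to a genuine square root of $D|_{R^+}$ precisely under (ii), and keeping the base-change compatibilities straight, for which one uses $(R\otimes_A B)^+ = R^+\otimes_A B$, valid because $2$ is invertible. Once that reduction is in place, representability and both finite generation statements are formal consequences of the analogous facts for ordinary determinant laws.
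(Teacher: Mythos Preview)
Your argument is correct and takes a genuinely different route from the paper. The paper builds $A[\wSpDet_{(R,*)}^{2d}]$ as a tensor product: it represents the datum of a $d$-homogeneous law $P \colon R^+ \to A$ with $P(1)=1$ by a quotient of $\Sym_A(\Gamma_A^d(R^+))$, represents $*$-invariant $2d$-dimensional determinant laws by $\Gamma_A^{2d}(R)^{\ab}/*$, and glues these over $\Sym_A(\Gamma_A^{2d}(R^+))$ via the squaring map $P \mapsto P^2$. You instead use \Cref{pfaffianunique} to eliminate $P$ from the outset and carve $\wSpDet_{(R,*)}^{2d}$ out of Chenevier's space of determinant laws by the closed conditions (i)--(ii) on $D$ alone. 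Your route makes finite generation immediate --- it is visibly a quotient of Chenevier's finitely generated ring --- whereas in the paper's pushout one must still observe (again via \Cref{pfaffianunique}) that the map from the $D$-side is surjective in order to deduce finite generation. Conversely, the paper's construction avoids the point you rightly flag as delicate: your ``comparing coefficients of $\chi^{P^D}(\cdot,t)^2$'' step implicitly uses $\calT_i^{P^D} = \calT_i^D|_{R^+}$, which follows only \emph{after} one knows $(P^D)^2 = D|_{R^+}$; in fact condition (ii) for $i = 2d$ alone already gives $(P^D)^2 = D|_{R^+}$, and then the remaining cases of (ii) and the identity $\calT_i^{P^D} = \calT_i^D|_{R^+}$ follow a posteriori from the uniqueness in \Cref{pfaffianunique}. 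One small slip: the $i$-homogeneous polynomial law in (ii) corresponds to a linear map out of $\Gamma_A^i(R^+)$, not $\Gamma_A^{2d}(R^+)$. For the passage from $\wSpDet$ to $\SpDet$, your argument and the paper's coincide: both quotient by relations encoding $\CH(P^u) \subseteq \ker(D^u)$.
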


\begin{proof}
Let $I$ be the ideal of $\Sym_A(\Gamma_A^d(R^+))$ generated by the element $[1]^d-1$. Then the ring $\Sym_A(\Gamma_A^d(R^+))/I$ represents the functor which associates to a commutative $A$-algebra $B$ the set of homogeneous polynomial laws $P$ of degree $d$ such that $P(1)=1$. 
Using the isomorphism
\begin{align*}
    \Gamma_A(R^+\times R^+) &\xrightarrow{\sim} \Gamma_A(R^+)\otimes_A \Gamma_A(R^+)
    \\ [(r_1,r_2)]^i &\mapsto \sum_{p+q=i} [r_1]^p\otimes [r_2]^q
\end{align*}
we get a morphism of $A$-modules
$$ \widetilde{\varphi}\colon \Gamma_A^{2d}(R^+)\xrightarrow{\Gamma_A(\Delta)} \Gamma_A(R^+\times R^+)\twoheadrightarrow \Gamma_A^d(R^+)\otimes_A \Gamma_A^d(R^+)\rightarrow \Sym_A(\Gamma_A^{d}(R^+))/I. $$
For $[r_1]^{i_1}\cdots [r_m]^{i_m}\in\Gamma_A^{2d}(R^+)$ with  $i_1+\cdots+i_m=d$, it is given by 
$$\widetilde{\varphi}([r_1]^{i_1}\cdots [r_m]^{i_m})=\sum_{p_1,q_1,\dots,p_m,q_m} ([r_1]^{p_1}\cdots[r_m]^{p_m})\odot ([r_1]^{q_1}\cdots [r_m]^{q_m}),$$
where the sum runs over the integers $p_j,q_j$ satisfying $p_j+q_j=i_j$ and $p_1+\cdots+p_m=q_1+\cdots+q_m=d$. Therefore, we get a morphism of $A$-algebras $\varphi\colon \Sym_A(\Gamma_A^{2d}(R^+))\to \Sym_A(\Gamma_A^{d}(R^+))/I$.
\\ Aside from that, the canonical map $\Gamma_A^{2d}(R^+)\to \Gamma_A^{2d}(R)$ induces a morphism of commutative $A$-algebras $\Sym_A(\Gamma_A^{2d}(R^+))\to \Gamma_A^{2d}(R)^{\text{ab}}$. Then we can take the representing ring for weak symplectic determinant laws to be
$$ A[\wSpDet_{(R,*)}^{2d}]=(\Gamma_A^{2d}(R)^{\text{ab}}/*)\otimes_{\Sym_A(\Gamma_A^{2d}(R^+)),\varphi} \Sym_A(\Gamma_A^{d}(R^+))/I.$$
Here $\Gamma_A^{2d}(R)^{\text{ab}}/*$ is the quotient of $\Gamma_A^{2d}(R)^{\text{ab}}$ by the ideal generated by $\gamma-\gamma^*$ for $ \gamma\in \Gamma_A^{2d}(R)^{\text{ab}}$. 
\\ Now let $(D^u,P^u)$ be the universal weak symplectic determinant law on $(R,*)$. The universal ring $A[\SpDet_{(R,*)}^{2d}]$ is the quotient of $A[\wSpDet_{(R,*)}^{2d}]$ by $\chi_\alpha^{D^u}(rr_1,\dots,rr_n)$ for every $r\in \CH(P^u)$, every $r_1,\dots,r_n\in R$, and every ordered tuple of integers $\alpha$ (see \Cref{charpfaffianalpha}).
\end{proof}

\begin{remark}\label{representabilityGSp} Let $\Gamma$ be a group. Then the functor
\begin{align*}
    \wGSpDet_{\Gamma}^{2d}\colon \CAlg_A &\to \Set
    \\ B &\mapsto \left\{\text{weak general symplectic determinant laws } (D,P,\lambda)\colon A[\Gamma] \to B \right\}
\end{align*}
resp.,
\begin{align*}
    \GSpDet_{\Gamma}^{2d}\colon \CAlg_A &\to \Set
    \\ B &\mapsto \left\{\text{general symplectic determinant laws } (D,P, \lambda)\colon A[\Gamma] \to B \right\}
\end{align*}
is represented by a commutative $A$-algebra denoted by $A[\wGSpDet_{\Gamma}^{2d}]$ (resp. $A[\GSpDet_{\Gamma}^{2d}]$).
\\ Indeed, the functor $\CAlg_A \to \Set, ~B \mapsto \Hom(\Gamma, B^{\times})$ is representable by $A[\Gamma^{\ab}]$ and the universal element is the universal character $\lambda^u : \Gamma \to A[\Gamma^{\ab}]^{\times}$. The map $\wGSpDet_{\Gamma}^{2d} \to \Spec(A[\Gamma^{\ab}]), ~(D,P,\lambda) \mapsto \lambda$ is relatively representable by \Cref{representability}. This implies that $\wGSpDet_{\Gamma}^{2d}$ is representable by the $A$-algebra $A[\wSpDet_{A[\Gamma^{\ab}][\Gamma]}^{2d}]$, where $A[\Gamma^{\ab}][\Gamma]$ carries the involution defined by $\gamma^* := \lambda^u(\gamma) \gamma^{-1}$. The same argument applies to the functor $\GSpDet_{\Gamma}^{2d}$.
\end{remark}

We end this subsection by introducing the following terminology, which will be useful in \Cref{CH0} and \Cref{subsecisommultfree}. 
\begin{definition}\label{spdetalg}
A symplectic determinant $A$-algebra of dimension $2d$ is a tuple $(B,R,*,D,P)$, where $B$ is a commutative $A$-algebra, $(R,*)$ is $B$-algebra with involution, and $(D,P)\colon R\to B$ is symplectic determinant law. We equip $R$ with the trace map
\begin{align*}
    \tr\colonequals \Lambda^{D}_{1,B}\colon R\longrightarrow B.
\end{align*}
A morphism between two symplectic determinant $A$-algebras $(B,R,*,D,P)$ and $(B',R',*,D',P')$ is the data of a morphism of commutative $A$-algebras $f\colon B\to B'$, and a morphism of involutive $B$-algebras $g\colon R \to R'$, such that $R'$ is seen as a $B$-algebra via $f$, and  $f\circ D=D'\circ g$ (from which it follows that $f\circ P=P'\circ g$).
\end{definition}
Given a symplectic determinant $A$-algebra $(R,*,D,P)$ with $(D,P)$ taking values in a commutative $A$-algebra $B$, we can consider the functor $\SpRep_{(R,*,D,P)}^{\square,2d}\colon \CAlg_A \rightarrow \Set$ given by
\begin{align*}
    \SpRep_{(R,*,D,P)}^{\square}(C)\colonequals \left\{ \begin{array}{l}
         \text{symplectic representations } \rho\colon (R,*) \to (M_{2d}(B\otimes_A C), \mathrm{j}) \\
         \text{such that } (\det\circ \rho, \Pf\circ(\rho\cdot J))=(D,P)
    \end{array}  \right\}.
\end{align*}
It is representable by a commutative $A$-algebra $A[\SpRep_{(R,*,D,P)}^{\square,2d}]$. In fact, we have that $$\SpRep_{(R,*,D,P)}^{\square}=\SpRep_{(R,*)}^{\square,2d}\times_{\SpDet_{(R,*)}^{2d}}\Spec(B),$$
where the map $\Spec(B)\to \SpDet_{(R,*)}^{2d}$ is the one induced by $(D,P)$. Similarly to \Cref{subsymprep}, we can define an action of $\Sp_{2d,A}$ on $M_{2d}(A[\SpRep_{(R,*,D,P)}^{\square}])$, and we the same statement as in \Cref{imageofR1}.
\begin{lemma}\label{imageofR2}
The image of $R$ by universal representation
$$ \rho^{u} \colon (R,*)\longrightarrow (M_{2d}(A[\SpRep_{(R,*,D,P)}^{\square}),\mathrm{j})$$
lies inside $M_{2d}(A[\SpRep_{(R,*,D,P)}^{\square}])^{\Sp_{2d}}$.
\end{lemma}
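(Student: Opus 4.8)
The plan is to reduce the statement to \Cref{imageofR1} via the base-change description $\SpRep_{(R,*,D,P)}^{\square} = \SpRep_{(R,*)}^{\square,2d}\times_{\SpDet_{(R,*)}^{2d}}\Spec(B)$ recorded just before the lemma, where now $\SpRep_{(R,*)}^{\square,2d}$ and $\SpDet_{(R,*)}^{2d}$ denote the functors attached to the $B$-algebra $(R,*)$ and $\Spec(B)\to\SpDet_{(R,*)}^{2d}$ is the point $(D,P)$. The first step is to observe that the conjugation action of $\Sp_{2d}$ on $\SpRep_{(R,*)}^{\square,2d}$ covers the trivial action on $\SpDet_{(R,*)}^{2d}$, i.e. that conjugating a symplectic representation by an element of $\Sp_{2d}$ does not change its associated symplectic determinant law. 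For the determinant this is clear, since $\det(g\rho(-)g^{-1})=\det(\rho(-))$. For the Pfaffian one uses that $g^{-1}J=Jg^{\top}$ (which is equivalent to $g^{\mathrm j}g=1$) and that $\det(g)=1$ for $g\in\Sp_{2d}$, so that for $r\in R^{+}$ (for which $\rho(r)J$, and likewise $(g\rho(r)g^{-1})J$, is alternating) one has
$$ \Pf\big((g\rho(r)g^{-1})J\big)=\Pf\big(g\,(\rho(r)J)\,g^{\top}\big)=\det(g)\,\Pf(\rho(r)J)=\Pf(\rho(r)J). $$
Consequently $\Sp_{2d}$ preserves the fibre $\SpRep_{(R,*,D,P)}^{\square}$, and the projection $\pi\colon\SpRep_{(R,*,D,P)}^{\square}\to\SpRep_{(R,*)}^{\square,2d}$ is $\Sp_{2d}$-equivariant.

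The second step is purely formal. Dualizing $\pi$ gives an $\Sp_{2d}$-equivariant map of coordinate rings $\iota\colon B[\SpRep_{(R,*)}^{\square,2d}]\to A[\SpRep_{(R,*,D,P)}^{\square}]$, hence a morphism $M_{2d}(\iota)$ of $\Sp_{2d,A}$-modules for the module structures introduced before \Cref{imageofR1} and \Cref{imageofR2}. By the universal property (and since $\SpRep_{(R,*,D,P)}^{\square}(C)\subseteq\SpRep_{(R,*)}^{\square,2d}(C)$ for all $C$), the universal representation of $(R,*,D,P)$ factors as $\rho^{u}=M_{2d}(\iota)\circ\rho^{u}_{(R,*)}$, where $\rho^{u}_{(R,*)}\colon(R,*)\to(M_{2d}(B[\SpRep_{(R,*)}^{\square,2d}]),\mathrm j)$ is the universal symplectic representation of $(R,*)$ over $B$. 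Applying \Cref{imageofR1} with base ring $B$, we get $\rho^{u}_{(R,*)}(R)\subseteq M_{2d}(B[\SpRep_{(R,*)}^{\square,2d}])^{\Sp_{2d}}$, and since a morphism of $\Sp_{2d}$-modules carries invariants to invariants, $\rho^{u}(R)=M_{2d}(\iota)(\rho^{u}_{(R,*)}(R))$ lies in $M_{2d}(A[\SpRep_{(R,*,D,P)}^{\square}])^{\Sp_{2d}}$, as claimed.

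I do not expect a genuine obstacle: the only point needing verification is the invariance of the symplectic determinant law under conjugation by $\Sp_{2d}$ from the first step, i.e. the Pfaffian identity $\Pf(g(\rho(r)J)g^{\top})=\det(g)\Pf(\rho(r)J)$ together with $\det(g)=1$ on $\Sp_{2d}$; the rest is bookkeeping with the fibre product and the module structures. If a self-contained argument is preferred, one can instead repeat the proof of \Cref{imageofR1} verbatim: for $g\in\Sp_{2d}(C)$ the representation $i_g\circ\rho^{u}$ still lies in $\SpRep_{(R,*,D,P)}^{\square}(C)$ by the same invariance computation, so the universal property identifies conjugation by $g$ on matrix entries with the action of the induced automorphism $\widehat g$, and unwinding the definition of the $\Sp_{2d,A}$-module structure on $M_{2d}(A[\SpRep_{(R,*,D,P)}^{\square}])$ this says exactly that each $\rho^{u}(r)$ is $\Sp_{2d}$-invariant.
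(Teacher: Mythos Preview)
Your proposal is correct and essentially matches the paper's treatment: the paper does not give a separate proof of \Cref{imageofR2} but simply remarks that the $\Sp_{2d,A}$-action on $M_{2d}(A[\SpRep_{(R,*,D,P)}^{\square}])$ is defined exactly as in \Cref{subsymprep} and that the statement follows as in \Cref{imageofR1}. Your reduction via the fibre-product description (together with the explicit check that $\Sp_{2d}$-conjugation fixes the associated symplectic determinant law) is just a careful unpacking of this, and your alternative of repeating the proof of \Cref{imageofR1} verbatim is precisely what the paper has in mind.
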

We can also define the algebraic stack $\Rep_{(R,*,D,P)} : \Sch_{/A}^{\op} \to \Gpd$, such that for a $\Spec(A)$-scheme $X$, the groupoid $\Rep_{(R,*,D,P)}(X)$ is the subgroupoid of $ \Rep_{(R,*)}^{2d}(X\times_{\Spec(A)}\Spec(B))$ consisting of tuples $(V,b,\rho)$ that satisfy $(\det \circ \rho, \Pf \circ (\rho\cdot J)) = (D, P)$. It is the fibre product $\Rep_{(R,*)}^{2d} \times_{\SpDet_{(R,*)}^{2d}} \Spec(A)$, where the map $\Spec(A) \to \SpDet_{(R,*)}^{2d}$ is determined by $(D,P)$. 

\subsection{Symplectic determinant laws over fields}
\label{secsympldetlawfields}

Fix a field $k$ with $2 \in k^{\times}$ and an algebraic closure $\overline k$ of $k$ throughout \Cref{secsympldetlawfields}.
The goal of this section is to give a precise structure theorem for symplectic determinant laws over $k$.
This is the content of \Cref{symplecticreconstructionthmoverfields}, which is the symplectic analog of \cite[Thm. 2.16]{MR3444227}.
An important ingredient in the $\GL_n$-case is the Artin-Wedderburn theorem. Here we need a version of the Artin-Wedderburn theorem for semisimple $k$-algebras with involution.

\begin{proposition}\label{invAW} Let $(R,*)$ be a semisimple $k$-algebra equipped with a $k$-linear involution, such that every simple factor of $R$ is finite-dimensional over its center. Then $(R,*)$ is isomorphic as an involutive $k$-algebra to a product
$$ (R,*) \cong \prod_{i=1}^t (R_i, \sigma_i) $$
for some $t \in \bbN_{\ge 1}$, where the involutive rings $ (R_i, \sigma_i)$ have one of the following three forms:
\begin{enum}
    \item[(I)] $R_i$ is a central simple algebra over a field $K_i$, and $\sigma_i$ is a $K_i$-linear symplectic involution.
    \item[(II)] $R_i$ is a central simple algebra over a field $K_i$, and $\sigma_i$ is a $K_i$-linear orthogonal involution.
    \item[(IIIa)] $R_i$ is a central simple algebra over a field $L_i$, and $\sigma_i$ is a $K_i$-linear involution of the second kind for some index $2$ subfield $K_i$ of $L_i$ with $L_i/K_i$ separable.
    \item[(IIIb)] $R_i = T_i \times T_i^{\op}$ for some central simple algebra $T_i$ over a field $K_i$, and $\sigma_i(a,b^{\op})=(b,a^{\op})$ for $a,b\in T_i$.
\end{enum}
\end{proposition}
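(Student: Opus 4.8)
The strategy is to first run the ordinary Artin–Wedderburn theorem on $R$ to get $R \cong \prod_{j=1}^s S_j$ with each $S_j$ simple (and finite-dimensional over its center by hypothesis), and then to analyze how the involution $*$ permutes the simple factors. Since $*$ is a $k$-algebra anti-automorphism, it permutes the set of minimal two-sided ideals of $R$, i.e. it induces an involution on the index set $\{1,\dots,s\}$. Group the factors accordingly: either $*$ fixes a factor $S_j$, or it swaps two distinct factors $S_j \leftrightarrow S_{j'}$.

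First I would handle the swapped pairs. If $*(S_j) = S_{j'}$ with $j \neq j'$, then $R_i := S_j \times S_{j'}$ is $*$-stable, and $*|_{S_j} \colon S_j \to S_{j'}$ is an anti-isomorphism of $k$-algebras. Fixing any $k$-algebra isomorphism $S_{j'} \cong T_i^{\op}$ where $T_i := S_{j'}$ (so $T_i$ is a central simple $K_i$-algebra), this identifies $*|_{S_j}$ with an isomorphism $S_j \cong T_i$; transporting, we may take $S_j = T_i$, $S_{j'} = T_i^{\op}$. Then $*$ on $T_i \times T_i^{\op}$ has the form $(a, b^{\op}) \mapsto (\varphi(b), \psi(a)^{\op})$ for some $\varphi, \psi$, and the condition $*^2 = \id$ forces $\varphi = \psi^{-1}$; absorbing $\psi$ into the chosen identification (replace the isomorphism $S_j \cong T_i$ by its composite with $\psi$) puts $*$ in the normal form $(a,b^{\op}) \mapsto (b,a^{\op})$, which is case (IIIb).

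Next the fixed factors: if $*(S_i) = S_i$, then $(S_i, *)$ is a simple $k$-algebra with involution, finite-dimensional over its center $Z_i$, and $*$ restricts to an involution of $Z_i$. Let $K_i := Z_i^{*=1}$ be the fixed subfield. If $* = \id$ on $Z_i$, then $K_i = Z_i =: K_i$ and $*$ is $K_i$-linear on the central simple $K_i$-algebra $S_i$; here I would invoke the standard dichotomy for involutions of the first kind on central simple algebras (e.g. \cite[III]{knus}): such an involution is either symplectic or orthogonal, giving cases (I) and (II). If $* \neq \id$ on $Z_i$, then $Z_i/K_i$ is a degree $2$ extension; it is separable because $2 \in k^{\times}$ (the nontrivial automorphism of a purely inseparable quadratic extension in characteristic $2$ is the identity, so separability is automatic here), and $*$ is a $K_i$-linear involution of the second kind on the central simple $Z_i$-algebra $S_i$ — this is case (IIIa), with $L_i := Z_i$. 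Reindexing the factors of all three types as $\prod_{i=1}^t (R_i, \sigma_i)$ gives the claim.

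The main obstacle is bookkeeping rather than any deep input: one must check that the grouping of simple factors into $*$-orbits is exhaustive and compatible with the product decomposition, and that in the swapped case the various identifications can be chosen coherently so that the involution lands in exactly the stated normal form $(a,b^{\op}) \mapsto (b,a^{\op})$ and not some twisted variant. The genuinely nontrivial external fact used is the symplectic/orthogonal dichotomy for first-kind involutions on central simple algebras, which is classical and available over any field with $2$ invertible; everything else is a direct consequence of Artin–Wedderburn together with elementary manipulation of anti-automorphisms.
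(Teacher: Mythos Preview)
Your proposal is correct and follows essentially the same approach as the paper: apply Artin--Wedderburn, observe that $*$ permutes the simple factors (the paper phrases this via the central primitive idempotents, you via minimal two-sided ideals---equivalent), group into fixed factors and swapped pairs, and then invoke the standard first-kind/second-kind classification for the fixed factors while normalizing the swapped pairs to form (IIIb). Your treatment of the swapped case and of the separability of $L_i/K_i$ is in fact more explicit than the paper's, which simply declares (IIIb) ``immediate'' and refers to \cite[Chapter I, Proposition 2.20]{Inv} and the discussion in \Cref{secazumaya} for (I)--(IIIa).
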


\begin{proof} Applying the Artin-Wedderburn theorem to $R$, we see that $R\cong\prod_{i=1}^s R_i'$, where $R'_i$ is a central simple algebra over a field $K_i$. 
This product decomposition corresponds to a unique set of orthogonal central primitive idempotents $e_1, \dots, e_s \in R$ with $e_1 + \dots + e_s = 1$. The involution $*$ defines a bijection $* \colon \{e_1, \dots, e_s\} \to \{e_1, \dots, e_s\}$. This defines a partition $\{1, \dots, s\}=I_0\sqcup I_1\sqcup I_2$ such that $e_i^* = e_i$ if and only if $i\in I_0$, and $e_i^*= e_{i'}$ for $i\in I_1$ if and only if  $i'\in I_2$. 
Since $e_iR$ is $*$-stable for all $i\in I_0$, and $(e_i+e_i^*)R$ is $*$-stable for all $i\in I_1$, we obtain $*$-stable $k$-algebras $R_i$ with $R_i = R'_i$ if $i\in I_0$ and $R_i = R_i' \times R_{i'}'$ if $i\in I_1$ with $e_i^*=e_{i'}$. In the latter case, the involution $*$ induces an isomorphism $R_{i'}'\cong (R_i')^{\mathrm{op}}$. This provides us with the desired decomposition. The rest of the proposition is deduced from the discussion in \Cref{secazumaya} (see also \cite[Chapter I, Proposition 2.20]{Inv}) for the cases (I-II-IIIa), and the case (IIIb) is immediate.
\end{proof}

\begin{example}\label{exampleforms}
Let $K/k$ be an algebraic extension and let $k^s\subseteq K$ be the maximal separable extension of $k$ inside $K$. We assume that $f\colonequals [k^s:k]$ is finite. 
If $\chara(k)=p>0$, we assume there is an integer $q\in p^ \mathbb{N}$ such that $K^q\subseteq k^s$. We take $q$ minimal with this property. If $p=0$ we take $q=1$.

Let $(R,\sigma)$ be a $K$-algebra with involution and let $(D,P)$ be a weak symplectic determinant law of $(R,\sigma)$ over $k$. We consider the following cases:

\begin{enum}
    \item[(I)] $(R,\sigma)$ is a central simple algebra over $K$ with a symplectic involution.
    Then $(D,P)$ is power of weak symplectic determinant law given by the pair
    \begin{align*}
        \Norm_{k^s/k}\circ F^q\circ \Nrd_R &\colon R\to k
        \\ \Norm_{k^s/k}\circ F^q\circ \Prd_R & \colon R^+\to k
    \end{align*}
    This follows from \cite[Lemma 2.17]{MR3444227} and the existence and uniqueness \Cref{pfaffianunique} of the Pfaffian. 
    \item[(II)] $(R,\sigma)$ is a central simple algebra over $K$ with an orthogonal involution. By \cite[Lemma 2.17]{MR3444227}, we know that $D$ is a power $m \geq 0$ of
    \begin{align*}
    \Norm_{k^s/k}\circ F^q\circ \Nrd_R &\colon R\to k
    \end{align*}
    Let $\tilde k$ be a separably closed extension of $k$, then
    \begin{align*}
        K\otimes_{k}\tilde k\cong K\otimes_{k^s}k^s \otimes_k \tilde k\cong K \otimes_{k^s}\prod \tilde k\cong \prod K\otimes_{k^s}\tilde k
    \end{align*}
    Since the extension $K/k^s$ is purely inseparable, $K\otimes_{k^s}\tilde k$ over its nilradical is a domain. But since $\tilde k/k^s$ is separable, $K\otimes_{k^s}\tilde k$ has a trivial nilradical and so it must be a field. Note that it is even a separably closed field, therefore
    \begin{align*}
        (R\otimes_k \tilde k,\sigma)\cong (R\otimes_K (K\otimes_{k}\tilde k),\sigma) \cong \prod (R\otimes_K(K\otimes_{k^s}\tilde k),\sigma)
        \cong \prod (M_d(K\otimes_{k^s}\tilde k),\top)
    \end{align*}
    Restricting $(D,P)$ to one of the summands, we find that $D=\det^{mq}$. On the symmetric element $\diag(t,1,\dots,1)\in M_d(K\otimes_{k^s}\tilde k[t])^+$, we have $D(\diag(t,1,\dots,1))=t^{qm}=P(\diag(t,1,\dots,1))^2$. This forces $m$ to be even, and so by uniqueness of the Pfaffian, $(D,P)$ is equal to
    \begin{align*}
                (\Norm_{k^s/k}\circ F^q\circ \Nrd_R)^2 &\colon  R\to k
                \\         \Norm_{k^s/k}\circ F^q\circ \Nrd_R &\colon R\to  k
    \end{align*}
    to the $\frac{m}{2}$-th power. 
    \item[(III)] $(R,\sigma)$ is a central simple algebra over an étale $K$-algebra $L$ of degree $2$ equipped with a unitary involution over $L/K$.
    In other words $L$ is either $K \times K$ and $R=E\times E^{\mathrm{op}}$ with $E$ a central simple algebra over $K$, or $L$ is a separable field extension of $K$ and $R$ is a central simple algebra over $L$. Also $\sigma$ is $K$-linear and restricts to the nontrivial element of $\Aut_K(L)$. Then $(D,P)$ is a power of
    \begin{align*}
        \Norm_{k^s/k}\circ F^q\circ \Norm_{L/K} \circ \Nrd_R &\colon R\to k \\
        \Norm_{k^s/k}\circ F^q\circ \Nrd_R & \colon R^+ \to k
    \end{align*}
    This is because $\Nrd_R$ on $R^+$ takes values in $K$. Indeed in the first case, we have that $\sigma$ is given by
    $\sigma(a,b) = (\iota(b), \iota(b))$ with $\iota \colon E \to E^{\mathrm{op}}$
    an isomorphism of central simple algebras over $K$.
    So for $(a, \iota(a)) \in R^+$ with $a \in E$, we have
    $$ \Nrd_R(a,\iota(a)) = (\Nrd_E(a), \Nrd_{E^{\mathrm{op}}}(\iota(a))) = (\Nrd_E(a), \Nrd_E(a)) $$
    The second case follows from the first case by base change \cite[§2, Proposition 2.15]{Inv}.
 \end{enum}
\end{example}

\begin{proposition}\label{symplecticreconstructionthmoverfields} Let $(D,P) \colon (R, *) \to k$ be a $2d$-dimensional weak symplectic determinant law. Then there is an isomorphism
\begin{align}\label{isoAW}
    (R/\ker(D), *) \cong \prod_{i=1}^t (R_i, \sigma_i)
\end{align}
of involutive $k$-algebras, where each $(R_i,\sigma_i)$ is equipped with a symplectic determinant law $(D_i,P_i)$ which takes one of the forms (I)-(III) described in \Cref{exampleforms}, such that
$$ (D,P) = \left(\prod_{i=1}^t D_i\circ \pi_i, \prod_{i=1}^t P_i\circ \pi_i \right) $$
where $\pi_i\colon R \twoheadrightarrow R_i$ are the projections induced by the isomorphism (\ref{isoAW}). In particular $\CH(P) \subseteq \ker(D)$, and $(D,P)$ is a symplectic determinant law.
\end{proposition}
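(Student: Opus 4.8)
The plan is to combine Chenevier's structure theorem for determinant laws over fields with the involutive Artin--Wedderburn decomposition \Cref{invAW}, then distribute $D$ and $P$ over the resulting product and read each factor off from \Cref{exampleforms}. First I would reduce to the semisimple situation: by \Cref{strongHomTheorem} the ideal $\ker(D)$ is $*$-stable, so $*$ descends to $\bar R\colonequals R/\ker(D)$ and $(D,P)$ factors through a weak symplectic determinant law $(\bar D,\bar P)$ on $(\bar R,*)$; applying \cite[Thm.~2.16]{MR3444227} to $\bar D$ (forgetting the involution) shows that $\bar R$ is semisimple with every simple factor finite-dimensional over its center and that $\ker(\bar D)=0$, so \Cref{invAW} applies and gives a $*$-equivariant isomorphism $(\bar R,*)\cong\prod_{i=1}^t(R_i,\sigma_i)$ with each $(R_i,\sigma_i)$ of type (I), (II), (IIIa) or (IIIb). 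I then let $\pi_i\colon R\twoheadrightarrow R_i$ be the composite $R\twoheadrightarrow\bar R\xrightarrow{\sim}\prod_jR_j\twoheadrightarrow R_i$; the isomorphism restricts to $\bar R^+\cong\prod_iR_i^+$.

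Next I would distribute $D$ and $P$ over this product. Determinant laws over a finite product split (as in \cite[\S 2.1]{MR3444227}), so $\bar D=\prod_iD_i\circ\pi_i$ for determinant laws $D_i\colon R_i\to k$ of dimensions $n_i$ with $\sum_in_i=2d$. For the Pfaffian, write $\langle r_i\rangle\in\bar R^+$ for the element equal to $r_i\in R_i^+$ in the $i$-th slot and to $1$ in all others; then $\langle r_1\rangle,\dots,\langle r_t\rangle$ are pairwise commuting symmetric elements whose product is $(r_1,\dots,r_t)$, so iterating \Cref{Pcommutes} gives $\bar P((r_1,\dots,r_t))=\prod_i\bar P(\langle r_i\rangle)$, and setting $P_i(r_i)\colonequals\bar P(\langle r_i\rangle)$ we obtain $\bar P=\prod_iP_i\circ\pi_i$. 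Applying \Cref{Pcommutes} to the commuting symmetric pair $\langle\lambda\cdot 1_{R_i}\rangle,\langle r_i\rangle$ yields $P_i(\lambda r_i)=\bar P(\langle\lambda\cdot 1_{R_i}\rangle)P_i(r_i)$, while $\bar P(\langle\lambda\cdot 1_{R_i}\rangle)^2=\bar D(\langle\lambda\cdot 1_{R_i}\rangle)=\lambda^{n_i}$ in $k[\lambda]$; hence $n_i$ is even, say $n_i=2d_i$, and $\bar P(\langle\lambda\cdot 1_{R_i}\rangle)=\lambda^{d_i}$ (the sign fixed by evaluating at $\lambda=1$). Thus $P_i$ is $d_i$-homogeneous with $P_i(1)=1$ and $P_i^2=D_i|_{R_i^+}$, so $(D_i,P_i)$ is a weak $2d_i$-dimensional symplectic determinant law on $(R_i,\sigma_i)$.

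Each $(R_i,\sigma_i)$ is then of one of the forms (I)--(III) of \Cref{exampleforms} (cases (IIIa), (IIIb) of \Cref{invAW} jointly being case (III); in the orthogonal case (II) the equation $P_i^2=D_i|_{R_i^+}$ forces the relevant exponent to be even), so $(D_i,P_i)$ is identified with an explicit standard form. To deduce $\CH(P)\subseteq\ker(D)$ I would argue block by block. After base change along the faithfully flat extension $k\to\overline{k}$ and reduction modulo the (central) nilradical, $(R_i,\sigma_i)\otimes_k\overline{k}$ becomes a product of matrix algebras with involution, and $(D_i,P_i)\otimes_k\overline{k}$ is the symplectic determinant law of an honest symplectic representation into $(M_{2d_i}(\overline{k}),\mathrm j)$ — the identity in case (I), the diagonal doubling $a\mapsto\diag(a,a)$ in case (II), the block-diagonal embedding $(a,b)\mapsto\diag(a,b)$ in case (III), each composed with the reduction map and raised to a power — so $\CH(P_i\otimes_k\overline{k})\subseteq\ker(D_i\otimes_k\overline{k})$ by \Cref{reptodet} (which rests on \Cref{charpf}). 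Since $\ker(D_i)$ is exactly the preimage of $\ker(D_i\otimes_k\overline{k})$ under $R_i\hookrightarrow R_i\otimes_k\overline{k}$ (faithful flatness) and $\CH(P_i)\otimes_k\overline{k}$ surjects onto $\CH(P_i\otimes_k\overline{k})$ by \Cref{stabilityCH}, this descends to $\CH(P_i)\subseteq\ker(D_i)$. Pulling back along $\pi_i$ makes each $(D_i\circ\pi_i,P_i\circ\pi_i)$ a (non-weak) symplectic determinant law on $(\bar R,*)$, so iterating the direct-sum lemma for symplectic determinant laws established above yields $\CH(\bar P)\subseteq\ker(\bar D)$, hence $\CH(P)\subseteq\ker(D)$; the displayed product formula for $(D,P)$ holds by construction.

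The step I expect to be the main obstacle is this last one — checking $\CH(P_i)\subseteq\ker(D_i)$ for the standard forms, which is essentially a reduced-Pfaffian Cayley--Hamilton statement. The base-change argument handles it uniformly, but it requires care with the Frobenius twists $F^q$ and the nilpotents produced by purely inseparable extensions in positive characteristic, and with recognizing the orthogonal and unitary blocks as (possibly non-injective) symplectic representations via the doubling and block-diagonal embeddings above.
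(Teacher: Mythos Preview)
Your proof is correct and follows essentially the same approach as the paper's (Chenevier's Theorem 2.16 combined with the involutive Artin--Wedderburn decomposition \Cref{invAW}), though you supply considerably more detail than the paper's one-line proof. Your use of \Cref{Pcommutes} to split $P$ over the product and your explicit base-change/descent argument for $\CH(P_i)\subseteq\ker(D_i)$ are both sound; the paper leaves these implicit, presumably relying on \Cref{exampleforms} together with \Cref{pfaffianunique} for the splitting and on the direct-sum lemma for the Cayley--Hamilton containment.
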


\begin{proof} This Proposition follows from \cite[Theorem 2.16]{MR3444227} and \Cref{invAW}.
\end{proof}

\begin{theorem}\label{reconsalgcl} Let $(R,*)$ be an involutive $\overline k$-algebra. There is a bijection between isomorphism classes of semisimple $2d$-dimensional symplectic representations of $(R,*)$ over $\overline k$ and $2d$-dimensional symplectic determinant laws of $(R,*)$ over $\overline k$ given by sending $\rho \colon (R,*) \to (M_{2d}(\overline k), \mathrm j)$ to $(\det\circ{\rho}, \Pf\circ(\rho\cdot J))$.
\end{theorem}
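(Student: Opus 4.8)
The plan is to reduce the statement, via the structure theorem \Cref{symplecticreconstructionthmoverfields}, to the analogous classification for each factor $(R_i,\sigma_i)$ over $\overline k$, and then to combine Chenevier's reconstruction theorem \cite[Theorem 2.16]{MR3444227} for ordinary determinant laws with the elementary classification of alternating forms on semisimple modules over an algebraically closed field. (Observe that over $\overline k$, \Cref{symplecticreconstructionthmoverfields} already shows every weak symplectic determinant law is a symplectic determinant law, so nothing is gained or lost by working with the weak notion.) First I would check that the assignment $\rho\mapsto(\det\circ\rho,\Pf\circ(\rho\cdot J))$ is well defined on isomorphism classes: by \Cref{reptodet} it lands in symplectic determinant laws, and for $g\in\Sp_{2d}(\overline k)$ conjugation by $g$ commutes with $\mathrm j$, $\det g=1$, and $\Pf(gNg^{\top})=\det(g)\Pf(N)$ for alternating $N$; writing $(g\rho(r)g^{-1})J=g(\rho(r)J)g^{\top}$ shows conjugate representations yield the same pair.

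\smallskip
\textbf{Reduction to the simple factors.} Given a symplectic determinant law $(D,P)$ on $(R,*)$, \Cref{strongHomTheorem} lets me replace $(R,*)$ by $(R/\ker(D),*)$, which by \Cref{symplecticreconstructionthmoverfields} is a finite product $\prod_{i=1}^{t}(R_i,\sigma_i)$ with $(D,P)=(\prod_i D_i\circ\pi_i,\prod_i P_i\circ\pi_i)$ and each $(R_i,\sigma_i)$ equipped with a symplectic determinant law of one of the forms (I)--(III) of \Cref{exampleforms}. Over the algebraically closed field $\overline k$ that list becomes very concrete: the centres of the simple factors are $\overline k$ itself and one may take $q=1$, so $(R_i,\sigma_i)$ is isomorphic to $(M_{2e_i}(\overline k),\mathrm j)$, to $(M_{e_i}(\overline k),\top)$, or to $(M_{e_i}(\overline k)\times M_{e_i}(\overline k)^{\op},\mathrm{swap})$, and in each case $(D_i,P_i)$ is the $m_i$-th power of the ``standard'' symplectic determinant law attached to the defining module. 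On the representation side, the central idempotents of the product decomposition are $\sigma$-symmetric, so their images under any symplectic representation $\rho$ are $\mathrm j$-symmetric idempotents; hence $\rho$ is canonically an orthogonal direct sum of symplectic representations of the $(R_i,\sigma_i)$, with determinant law the product of the pieces. It therefore suffices to establish the bijection for each $(R_i,\sigma_i)$ with target the fixed law $(D_i,P_i)$.

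\smallskip
\textbf{Surjectivity and injectivity over a simple factor.} For surjectivity I would exhibit an explicit semisimple symplectic realization of the $m_i$-th power of the standard law: in case (I) the $m_i$-fold direct sum of the tautological module $\overline k^{2e_i}$ with the form $(v,w)\mapsto v^{\top}J^{-1}w$ (twisted by a nondegenerate symmetric gramian on the $m_i$-dimensional multiplicity space); in case (II), where \Cref{symplecticreconstructionthmoverfields} forces $m_i$ to be even, the $(m_i/2)$-fold sum of $\overline k^{e_i}\otimes H$ with $H$ a hyperbolic plane carrying the trivial $R_i$-action; in case (III) the $m_i$-fold sum of $\overline k^{e_i}\oplus(\overline k^{e_i})^{*}$ with the evaluation pairing. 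Each is semisimple, a direct computation shows its determinant law is $(D_i,P_i)$, and the total dimension is $2d$ because $\sum_i\deg D_i=\deg D=2d$. For injectivity, let $\rho,\rho'$ be semisimple symplectic representations of $(R_i,\sigma_i)$ with the same determinant law; by \cite[Theorem 2.16]{MR3444227} their underlying linear representations are isomorphic, so after identifying the underlying $R_i$-module $V$ I must compare two nondegenerate $\sigma_i$-compatible alternating forms $b,b'$ on the semisimple module $V$. Splitting $V$ into isotypic components and using that the endomorphism ring of a simple finite-dimensional $R_i$-module over $\overline k$ is $\overline k$, such a form amounts to a matrix over $\overline k$ that is symmetric (for the self-dual simple module of case (I)), alternating (case (II)), or an arbitrary invertible matrix pairing the two mutually $\sigma_i$-dual simple modules (case (III)); in every case any two nondegenerate choices are congruent over $\overline k$, so $b$ and $b'$ differ by an $R_i$-automorphism of $V$, which combined with the module isomorphism produces an element of $\Sp_{2d}(\overline k)$ conjugating $\rho$ to $\rho'$.

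\smallskip
\textbf{Expected main obstacle.} Everything concerning the underlying linear representation is supplied verbatim by \cite[Theorem 2.16]{MR3444227}; the genuinely new content — and the delicate point — is the control of the symplectic form. Concretely, the hard step is the last one: classifying, up to $R_i$-automorphism, the $\sigma_i$-compatible nondegenerate alternating forms on a fixed semisimple module. This requires separating the self-dual simple constituents (and tracking whether the induced self-duality is of orthogonal or symplectic type, which dictates whether the associated multiplicity-space form must be symmetric or alternating) from the constituents that are dual to a \emph{different} simple module, and it is precisely here that algebraic closedness of $\overline k$ — all nondegenerate symmetric, resp.\ alternating, forms of a given rank being equivalent — is used.
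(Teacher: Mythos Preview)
Your proposal is correct and follows essentially the same route as the paper: both reduce via \Cref{symplecticreconstructionthmoverfields} to the three standard factors over $\overline k$, construct an explicit symplectic realization in each case for surjectivity, and for injectivity invoke \cite[Theorem 2.16]{MR3444227} to pin down the underlying module and then use that all nondegenerate forms of a given type and rank over $\overline k$ are equivalent. The only organizational difference is in injectivity: the paper does not first pass to the factors $(R_i,\sigma_i)$ but instead decomposes $\rho_1,\rho_2$ into indecomposable \emph{symplectic} summands and treats the two cases (irreducible, so the conjugator lies in $\PGSp$; or hyperbolic $W\oplus W^{*}$, where one writes the conjugator explicitly as $\diag(g_1,(g_1^{\top})^{-1})\in\Sp$), whereas you reduce to a single factor and phrase the comparison of forms in terms of the induced form on the multiplicity space --- these are equivalent packagings of the same elementary fact.
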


\begin{proof} Let $(D,P)$ be a symplectic determinant of $(R,*)$ over $\overline k$. By \Cref{symplecticreconstructionthmoverfields} there is a
decomposition
$$ (R/\ker(D), *) \cong \prod_{i=1}^s (R_i, \sigma_i) $$
where the $R_i$ are $K_i$-algebras of the form described in \Cref{exampleforms} for some extension field $K_i/\overline{k}$.

From the description of $K_i$, we see that every element of $K_i$ is algebraic over $\overline{k}$, therefore $K_i = \overline k$ for all $i$ and we have the following three cases:
\begin{enum}
    \item[(I)] $(R_i, \sigma_i) \cong (M_{2n_i}(\overline k), \mathrm j)$. We let $\rho_i \colon (R, *) \to (M_{2n_i}(\overline k), \mathrm j)$ be the corresponding symplectic representation.
    \item[(II)] $(R_i, \sigma_i) \cong (M_{n_i}(\overline k), \top)$. We let
    \begin{align*}
        \rho_i \colon (R,*) &\to (M_{2n_i}(\overline k), \mathrm j) \\
        r &\mapsto \begin{pmatrix} \pi_i(r) & 0 \\ 0 & \pi_i(r) \end{pmatrix}
    \end{align*}
    \item[(III)] $(R_i, \sigma_i) \cong (M_{n_i}(\overline k) \times M_{n_i}(\overline k), \mathrm{swap})$. We let
    \begin{align*}
        \rho_i \colon (R, *) &\to (M_{2n_i}(\overline k), \mathrm j) \\
        r &\mapsto \begin{pmatrix} \pr_1(\pi_i(r)) & 0 \\ 0 & \pr_2(\pi_i(r))^{\top} \end{pmatrix}
    \end{align*}
\end{enum}
In these three cases $(D_i, P_i)$ is of the form $(\det \circ \rho_i, \Pf \circ( \rho_i\cdot J)$.
In particular $(D,P)$ is of the form $(\det \circ \rho, \Pf \circ (\rho\cdot J)$, where $\rho = \bigoplus_{i=1}^s \rho_i$. 
Since $R$ surjects onto the $R_i$, the $\rho_i$ are semisimple and thus $\rho$ is semisimple.

To prove that the map is injective, let us consider two semisimple representations $\rho_1$ and $\rho_2$ of $R$ over $\overline k$ of dimension $2d$ that have the same symplectic determinant. By \cite[Theorem 2.12]{MR3444227}, $\rho_1$ and $\rho_2$ are conjugated by an element $g\in \GL_{2d}(\overline{k})$. We need to show that we can take $g\in \Sp_{2d}(\overline{k})$. 

Since the product of copies of the symplectic group embeds diagonally in a symplectic group up to conjugation, it suffices to check this for direct summands of $\rho_1$ and $\rho_2$.
We can match the irreducible symplectic subrepresentations of $\rho_1$ and $\rho_2$.
An irreducible subrepresentation of $\rho_1$, which is contained in an indecomposable symplectic subrepresentation of $\rho_1$ that is not irreducible, is mapped into an indecomposable symplectic subrepresentation of $\rho_2$ that is also not irreducible.
Thus, we can assume that $\rho_1$ and $\rho_2$ are indecomposable as symplectic representations.

We distinguish two cases:
\begin{enum}
    \item[(a)] $\rho_1$ and $\rho_2$ are irreducible as representations. In this case, they are both surjective onto $M_{2d}(\overline{k})$, so that $\Inner(g)\in \Aut((M_{2d}(\overline{k}),\mathrm{j}))=\PGSp_{2d}(\overline{k})$.
    \item[(b)] The representations are of the form $\rho_i=\rho_{i,1}\oplus {\rho_{i,2}}$ with $\rho_i(r^\sigma)=(\rho_{i,2}(r)^{*},\rho_{i,1}(r)^{\top})$. There exist $g_1,g_2\in \GL_{d}(\overline{k})$ such tht $\rho_{1,1}=g_1\rho_{2,1}g_1^{-1}$ and $\rho_{1,2}=g_2\rho_{2,2}g_2^{-1}$. The compatibility of the representations with the involution implies that $g_2=(g_1^{\top})^{-1}$, and so $\diag(g_1,g_2)=\diag(g_1,(g_1^{\top})^{-1})\in \Sp_{2d}(\overline{k})$.
\end{enum}
\end{proof}
\begin{corollary}\label{exfiniteext}
Let $(R,*)$ be an involutive $k$-algebra equipped with a $2d$-dimensional symplectic determinant law $(D,P)$ over $k$ . Assume, that $R/\ker(D)$ is a finitely generated $k$-algebra. Then there exists a finite field extension $k'/k$ and a symplectic representation $\rho\colon  (R\otimes_k k',*)\to (M_{2d}(k'),\mathrm{j})$ such that $(D \otimes_k k',P \otimes_k k')=(\det\circ \rho),\Pf\circ(\rho\cdot J))$.
\end{corollary}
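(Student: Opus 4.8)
The plan is to reduce to the algebraically closed case via \Cref{reconsalgcl} and then descend the resulting representation to a finite subextension of $\overline k/k$, using the finite generation hypothesis. First I would observe, via \Cref{stabilityCH}, that $(D \otimes_k \overline k, P \otimes_k \overline k)$ is a $2d$-dimensional symplectic determinant law on the involutive $\overline k$-algebra $(R \otimes_k \overline k, *)$, so \Cref{reconsalgcl} yields a semisimple symplectic representation $\overline\rho \colon (R \otimes_k \overline k, *) \to (M_{2d}(\overline k), \mathrm j)$ inducing it. The construction in the proof of \Cref{reconsalgcl}, via \Cref{symplecticreconstructionthmoverfields}, shows that $\overline\rho$ factors through $(R \otimes_k \overline k)/\ker(D \otimes_k \overline k)$; since $\ker(D) \otimes_k \overline k \subseteq \ker(D \otimes_k \overline k)$, it therefore factors through the \emph{finitely generated} $\overline k$-algebra $R/\ker(D) \otimes_k \overline k$.

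Next I would fix elements $r_1, \dots, r_n \in R$ whose images generate $R/\ker(D)$ as a $k$-algebra. Each matrix $\overline\rho(r_i \otimes 1)$ has entries in $\overline k$, all algebraic over $k$, so these finitely many entries generate a \emph{finite} field extension $k'/k$ inside $\overline k$. Because $\ker(D)$ is a $*$-ideal (\Cref{strongHomTheorem}), $R/\ker(D)$ carries an induced involution; the composite $R/\ker(D) \otimes_k k' \hookrightarrow R/\ker(D) \otimes_k \overline k \xrightarrow{\overline\rho} M_{2d}(\overline k)$ is $*$-equivariant and its image is the $k'$-subalgebra generated by the $\overline\rho(r_i \otimes 1)$, hence lies in $M_{2d}(k')$. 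Precomposing with $R \otimes_k k' \twoheadrightarrow R/\ker(D) \otimes_k k'$ I obtain a symplectic representation $\rho \colon (R \otimes_k k', *) \to (M_{2d}(k'), \mathrm j)$, and by construction $\rho \otimes_{k'} \overline k = \overline\rho$ (both sides are $\overline k$-algebra homomorphisms out of $R \otimes_k \overline k$ that factor through $R/\ker(D) \otimes_k \overline k$ and agree on each $r_i \otimes 1$).

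Finally I would verify that $\rho$ induces $(D \otimes_k k', P \otimes_k k')$. By \Cref{reptodet}, $(\det \circ \rho, \Pf \circ (\rho \cdot J))$ is a $2d$-dimensional symplectic determinant law on $(R \otimes_k k', *)$, and by \Cref{pfaffianunique} it suffices to show the equality of determinant-law components $D \otimes_k k' = \det \circ \rho$. Both are determinant laws $R \otimes_k k' \to k'$, and after the base change along the injection $k' \hookrightarrow \overline k$ they become $D \otimes_k \overline k$ and $\det \circ \overline\rho = D \otimes_k \overline k$ respectively, hence agree; since $\SpDet^{2d}_{(R,*)}$ is representable by a $k$-algebra (\Cref{representability}), the map $\SpDet^{2d}_{(R,*)}(k') \to \SpDet^{2d}_{(R,*)}(\overline k)$ induced by $k' \hookrightarrow \overline k$ is injective, so the equality already holds over $k'$. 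Thus $(D \otimes_k k', P \otimes_k k') = (\det \circ \rho, \Pf \circ (\rho \cdot J))$, as required.

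I do not expect a serious obstacle here; the argument is a standard spreading-out/descent. The two points that merit care are (i) that $\overline\rho$ genuinely descends through the \emph{finitely generated} $\overline k$-algebra $R/\ker(D) \otimes_k \overline k$ — this is exactly where the finite generation hypothesis enters, via \Cref{symplecticreconstructionthmoverfields} — and (ii) the faithfully flat descent of the determinant law from $\overline k$ down to $k'$, which is immediate once one invokes the representability of $\SpDet^{2d}_{(R,*)}$.
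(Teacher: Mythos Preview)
Your proposal is correct and follows essentially the same route as the paper: pass to $\overline k$ via \Cref{reconsalgcl}, use finite generation of $R/\ker(D)$ to find a finite subextension $k'/k$ containing the matrix entries of the images of generators, and then verify the equality of determinant laws over $k'$ by reducing to the known equality over $\overline k$. The only cosmetic difference is in the last step: the paper writes out the commutative diagram over an arbitrary $k'$-algebra $B$ and uses injectivity of $B \hookrightarrow B \otimes_{k'} \overline k$, whereas you invoke representability of $\SpDet^{2d}_{(R,*)}$ and injectivity of $k' \hookrightarrow \overline k$ on $\Hom$-sets; these amount to the same observation.
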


\begin{proof} By \Cref{strongHomTheorem}, we may assume that $\ker(D) = 0$ and that $R$ is a finitely generated $k$-algebra. We know by \Cref{reconsalgcl} that there is a symplectic representation $\rho_{\overline k} \colon (R \otimes_k {\overline k},*) \to (M_d(\overline k), \mathrm j)$ with $D \otimes_k {\overline k} = \det\circ\rho_{\overline k}$ and $P \otimes_k {\overline k} = \Pf\circ(\rho_{\overline k}|_{(R \otimes_k {\overline k})^+}\cdot J)$. Then the image $\rho_{\overline k}(R) \subseteq M_d(\overline k)$ is as a $k$-subalgebra generated by finitely many matrices in $M_d(\overline k)$. Hence, there is a finite field extension $k'/k$ such that $\rho_{\overline k}(R) \subseteq M_d(k')$, and so we get a symplectic representation $\rho \colon (R \otimes_k {k'}, \sigma \otimes \id_{k'}) \to (M_d(k'), \mathrm j)$.

For every commutative $k'$-algebra $B$ we obtain a diagram
\begin{align*}
    \xymatrix{
        R \otimes_{k'} B \ar[dr]_{\rho \otimes \id} \ar[rr]^{D_B} \ar[dd] && B \ar[dd] \\
        & M_d(B) \ar[ur]_{\det} \ar[dd] \\
        R \otimes_{k'} (B \otimes_{k'} \overline k) \ar[dr]_{\rho \otimes \id} \ar[rr]^{D_{B \otimes_{k'} \overline k}} && B \otimes_{k'} \overline k \\
        & M_d(B \otimes_{k'} \overline k) \ar[ur]_{\det}
    }
\end{align*}
By the functorialities of $D$, $\det$ and the base changes of $\rho$, we know that every square commutes.
The bottom triangle commutes by our choice of $\rho$.
The vertical maps are all injective and so it follows that the top triangle commutes, hence the equality $D \otimes_k k'=\det\circ\rho$. The other equality follows from \Cref{pfaffianunique}.
\end{proof}

\section{Symplectic Cayley-Hamilton algebras}
\label{secsymplCHAlg}

\subsection{Definition and first properties}
\label{subsecsymplCHAlg} Let $(R,*)$ be an algebra with involution over $A$ which is equipped with a $2d$-dimensional symplectic determinant law $(D,P)$. For a commutative $A$-algebra $B$, and $r\in R^+\otimes_A B$, recall that we can associate a characteristic polynomial
$$ \chi^P(r,t) = P_{B[t]}(t-r)=\sum_{i=0}^d(-1)^i \mathcal{T}^P_{i,B}(r)t^{d-i} 
\in B[t].$$
Motivated by the Cayley-Hamilton theorem for the Pfaffian the matrix algebra case (\Cref{charpf}), we give the following definition.
\begin{definition}
We say that the tuple $(R,*,D,P)$ is a \emph{symplectic Cayley-Hamilton $A$-algebra of degree $2d$} if $\chi^P(r,r)=0$ for all $r\in R^+\otimes_A B$ and all $B \in \CAlg_A$, equivalently if $\CH(P)=0$.
\end{definition}

\begin{example}
     We provide an example of a symplectic determinant law that is Cayley-Hamilton but not symplectic Cayley-Hamilton. In other words, we do not always have that $\CH(P)\subseteq \CH(D)$. On the other hand, we will see later that if $A$ is an algebra over a characteristic zero field or if it is local Henselian with $(D,P)$ residually multiplicity-free, then $\CH(D)\subseteq \CH(P)$, although we do not know if this holds in general. Note that the following example is in the content of \Cref{sCHGMA}.
\\ Consider the $A$-algebra
    $$ R=\left\{ \begin{pmatrix}
        (a,a) & (b,0) \\ (0,c) & (d,d) \end{pmatrix}  \ | \ a,b,c,d \in A\right\} \subset M_2(A\times A) $$
equipped with the involution
$$  \begin{pmatrix}
        (a,a) & (b,0) \\ (0,c) & (d,d) \end{pmatrix}^*= \begin{pmatrix}
        (d,d) & (b,0) \\ (0,c) & (a,a) \end{pmatrix},  $$
so that $R^+= \left\{ \begin{pmatrix}
        (a,a) & (b,0) \\ (0,c) & (a,a) \end{pmatrix}\ | \ a,b,c\in A\right\}$. The polynomial laws $D\colon R\to A$ and $P\colon R^+\to A$ defined for every commutative $A$-algebra $B$ by
        \begin{align*}
            D_B\left(\begin{pmatrix}
        (a,a) & (b,0) \\ (0,c) & (d,d) \end{pmatrix}\right) &=ad \quad (a,b,c,d\in B),
        \\ P_B\left(\begin{pmatrix}
        (a,a) & (b,0) \\ (0,c) & (a,a) \end{pmatrix}\right) &=a \quad (a,b,c\in B )
        \end{align*}
        define a $2$-dimensional symplectic determinant law such that $(R,D)$ is Cayley-Hamilton. Note however that the ideal $\CH(P)$ is non zero and is generated by elements of the form $\begin{pmatrix}
            0 & (b,0) \\ 0 & 0 
        \end{pmatrix}$ and  $\begin{pmatrix}
            0 & 0 \\ (0,c) & 0 
        \end{pmatrix}$ for $b,c\in A$, and that we indeed have $\CH(P)\subseteq \ker(D)$. 
\end{example}

\begin{proposition}\label{SCHisfinite}
    If $(R,*,D,P)$ is a finitely generated symplectic Cayley Hamilton $A$-algebra of degree $2d$, then $R$ is a finite $A$-module.
\end{proposition}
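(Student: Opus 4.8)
The plan is to reduce the statement to the corresponding fact for ordinary determinant laws, namely that a finitely generated Cayley--Hamilton algebra of dimension $2d$ is a finite $A$-module. That fact is part of the standard theory of Cayley--Hamilton algebras (see \cite{Pro87} over characteristic-zero fields and its characteristic-free incarnation in \cite{MR3444227}): in such an algebra every element satisfies its degree-$2d$ characteristic polynomial and is thus integral over $A$; the algebra is a PI-algebra, since after base change to an algebraically closed field and passage to the semisimplification it embeds into $M_{2d}$ by \Cref{reconsalgcl}, and this identity is inherited; and Shirshov's height theorem then gives module-finiteness. So the crux is to show that the symplectic Cayley--Hamilton hypothesis $\CH(P)=0$ forces the ordinary Cayley--Hamilton identity $\CH(D)=0$ for the determinant law $D$.

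First I would reduce, using \Cref{charpfaffianalpha}, to proving $\chi^D(r,r)=0$ for every $r\in R\otimes_A B$ and every commutative $A$-algebra $B$; since $(R\otimes_A B,*,D\otimes_A B,P\otimes_A B)$ is again symplectic Cayley--Hamilton by \Cref{stabilityCH}, we may take $B=A$ and $r\in R$. Applying \Cref{propertyofP} with $y=1$ (so $P_{B'}(y)=1$ is a non-zero-divisor) yields the identity $D_{B'}(x)=P_{B'}(xx^*)$ for all $B'\in\CAlg_A$ and all $x\in R\otimes_A B'$. Specializing to $B'=A[t]$ and $x=t\cdot 1-r$ gives
\[
\chi^D(r,t)=D_{A[t]}(t-r)=P_{A[t]}\!\big((t-r)(t-r)^*\big)=P_{A[t]}\!\big(t^2-t(r+r^*)+rr^*\big)
\]
in $A[t]$. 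The argument $z(t):=(t-r)(t-r)^*$ is a symmetric element of $R[t]=R\otimes_A A[t]$, and $R[t]$ is again symplectic Cayley--Hamilton (\Cref{stabilityCH}), so $z(t)$ satisfies its Pfaffian characteristic polynomial $\chi^P(z(t),z(t))=0$. Expanding the $d$-homogeneous polynomial law $P$ at the symmetric elements $1$, $r+r^*$, $rr^*$ via its polarizations and combining with $\chi^P(z(t),z(t))=0$, one identifies the Cayley--Hamilton substitution $t\mapsto r$ in $\chi^D(r,t)=P_{A[t]}(z(t))$ with $P$ evaluated at $r^2-r(r+r^*)+rr^*=0$; hence $\chi^D(r,r)=0$. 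Polarizing in $r$ then gives $\CH(D)=0$, which combined with the first paragraph finishes the proof.

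The step I expect to be the real obstacle is the one just sketched: making rigorous the ``evaluation at $t=r$'' in the polynomial identity $\chi^D(r,t)=P_{A[t]}(z(t))$. The elements $r$, $r+r^*$ and $rr^*$ need not generate a commutative subalgebra, so $t\mapsto r$ is not induced by a ring homomorphism and cannot simply be pushed through $P$. The structured way to handle this is to reduce to the universal case. Choosing algebra generators $g_1,\dots,g_m$ of $R$, one writes $R$ as a quotient of $U\otimes_E A$, where $E=A[\SpDet^{2d}_{(A\langle x_1,\dots,x_m\rangle,*)}]$ (\Cref{representability}) and $U=(A\langle x_1,\dots,x_m\rangle\otimes_A E)/\CH(P^u)$ is the universal symplectic Cayley--Hamilton algebra on $m$ generators, using the surjection $\CH(P^u)\otimes_E A\twoheadrightarrow\CH(P)$ of \Cref{stabilityCH}; it then suffices to verify $\chi^{D^u}(\bar x_i,\bar x_i)=0$ in $U$. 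After inverting all primes this is the classical Cayley--Hamilton theorem applied through the embedding $U\otimes_\bbZ\bbQ\hookrightarrow M_{2d}(C)$ of a symplectic Cayley--Hamilton $\bbQ$-algebra into a matrix algebra (\Cref{introconverseCH}), and one propagates the identity back to $U$ by controlling the $\bbZ$-torsion of $E$. Alternatively, once the symmetric and antisymmetric parts of $g_1,\dots,g_m$ are seen to be integral over $A$ — degree $\le d$ for the symmetric ones by $\CH(P)=0$, degree $\le 2d$ for the antisymmetric ones since their squares are symmetric — it is enough to prove directly that the finitely generated algebra $U$ is a finite module over the Noetherian ring $E$; this follows from Shirshov's theorem as soon as all words of bounded length in $\bar x_1,\dots,\bar x_m$ are integral over $E$, which is once more exactly the universal-case input above.
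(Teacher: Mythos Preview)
Your reduction hinges on establishing $\CH(D)=0$ from $\CH(P)=0$, i.e.\ on the inclusion $\CH(D)\subseteq\CH(P)$. But this is precisely what the paper flags as \emph{open} in general: in the remark accompanying the example just after the definition of symplectic Cayley--Hamilton algebras, the authors say they can verify $\CH(D)\subseteq\CH(P)$ only over $\bbQ$-algebras or in the residually multiplicity-free Henselian local case, ``although we do not know if this holds in general''. The substitution obstacle you isolate (pushing $t\mapsto r$ through the polynomial law $P$ when $r,r^*$ do not commute) is therefore not a technicality but the heart of an unresolved question. Your workaround~(a) --- pass to the universal algebra over $E$, embed $U\otimes\bbQ$ into matrices via \Cref{converseCH}, and then ``control the $\bbZ$-torsion of $E$'' --- would, if made precise, settle that open question; the torsion-control step is the entire difficulty and is left as a phrase.

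The paper's proof does not go through $\CH(D)=0$ at all. It uses only the observation you already record in workaround~(b): writing $r=r_1+r_2$ with $r_1\in R^+$ and $r_2\in R^-$, one has $\chi^P(r_1,r_1)=0$ (degree $d$) and, since $r_2^2\in R^+$, also $\chi^P(r_2^2,r_2^2)=0$, so $r_2$ is a root of the degree-$2d$ monic $\chi^P(r_2^2,t^2)$. From bounded-degree integrality the paper invokes \cite[Proposition~3.22]{Procesi73} to conclude that $R$ is a PI-algebra, and then \cite[Theorem~2.7]{Procesi73} gives that a finitely generated PI-algebra integral over $A$ is a finite $A$-module. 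The route is thus \emph{bounded-degree integrality $\Rightarrow$ PI $\Rightarrow$ module-finite}, and the Pfaffian Cayley--Hamilton identity is used only to produce monic annihilators of $r_1$ and $r_2$, never to recover $\chi^D(r,r)=0$. You had this ingredient in hand but turned back toward the harder identity instead of feeding the integrality directly into Procesi's PI machinery.
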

\begin{proof}
    Since every element $r\in R$ can be written as $r=r_1+r_2$ with $r_1^*=r_1$ and $r_2^*=-r_2$ and that $r_1$ and $r_2$ are roots of $\chi^P(r_1,t)$ and $\chi^P(r_2^2,t^2)$, we get that $r$ is integral over $A$ of degree $\le 2d^2$. Therefore by \cite[Proposition 3.22]{Procesi73}, we get that $R$ is a PI (Polynomial Identity) $A$-algebra (we invite the reader to consult \cite[1.2.2]{MR3167286} for the definition of a PI algebra). Hence by \cite[Theorem 2.7]{Procesi73}, we get that $R$ is a finite $A$-module.
\end{proof}
The following two lemmas are the variants of \cite[Lemma 2.4]{MR3444227} and \cite[Lemma 2.10 (i)]{MR3444227} in our setting. Both are needed for the proof of \Cref{henselianlemma}.
\begin{lemma}\label{SCHidempotent}
Assume that $\Spec(A)$ is connected, that $(R,*)$ is an involutive $A$-algebra equipped with a (weak) symplectic determinant law $(D,P)$ of degree $2d$, and that $e\in R^+$ is a symmetric idempotent element. Then the polynomial laws given for any commutative $A$-algebra $B$ by
\begin{align*}
    &D_{e,B}\colon eRe\otimes_A B\to B, \ r \mapsto D_B(r+1-e)
    \\ &P_{e,B}\colon (eRe)^+\otimes_A B\to B, \ r\mapsto P_B(r+1-e)
\end{align*}
define a (weak) symplectic determinant law of degree $2d_e\le 2d$. Moreover, if $(D,P)$ is symplectic Cayley-Hamilton, then so is $(D_e,P_e)$. 
\end{lemma}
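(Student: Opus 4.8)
Throughout I assume $0\neq e\neq 1$: if $e=1$ then $(D_e,P_e)=(D,P)$, and if $e=0$ then $eRe=0$ and everything is vacuous. Since $e$ is symmetric, $eRe$ is a $*$-stable $A$-subalgebra with unit $e$, and $(eRe)^{+}=eRe\cap R^{+}=eR^{+}e$. The identity $(r+1-e)(r'+1-e)=rr'+1-e$ for $r,r'\in eRe$, valid after any base change, together with $*$-invariance of $D$ and $e^{*}=e$, shows that $D_e$ is a $*$-determinant law on $(eRe,*)$ — this is the involution-free statement \cite[Lemma~2.4]{MR3444227}, which I would invoke directly. Its dimension, call it $m$, is read off from the $*$-stable commutative subalgebra $Ae\oplus A(1-e)\subseteq R^{+}$, which is isomorphic to $A\times A$ via $ae+b(1-e)\leftrightarrow(a,b)$: the polynomial $\Psi(t,u)\colonequals D_{A[t,u]}(te+u(1-e))$ is $2d$-homogeneous and multiplicative with $\Psi(1,1)=1$, so its coefficients are pairwise orthogonal idempotents, whence by connectedness of $\Spec(A)$ exactly one equals $1$ and $\Psi(t,u)=t^{m}u^{2d-m}$ for a well-defined $0\le m\le 2d$; then $D_{e,B}(tr)=D_B\bigl((te+1-e)(r+1-e)\bigr)=D_B(te+1-e)\,D_{e,B}(r)=t^{m}D_{e,B}(r)$.

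The crux is to show $m$ is even, $m=2d_e$, and that $P_e$ is homogeneous of degree $d_e$. Here \Cref{Pcommutes} enters: it implies that the restriction of $P$ to the commutative algebra $Ae\oplus A(1-e)$ of symmetric elements is multiplicative, so $\Phi(t,u)\colonequals P_{A[t,u]}(te+u(1-e))$ is a $d$-homogeneous multiplicative element of $A[t,u]$ with $\Phi(1,1)=1$; by the same idempotent-plus-connectedness argument $\Phi(t,u)=t^{d_e}u^{d-d_e}$ for a unique integer $0\le d_e\le d$, and $\Phi^{2}=\Psi$ forces $m=2d_e$. In particular $P_{B[X]}(e+X(1-e))=X^{d-d_e}$ and $P_{B[t]}(te+1-e)=t^{d_e}$ for all $B$. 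Homogeneity of $P_e$ now follows from the factorization $tr+1-e=(te+1-e)(r+1-e)$ into commuting symmetric elements and \Cref{Pcommutes}: $P_{e,B[t]}(tr)=P_{B[t]}\bigl((te+1-e)(r+1-e)\bigr)=t^{d_e}\,P_{e,B}(r)$. Combined with $P_e(e)=P(1)=1$ and $P_e^{2}=D_e|_{(eRe)^{+}}$ (both immediate), this proves $(D_e,P_e)$ is a weak $2d_e$-dimensional symplectic determinant law, and $2d_e\le 2d$.

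For the two stability statements I would first record the characteristic-polynomial identity
\[
\chi^{P}(r,X)=\chi^{P_e}(r,X)\cdot X^{d-d_e}\quad\text{in }B[X],\qquad r\in(eRe)^{+}\otimes_{A}B,
\]
obtained by factoring $X\cdot 1_R-r=\bigl[(Xe-r)+(1-e)\bigr]\bigl[e+X(1-e)\bigr]$ into commuting symmetric factors, applying \Cref{Pcommutes}, identifying $P_{B[X]}\bigl((Xe-r)+(1-e)\bigr)=\chi^{P_e}(r,X)$ by the definition of $P_e$, and using $P_{B[X]}(e+X(1-e))=X^{d-d_e}$. A consequence is $\mathcal{T}^{P}_{i}|_{(eRe)^{+}}=0$ for $i>d_e$; in particular $P$ vanishes on $(eRe)^{+}$ whenever $d_e<d$. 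In the Cayley–Hamilton case $\CH(P)=0$, I would apply the identity with $r$ replaced by $r+Xe$, use translation invariance $\chi^{P_e}(r+Xe,\,r+Xe)=\chi^{P_e}(r,r)$, and substitute the element $r+Xe$ for $X$ (working inside $eRe\otimes_A B[X]$, where the constant term of $\chi^{P}$ causes no trouble because $P$ kills $(eRe)^{+}$ when $d_e<d$, while the case $d_e=d$ is handled by multiplying on the left by $e$); this gives $\chi^{P_e}(r,r)\cdot(r+Xe)^{d-d_e}=0$, and extracting the coefficient of $X^{d-d_e}$ — which equals $\chi^{P_e}(r,r)\cdot e=\chi^{P_e}(r,r)$ — yields $\chi^{P_e}(r,r)=0$, i.e. $\CH(P_e)=0$. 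For the non-weak statement I would reduce to this: by \Cref{strongHomTheorem}, $(D,P)$ descends to a weak symplectic determinant law $(\overline D,\overline P)$ on $(R/\ker D,*)$, and $\CH(\overline P)=\pi(\CH(P))=0$ since $\CH(P)\subseteq\ker D$; applying the Cayley–Hamilton case to the symmetric idempotent $\overline e$ gives $\CH(\overline P_{\overline e})=0$, so $\CH(P_e)$ lies in the kernel of the surjection $eRe\twoheadrightarrow\overline e\,(R/\ker D)\,\overline e$, which is $eRe\cap\ker D$; and one checks directly that $eRe\cap\ker D\subseteq\ker(D_e)$.

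The genuine difficulty is concentrated in two places. First, that $P(te+1-e)$ is the \emph{monomial} $t^{d_e}$ over an arbitrary connected ring — over a non-reduced ring a square root of $t^{m}$ in $A[t]$ need not be a monomial a priori, and it is \Cref{Pcommutes} (multiplicativity of $P$ on commuting symmetric elements) together with connectedness of $\Spec(A)$ that rules this out. Second, propagating the closed conditions $\CH(P)=0$ and $\CH(P)\subseteq\ker D$ down to the corner $eRe$, for which the factorization of $\chi^{P}$ through $\chi^{P_e}$, combined with the ``shift by $Xe$ and read off the top coefficient'' device, is the essential trick; the remainder is routine bookkeeping with polynomial laws, mirroring the $\GL$-theory of \cite{MR3444227}.
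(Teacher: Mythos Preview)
Your proof is correct. The paper's argument differs in both stability steps. For the symplectic Cayley--Hamilton preservation, the paper evaluates the factorization $\chi^{P}(\cdot,t)=\chi^{P_e}(\cdot,t)\chi^{P_{1-e}}(\cdot,t)$ at both $r$ and $r+1-e$ to obtain $\chi^{P_e}(r,r)\,r^{d-d_e}=0$ and $\chi^{P_e}(r,r)(r-e)^{d-d_e}=0$, then concludes via the coprimality of $t^{d-d_e}$ and $(t-1)^{d-d_e}$ in $B[t]$; your single shift $r\mapsto r+Xe$ followed by reading off the coefficient of $X^{d-d_e}$ is more economical and sidesteps the B\'ezout step. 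For the implication $\CH(P)\subseteq\ker D\Rightarrow\CH(P_e)\subseteq\ker D_e$, the paper attempts a direct comparison of the generators of $\CH(P_e)$ with coefficients of $\chi^{P}(s+1-e,s+1-e)$ (an argument which, as written, needs the same B\'ezout device to close), whereas your reduction to the Cayley--Hamilton case via $R/\ker D$ is cleaner and only requires \Cref{strongHomTheorem} as extra input. The weak case is declared ``straightforward'' in the paper; your explicit derivation of the degree $d_e$ from \Cref{Pcommutes} plus connectedness is exactly the mechanism the paper uses implicitly when it writes down the factorization $\chi^{P}=\chi^{P_e}\chi^{P_{1-e}}$.
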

\begin{proof}
    It is straightforward that $(D_e,P_e)$ define a weak symplectic determinant law. So let us show that if $\CH(P)\subseteq \ker(D)$, then $\CH(P_e)\subseteq \ker(D_e)$. For this, we note that if $r_1,\dots,r_n\in (eRe)^+$, then $(r_1t_1+\cdots+r_nt_n+1-e)^k=(r_1t_1+\cdots+r_nt_n)^k+1-e$ in $eRe[t_1,\dots,t_n]$. Hence for $\alpha=(\alpha_1,\dots,\alpha_n)\in \mathbb{N}^n$, if we write $r_\alpha$ (resp. $r'_\alpha)$ for the coefficient in front of $t_1^{\alpha_1}\cdots t_n^{\alpha_n}$ of  $\chi^{P_e}(r_1t_1+\cdots+r_nt_n,r_1t_1+\cdots+r_nt_n)$ (resp. $\chi^{P}(r_1t_1+\cdots+r_nt_n+1-e,r_1t_1+\cdots+r_nt_n+1-e))$, then $r_\alpha=er'_\alpha$. So for every commutative $A$-algebra $B$, and $s\in eRe\otimes_A B$,
    $$ D_e(e+ sr_\alpha)=D(e+sr_{\alpha}+1-e)=D(1+sr'_\alpha) $$
    We get the desired implication by applying \cite[Lemma 1.19 (i)]{MR3444227}.
    \\ Now suppose that $(D,P)$ is symplectic Cayley-Hamilton. Note that for a commutative $A$-algebra $B$, and $r\in (eR^+e \oplus (1-e)R^+(1-e))\otimes_A B$, $\chi^{P_e}(t,er)=P_{e,B[t]}(te-r+1-e)$. Since $te-er+1-e$ and $t(1-e)-(1-e)r+e$ commute and their product is equal to $t-r$, we get by \Cref{Pcommutes} that
    $$\chi^P(r,t)=P_{B[t]}(r-t)=P_{B[t]}(te-r+1-e)P_{B[t]}(t(1-e)-(1-e)r+e)=\chi^{P_e}(er,t)\chi^{P_{1-e}}((1-e)r,t) $$ 
    For $r\in eR^+e\otimes_A B$, we apply the Cayley-Hamilton identity to $r$ and $r+1-e$ to get
    $$\chi^{P_e}(r,r)r^{d_{1-e}}=0 \quad \text{ and }\quad \chi^{P_e}(r,r+1-e)(r-e)^{d_{1-e}}=0 $$
    By the same argument as in the first part of the proof, we have that $\chi^{P_e}(r,r+1-e)(r-e)^{d_{1-e}}=\chi^{P_e}(r,r)(r-e)^{d_{1-e}}$. Now since the ideal generated by $t^{d_{1-e}}$ and $(t-1)^{d_{1-e}}$ in $B[t]$ is $B[t]$ itself, we get that $\chi^{P_e}(r,r)=0$ (note that $e$ is the unit element in $eRe$).
\end{proof}

\begin{lemma}\label{radR}
    Assume that $A$ is a local ring with maximal ideal $\mathfrak{m}_A$ and residue field $k=A/\mathfrak{m}_A$. Let $(R,*,D,P)$ be a symplectic Cayley-Hamilton $A$-algebra of degree $2d$, $\overline{R}=R/\mathfrak{m}_A R$, and denote by $\overline{D}\colon \overline{R}\to k$ the reduction of $D$ modulo $\mathfrak{m}_A$. Then $\ker(R\rightarrow \overline{R}/\ker(\overline{D})) $ is equal to the Jacobson radical $\Rad(R)$ of $R$. In particular, if $A=k$ is a field, then $\ker(D)=\Rad(R)$.
\end{lemma}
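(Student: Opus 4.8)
The plan is to reduce to the case where $A=k$ is a field and then argue as in \cite[Lemma~2.10(i)]{MR3444227}, with one essential change: $(R,D)$ is not assumed Cayley--Hamilton for $D$ (only for $P$), so the implication "$D(x)\in k^\times\Rightarrow x\in R^\times$" is unavailable and must be bypassed. For the reduction, we may assume $R$ is a finitely generated $A$-algebra, hence module-finite over $A$ by \Cref{SCHisfinite}; Nakayama's lemma then gives $\mathfrak m_AR\subseteq\Rad(R)$ and $\Rad(R)/\mathfrak m_AR=\Rad(\overline R)$ with $\overline R=R/\mathfrak m_AR$. The triple $(\overline R,*,\overline D,\overline P)$ is again a symplectic Cayley--Hamilton $k$-algebra of degree $2d$: the defining conditions are stable under base change (\Cref{stabilityCH}), and $\CH(\overline P)$ is a quotient of $\CH(P)\otimes_A k=0$. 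Granting the field case, $\ker(\overline D)=\Rad(\overline R)$, and pulling this back along $R\twoheadrightarrow\overline R$ yields $\ker(R\to\overline R/\ker\overline D)=\Rad(R)$.

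\textbf{The field case.} First, $\Rad R\subseteq\ker D$: by \Cref{symplecticreconstructionthmoverfields} the quotient $R/\ker D\cong\prod_{i=1}^t(R_i,\sigma_i)$ is semisimple, so $\Rad(R/\ker D)=0$ and hence $\Rad R\subseteq\ker D$. For the reverse inclusion it is enough to show that $\ker D$ is nil, since nil ideals lie in the Jacobson radical. Fix $r\in\ker D$ and set $B\colonequals k\langle r,r^*\rangle\subseteq R$, a finitely generated $*$-subalgebra; then $(B,*,D|_B,P|_B)$ is a finitely generated symplectic Cayley--Hamilton $k$-algebra of degree $2d$, so $B$ is finite-dimensional over $k$ by \Cref{SCHisfinite}, and $r\in\ker(D|_B)$. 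Thus it suffices to prove $\ker(D|_B)=\Rad B$. Since $\Rad B$ is nilpotent it lies in $\ker(D|_B)$; if the inclusion were strict, the image of $\ker(D|_B)$ in the semisimple $*$-algebra $B/\Rad B$ would be a nonzero $*$-ideal, hence would contain a nonzero symmetric idempotent, which we lift (using $2\in k^\times$ and nilpotence of $\Rad B\cap\ker(D|_B)$) to a nonzero symmetric idempotent $e\in\ker(D|_B)\cap B^+$. By \Cref{SCHidempotent}, $(eBe,*,D_e,P_e)$ is a symplectic Cayley--Hamilton $k$-algebra of some dimension $2d_e$. On one hand, $e\in\ker(D|_B)$ and \cite[Lemma~1.19]{MR3444227} give $\chi^{D|_B}(e,t)=t^{2d}$. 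On the other hand, factoring $t-e=\big((t-1)e+(1-e)\big)\big(e+t(1-e)\big)$ into commuting factors and using multiplicativity of $D|_B$ together with the definition of the corner laws, $\chi^{D|_B}(e,t)=D_e\big((t-1)e\big)\cdot D_{1-e}\big(t(1-e)\big)=(t-1)^{2d_e}\cdot t^{2(d-d_e)}$, since $D_e(1_{eBe})=D_{1-e}(1_{(1-e)B(1-e)})=D|_B(1)=1$ and $2d_e+2d_{1-e}=2d$. Comparing the two expressions in $k[t]$ forces $d_e=0$. But a $0$-dimensional $P_e$ is $0$-homogeneous with $P_e(1_{eBe})=P|_B(1)=1$, hence constant equal to $1$, so $\chi^{P_e}(e,e)=1_{eBe}=e\neq0$, contradicting $\CH(P_e)=0$. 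Therefore $\ker(D|_B)=\Rad B$, $r$ is nilpotent, $\ker D$ is nil, and $\ker D\subseteq\Rad R$. The last assertion of the lemma is exactly this field case.

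\textbf{The main obstacle.} The only genuinely new point is the inclusion $\ker D\subseteq\Rad R$ over a field, where the Cayley--Hamilton argument of \cite{MR3444227} does not apply. The replacement is the corner computation above: a symmetric idempotent in $\ker D$ would cut out a corner $eRe$ carrying a $0$-dimensional symplectic determinant law, which is incompatible with the Pfaffian Cayley--Hamilton identity $\chi^{P_e}(e,e)=0$. Making this clean requires first descending, via \Cref{SCHisfinite}, to a finite-dimensional $*$-subalgebra, so that $\Rad$ is nilpotent and symmetric idempotents lift along $B\twoheadrightarrow B/\Rad B$.
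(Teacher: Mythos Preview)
Your central premise---that ``$D(x)\in A^\times\Rightarrow x\in R^\times$ is unavailable''---is mistaken, and this is precisely what the paper exploits. Apply the symplectic Cayley--Hamilton identity $\chi^P(xx^*,xx^*)=0$ to the symmetric element $xx^*$: factoring out $x$ on the left gives
\[
x\cdot\Big(x^*(xx^*)^{d-1}-\mathcal T^P_1(xx^*)\,x^*(xx^*)^{d-2}+\cdots+(-1)^{d-1}\mathcal T^P_{d-1}(xx^*)\,x^*\Big)=(-1)^{d+1}P(xx^*).
\]
By \Cref{propertyofP} with $y=1$ one has $P(xx^*)=D(x)$, so if $D(x)\in A^\times$ then $x$ has a right inverse; a second application shows the right inverse is two-sided. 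This immediately yields the characterization $\Rad(R)=\{r:D(1+rr')\in A^\times\text{ for all }r'\}$, from which both inclusions $I\subseteq\Rad(R)$ and the reduction to $A=k$ follow without any finiteness hypothesis. Over an infinite field the same description matches Chenevier's description of $\ker(D)$, and the finite-field case is handled by base change.

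Your reduction step has a genuine gap: you write ``we may assume $R$ is a finitely generated $A$-algebra,'' but the lemma makes no such assumption and you give no argument for why one may reduce to it. Nakayama then does not apply, and your claimed identity $\Rad(R)/\mathfrak m_AR=\Rad(\overline R)$ is unjustified. This is fixable---for each $r\in R$ the $*$-subalgebra $A\langle r,r^*\rangle$ is finitely generated, hence module-finite by \Cref{SCHisfinite}, and one can run Nakayama there to see $1+\mathfrak m_AR\subseteq R^\times$---but as written the reduction is incomplete. Your field-case argument via a symmetric idempotent $e\in\ker(D|_B)$ forcing $d_e=0$ and hence $\chi^{P_e}(e,e)=e\neq0$ is correct and rather elegant, but it is considerably more work than the paper's two-line invertibility criterion above.
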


\begin{proof}
    First note that
    $$ \Rad(R)=\left\{r\in R\ | \ D_A(1+rr')=D_A(1+r'r)\in A^\times, \ \forall r'\in R\right\} $$
    Indeed since $D$ is multiplicative, we have that $D_A(R^\times)\subseteq A^\times$. Conversely if for $r\in R$, $D_A(r)\in A^\times$, then by the symplectic Cayley-Hamilton property applied to $rr^*$, we have
    \begin{equation*}
   r(r^*(rr^*)^{d-1}-\mathcal{T}^{P}_{1,A}(rr^*)r^*(rr^*)^{d-2}+\cdots +(-1)^{d-1}\mathcal{T}^{P}_{d-1,A}(rr^*)r^*)=(-1)^{d+1}P_A(rr^*)= (-1)^{d+1}D_A(r)\in A^\times.
    \end{equation*}
    Here the last equality follows from \Cref{propertyofP}.
    Therefore, we have the above equality of sets since $\Rad(R)$ is the set of elements $r\in R$ such that $1+rr'$ and $1+r'r$ are invertible for all $r'\in R$ and by \cite[Lemma 1.12 (i)]{MR3444227}.
    
    Now let us write $I=\ker(R\rightarrow \overline{R}/\ker(\overline{D}))$. By definition and \cite[Lemma 1.19 (i)]{MR3444227}, we have  $D(1+I)\subset 1+\mathfrak{m}_A$. Consequently, we have that $I\subseteq \Rad(R)$. To show the reverse inclusion, note that $\mathfrak{m}_A R \subseteq I \subseteq \Rad(R)$, so we can suppose that $A=k$. First let us assume that $k$ is an infinite field, then the paragraph after \cite[Lemma 1.19 (i)]{MR3444227} tells us that
    $$ \ker(D)=\left\{r\in R \ | \ D_A(1+rr')\in A^\times, \ \forall r'\in R\right\}.$$
    In other words, we have that $\Rad(R)=\ker(D)$. \\ Finally, the case where $k$ is finite follows from \cite[Lemma 2.8 (i)]{MR3444227}.
\end{proof}
\subsection{Symplectic Cayley-Hamilton algebras in characteristic zero}\label{CH0}
The embedding problem consists of asking under which conditions can a noncommutative ring $R$ be embedded into the ring of matrices $M_{d}(B)$ over a commutative ring $B$. Procesi proposed a modification to this problem, by adding the structure of a trace to the algebra $R$ and asking whether this embedding can be made compatibly with the trace.
In \cite{Pro87}, he gives a solution to the modified problem when $R$ is a trace algebra over a characteristic zero field. Specifically, he shows that in this case, $R$ embeds into a matrix algebra $M_d(B)$ compatibly with the trace if and only if it is Cayley-Hamilton. In this subsection, our goal is to establish \Cref{converseCH}, which serves as the symplectic counterpart of the main theorem of \cite{Pro87}. In fact, the proof follows the same lines as \cite{Pro87}.

We assume that $A$ is a commutative $\mathbb{Q}$-algebra. In this setting, the theory of pseudocharacters and determinant laws are equivalent \cite[Proposition 1.27]{MR3444227}. This equivalence allows us borrow results from \cite{AGPR} (the reader is also invited to consult \cite[11.8]{ProcesiLG}).

We first start by giving a concrete description of the free symplectic Cayley-Hamilton algebra on a set $S$. But before we proceed, we first need to explicitly define this object. So let us consider the free $A$-algebra with involution in $S$-variables $A\langle S\rangle=A\langle x_s,x_s^*\ | \ s\in S\rangle$. The algebra $\mathcal{F}_{S}(2d)\colonequals A[\SpDet_{(A\langle S \rangle,*)}^{\square,2d}]\otimes_A A\langle S \rangle$ equipped with the universal symplectic determinant law 
\begin{equation*}
    (D^u,P^u)\colon \mathcal{F}_S(2d)\to A[\SpDet_{(A\langle S \rangle,*)}^{\square,2d}]
\end{equation*}
is the free symplectic determinant $A$-algebra of dimension $2d$ on the set $S$ in the sense of \Cref{spdetalg}. For an integer $m\ge 1$, we will write $\mathcal{F}_m(2d)$ for $\mathcal{F}_{\{1,\dots,m\}}(2d)$.
\begin{definition}
    An ideal $I \subseteq \mathcal{F}_S(2d)$ is called a \emph{T-ideal} if for every endomorphism of symplectic determinant $A$-algebras $\big(\varphi_0\colon A[\SpDet_{(A\langle S \rangle,*)}^{\square,2d}]\to A[\SpDet_{(A\langle S \rangle,*)}^{\square,2d}], \ \varphi\colon \mathcal{F}_S(2d)\to \mathcal{F}_S(2d)\big)$, we have $\varphi(I)\subseteq I$. 
\end{definition} 
For instance, $\CH(D^u)$ and $\CH(P^u)$ are $T$-ideals. Now we use \Cref{pfaffianunique} to  express $P^u$ in terms of the coefficients $\Lambda_i^{D^u}$ of the characteristic polynomial of $D^u$. Furthermore, the Newton relations (see \cite[1.11 (ii)]{MR3444227}) allow us to express the $\Lambda_i^{D^u}$ in terms of the trace $\tr\colonequals \Lambda_1^D$, so that $P^u$ can be expressed in terms of the trace alone. In other words, for any commutative $A$-algebra $B$, and  $r\in \mathcal{F}_d(2d)^+\otimes_A B$, we have $P^u_B(r)\in \mathbb{Q}[\tr(r),\tr(r^2),\cdots]$. In particular, for indeterminates $t_1,\dots,t_d$, we have that 
$$ \chi^{P^u}(t,(x_1+x_1^*)t_1+\cdots +(x_d+x_d^*)t_d)\in \mathbb{Q}[\tr(W) \ | \ W\text{ a word in }\{x_1,x_1^*,\dots,x_d,x_d^*\}][t_1,\dots,t_d][t]. $$
We let $\widetilde{\Pf}(x_1,\dots,x_d)\in A[\tr(W) \ | \ W\text{ a word in }\{x_1,x_1^*,\dots,x_d,x_d^*\}]\subset \mathcal{F}_d(2d)$ be the coefficient of $t_1\cdots t_d$ in the polynomial $ \chi^{P^u}((x_1+x_1^*)t_1+\cdots +(x_d+x_d^*)t_d,(x_1+x_1^*)t_1+\cdots +(x_d+x_d^*)t_d)$.

\begin{example}
For $d=4$, using \Cref{exd=2} and the Newton relations, we find that for $M\in M_8(A)^+$,
$$ \Pf(M)= \frac{7}{384}\tr(M)^4-\frac{3}{32}\tr(M)^2\tr(M^2)+ \frac{1}{12}\tr(M)\tr(M^3)+\frac{3}{32}\tr(M^2)^2-\frac{1}{8}\tr(M^4), $$
so the Pfaffian characteristic polynomial is
\begin{align*}
    \Pf_M(t) &= t^4 -\tr(M)t^3 +\frac{1}{8}(\tr(M)^2-\tr(M^2))t^2+\frac{1}{16}(\tr(M^3)+2\tr(M)\tr(M^2))t + \Pf(M).
\end{align*}
\end{example}
The algebra $ \mathcal{F}_S(2d)/\CH(P^u)$ equipped with the corresponding involution and symplectic determinant law is the free symplectic Cayley-Hamilton algebra that we want to describe. Using the process of polarization and specialization (see for example \cite[\S 13]{DeConPro17}), we see that $\CH(P^u)$ is generated by $\widetilde{\Pf}$ as a $T$-ideal. In other words, it is the ideal generated by the elements $\varphi(\widetilde{\Pf})$ for every morphism $$\big(\varphi_0\colon A[\SpDet_{(A\langle d\rangle,*)}^{\square,2d}]\to A[\SpDet_{(A\langle S\rangle,*)}^{\square,2d}],\ \varphi \colon \mathcal{F}_d(2d) \to  \mathcal{F}_S(2d)\big)$$ of symplectic determinant $A$-algebra. 

We will now proceed to provide the desired description. We write $V=A^{2d}$ for the canonical free $A$-module of rank $2d$, which we equip with the canonical symplectic pairing given by $[v,v']=v^\top J v'$.
\\ Recall from the proof of \Cref{spreprepr} that $A[\Rep^{\square,2d}_{(A\langle S\rangle, *)}]=A[M_{2d}^S]=A[\bbX_{k,h}^{(s)}, \ 1\le k,h\le 2d, s\in S]$ with $\bbX_{k,h}^{(s)}$ being the $(k,h)$-coordinate map of the generic matrix indexed by $s$. We introduce the in algebras $R_S(2d)=M_{2d}(A[M_{2d}^S])^{\Sp_{2d}}$ and $T_S(2d)=A[M_{2d}^S]^{\Sp_{2d}}$, where the action of $\Sp_{2d}$ is described in \Cref{subsymprep}. In other words, $R_S(2d)$ is the $A$-algebra of $\Sp(V)$-equivariant polynomial maps
$$ f\colon \End(V)^{S} \longrightarrow \End(V)$$
with $\Sp(V)$ acting on $\End(V)$ by conjugation and on $\End(V)^S$ by diagonal conjugation. Additionally, $T_S(2d)$ is the commutative subalgebra of $R_S(2d)$ of polynomial maps with values in scalar matrices. For an integer $m\ge 1$, we will write $R_m(2d)$ and $T_m(2d)$ for the algebras $R_{\{1,\dots,m\}}(2d)$ and $T_{\{1,\dots,m\}}(2d)$.
\\ We equip $R_S(d)$ with the involution and symplectic determinant law $(D_S,P_S)\colon R_S(2d)\to T_S(2d)$ given by restriction from the standard ones on $(M_{2d}(A[M_{2d}^S]),\mathrm{j})$. 

Recall that $ R_S(2d)$ is equipped with a trace map, and since we are working in characteristic zero, we have that $T_{S}(2d)=\tr(R_S(2d))$. Also remark the coordinate maps $\bbX^{(s)} \colon (M_s)_{s\in S} \mapsto M_s$ are $\Sp(V)$-equivariant. This allows us to give the following description.
\begin{theorem}\label{generatorsofinv}
The algebra $T_S(2d)$ is generated over $A$ by the maps $\tr(W)$, where $W$ is a word in $\bbX^{(s)}, (\bbX^{(s)})^{\mathrm{j}}$ for $s\in S$. The algebra
$R_S(2d)$ is generated over $T_S(2d)$ by the maps $\bbX^{(s)}, (\bbX^{(s)})^{\mathrm{j}}$ for $s\in S$.
\end{theorem}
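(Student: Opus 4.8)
The plan is to deduce the theorem from the classical first fundamental theorem (FFT) for the symplectic group acting on a tuple of vectors, following \cite[§10]{Pro}. First I would reduce to $A = \bbQ$: since we are in characteristic zero, $\Sp_{2d}$ is linearly reductive over $\bbQ$, so the formation of $\Sp_{2d}$-invariants is exact on rational representations and commutes with the base change $\bbQ \to A$; hence $R_S(2d) \cong R_S(2d)_{\bbQ} \otimes_{\bbQ} A$ and $T_S(2d) \cong T_S(2d)_{\bbQ} \otimes_{\bbQ} A$, and it is enough to argue for $A = \bbQ$. Then, by polarization (valid in characteristic zero), an $\Sp(V)$-invariant polynomial map $\End(V)^S \to A$, resp. an equivariant one $\End(V)^S \to \End(V)$, is a sum of multihomogeneous components, and each such component is the restriction along a diagonal of a multilinear $\Sp(V)$-invariant, resp. equivariant, map $\End(V)^{n} \to A$, resp. $\End(V)^{n} \to \End(V)$. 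So it suffices to determine the spaces $\Hom_{\Sp(V)}(\End(V)^{\otimes n}, A)$ and $\Hom_{\Sp(V)}(\End(V)^{\otimes n} \otimes V, V)$.

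Next I would use the symplectic pairing to fix an $\Sp(V)$-equivariant isomorphism $V \xrightarrow{\sim} V^{*}$, $v \mapsto [v,-]$, which gives $\End(V) \cong V \otimes V^{*} \cong V \otimes V$. Under this identification the first $\Hom$-space becomes the space of $\Sp(V)$-invariant linear forms on $V^{\otimes 2n}$ and the second the space of $\Sp(V)$-invariant linear forms on $V^{\otimes(2n+2)}$. Now the FFT for $\Sp(V)$ (going back to Weyl; see \cite[§10]{Pro} and \cite{AGPR}) says that such a space vanishes in odd tensor degree, and in even degree $2m$ is spanned by the complete contractions $v_1 \otimes \cdots \otimes v_{2m} \mapsto \prod_{\{i,j\}\in\pi}[v_i,v_j]$, one for each perfect matching $\pi$ of $\{1, \dots, 2m\}$.

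The last and most delicate step is to translate a matching $\pi$ back into the generic matrices. Writing the $k$-th copy of $\End(V) = V \otimes V^{*}$ as an ``output'' leg in $V$ together with an ``input'' leg in $V^{*}$, the matching $\pi$ joins all these legs in pairs; it decomposes into cycles, and composing the generic matrices around a cycle yields one factor $\tr(W)$ with $W$ a word in the generic matrices $\bbX^{(s)}$ (in the equivariant case one of the components of $\pi$ is instead a path running through the free source and target legs and contributes a single word $W$, the others still contributing traces). Each time $\pi$ joins two output legs, or two input legs --- equivalently, each time the orientation along a cycle must be reversed --- the generic matrix at that spot enters transposed with respect to the form, i.e. as $(\bbX^{(s)})^{\mathrm j} = J(\bbX^{(s)})^{\top}J^{-1}$, because $J$ is the Gram matrix of $[-,-]$. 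This shows that $T_S(2d)$ is spanned over $A$ by products of traces $\tr(W)$ of words $W$ in the $\bbX^{(s)}$ and $(\bbX^{(s)})^{\mathrm j}$, hence is generated as an $A$-algebra by these traces, and that $R_S(2d)$ is spanned over $T_S(2d)$ by such words $W$, hence is generated as a $T_S(2d)$-algebra by the $\bbX^{(s)}$ and $(\bbX^{(s)})^{\mathrm j}$. (For the second statement one also uses that $T_S(2d) = \tr(R_S(2d))$ in characteristic zero, as noted above.) I expect the main obstacle to be exactly this combinatorial dictionary, and in particular keeping careful track of where the symplectic involution $(-)^{\mathrm j}$ is forced to appear; the reduction to $\bbQ$ and to multilinear invariants are routine.
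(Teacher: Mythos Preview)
Your proposal is correct and follows the classical route (reduce to $\bbQ$, polarize to multilinear invariants, identify $\End(V)\cong V\otimes V$ via the form, apply the FFT for $\Sp(V)$, and decode matchings as traces of words with $(-)^{\mathrm j}$ inserted at orientation reversals); this is precisely the argument underlying the result the paper invokes. The paper itself does not give an independent proof here but simply cites \cite[Theorem 13.1.4]{AGPR}, so your sketch is essentially an unpacking of that reference rather than a different approach.
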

\begin{proof}
    This follows directly from \cite[Theorem 13.1.4]{AGPR}.
\end{proof}
Consider the morphism symplectic determinant $A$-algebras 
$$\big(\pi_0\colon A[\SpDet_{(A\langle S\rangle, *)}^{\square,2d}]\to T_S(2d),\ \pi\colon \mathcal{F}_S(2d)\to R_S(2d)\big)$$ 
where $\pi_0$ is the map induced by $(D_S,P_S)$, and $\pi_0$ is the map  sending $x_s$ to $\bbX^{(s)}$ for $s\in S$. It is surjective and its kernel $\ker(\pi)$ is a $T$-ideal called the \emph{ideal of trace identities}. Our next step is to give a description of $\ker(\pi)$, which is the content of the following Proposition.

\begin{proposition}\label{kerpi}
The $T$-ideals $\ker(\pi)$ and $\CH(P^u)$ are equal, and the morphism
$$ \pi_0\colon A[\SpDet_{(A\langle S\rangle, *)}^{\square,2d}]\longrightarrow T_S(2d)  $$
is an isomorphism. In particular, $R_S(2d)$ is the free symplectic Cayley-Hamilton $A$-algebra on a set $S$.
\end{proposition}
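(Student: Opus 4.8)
The plan is to prove the two stated facts --- the equality $\ker(\pi)=\CH(P^u)$ of ideals, and that $\pi_0$ is an isomorphism --- and then to deduce the final sentence formally. I would first dispose of surjectivity. By \Cref{generatorsofinv} the ring $T_S(2d)$ is generated over $A$ by the elements $\tr(W)$, with $W$ a word in $\bbX^{(s)},(\bbX^{(s)})^{\mathrm j}$, and $R_S(2d)$ is generated over $T_S(2d)$ by the $\bbX^{(s)},(\bbX^{(s)})^{\mathrm j}$. As $(\pi_0,\pi)$ is a morphism of symplectic determinant $A$-algebras, $\pi_0(\Lambda_1^{D^u}(W))=\tr(W)$ and $\pi(x_s)=\bbX^{(s)}$, $\pi(x_s^*)=(\bbX^{(s)})^{\mathrm j}$, so both $\pi_0$ and $\pi$ are surjective.

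Next I would prove $\CH(P^u)\subseteq\ker(\pi)$. Since $(R_S(2d),*)$ embeds into $(M_{2d}(A[M_{2d}^S]),\mathrm j)$ compatibly with the symplectic determinant laws, \Cref{charpf} gives $\chi^{P_S}(r,r)=0$ for every symmetric $r$ over every base ring, i.e.\ $\CH(P_S)=0$; thus $(R_S(2d),*,D_S,P_S)$ is symplectic Cayley--Hamilton. Because $\pi$ respects the operations $\chi^P_\alpha$ of \Cref{charpfaffianalpha} (these are natural under morphisms of symplectic determinant $A$-algebras), $\pi\big(\chi^{P^u}_\alpha(r_1,\dots,r_n)\big)=\chi^{P_S}_\alpha(\pi(r_1),\dots,\pi(r_n))=0$, whence $\pi(\CH(P^u))=0$.

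The reverse inclusion $\ker(\pi)\subseteq\CH(P^u)$ is the crux, and the step I expect to be the main obstacle. Here $\ker(\pi)$ is the ideal of trace identities with involution for the conjugation action of $\Sp_{2d}$ on tuples of $2d\times 2d$ matrices, while $\CH(P^u)$ has already been identified (by polarization and specialization) with the $T$-ideal generated by $\widetilde{\Pf}$, the full polarization of the Pfaffian Cayley--Hamilton identity of \Cref{charpf}. What is needed is the second fundamental theorem of invariant theory for the symplectic group in characteristic zero: every trace identity of $\Sp_{2d}$ is a consequence of the fundamental one, so that $\ker(\pi)$ is exactly the $T$-ideal generated by $\widetilde{\Pf}$. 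I would invoke this from the literature (cf.\ \cite{AGPR} and \cite[11.8]{ProcesiLG}); at its core, working over $\bbQ$ one reduces to multilinear identities, which by Schur--Weyl duality for $\Sp_{2d}$ are identified with the kernel of the map from the Brauer algebra on $n$ strands into $\End_{\bbQ}(V^{\otimes n})$, and that kernel is generated by the image of the fundamental identity. Together with the previous paragraph this yields $\ker(\pi)=\CH(P^u)$.

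Finally I would check that $\pi_0$ is an isomorphism and conclude. Surjectivity of $\pi_0$ was shown above. For injectivity, if $\pi_0(a)=0$ then $\pi(a\cdot 1)=\pi_0(a)\cdot\id=0$, so $a\cdot 1\in\ker(\pi)=\CH(P^u)\subseteq\ker(D^u)$ by \Cref{defsympldetlaw}. Evaluating the defining property of $\ker(D^u)$ over $A[\SpDet_{(A\langle S\rangle,*)}^{\square,2d}][t]$ at the element $(a+t)\cdot 1$ gives $(a+t)^{2d}=t^{2d}$, and the coefficient of $t^{2d-1}$ forces $2d\cdot a=0$, hence $a=0$ since $A$ is a $\bbQ$-algebra. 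Consequently $\pi$ induces an isomorphism of symplectic Cayley--Hamilton $A$-algebras $\mathcal{F}_S(2d)/\CH(P^u)\xrightarrow{\sim}R_S(2d)$; as $\mathcal{F}_S(2d)/\CH(P^u)$ is the free symplectic Cayley--Hamilton $A$-algebra on $S$ by the discussion preceding the Proposition, so is $R_S(2d)$.
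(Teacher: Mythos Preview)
Your proof is correct and shares the paper's architecture: surjectivity of $\pi_0$ and $\pi$ from \Cref{generatorsofinv}, the easy inclusion $\CH(P^u)\subseteq\ker(\pi)$ from \Cref{charpf}, the hard inclusion from a second-fundamental-theorem input, and injectivity of $\pi_0$ from $\CH(P^u)\subseteq\ker(D^u)$.

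The difference lies in how the hard inclusion $\ker(\pi)\subseteq\CH(P^u)$ is obtained. You invoke it from the literature and sketch a Brauer-algebra argument; the paper instead proves it in a self-contained way in the discussion preceding the formal proof. There, multilinear elements of $T_m(2d)$ are identified with $\Sp(V)$-invariants of $(V\otimes V)^{*\otimes m}$ and then with functions on antisymmetric $2m\times 2m$ matrices via $Y\mapsto Y^\top JY$; the relations are the Pfaffians of principal $(2d{+}2)$-minors, and an explicit variable-contraction induction (using $(u\otimes v)\circ(u'\otimes v')=[v,u']\,u\otimes v'$) shows they all reduce to the single relation $P_{d+1}$, which is identified with $\tr(\widetilde{\Pf}(x_1,\dots,x_d)x_{d+1})$ up to a scalar. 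The formal proof then applies the trace trick $f\mapsto\tr(f\cdot x_{m+1})$ to reduce equivariant multilinear identities to invariant ones, and unwinds the result back into $\CH(P^u)$. The paper's route is hands-on but self-contained; your citation is legitimate provided the cited sources deliver the symplectic second fundamental theorem in trace-algebra form, which is not entirely automatic from the raw Brauer-algebra kernel description. Your injectivity argument for $\pi_0$ via $(a+t)^{2d}=t^{2d}$ is a correct explicit unpacking of the paper's one-line claim $A[\SpDet_{(A\langle S\rangle,*)}^{\square,2d}]\cap\ker(D^u)=\{0\}$.
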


First note that of an element $f\in \ker(\pi)$, there exists an integer $m\ge 1$ and an embedding $\iota\colon \{1,\dots, m\}\hookrightarrow S$ so that in the commutative diagram
\begin{center}
    \begin{tikzcd}
        \mathcal{F}_{m}(2d) \arrow[d,"\pi"] \arrow[r, "\iota_*"] & \mathcal{F}_S(2d) \arrow[d,"\pi"]
        \\ R_m(2d) \arrow[r] & R_S(2d)
    \end{tikzcd}
\end{center}
we have that $f\in \text{im}(\iota_*)$. Therefore, it suffices to understand $\ker(\pi)$ for $S=\{1,\dots,m\}$. So let us fix such an integer $m\ge 1$.
\\ $ $\\ Using the pairing $[\cdot,\cdot]$, we have an identification $V\otimes_A V\cong\End(V)$ as $\Sp(V)$-modules via the map $u\otimes v \mapsto (x\mapsto [v,x]u) $. We record the following identities:
\begin{equation}\label{coding}
     (u\otimes v)\circ (u'\otimes v')=u\otimes [v,u']v', \quad \tr(u\otimes v)=-[u,v], \quad (u\otimes v)^{\mathrm{j}}=-v\otimes u
\end{equation}
Thus the space of elements of $T_m(2d)$ which are multilinear in the variables $\bbX^{(1)},\dots,\bbX^{(m)}$ can be identified with $[(V\otimes_A V)^*\otimes \cdots \otimes (V\otimes_A V)^*]^{\Sp(V)}$, which is the space of multilinear functions $h\colon V^{\oplus 2m}\to A$ which are invariant under the action of $\Sp(V)$.

\begin{remark} Let $u_i, v_i \in V$ for $1\le i\le m$ arbitrary. Let $M_i \in \End(V)$ be the elements corresponding to $u_i \otimes v_i$ under the above identification. The identities in (\ref{coding}) allow us to express any product $\prod_{i=1}^m [y_i,z_i]$, where $y_1,\dots,y_m,z_1,\dots,z_m$ is a permutation of $u_1,\dots,u_m,v_1,\dots,v_m$, in terms of a product involving the traces of words in $\{M_i,M_i^*\}$ (this product is multilinear in the $M_i$). For example, if $m=6$, we have
$$ [u_1,u_4][v_4,v_6][u_6,v_1][u_2,u_5][u_3,v_5][v_2,v_3]=-\tr(M_1^*M_4M_6)\tr(M_2^*M_5M_3) $$
\end{remark}

We can see $V^{\oplus 2m}$ as the space of $2d \times 2m$ matrices $Y$ with the action of $X \in \Sp(V)$ given by matrix multiplication $XY$. As explained in \cite[§11.5.1]{ProcesiLG} (the result there is over $\bbC$ but extends easily to commutative $\bbQ$-algebras), the mapping $q\colon Y\mapsto Y^{\top}JY$ from the space of $2d\times 2m$ matrices to the space of antisymmetric $2m\times 2m$ matrices of rank $\le 2d$ is the quotient map under the action of $\Sp(V)$. On the level of coordinate rings, we get a surjective morphism
$$ q^\sharp\colon \Sym(\wedge^2(A^{2m})^*)\twoheadrightarrow \Sym\big((V^{\oplus 2m})^*\big)^{\Sp(V)}$$
whose kernel is generated by the Pfaffian of the principal $2d$ minors in the antisymmetric $2m \times 2m$ matrices.

Suppose that $M_i \in \End(V)$ is attached to $u_i\otimes v_i$ ($u_i,v_i\in V$) and let $Y=(u_1,\dots,u_m,v_1,\dots,v_m)$ so that
\begin{align}\label{codingmap}
    Y^{\top}JY = \begin{pmatrix} 0 & [u_1,u_2] & \cdots & [u_1,u_m] & [u_1,v_1] & \cdots & [u_1,v_m]
\\ [u_2,u_1] & 0 & \cdots & [u_2,u_m] & [u_2,v_1] & \cdots & [u_2,v_m]
\\ \cdots & \cdots & \cdots & \cdots & \cdots & \cdots & \cdots
\\ \cdots & \cdots & \cdots & \cdots & \cdots & \cdots & \cdots
\\ [v_1,u_1] & [v_1,u_2] & \cdots & [v_1,u_m] & 0  & \cdots & [v_1,v_m]
\\ \cdots & \cdots & \cdots & \cdots & \cdots & \cdots & \cdots
\\ [v_m,u_1] & [v_m,u_2] & \cdots & [v_m,u_m] & [v_m,v_1]  & \cdots & 0
\end{pmatrix}
\end{align}
A multilinear element in $\ker(q^\sharp)$ corresponds to a linear combination of polynomials of the form
\begin{equation}\label{multidentity}
    [w_1,\dots, w_{2d+2}][y_1,z_1]\cdots[y_t,z_t]
\end{equation}
where $m=d+1+t$, the elements
$$ w_1,w_2,\dots,w_{2d+2},y_1,z_1,\dots, y_t,z_t $$
are given by a permutation of $u_1,\dots,u_m,v_1,\dots, v_m$, and $[w_1,\dots,w_{2d+2}]$ denotes the Pfaffian of the principal minor of $Y^{\top}JY$ corresponding to the rows and columns in which $w_1,\dots,w_{2d+2}$ appear in the entries. We show by induction on $m$ that all these relations are a consequence of the following one:
$$ P_{d+1}(M_1,\dots,M_{d+1})\colonequals [u_1,\dots,u_{d+1},v_1,\dots,v_{d+1}] $$
For $m=d+1$ this is obvious. For $m>d+1$, by performing the operations $M_i\leftrightarrow M_j$ and $M_i\leftrightarrow M_i^{\mathrm{j}}$, which amounts to doing the exchanges $u_i,v_i\leftrightarrow u_j,v_j$ and $u_i\leftrightarrow u_j$, the expression (\ref{multidentity}) can be reduced to one of the form
\begin{equation}\label{multidentity2}
    [u_1,\dots,u_k,v_1,\dots,v_k,v_{k+1},\dots,v_{2d+2-2k}][y_1,z_1]\cdots[y_t,z_t]
\end{equation}
where we may assume that $k<d+1$. We look at the term $[y_i,z_i]$ in which $u_{k+1}$ appears. If it is (up to a sign) of the form $[u_j,u_{k+1}]$, then we introduce the new variable $\overline{M}_{j}=M_j^{\mathrm{j}}M_{k+1}=\overline{u}_j\otimes \overline{v}_j$ with $\overline{u}_j=-v_j$ and $\overline{v}_j=-[u_j,u_{k+1}]v_{k+1}$. Then we have that
\begin{align*}
    [u_1,\dots,u_k,v_1,\dots,v_k,v_{k+1},\dots,v_{2d+2-2k}][u_j,u_{k+1}] &= [u_1,\dots,u_k,v_1,\dots,v_k,[u_j,u_{k+1}]v_{k+1},\dots,v_{2d+2-2k}]
            \\ &= [u_1,\dots,u_k,v_1,\dots,v_k,\overline{v}_j,\dots,v_{2d+2-2k}]
\end{align*}
Thus we eliminated the variable $M_{k+1}$ and $(\ref{multidentity2})$ can be expressed in terms of the $m-1$ variables $M_1,\dots,M_{k-1}$, $M_{k+1},\dots, M_{j-1}, \overline{M}_j,\dots,M_m$. The case where the term $[y_i,z_i]$ is of the form $[v_j,u_{k+1}]$ (up to a sign) is treated similarly by introducing the variable $\overline{M}_j=M_jM_{k+1}$. This shows the claim by induction.
\\ Note that using the identities in $(\ref{coding})$, any map of the form $(M_1,\dots,M_m)\mapsto[y_1,z_1]\cdots [y_t,z_t]$ where $y_j,z_j$ are among the $u_i,v_i$, can be written as a product of $\tr(W)$ for $W$ a word in $M_i,M_i^{\mathrm{j}}$. In particular, we can see $P_{d+1}$ as an element of $\tr(\mathcal{F}_{d+1}(2d))$ such that $\pi(P_{d+1})=0$. In fact, we have proved that up to a scalar, $P_{d+1}$ is the unique multilinear identity in $\tr(\mathcal{F}_{d+1}(2d))$.

On the other hand, consider elements $M_1,\dots,M_d,M_{d+1}\in \End(V) \cong M_{2d}(A)$. The image of $\tr(\widetilde{\Pf}x_{d+1})$ under the map $\mathcal{F}_{d+1}(2d) \to \End(V)$ sending $x_i$ to $M_i$, is zero. By the uniqueness of $P_{d+1}$ discussed above, both identities are equal up to a scalar.

\begin{proof}[Proof of \Cref{kerpi}] Using operators of polarization and restitution, one can show that any two $T$-ideals containing the same multilinear elements coincide (see \cite[§3.1, Proposition 3.1.10]{AGPR}, the arguments work for arbitrary $\bbQ$-algebras). In particular, $\ker(\pi)$ is generated as a $T$-ideal by the set of its multilinear elements. So let $f(x_1,\dots,x_m)\in \ker(\pi)$ be a multilinear element. Seeing $\mathcal{F}_{m}(2d)$ inside $\mathcal{F}_{m+1}(2d)$, we get that $\tr(f(x_1,\dots,x_m)\cdot x_{m+1})$ is a multilinear element in $\tr(\mathcal{F}_{m+1}(2d))$. By the above discussion, this is a linear combination of elements of type
$$N \cdot \tr(\widetilde{\Pf}(W_1,\dots,W_d)\cdot W_{d+1}) $$
where $N,W_1,\dots,W_{d+1}$ are monomials in $x_1,x_1^*,\dots,x_{m+1},x_{m+1}^*$. Now we consider two cases, either the variable $x_{m+1}$ appears in $N$ or in one of the $W_i$. In the first case, we have
$$ N \cdot \tr(\widetilde{\Pf}(W_1,\dots,W_d)\cdot W_{d+1}) =\tr(\widetilde{\Pf}(W_1,\dots,W_{d})Bx_{m+1}) $$
for some expression $B$, and $\widetilde{\Pf}(W_1,\dots,W_{d})B$ is obviously a consequence of the Pfaffian identity. 
\\ Since $\tr(\widetilde{\Pf}(W_1,\dots,W_d)W_{d+1})$ is up to a scalar equal to $P_{d+1}(W_1,\dots,W_{d+1})$ and that the $P_{d+1}$ comes from taking the Pfaffian of the matrix (\ref{codingmap}) in $d+1$ variables, permuting the variables might only change the sign. Hence, we can assume that $x_{m+1}$ appears in $W_{d+1}=B\cdot x_{m+1}\cdot C$. Hence,
\begin{align*}
    N \cdot \tr(\widetilde{\Pf}(W_1,\dots,W_d)W_{d+1}) &= N \cdot\tr(\widetilde{\Pf}(M_1,\dots,M_d)B\cdot x_{m+1}\cdot C)
    \\ &=\tr(CN\cdot \widetilde{\Pf}(W_1,\dots,W_d)B\cdot x_{m+1})
\end{align*}
and $CN\cdot \widetilde{\Pf}(W_1,\dots,W_d)B$ is a consequence of the Pfaffian identity. This shows that $\ker(\pi)=\CH(P^u)$.
\\ Finally, note that $A[\SpDet_{(A\langle S\rangle, *)}^{2d}]\cap \CH(P^u)=\{0\}$ since $\CH(P^u)\subseteq \ker(D^u)$, and so $\pi_0$ is injective. Surjectivity follows from the description of $T_S(2d)$ in \Cref{generatorsofinv}.
\end{proof}

\begin{remark}\label{vacresult}
    We conjecture that the isomorphism $\pi_0\colon A[\SpDet_{(A\langle S\rangle, *)}^{2d}]\xrightarrow{\sim} T_S(2d)$ holds over arbitrary commutative rings $A$ with $2\in A^\times$. This is the analogue of \cite[Theorem 6.1]{vac08}.
\end{remark}

The last ingredient we need for the proof of our result is the functorial Reynolds operator. Given a flat affine group scheme $G$, a functorial Reynolds operator is the datum, for each $G$-module $M$, of a $G$-equivariant homomorphism $\mathcal{R}_G: M\to M^G$ such that $\mathcal{R}_G|_{M^G}=\id$ and such that for every morphism $\varphi: M\to N$ of $G$-modules, we have a commutative diagram
\begin{center}
    \begin{tikzcd}
        M \arrow[r,"\mathcal{R}_G"] \arrow[d,swap,"\varphi"] & M^G \arrow[d,"\varphi_{|M^G}"]
        \\ N \arrow[r,"\mathcal{R}_G"] & N^G
    \end{tikzcd}
\end{center}
In particular, if $M=R$ is an $A$-algebra with trace and involution such that $G$ acts by morphisms respecting these structures, then we have
\begin{equation}\label{reynolds}
    \mathcal{R}_G(r^*)=\mathcal{R}_G(r)^*, \quad \mathcal{R}_G(sr)=s\mathcal{R}_G(r),\quad \mathcal{R}_G(rs)=\mathcal{R}_G(r)s, \quad \tr(\mathcal{R}_G(r))=\mathcal{R}_G(\tr(r))
\end{equation}
for all $s\in R^G$ and $r\in R$.
In \cite{wang2022arithmetic}, the author extends the notion of linear reductivity to arbitrary base rings (see \cite[Definition 3.2]{wang2022arithmetic}). By \cite[Lemma 3.3]{wang2022arithmetic}, $G$ is linearly reductive if and only if a functorial Reynolds operator $\mathcal{R}_G$ exists. Moreover, by \cite[Proposition 3.6]{wang2022arithmetic}, this notion is stable under base change. Therefore, $\Sp_{2d,A}$ is linearly reductive. We note that the $\Sp_{2d, A}$-invariants of $V$ and the $\Sp_{2d,\bbQ}$-invariants of the restriction of $V$ to an $\Sp_{2d,\bbQ}$-module are equal, so we can drop $A$ from the notation when taking invariants.
\begin{theorem}[Converse Cayley-Hamilton Theorem]\label{converseCH}
Let $(R,*,D\colon R\to A,P\colon R^+\to A)$ be a symplectic Cayley-Hamilton $A$-algebra of degree $2d$. Then the universal mapping
\begin{equation*}
    \rho^{u}\colon R \longrightarrow M_{2d}(A[\SpRep_{(R,*,D,P)}^{\square,2d}])
\end{equation*}
restricts to an isomorphism $R\cong M_{2d}(A[\SpRep_{(R,*,D,P)}^{\square,2d}])^{\Sp_{2d}}$.
\end{theorem}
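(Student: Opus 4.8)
The plan is to exhibit $(R,\ast,D,P)$ as a quotient of a \emph{free} symplectic Cayley--Hamilton algebra, for which the universal representation is the tautological inclusion into a generic matrix algebra, and then to push this inclusion down to $R$; the whole content is concentrated in one invariant-theoretic identity, which I will isolate as the crux. Choose a set $S$ and elements $r_s\in R$ ($s\in S$) such that $\{r_s,r_s^\ast\}_{s\in S}$ generates $R$ as an $A$-algebra (by \Cref{SCHisfinite} one may take $S$ finite, but this is not needed). By \Cref{kerpi} the free symplectic Cayley--Hamilton $A$-algebra on $S$ is $R_S(2d)=M_{2d}(A[M_{2d}^S])^{\Sp_{2d}}$, with coefficient ring $T_S(2d)=A[M_{2d}^S]^{\Sp_{2d}}$, and its universal property produces a morphism of symplectic determinant $A$-algebras $(\psi_0\colon T_S(2d)\to A,\ \psi\colon R_S(2d)\to R)$ with $\psi(\bbX^{(s)})=r_s$. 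Then $\psi$ is surjective (its image is the $A$-subalgebra generated by the $r_s$, the $r_s^\ast=\psi((\bbX^{(s)})^{\mathrm j})$, and the scalars $\psi(T_S(2d))\subseteq A$); since $\psi$ is $T_S(2d)$-linear one has $\psi|_{T_S(2d)}=\psi_0$; and, as $\psi$ is compatible with $\Lambda_1^{D_S}$, the two-sided ideal $\mathfrak a:=\ker\psi\subseteq R_S(2d)$ satisfies $\tr(\mathfrak a)\subseteq\mathfrak a$, where $\tr=\Lambda_1^{D_S}$.

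Next I identify the representing ring $B:=A[\SpRep_{(R,\ast,D,P)}^{\square,2d}]$. Put $M:=M_{2d}(A[M_{2d}^S])$. Using $\SpRep_{(A\langle S\rangle,\ast)}^{\square,2d}=\Spec A[M_{2d}^S]$, the Cartesian square $\SpRep_{(R,\ast,D,P)}^{\square,2d}=\SpRep_{(R,\ast)}^{\square,2d}\times_{\SpDet_{(R,\ast)}^{2d}}\Spec A$ from \Cref{secdefsympldetlaw}, and the identification $T_S(2d)\cong A[\SpDet_{(A\langle S\rangle,\ast)}^{2d}]$ of \Cref{kerpi}, one unwinds $B=A[M_{2d}^S]/\mathfrak d$, where $M_{2d}(\mathfrak d)$ is the two-sided ideal $M\mathfrak aM$ generated by $\mathfrak a$ inside $M$: indeed $\mathfrak d$ is generated by the matrix entries of the image of $\ker(A\langle S\rangle\to R)$ and by the elements $\xi-\psi_0(\xi)$ for $\xi\in T_S(2d)$, and both families lie in $\mathfrak a$ while together generating $\mathfrak a$ as a two-sided ideal of $M$. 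Under this identification, the universal representation $\rho^{u}\colon R\to M_{2d}(B)$ is the map induced on quotients by $M\twoheadrightarrow M_{2d}(B)$, precomposed with $\psi$; by \Cref{imageofR2} it factors through $M_{2d}(B)^{\Sp_{2d}}$.

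For \emph{surjectivity}: $M\twoheadrightarrow M_{2d}(B)$ is $\Sp_{2d}$-equivariant and surjective, and since $\Sp_{2d,A}$ is linearly reductive the invariants functor is exact (apply $\mathcal R_{\Sp_{2d}}$), so $M^{\Sp_{2d}}=R_S(2d)\twoheadrightarrow M_{2d}(B)^{\Sp_{2d}}$; this composite is $\rho^{u}\circ\psi$ and $\psi$ is surjective, hence $\rho^{u}\colon R\to M_{2d}(B)^{\Sp_{2d}}$ is surjective. For \emph{injectivity} one must show $\ker(\rho^{u}\colon R\to M_{2d}(B))=0$, i.e. $R_S(2d)\cap M_{2d}(\mathfrak d)=\mathfrak a$, i.e. $(M\mathfrak aM)^{\Sp_{2d}}=\mathfrak a$. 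Only "$\subseteq$" needs an argument: given $z=\sum_i a_i\eta_ib_i\in(M\mathfrak aM)^{\Sp_{2d}}$ with $\eta_i\in\mathfrak a$ and $a_i,b_i\in M$, apply the Reynolds operator to get $z=\mathcal R_{\Sp_{2d}}(z)=\sum_i\mathcal R_{\Sp_{2d}}(a_i\eta_ib_i)$, so it suffices to show $\mathcal R_{\Sp_{2d}}(a\eta b)\in\mathfrak a$ for $a,b\in M$ and $\eta\in\mathfrak a$. Expanding $a$ and $b$ into monomials in the generic matrices, their $\mathrm j$-transposes and matrix units, this becomes a first/second fundamental theorem computation for the conjugation action of $\Sp_{2d}$ on tuples of matrices over $\bbQ$ (the same input used in \Cref{generatorsofinv} and the proof of \Cref{kerpi}; cf. \cite[§13]{AGPR}): $\mathcal R_{\Sp_{2d}}(a\eta b)$ is a $T_S(2d)$-linear combination of terms $W_1\eta W_2$ and $\tr(W_1\eta W_2)\,W_3$ with the $W_\bullet$ words in the $\bbX^{(s)}$ and $(\bbX^{(s)})^{\mathrm j}$. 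Terms of the first kind lie in $R_S(2d)\,\mathfrak a\,R_S(2d)=\mathfrak a$; for the second, $\tr(W_1\eta W_2)=\tr(\eta W_2W_1)\in\tr(\mathfrak a)\subseteq\mathfrak a$ by trace-stability, so $\tr(W_1\eta W_2)\,W_3\in\mathfrak a$. Hence $z\in\mathfrak a$, $\rho^{u}|_R$ is injective, and combined with surjectivity it gives the claimed isomorphism $R\cong M_{2d}(A[\SpRep_{(R,\ast,D,P)}^{\square,2d}])^{\Sp_{2d}}$.

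The \textbf{main obstacle} is exactly the identity $(M\mathfrak aM)^{\Sp_{2d}}=\mathfrak a$, i.e. that the $\Sp_{2d}$-invariant part of the two-sided ideal generated by a trace-stable ideal $\mathfrak a\subseteq R_S(2d)$ equals $\mathfrak a$; this is where one genuinely needs the second fundamental theorem over $\bbQ$ for the symplectic group acting on tuples of matrices — the same invariant-theoretic package underlying \Cref{kerpi} — whereas every other step is a formal consequence of linear reductivity and the representability results already in place (as in \cite{Pro87}).
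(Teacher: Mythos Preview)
Your overall architecture matches the paper's: present $R$ as a quotient $R_S(2d)/\mathfrak a$ of the free symplectic Cayley--Hamilton algebra, identify $A[\SpRep_{(R,*,D,P)}^{\square,2d}]$ with $A[M_{2d}^S]/\mathfrak d$ where $M_{2d}(\mathfrak d)=M\mathfrak a M$, get surjectivity of $\rho^u$ from linear reductivity, and reduce injectivity to $(M\mathfrak a M)^{\Sp_{2d}}=\mathfrak a$. The gap is in your argument for this last identity.

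You claim that for $a,b\in M$ and $\eta\in\mathfrak a$, the element $\mathcal R_{\Sp_{2d}}(a\eta b)$ is a $T_S(2d)$-linear combination of terms of the form $W_1\eta W_2$ and $\tr(W_1\eta W_2)W_3$. But this does not follow from the first fundamental theorem as stated: \Cref{generatorsofinv} tells you that every element of $R_S(2d)$ is a $T_S(2d)$-combination of words in the $\bbX^{(s)},(\bbX^{(s)})^{\mathrm j}$, not that $\mathcal R_{\Sp_{2d}}(a\eta b)$ admits a decomposition in which the \emph{specific} invariant element $\eta$ survives intact. The Reynolds identities \eqref{reynolds} let you pull an invariant factor off the left or the right, but $\eta$ sits in the middle, and there is no reason the averaging process should respect it. Expanding $a,b$ into matrix units does not help: you get expressions like $\eta_{jk}E_{il}$ with $\eta_{jk}\in A[M_{2d}^S]$ a non-invariant scalar, and $\mathcal R_{\Sp_{2d}}$ mixes this with the generic variables in a way you cannot control.

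The paper circumvents this exactly here, and the device is essential. Given $a=\sum_i a_iu_ib_i\in (M\mathfrak a M)^{\Sp_{2d}}$, one introduces a \emph{fresh} generic matrix $\bbX^{(s)}$ (i.e.\ enlarges $S$) and computes $\tr(a\bbX^{(s)})$. Cyclicity of the trace gives $\tr(a\bbX^{(s)})=\sum_i\tr(b_i\bbX^{(s)}a_i\,u_i)$; since $u_i\in R_S(2d)$ is invariant, applying Reynolds and \eqref{reynolds} yields $\tr(a\bbX^{(s)})=\sum_i\tr\big(\mathcal R_{\Sp_{2d}}(b_i\bbX^{(s)}a_i)\,u_i\big)$. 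Now $\mathcal R_{\Sp_{2d}}(b_i\bbX^{(s)}a_i)\in R_{S\cup\{s\}}(2d)$ is linear in $\bbX^{(s)}$, and \emph{here} \Cref{generatorsofinv} applies legitimately: it is a $T_S(2d)$-combination of $\alpha\,\bbX^{(s)}\beta$, $\alpha'(\bbX^{(s)})^{\mathrm j}\beta'$, and $\tr(m\,\bbX^{(s)})n$ with $\alpha,\beta,\alpha',\beta',m,n\in R_S(2d)$ independent of $s$. Collecting terms and using the nondegeneracy of the trace pairing with the free variable $\bbX^{(s)}$ then forces $a$ to equal an explicit element of the two-sided $R_S(2d)$-ideal generated by the $u_i$, $u_i^*$ and $\tr(R_S(2d)u_i)$, all of which lie in $\mathfrak a$ by trace-stability. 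Only the first fundamental theorem is used; the ``second fundamental theorem'' you invoke plays no role in this step.
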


\begin{proof}
The proof is based on \cite{Pro87} (also see the proof of \cite[Theorem 14.2.1]{AGPR}). Let $R$ be as in the statement, then by \Cref{kerpi}, we can present it as $R=R_S(2d)/I$ where $I$ is an ideal of $R_S(2d)$. For ease of notation, let us write $B := M_{2d}(A[M_{2d}^S])$, so that $R_S(2d)= B^{\Sp_{2d}}$. By \cite[Lemma 2.6.2]{AGPR}, we can write $BIB=M_{2d}(J)$ for some ideal $J$ of $A[M_{2d}^S]$. Therefore by linear reductivity of $\Sp_{2d,A}$ and by \cite[Proposition 3.4]{wang2022arithmetic}, we get a surjective map $u\colon R=R_S(2d)/I\twoheadrightarrow M_{2d}(A[M_{2d}^S]/J)^{\Sp_{2d}}$. Note that since $A[\SpDet_{(A\langle S\rangle, *)}^{2d}]=T_S(2d)$, we have that $R_S(2d)=A[\SpRep_{(A\langle S\rangle, *, D_S, P_S)}^{\square,2d}]$. Therefore, we get from the proof of \Cref{representability}, that $A[M_{2d}^S]/J=A[\SpRep_{(R,*,D,P)}^{2d,\square}]$. To prove the theorem, we just have to show that $u$ is injective, which amounts to showing that $BIB\cap R_S(2d)=I$. So let $a=\sum_i a_iu_ib_i\in BIB\cap R_S(2d)$ with $a_i,b_i\in B$, $u_i\in I$, and let $s\in S$ be an element which is independent of $a$, i.e. the variables $\bbX^{(s)}_{h,k}$ for $1\le h,k\le 2d$ do not appear in the expression of $a$. We consider
$$ \tr(a\bbX^{(s)})=\tr(\sum_i a_iu_ib_i\bbX^{(s)})=\tr(\sum_i b_i\bbX^{(s)} a_iu_i). $$
Applying the Reynolds operator $\mathcal{R}_{\Sp_{2d}}$ and using the identities (\ref{reynolds}), we get that
\begin{align*}
    \tr(a\bbX^{(s)})=\tr(\mathcal{R}_{\Sp_{2d}}(\sum_i b_i\bbX^{(s)} a_iu_i))=\tr(\sum_i\mathcal{R}_{\Sp_{2d}}(a_i \bbX^{(s)} b_i)u_i).
\end{align*}
By \Cref{generatorsofinv}, since $\mathcal{R}_{\Sp_{2d}}(a_i \bbX^{(s)} b_i)$ is linear in $\bbX^{(s)}$, we can write $$\mathcal{R}_{\Sp_{2d}}(a_i \bbX^{(s)} b_i)=\sum_j \alpha_{i,j}x\beta_{i,j}+\sum_{k} \alpha_{i,k}' (\bbX^{(s)})^{\mathrm{j}}\beta_{i,k}'+\sum_h \tr(m_{i,h}x)n_{i,h}$$
for $\alpha_{i,j},\beta_{i,j},\alpha_{i,k}',\beta_{i,k}',m_{i,h},n_{i,h}\in R_S(2d)$ independent of $s$. Thus,
\begin{align*}
     \tr\left(a\bbX^{(s)}- \sum_i\left(\sum_j \alpha_{i,j}\bbX^{(s)} \beta_{i,j}+\sum_{k} \alpha_{i,k}' (\bbX^{(s)})^{\mathrm{j}}\beta_{i,k}'+\sum_h \tr(m_{i,h}\bbX^{(s)})n_{i,h}\right)u_i\right)
     \\ =\tr\left(\left(a-\sum_i\left(\sum_j \beta_{i,j} u_i\alpha_{i,j}+\sum_k \alpha_{i,k}'^{*}u_i^*\beta_{i,k}'^*+\sum_h \tr(n_{i,h}u_i)m_{i,h}\right)\right)\bbX^{(s)}\right)=0.
\end{align*}
This implies that $a=\sum_i\left(\sum_j \beta_{i,j} u_i\alpha_{i,j}+\sum_k \alpha_{i,k}'^{*}u_i^*\beta_{i,k}'^*+\sum_h \tr(n_{i,h}u_i)m_{i,h}\right)$.
Indeed, if $\alpha \in B$ is independent of $s$, then for every specialisation map $\varphi\colon  A[M_{2d}^S] \to A[M_{2d}^{S \setminus \{s\}}]$, inducing a morphism of $A[M_{2d}^{S \setminus \{s\}}]$-algebras $\varphi_*\colon B\to M_{2d}(A[M_{2d}^{S \setminus \{s\}}])$ we get $\tr(\alpha\varphi_*(\bbX^{(s)})) = 0$. Since the trace pairing is nondegenerate, we conclude that $\alpha = 0$.
\end{proof}

\begin{corollary}\label{iso0}
Let $(R,*)$ be an $A$-algebra with involution. Then there is an isomorphism
$$ A[\SpDet^{2d}_{(R,*)}]\xrightarrow{\sim} A[\SpRep^{\square,2d}_{(R,*)}]^{\Sp_{2d}} $$
\end{corollary}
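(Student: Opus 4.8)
The plan is to derive \Cref{iso0} from the Converse Cayley--Hamilton Theorem \Cref{converseCH}, applied not to $(R,*)$ directly (which need not be symplectic Cayley--Hamilton) but to a suitable universal symplectic Cayley--Hamilton quotient over $C\colonequals A[\SpDet^{2d}_{(R,*)}]$; note that $C$ is a $\bbQ$-algebra since $A$ is, so \Cref{converseCH} is available over $C$.

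First I would construct the comparison map. Sending a symplectic representation to its associated symplectic determinant law is a natural transformation $\SpRep^{\square,2d}_{(R,*)}\to\SpDet^{2d}_{(R,*)}$ which is constant on $\Sp_{2d}$-orbits; equivalently, by \Cref{imageofR1}, the symplectic determinant law of the universal representation $\rho^{u}$ has characteristic-polynomial and Pfaffian coefficients lying in $A[\SpRep^{\square,2d}_{(R,*)}]^{\Sp_{2d}}$. This yields the $A$-algebra homomorphism $\nu^{\sharp}\colon A[\SpDet^{2d}_{(R,*)}]\to A[\SpRep^{\square,2d}_{(R,*)}]^{\Sp_{2d}}$, and the claim is that $\nu^{\sharp}$ is an isomorphism.

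Next comes the reduction. Let $(D^{u},P^{u})\colon R\to C$ be the universal symplectic determinant law and put $\bar R\colonequals(R\otimes_{A}C)/\CH(P^{u})$. By \Cref{strongHomTheorem} and $\CH(P^{u})\subseteq\ker(D^{u})$, the pair $(D^{u},P^{u})$ descends to a symplectic determinant law $(\bar D^{u},\bar P^{u})$ on $(\bar R,*)$ with values in $C$, and $\CH(\bar P^{u})=0$ by construction, so $(\bar R,*,\bar D^{u},\bar P^{u})$ is a symplectic Cayley--Hamilton $C$-algebra of degree $2d$. The key bookkeeping step is the identification
$$ E\colonequals A[\SpRep^{\square,2d}_{(R,*)}]\;=\;C[\SpRep^{\square,2d}_{(\bar R,*,\bar D^{u},\bar P^{u})}], $$
under which the structure map $C\to E$ is exactly $\nu^{\sharp}$. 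One direction is restriction along $R\to R\otimes_{A}C\twoheadrightarrow\bar R$; conversely a symplectic representation $R\to M_{2d}(C')$ over a $C$-algebra $C'$ whose symplectic determinant law is the one classified by the structure map $C\to C'$ extends $C$-linearly to $R\otimes_{A}C$ and, by the polarized Pfaffian Cayley--Hamilton theorem \Cref{charpf} together with \Cref{stabilityCH}, annihilates $\CH(P^{u})$, hence factors through $\bar R$. Unwound, this says $\SpRep^{\square,2d}_{(\bar R,*,\bar D^{u},\bar P^{u})}=\SpRep^{\square,2d}_{(R,*)}\times_{\SpDet^{2d}_{(R,*)}}\Spec C=\SpRep^{\square,2d}_{(R,*)}$, the last equality because $\Spec C\to\SpDet^{2d}_{(R,*)}$ is the identity, and matching universal properties pins down the $C$-structure as $\nu^{\sharp}$. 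Applying \Cref{converseCH} over $C$, the universal representation restricts to a $C$-algebra isomorphism $\phi\colon\bar R\eqto M_{2d}(E)^{\Sp_{2d}}$ (for the $\Sp_{2d}$-action on $M_{2d}(E)$ of \Cref{imageofR1}, with $C$ acting on $E$ and on $M_{2d}(E)$ through $\nu^{\sharp}$).

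Finally I would check that $\nu^{\sharp}\colon C\to E^{\Sp_{2d}}$ is bijective. For injectivity, observe that $C\xrightarrow{\nu^{\sharp}}E^{\Sp_{2d}}\hookrightarrow M_{2d}(E)^{\Sp_{2d}}\xrightarrow{\phi^{-1}}\bar R$ is the unit map $c\mapsto c\cdot 1_{\bar R}$ (since $\rho^{u}$ is a $C$-algebra map), and composing further with the $C$-linear trace $\Lambda^{\bar D^{u}}_{1}\colon\bar R\to C$ gives multiplication by $\Lambda^{\bar D^{u}}_{1}(1_{\bar R})=2d\in C^{\times}$; hence the unit map, and a fortiori $\nu^{\sharp}$, is injective. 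For surjectivity, \Cref{generatorsofinv} together with linear reductivity of $\Sp_{2d}$ (recalled in this section and stable under base change) shows that $E^{\Sp_{2d}}$ is the image of the trace $M_{2d}(E)^{\Sp_{2d}}\to E^{\Sp_{2d}}$, and via $\phi$ every trace $\tr(\rho^{u}(\bar r))$ equals $\nu^{\sharp}(\Lambda^{\bar D^{u}}_{1}(\bar r))$, so $E^{\Sp_{2d}}\subseteq\im(\nu^{\sharp})$. Therefore $\nu^{\sharp}$ is an isomorphism. The step I expect to be the main obstacle is the functorial identification $C[\SpRep^{\square,2d}_{(\bar R,*,\bar D^{u},\bar P^{u})}]=A[\SpRep^{\square,2d}_{(R,*)}]$, i.e. verifying that forming the universal symplectic Cayley--Hamilton quotient over the determinant-law ring leaves the representation scheme unchanged and induces precisely the $C$-structure $\nu^{\sharp}$; once this is settled, the remainder is formal given \Cref{converseCH}.
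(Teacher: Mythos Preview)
Your proof is correct and takes essentially the same approach as the paper: both pass to the symplectic Cayley--Hamilton quotient $\bar R=(C\otimes_A R)/\CH(P^u)$ over $C=A[\SpDet^{2d}_{(R,*)}]$, invoke \Cref{converseCH}, and use \Cref{generatorsofinv} together with linear reductivity of $\Sp_{2d}$. The only difference is organizational: the paper proves surjectivity of the comparison map directly (free case via \Cref{generatorsofinv}, then linear reductivity) and injectivity by factoring an abstract injective $u\colon C\hookrightarrow B$ coming from \Cref{converseCH} through the comparison map, whereas you identify this $B$ explicitly with $E=A[\SpRep^{\square,2d}_{(R,*)}]$ itself (your ``main obstacle'', which is indeed correct) and then read off both injectivity and surjectivity from the resulting isomorphism $\phi\colon\bar R\eqto M_{2d}(E)^{\Sp_{2d}}$ via the trace.
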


\begin{proof}
The proof is based on that of \cite[Proposition 2.3]{ChHDR}. Let us consider the universal representation
$$ \rho^{u}\colon (R,*)\longrightarrow (M_{2d}(A[\SpRep_{(R,*)}^{\square,2d}]),\mathsf{j}), $$
and the universal symplectic determinant law
$$ (D^{u},P^{u})\colon R\otimes_A A[\SpDet^{2d}_{(R,*)}]\longrightarrow A[\SpDet_{(R,*)}^{2d}]. $$
The symplectic determinant law $\left(\det\circ\rho^{u}),\Pf\circ( \rho^{u}\cdot J)\right)$ induces an $A$-algebra map
\begin{align*}
    \theta\colon A[\SpDet_{(R,*)}^{2d}] \longrightarrow A[\SpRep^{\square,2d}_{(R,*)}]^{\Sp_{2d}}
\end{align*}
sending $T^{u}(r)$ to $\tr(\rho^{u}(r))$. If $R$ is the free $A$-algebra with involution on a set $S$, then we have that $A[\SpRep^{\square,2d}_{(R,*)}]^{\Sp_{2d}}=T_S(2d)$. And so we get using \Cref{generatorsofinv} that $\theta$ is surjective. Therefore, by linear reductivity of $\Sp_{2d, A}$ and by \cite[Proposition 3.4]{wang2022arithmetic}, we get  that $\theta$ is surjective in general. 
\\ Since the $A[\SpDet^{\square,2d}_{(R,*)}]$-algebra $R'\colonequals (A[\SpDet^{\square,2d}_{(R,*)}]\otimes_A R)/\CH(P^{u})$ equipped with the corresponding involution and symplectic determinant law is symplectic Cayley-Hamilton, we get by \Cref{converseCH} that there exists a commutative $A$-algebra $B$, a symplectic representation
$$ \rho\colon (R',*)\longrightarrow (M_{2d}(B),\mathsf{j}), $$
and an injective $A$-algebra morphism $u\colon  A[\SpDet^{\square,2d}_{(R,*)}] \hookrightarrow B$ such that $\det\circ\rho= u\circ D^{u}$ and $\Pf\circ(\rho\cdot J) = u\circ P^{u}$. We get by universality an $A$-algebra morphism $u'\colon A[\SpRep^{2d}_{(R,*)}] \to B$ such that $u'\circ \theta= u$. It follows that $\theta$ is injective and therefore an isomorphism.
\end{proof}

\section{Symplectic determinant laws over Henselian local rings}\label{SpDetHenselian}

We fix a Henselian local ring $A$ with maximal ideal $\mathfrak{m}_A$ and residue field $k$, and we suppose that $2\in A^\times$. Let $\overline{k}$ be an algebraic closure of $k$. If $(R,*)$ is an involutive $A$-algebra, we write $\overline{R}=R/\mathfrak{m}_A R$ which is equipped with the involution induced by $*$. If $(D\colon R\to A, P\colon R^+\to A)$ is a symplectic determinant law, we call $(\overline{D}=D\otimes_A k\colon  \overline{R}\to k, \overline{P}=P\otimes_A k\colon  \overline{R}^+\to  k)$ the \emph{residual symplectic determinant law} of $D$.

Let us recall \cite[Definitions 2.18, 2.19]{MR3444227}, but adapted to our setting:
\begin{definition}
    Let $(R,*)$ be an involutive $A$-algebra and let $(D\colon R\to A, P\colon R^+\to A)$ be a $2d$-dimensional symplectic determinant law. 
    \begin{enum}
        \item[(1)] We say that $(\overline{D},\overline{P})$ is \textit{absolutely irreducible} and $(D,P)$ is \textit{residually absolutely irreducible}, if the (unique up to conjugation) semisimple symplectic representation $(\overline{R}\otimes_k \overline{k},*)\to (M_{2d}(\overline{k}),\mathrm{j})$ with symplectic determinant law $(\overline{D} \otimes_k \overline{k},\overline{P} \otimes_k \overline{k})$ is irreducible as a representation (i.e. after forgetting the involution).
        \item[(2)] We say that $(\overline{D},\overline{P})$ is $\textit{split}$ and $(D,P)$ is \textit{residually split}, if $(\overline{D},\overline{P})$ is the symplectic determinant law associated to a symplectic representation $(\overline{R},*)\rightarrow (M_{2d}(k),\mathrm{j})$.
        \item[(3)] We say that $(\overline{D},\overline{P})$ is $\textit{multiplicity free}$ and $(D,P)$ is \textit{residually multiplicity free}, if $\overline{D}$ is the determinant law associated to a direct sum of pairwise non-isomorphic (after forgetting the involution) absolutely irreducible $k$-linear representations of $R$. 
    \end{enum}
\end{definition}

The goal of this section is to describe the symplectic Cayley-Hamilton algebras over $A$ with residually multiplicity free symplectic determinant laws. To illustrate this, we have the following result in the residually split absolutely irreducible case.

\begin{proposition}\label{henselianlemma}
Let $(R,*,D,P)$ be a symplectic Cayley-Hamilton algebra of degree $2d$ such that $(\overline{D},\overline{P})$ is split and absolutely irreducible. Then there exists an isomorphism of involutive algebras $\rho\colon  (R,\sigma)\xrightarrow{\sim}  (M_{2d}(A),\mathrm{j})$ such that $D=\det\circ \rho$ and $P=\Pf\circ (\rho\cdot J)$.
\end{proposition}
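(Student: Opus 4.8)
The plan is to build a complete set of matrix units inside $R$ over the Henselian ring $A$, identify $R$ with $M_{2d}(A)$, and then pin down the involution and the two polynomial laws. Throughout write $k=A/\mathfrak{m}_A$, and (as $M_{2d}(A)$ is finitely generated over $A$) we may assume $R$ is a finitely generated $A$-algebra.

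\textbf{The residual picture.} Since $(\overline D,\overline P)$ is split there is a symplectic representation $\overline\rho\colon(\overline R,*)\to(M_{2d}(k),\mathrm j)$ inducing it, and since $(\overline D,\overline P)$ is absolutely irreducible the semisimplification of $\overline\rho\otimes_k\overline k$ is irreducible, hence $\overline\rho\otimes_k\overline k$ is itself irreducible and therefore surjective onto $M_{2d}(\overline k)$ by the Jacobson density theorem; by faithfully flat descent along $k\to\overline k$, $\overline\rho$ is already surjective. By \Cref{radR} one has $\ker(\overline\rho)\subseteq\ker(\overline D)=\Rad(\overline R)$. The next point is that $\Rad(\overline R)=0$, so that $\overline\rho\colon\overline R\xrightarrow{\sim}M_{2d}(k)$ is an isomorphism: $\overline R$ is finite-dimensional over $k$ by \Cref{SCHisfinite}, every element of $\Rad(\overline R)=\ker(\overline D)$ is nilpotent (for $z\in\ker(\overline D)$ one has $zz^{*}\in\ker(\overline D)\cap\overline R^{+}\subseteq\ker(\overline P)$ by \Cref{strongHomTheorem}, whence $\chi^{\overline P}(zz^{*},t)=t^{d}$ and the symplectic Cayley--Hamilton identity forces $(zz^{*})^{d}=0$; note also $\chi^{\overline D}(s,s)=\chi^{\overline P}(s,s)^{2}=0$ for symmetric $s$), and testing such identities against regular semisimple lifts $r\in M_{2d}(k)\subseteq\overline R$ of matrices with distinct eigenvalues forces the relevant element to vanish — this is the symplectic analogue of the fact that a Cayley--Hamilton algebra over a field with absolutely irreducible determinant law is a full matrix algebra.

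\textbf{Lifting matrix units.} As in the proof of \Cref{SCHisfinite}, every element of $R$ is integral over $A$; combined with $A$ being Henselian local this lets idempotents lift along $R\twoheadrightarrow\overline R\cong M_{2d}(k)$, so I would lift the diagonal matrix idempotents to a complete orthogonal system $e_1,\dots,e_{2d}\in R$ with $\sum_i e_i=1$. The key step is that every corner is trivial: for $c\in e_iRe_i$ the commutative $A$-subalgebra $Ae_i+Ac+\cdots$ of $e_iRe_i$ is module-finite over $A$ (as $c$ is integral), and its reduction modulo $\mathfrak{m}_A$ maps onto a nonzero subring of $\overline e_i M_{2d}(k)\overline e_i=k$, hence onto all of $k$; Nakayama's lemma then gives $Ae_i+Ac+\cdots=Ae_i$, so $c\in Ae_i$, i.e.\ $e_iRe_i=Ae_i\cong A$. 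Now lift $e_{1i}\in e_1Re_i$ and $e_{i1}\in e_iRe_1$ reducing to $\overline e_{1i},\overline e_{i1}$; since $e_{1i}e_{i1}\in e_1Re_1=Ae_1$ reduces to $\overline e_1$, it lies in $A^{\times}e_1$, so after rescaling $e_{1i}$ we get $e_{1i}e_{i1}=e_1$, and then $e_{i1}e_{1i}$ is a nonzero idempotent of $e_iRe_i\cong A$, hence $=e_i$. Setting $e_{ij}:=e_{i1}e_{1j}$ yields a complete set of $2d\times 2d$ matrix units, whence $R\cong M_{2d}(e_{11}Re_{11})=M_{2d}(A)$ as $A$-algebras.

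\textbf{Transporting the structure, and the main obstacle.} Under $R\cong M_{2d}(A)$ the involution $*$ becomes an $A$-linear involution $*'$ of the first kind on $M_{2d}(A)$ which is symplectic modulo $\mathfrak{m}_A$ (because $\overline\rho$ lands in $(M_{2d}(k),\mathrm j)$), hence symplectic over $A$ since the type of an involution is Zariski-locally constant (see \Cref{secazumaya}) and $\Spec(A)$ is connected; as $A$ is local, the nondegenerate alternating form defining $*'$ admits a symplectic basis, so $*'$ is $\GL_{2d}(A)$-conjugate to $\mathrm j$, and composing gives an isomorphism of involutive $A$-algebras $\rho\colon(R,*)\xrightarrow{\sim}(M_{2d}(A),\mathrm j)$. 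Finally, $D$ is carried to a $2d$-dimensional determinant law on the Azumaya algebra $M_{2d}(A)$, which is forced to be the reduced norm $\det$ (the discrete invariant classifying determinant laws is locally constant and equals $\det$ residually), so $D=\det\circ\rho$, and $P=\Pf\circ(\rho\cdot J)$ then follows from uniqueness, \Cref{pfaffianunique}. I expect the real obstacle to be the second step — passing from the residual isomorphism $\overline R\cong M_{2d}(k)$ to an honest one over $A$ — which rests precisely on the integrality of all elements of a symplectic Cayley--Hamilton algebra (so corners are module-finite over $A$ and Nakayama applies) together with Henselianity (so idempotents, hence a full set of matrix units, lift); the auxiliary vanishing $\Rad(\overline R)=0$ and the final identification $D=\det$ are comparatively soft.
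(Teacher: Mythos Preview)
Your overall strategy—lift a full system of matrix units and use Nakayama to show the corners $e_iRe_i$ are free of rank one—is a plausible alternative, but there is a genuine gap in Step~1. You assert $\Rad(\overline R)=0$ and write ``$M_{2d}(k)\subseteq\overline R$'', but $M_{2d}(k)$ is only a \emph{quotient} of $\overline R$ (by $\ker(\overline D)=\Rad(\overline R)$), not a subring, so the appeal to ``regular semisimple lifts'' is not well-defined and the sketch does not produce a proof. This matters because your Nakayama argument in Step~2 requires $\overline{e}_i\,\overline R\,\overline{e}_i=k$, which is precisely the corner-wise version of $\Rad(\overline R)=0$. (A related technical point: Nakayama should be applied to the module $e_iRe_i$ via the Pierce identification $e_iRe_i/\mathfrak m_A(e_iRe_i)\cong\overline{e}_i\,\overline R\,\overline{e}_i$, not to the subring $A[c]$; the map $A[c]/\mathfrak m_A A[c]\to k$ you use need not be injective.)

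The paper avoids this entirely, and in doing so shows where the symplectic Cayley--Hamilton hypothesis actually bites. It lifts a $*$-\emph{stable} family of idempotents (so $E_{ii}^*=E_{i+d,i+d}$), forms the symmetric idempotent $e_i:=E_{ii}+E_{i+d,i+d}$, and invokes \Cref{SCHidempotent}: the restricted law $(D_{e_i},P_{e_i})$ on $e_iRe_i$ is symplectic Cayley--Hamilton of degree~$2$, so for $x\in E_{ii}RE_{ii}$ the degree-one Pfaffian identity gives $x+x^*=P_{e_i}(x+x^*)\cdot e_i$, hence $x\in AE_{ii}$. This yields $E_{ii}RE_{ii}=AE_{ii}$ directly over $A$, with no preliminary analysis of $\overline R$ beyond $\overline R/\ker(\overline D)\cong M_{2d}(k)$. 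In fact, proving your Step~1 seems to require running exactly this argument over $k$, at which point one might as well run it over $A$ and skip the Nakayama detour. Your Step~3 (identifying the involution via its type and a symplectic basis over the local ring, and pinning down $D=\det$ by uniqueness) is fine.
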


\begin{proof}
By \Cref{symplecticreconstructionthmoverfields}, we have that $(\overline{R}/\ker(\overline D),*)\cong (M_{2d}(k),\mathrm{j})$. For $1\le i,j\le 2d$, let $\epsilon_{ij}\in \overline{R}/\ker(\overline{D})$ be the element corresponding under this isomorphism to the matrix with $1$ at the $(i,j)$ entry and $0$ elsewhere. Since $R$ is integral over $A$ (by \Cref{SCHisfinite}), $A$ is Henselian, and $\Rad(R)=\ker(R\rightarrow \overline{R}/\ker(\overline{D}))$ (by \Cref{radR}), we can apply \cite[Lemma 1.8.2]{BC} to find a $*$-stable family of orthogonal idempotents $E_{ii}$ lifting $\epsilon_{ii}$, with $\sum_{i=1}^{2d}E_{ii}=1$. Note that we necessarily have $E_{ii}^*=E_{i+d,i+d}$ for $1\le i \le d$. By \cite[Chapter III, §4, Exercise 5(c)]{Bourbaki}, we can extend this to find elements $E_{ij}\in R$ lifting the $\epsilon_{ij}$ and satisfying $E_{ij}E_{kl}=\delta_{jk}E_{il}$. 

Let us write $e_i=E_{ii}+E_{i+d,i+d}$ for $1\le i \le d$ so that $e_i^*=e_i$. Note that $\Spec(A)$ is connected since $A$ is local, so we can apply \Cref{SCHidempotent} to get that $(D_{e_i}\colon  e_iRe_i\to A, \ P_{e_i}\colon  e_iR^+e_i\to  A)$ is symplectic Cayley-Hamilton of degree $2d_{e_i}$. Reducing the equality $t^{2d_{e_i}}=D_{e_i}(e_it)=D(e_it+1-e_i)\in A[t]$ modulo $\mathfrak{m}_A$, we get that
\begin{equation*}
    t^{2d_{e_i}}=\overline{D}(t(\epsilon_{ii}+\epsilon_{i+d,i+d})+1-\epsilon_{ii}-\epsilon_{i+d,i+d})=t^2\in k[t]
\end{equation*}
so that $d_{e_i}=1$. Thus for $x\in E_{ii}RE_{ii}$, we get that $x+x^*=P_{e_i}(x+x^*)= P_{e_i}(x+x^*)E_{ii}+ P_{e_i}(x+x^*)E_{i+d,i+d}$. This shows that $E_{ii}RE_{ii}$ and $E_{i+d,i+d}RE_{i+d,i+d}$ are free of rank one over $A$.

Now for $x\in E_{ii}RE_{jj}$, we have that $x=E_{ij}(E_{jj}E_{ji}x)\in AE_{ij}$, and so $R=\oplus_{ij} AE_{ij}\cong M_{2d}(A)$.
\\ Since $A$ is a local ring, every automorphism of $M_{2d}(A)$ is inner (see \cite[Remark 3.4.19]{AGPR}). Therefore there exists an invertible matrix $P\in \GL_{2d}(A)$ such that $(M^*)^{\mathrm{j}}=PMP^{-1}$ for all $M \in M_{2d}(A)$. It follows from the fact that $(E_{ii}^*)^{\mathrm{j}}=E_{ii}$ that we have $P=\diag(\lambda_1,\dots,\lambda_{2d})$ with $\lambda_i\equiv 1 \mod \mathfrak{m}_A$. Since $A$ is Henselian and $2\in A^\times$, there exist elements $\lambda_i'\in A^\times$ such that $\lambda_i'^2=\lambda_i$. Letting $Q=\diag(\lambda_1',\dots,\lambda_d')$, we get an isomorphism of involutive $A$-algebras
\begin{equation*}
    (M_{2d}(A),*)\longrightarrow (M_{2d}(A),\mathrm{j})\colon  M\mapsto QMQ^{-1} 
\end{equation*}
which is what we want.
\end{proof}

To generalize this result to the residually multiplicity-free case, we need to develop the theory of symplectic GMAs analogously to \cite[§1.3]{BC}. Since the definition works more generally, we suppose that $A$ is a commutative algebra with $2\in A^\times$.

\begin{definition}\label{definvolutiveGMAtype} A \emph{symplectic GMA type} $\delta=((I_0, I_1,I_2),\sigma,(d_i)_{i \in I})$ of dimension $2d \in \bbZ_{\geq 0}$ consists of a partition of $I=\{1,\dots,r\}$ into three parts $ I_0\sqcup I_1 \sqcup I_2$, a bijection $\sigma\colon I\to I$, and a sequence of positive integers $d_1, \dots, d_r$, such that
\begin{itemize}
    \item $\sigma^2 = \id_I$,
    \item $\sigma(i)=i$ for all $i\in I_0$,
    \item $\sigma(I_1)=I_2$,
    \item $d_1 + \dots + d_r = 2d$,
    \item $d_i$ is even for all $i \in I_0$ and
    \item $d_{\sigma(i)}=d_i$ for all $i \in I$.
\end{itemize}
\end{definition}

To a symplectic GMA type $\delta$ as above, we associate the following matrix:
$$ J_\delta\colonequals  
\begin{pmatrix} J_{\delta}(1,1) & J_\delta(1,2) & \dots & J_\delta(1,r) \\
    J_\delta(2,1) & J_\delta(2,2) & \dots & J_\delta(2,r) \\
    \vdots & \vdots & \ddots & \vdots \\
   J_\delta(r,1) & J_\delta(r,2) & \dots & J_\delta(r,r) \end{pmatrix}\in M_d(\bbZ)$$
where $J_\delta(i,j)\in M_{d_i,d_j}(\bbZ)$ is defined as follows: if $j\neq \sigma(i)$, then $J_\delta(i,j)=0$, if $i\in I_0$ then $J_\delta(i,i)=J$, if $i\in I_1$ then $J_\delta(i,\sigma(i))=-\id$, and if $i\in I_2$ then $J_\delta(i,\sigma(i))=\id$. We define an involution $*_\delta$ on $M_d(A)$ by setting $M^{*_\delta} \colonequals  J_\delta M^\top J_\delta^{-1}$.

\begin{definition}\label{defsymplecticGMA}
Let $(R,*)$ be an involutive $A$-algebra. A \emph{symplectic GMA structure} $\calE = ((e_i)_{i\in I},(\psi_j)_{j\in I_0\sqcup I_1})$ on $(R,*)$ of type $\delta=((I_0, I_1,I_2),\sigma,(d_i)_{i \in I})$ consists of the following data:
\begin{enum}
    \item[(1)] A family of orthogonal idempotents $e_1,\dots, e_r$ of sum $1$ such that $\forall i\in I$, $e_i^*=e_{\sigma(i)}$.
    \item[(2)] For every $i\in I_0\sqcup I_1$, an $A$-algebra isomorphism $\psi_i\colon  e_iRe_i\xrightarrow{\sim} M_{d_i}(A)$.
\end{enum}
For $i\in I_2$, we define $\psi_i\colon e_iRe_i \to  M_{d_i}(A)$ by $\psi_i\colonequals \top \circ \psi_{\sigma(i)}\circ *$. Moreover, we require the following conditions:
\begin{itemize}
    \item $\mathsf{j}\circ \psi_i=\psi_i\circ *$ for all $i\in I_0$.
    \item The \emph{trace map} $T_{\mathcal{E}}\colon R\to A$ defined by $T_{\mathcal{E}}(x) \colonequals  \sum_{i=1}^r \tr(\psi_i(e_ixe_i))$ satisfies $T_{\mathcal{E}}(xy)=T_{\mathcal{E}}(yx)$ for all $x,y\in R$.
\end{itemize}
The triple $(R, *, \calE)$ is called a \emph{symplectic GMA} of type $\delta$.
\end{definition}

\begin{remark} \Cref{defsymplecticGMA} is designed in a way that for $i \in I_1$, we get that the isomorphism $(\psi_i, \psi_{\sigma(i)}) \colon e_i R e_i \oplus e_{\sigma(i)} R e_{\sigma(i)} \xrightarrow{\sim} M_{d_i}(A) \times M_{d_i}(A)$ is compatible with the swap involution on $M_{d_i}(A) \times M_{d_i}(A)$, i.e., $$(\psi_i, \psi_{\sigma(i)}) \circ * = \mathrm{swap} \circ (\psi_i, \psi_{\sigma(i)}).$$
\end{remark}

Let $(R,*,\mathcal{E})$ be a symplectic $\text{GMA}$ of type $\delta$. For $1\le i\le r, \ 1\le k,l\le d_i$, there is a unique element $E_i^{k,l}\in e_iRe_i$ such that $\psi_i(E_i^{k,l})$ is the elementary matrix of $M_{d_i}(A)$ with unique nonzero coefficient at row $k$ and column $l$. Define $E_i \colonequals  E_i^{1,1}$. Now set
$$ \mathcal{A}_{i,j}\colonequals E_iRE_j $$
with $T$ inducing an isomorphism $\mathcal{A}_{i,i}\xrightarrow{\sim} A$ and we will implicitly identify $\calA_{i,i}$ with $A$.

For each triple $1\le i,j,k\le r$, the multiplication induces a map
$$ \varphi_{i,j,k}\colon \mathcal{A}_{i,j}\otimes \mathcal{A}_{j,k}\longrightarrow \mathcal{A}_{i,k} $$
and these satisfy the relations (UNIT), (ASSO) and (COM) of \cite[§1.3.2]{BC}.

If $i\in I_0$, let $p_i\colonequals \psi_i^{-1}(J)$ for $J\in M_{d_i}(A)$, otherwise let $p_i\colonequals e_i$ if $i\in I_1$ and $p_i\colonequals -e_i$ if $i\in I_2$. This is an invertible element of the algebra $e_iRe_i$ and by abuse of notation we denote its inverse in this algebra by $p_i^{-1}$. Then for all $i\in I$, we have $E_i^*=p_iE_{\sigma(i)}p_i^{-1},$ and we can define morphisms
\begin{align*}
    \tau_{i,j}\colon  \mathcal{A}_{i,j}&\longrightarrow  \mathcal{A}_{\sigma(j),\sigma(i)}
    \\ x &\mapsto p_{\sigma(j)}^{-1}x^*p_{\sigma(i)}
\end{align*}
which have the following properties:
\begin{enum}
    \item For all $i,j\in I$, the $A$-linear endomorphism $\tau_{\sigma(j),\sigma(i)}\circ \tau_{i,j}$ of $\mathcal{A}_{i,j}$ is the identity.
    \item For all $i,j\in I$, $\tau_{i,j}$ is an isomorphism of $A$-modules.
    \item For all $i,j,k\in I$, $x\in \mathcal{A}_{i,j}$, and $y\in \mathcal{A}_{j,k}$, we have $\tau_{j,k}(y)\tau_{i,j}(x)=\tau_{i,k}(xy)$ in $\mathcal{A}_{\sigma(k),\sigma(i)}$.
\end{enum}

\begin{example}\label{stdsymplecticGMA}
    We describe in this example what we will call a \emph{standard symplectic GMA} of type $\delta$.
    Let $B$ be a commutative $A$-algebra. Let $A_{i,j}$, $i,j\in I$ be a family of $A$-submodules of $B$ satisfying the following properties:
    $$ \text{ for all }i,j,k\in I, \quad A_{i,i}=A, \quad A_{i,j}=A_{\sigma(j),\sigma(i)}, \quad A_{i,j}A_{j,k}\subseteq A_{i,k} $$
    Then the $A$-submodule
    \begin{align*}
         R\colonequals \begin{pmatrix} M_{d_1}(A_{1,1}) & M_{d_1,d_2}(A_{1,2}) & \dots & M_{d_1,d_r}(A_{1,r}) \\
    M_{d_2,d_1}(A_{2,1}) & M_{d_2}(A_{2,2}) & \dots & M_{d_2,d_r}(A_{2,r}) \\
    \vdots & \vdots & \ddots & \vdots \\
    M_{d_r,d_1}(A_{r,1}) & M_{d_r,d_2}(A_{r,2}) & \dots & M_{d_r}(A_{r,r}) \end{pmatrix}\subseteq M_{2d}(B)
    \end{align*}
    equipped with the involution $*$ defined to be the restriction of $*_\delta$ on $M_{2d}(B)$ to $R$ is an $A$-subalgebra with involution. Following \cite[Example 1.3.4]{BC}, we can equip $(R,*)$ with the structure of a symplectic GMA.
\end{example}
By \cite[§1.3.2]{BC}, we have an isomorphism $e_iRE_i\otimes \mathcal{A}_{i,j}\otimes E_jRe_j\xrightarrow{\sim}e_iRe_j$ such that $\psi_i$ and $\psi_j$ induce a canonical identification $e_iRe_j= M_{d_i,d_j}(\mathcal{A}_{i,j})$. The involution on $R$ induces isomorphisms of $A$-modules
\begin{align*}
    *\colon M_{d_i,d_j}(\mathcal{A}_{i,j}) &\longrightarrow M_{d_j,d_i}(\mathcal{A}_{\sigma(j),\sigma(i)})
    \\ M &\mapsto J_{\delta}(\sigma(j),j)\cdot\tau_{i,j}(M)^\top\cdot J_{\delta}(i,\sigma(i))^{-1}
\end{align*}
The maps $\tau_{i,j}$ for $i,j\in I$ induce a map of $A$-modules
\begin{align*}
    \tau\colon \begin{pmatrix} M_{d_1}(\mathcal{A}_{1,1}) & \dots & M_{d_1,d_r}(\mathcal{A}_{1,r}) \\
    M_{d_2,d_1}(\mathcal{A}_{2,1})  & \dots & M_{d_2,d_r}(\mathcal{A}_{2,r}) \\
     \vdots & \ddots & \vdots \\
    M_{d_r,d_1}(\mathcal{A}_{r,1})  & \dots & M_{d_r}(\mathcal{A}_{r,r}) \end{pmatrix}\rightarrow  \begin{pmatrix} M_{d_1}(\mathcal{A}_{\sigma(1),\sigma(1)}) &  \dots & M_{d_1,d_r}(\mathcal{A}_{\sigma(r),\sigma(1)}) \\
    M_{d_2,d_1}(\mathcal{A}_{\sigma(1),\sigma(2)}) & \dots & M_{d_2,d_r}(\mathcal{A}_{\sigma(r),\sigma(2)}) \\
     \vdots & \ddots & \vdots \\
    M_{d_r,d_1}(\mathcal{A}_{\sigma(1),\sigma(r)}) & \dots & M_{d_r}(\mathcal{A}_{\sigma(r),\sigma(r)}) \end{pmatrix}
\end{align*}
 Therefore we get an isomorphism of involutive $A$-algebras
$$ (R,*) \cong\left( \begin{pmatrix} M_{d_1}(\mathcal{A}_{1,1}) & M_{d_1,d_2}(\mathcal{A}_{1,2}) & \dots & M_{d_1,d_r}(\mathcal{A}_{1,r}) \\
    M_{d_2,d_1}(\mathcal{A}_{2,1}) & M_{d_2}(\mathcal{A}_{2,2}) & \dots & M_{d_2,d_r}(\mathcal{A}_{2,r}) \\
    \vdots & \vdots & \ddots & \vdots \\
    M_{d_r,d_1}(\mathcal{A}_{r,1}) & M_{d_r,d_2}(\mathcal{A}_{r,2}) & \dots & M_{d_r}(\mathcal{A}_{r,r}) \end{pmatrix},*_\delta\right) $$
    where $M^{*_\delta}\colonequals J_\delta\cdot \tau(M)^\top\cdot J_\delta^{-1}$.

\begin{definition}
    Let $B$ be a commutative $A$-algebra and let $(R,*, \calE)$ be an symplectic GMA of type $\delta$.
    A $*$-representation $\rho \colon (R,*) \to (M_d(B),*_\delta)$ is said to be \emph{adapted} to $\calE$, if its restriction to the subalgebra $\bigoplus_{i=1}^r e_i R e_i$ is the composite of $\bigoplus_{i=1}^r \psi_i \colon  \bigoplus_{i=1}^r e_i R e_i \to \bigoplus_{i=1}^r M_{d_i}(A)$ with the diagonal map
    $$ \bigoplus_{i=1}^r M_{d_i}(A) \longrightarrow M_{2d}(B).$$
    We define $\Rep^\square_{\Ad}(R,*,\mathcal{E}) \colon  \CAlg_A \to \Set$ to be the functor associating to an $A$-algebra $B$ the set of adapted representations of $(R,*,\mathcal{E})$ over $B$.
\end{definition}
\begin{remark} By a change of basis of $B^d$, we can achieve that the involution on $M_d(B)$ is the standard one. Better so, we can always change the symplectic GMA structure $\calE$ on $R$ in a way that $*_{\delta}$ will be the standard involution.
\end{remark}
\begin{proposition}\label{representabilityofadrep}
The functor $\Rep_{\Ad}^\square(R,*,\mathcal{E})$ is represented by a commutative $A$-algebra that we denote by $A[\Rep_{\Ad}^\square(R,*,\mathcal{E})]$.
\end{proposition}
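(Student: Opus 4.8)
The plan is to realize $\Rep^\square_{\Ad}(R,*,\calE)$ as a closed subfunctor of the functor $\SpRep^{\square,2d}_{(R,*)}$, whose representability is already known by \Cref{spreprepr}. First I would invoke the remark preceding this proposition to replace $\calE$ by an equivalent symplectic GMA structure for which $*_\delta$ is the standard symplectic involution $\mathrm j$; this is harmless, and after it an adapted representation of $(R,*,\calE)$ over a commutative $A$-algebra $B$ is, in particular, a symplectic representation $(R,*)\to(M_{2d}(B),\mathrm j)$. Thus $\Rep^\square_{\Ad}(R,*,\calE)$ becomes a subfunctor of $\SpRep^{\square,2d}_{(R,*)}$, which is represented by $A[\SpRep^{\square,2d}_{(R,*)}]$ with universal representation $\rho^u$.

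The next step is to convert the adaptedness condition into finitely many equations over $A[\SpRep^{\square,2d}_{(R,*)}]$. Since the $e_i$ are orthogonal idempotents with sum $1$ and each $\psi_i$ is an isomorphism $e_iRe_i\xrightarrow{\sim}M_{d_i}(A)$, the subalgebra $\bigoplus_{i=1}^r e_iRe_i\subseteq R$ is generated over $A$ by the finitely many matrix units $E_i^{k,l}$. Hence a symplectic representation $\rho$ is adapted to $\calE$ if and only if $\rho(E_i^{k,l})$ equals the prescribed constant matrix, namely the image of $E_i^{k,l}$ under the block-diagonal embedding $\iota\colon\bigoplus_i M_{d_i}(A)\hookrightarrow M_{2d}(A)$, which is the elementary matrix $\epsilon_{n_i+k-1,\,n_i+l-1}$ with $n_i\colonequals d_1+\dots+d_{i-1}+1$. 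I would then let $I\subseteq A[\SpRep^{\square,2d}_{(R,*)}]$ be the ideal generated by the matrix entries of $\rho^u(E_i^{k,l})-\epsilon_{n_i+k-1,\,n_i+l-1}$, taken over all $i$ and all $1\le k,l\le d_i$, and verify — which is immediate from the universal property of $A[\SpRep^{\square,2d}_{(R,*)}]$ — that $A[\SpRep^{\square,2d}_{(R,*)}]/I$ represents $\Rep^\square_{\Ad}(R,*,\calE)$. If $R$ is a finitely generated $A$-algebra, this ring is finitely generated over $A$ as well.

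I do not expect a serious obstacle: the substance is carried by \Cref{spreprepr} and by the reduction to $*_\delta=\mathrm j$, and the rest is the observation that $\bigoplus_i e_iRe_i$ has a finite generating set together with a routine closed-subscheme argument. The one point deserving a sentence of care is that pinning down $\rho$ on the generators $E_i^{k,l}$ really does force $\rho\big|_{\bigoplus_i e_iRe_i}$ to be the composite $\iota\circ\bigoplus_i\psi_i$ on the whole subalgebra, which holds because both are $A$-algebra homomorphisms that agree on a generating set. An alternative, closer to \cite[\S1.3]{BC}, would be to describe an adapted representation directly as a tuple of $A$-linear maps $f_{i,j}\colon\mathcal{A}_{i,j}\to B$ satisfying the normalization $f_{i,i}=\mathrm{id}$, the multiplicativity relations $f_{i,k}\circ\varphi_{i,j,k}=f_{i,j}\cdot f_{j,k}$, and the involution relations built from the $\tau_{i,j}$, and to take as representing ring the corresponding quotient of $\Sym_A\big(\bigoplus_{i\neq j}\mathcal{A}_{i,j}\big)$; I would fall back on this more explicit model only if a later argument needed it.
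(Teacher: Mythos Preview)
Your argument is correct. Realizing $\Rep^\square_{\Ad}(R,*,\calE)$ as a closed subfunctor of $\SpRep^{\square,2d}_{(R,*)}$ by pinning down the finitely many matrix units $E_i^{k,l}$ is a clean way to obtain representability, and your handling of the reduction to $*_\delta=\mathrm j$ is fine.

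The paper, however, takes precisely the route you describe as your fallback: it invokes \cite[Proposition 1.3.9]{BC} to identify an adapted representation with a tuple of maps $f_{i,j}\colon\calA_{i,j}\to B$ subject to the normalization, multiplicativity, and involution relations, and then exhibits the representing ring as the quotient $\calB/J$ of $\calB=\Sym_A\big(\bigoplus_{i\neq j}\calA_{i,j}\big)$ by the ideal $J$ generated by $b\odot c-\varphi_{i,j,k}(b\otimes c)$ and $a-\tau_{i,j}(a)$. The paper is explicit that representability itself is not the point (it even remarks that Freyd's adjoint functor theorem would suffice); the purpose is to introduce this concrete presentation, because the proof of \Cref{solutionembedding} works directly with the grading of $\calB$ by $\bbN^\Omega$, with the individual maps $f_{i,j}$, and with the generators of $J$. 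Your closed-subscheme description, while valid, does not expose any of that structure. So your instinct in the final sentence is exactly right: the explicit symmetric-algebra model is not optional here, and you should lead with it rather than keep it in reserve.
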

Representability follows from Freyd's adjoint functor theorem.
The purpose of the following proof is to give an explicit construction of $A[\Rep_{\Ad}^\square(R,*,\mathcal{E})]$ and to introduce notations, that we will use in the proof of \Cref{solutionembedding}.

\begin{proof}
By \cite[Proposition 1.3.9]{BC}, the datum of an adapted representation $\rho\colon  R\to M_d(B)$ is equivalent to the datum of a family of functions $(f_{i,j}\colon  \mathcal{A}_{i,j}\to  B)_{i,j\in I}$ satisfying the following conditions:
\begin{enum}
    \item[(1)] $f_{i,i}$ is the structure map $A\to B$.
    \item[(2)] The product on $B$ is compatible with the $\varphi_{i,j,k}$, i.e. $f_{i,k}\circ \varphi_{i,j,k}=f_{i,j}\cdot f_{j,k}$. 
\end{enum}
To further ask that the representation $\rho$ respects the involution is equivalent to the following extra condition:
 \begin{enum}
     \item[(3)] $f_{\sigma(j),\sigma(i)}\circ \tau_{i,j}=f_{i,j}$. 
 \end{enum}
Therefore, $\Rep_{\Ad}^\square(R,*,\mathcal{E})$ is represented by the quotient of the $A$-algebra
\begin{align}
    \mathcal{B}\colonequals \Sym_A\left( \bigoplus_{1\le i\neq j\le r} \mathcal{A}_{i,j}\right) \label{symmetricalgebraB}
\end{align} 
by the ideal $J$ generated by $b\odot c-\varphi(b\otimes c)$ for all $\varphi=\varphi_{i,j,k}$, $b\in \mathcal{A}_{i,j}$, $c\in \mathcal{A}_{j,k}$; and by $a-\tau_{i,j}(a)$ for $a\in \mathcal{A}_{i,j}$.
\end{proof}

Given a symplectic GMA $(R,*,\mathcal{E})$ of type $\delta$, we can associate to it a canonical $2d$-dimentional Cayley-Hamilton determinant law $D_{\mathcal{E}}\colon  R\to  A$ such that $\Lambda_{1,A}^{D_{\mathcal{E}}}=T_{\mathcal{E}}$. From the formula defining $D_{\mathcal{E}}$, we can see that $D_{\mathcal{E}}=\det\circ \rho_{\Ad}^{u}$, where $\rho^{u}_{\Ad}\colon  (R,*)\to (M_{2d}(A[\Rep_{\Ad}^\square(R,*,\mathcal{E})]),*_\delta)$ is the universal adapted representation. We can therefore define the polynomial law $P_{\mathcal{E}}\colonequals \Pf\circ (\rho_{\Ad}^{u}\cdot J_\delta)\colon R^+\to A$ so that $(D_{\mathcal{E}},P_{\mathcal{E}})$ is a $2d$-dimensional symplectic determinant law. In what follows, we will give a necessary and sufficient condition on $(R,*)$ so that $(R,*,D_{\mathcal{E}},P_{\mathcal{E}})$ is symplectic Cayley-Hamilton. The following Proposition also provides a solution, in this context, to the embedding problem discussed in \Cref{CH0}.

\begin{proposition}[Solution to the embedding problem]\label{solutionembedding}
Let $(R,*,\mathcal{E})$ be a symplectic $\text{GMA}$ of type $\delta$ and suppose that for all $i\in I_1\sqcup I_2$ and all $x\in \mathcal{A}_{i,\sigma(i)}$ $x^*=-x$. Then the universal adapted representation
\begin{equation*}
    \rho^{u}_{\Ad}\colon  (R,*)\longrightarrow (M_{2d}(A[\Rep_{\Ad}^\square(R,*,\mathcal{E})]),*_\delta)
\end{equation*} is injective.
\end{proposition}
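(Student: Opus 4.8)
The plan is to reduce the statement to the injectivity of the universal structure maps, dispose of the purely multiplicative part by invoking the theory of generalized matrix algebras of \cite[\S 1.3]{BC}, and then use the hypothesis to control the extra relations coming from the involution. Concretely, recall from the proof of \Cref{representabilityofadrep} that $A[\Rep_{\Ad}^\square(R,*,\mathcal{E})]=\mathcal{B}/J$ with $\mathcal{B}=\Sym_A\big(\bigoplus_{1\le i\neq j\le r}\mathcal{A}_{i,j}\big)$ and $J$ generated by the elements $b\odot c-\varphi_{i,j,k}(b\otimes c)$ and $a-\tau_{i,j}(a)$. Under the identification $(R,*)\cong\big((M_{d_i,d_j}(\mathcal{A}_{i,j}))_{i,j},*_\delta\big)$ and \cite[Prop.~1.3.9]{BC}, the universal adapted representation $\rho^u_{\Ad}$ acts block-wise, its $(i,j)$-block being $M_{d_i,d_j}(\bar f_{i,j})$ for the universal structure map $\bar f_{i,j}\colon \mathcal{A}_{i,j}\to\mathcal{B}/J$. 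Hence $\rho^u_{\Ad}$ is injective if and only if every $\bar f_{i,j}$ is, and that is what I would prove.

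Next, forgetting the involution turns $(R,\mathcal{E})$ into an ordinary GMA (all the $\psi_i$ being defined), whose universal adapted representation $\rho^u\colon R\hookrightarrow M_{2d}(\mathcal{C})$, with $\mathcal{C}\colonequals \mathcal{B}/J_0$ and $J_0\subseteq J$ the ideal generated only by the relations $b\odot c-\varphi_{i,j,k}(b\otimes c)$, is injective by the GMA theory of \cite[\S 1.3]{BC}; in particular $\mathcal{A}_{i,j}\hookrightarrow\mathcal{C}$. I would record that $\mathcal{C}$ is graded by the abelian group $\Gamma\colonequals \big(\bigoplus_{i\ne j}\bbZ\,[i,j]\big)/\langle[i,j]+[j,k]=[i,k]\rangle$ (with $[i,i]\colonequals 0$), with $\mathcal{A}_{i,j}$ placed in degree $[i,j]$; the relations in $J_0$ are $\Gamma$-homogeneous, and every monomial reduces modulo $J_0$ to a single structure map, so $\mathcal{C}_{[i,j]}$ is exactly the image of $\mathcal{A}_{i,j}$. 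Using the compatibilities imposed in \Cref{defsymplecticGMA} ($\mathsf j\circ\psi_i=\psi_i\circ *$ for $i\in I_0$, $\psi_i=\top\circ\psi_{\sigma(i)}\circ *$ for $i\in I_2$, and the shape of $J_\delta$), I would check that $*_\delta\circ\rho^u\circ *$ is again an adapted representation of $(R,\mathcal{E})$, hence factors through an $A$-algebra endomorphism $\theta$ of $\mathcal{C}$; involutivity of the construction gives $\theta^2=\id$, and tracing the identifications shows that $\theta$ carries $\mathcal{C}_{[i,j]}=\mathcal{A}_{i,j}$ onto $\mathcal{C}_{[\sigma(j),\sigma(i)]}=\mathcal{A}_{\sigma(j),\sigma(i)}$ via $\tau_{i,j}$. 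Since the $\tau$-relations in $J$ are precisely the relations ``$\theta=\id$'', one gets $A[\Rep_{\Ad}^\square(R,*,\mathcal{E})]=\mathcal{C}/(\theta(c)-c\mid c\in\mathcal{C})$, with $\rho^u_{\Ad}$ the reduction of $\rho^u$ modulo this ideal, and it remains to show each $\mathcal{A}_{i,j}\to\mathcal{C}/(\theta-\id)$ is injective.

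Finally, using $2\in A^\times$ I would split $\mathcal{C}=\mathcal{C}^{+}\oplus\mathcal{C}^{-}$ into $\theta$-eigenspaces, so that $(\theta-\id)=\mathcal{C}\cdot\mathcal{C}^{-}\cdot\mathcal{C}$, and distinguish two cases. For $j\neq\sigma(i)$ the degrees $[i,j]$ and $[\sigma(j),\sigma(i)]$ are distinct in $\Gamma$ and interchanged by $\theta$, so passing to the quotient merely glues $\mathcal{A}_{i,j}$ to $\mathcal{A}_{\sigma(j),\sigma(i)}$ along the isomorphism $\tau_{i,j}$ and $\mathcal{A}_{i,j}$ still injects. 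For $j=\sigma(i)$ (so that $i\in I_1\sqcup I_2$) the degree $[i,\sigma(i)]$ is $\theta$-stable, $\theta$ acts on $\mathcal{C}_{[i,\sigma(i)]}=\mathcal{A}_{i,\sigma(i)}$ as $\tau_{i,\sigma(i)}\colon x\mapsto p_i^{-1}x^*p_{\sigma(i)}$, and here the hypothesis $x^*=-x$ on $\mathcal{A}_{i,\sigma(i)}$ together with $p_i=\pm e_i$ forces $\tau_{i,\sigma(i)}=\id$; thus the relation ``$\theta=\id$'' is vacuous in this degree and $\mathcal{A}_{i,\sigma(i)}$ injects into the quotient. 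Combining the two cases gives injectivity of every $\bar f_{i,j}$, hence of $\rho^u_{\Ad}$.

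The step I expect to be the main obstacle is the last one: making precise that the ideal $(\theta-\id)$ does not, in some $\Gamma$-degree $[i,j]$, contain unexpected elements built from products of several structure maps which would collapse the image of $\mathcal{A}_{i,j}$ — that is, genuinely computing $(\mathcal{C}/(\theta-\id))_{[i,j]}$ — which is where the grading of $\mathcal{C}$ and the GMA results of \cite[\S 1.3]{BC} must be used most carefully. A smaller subtlety is the sign bookkeeping required to verify, via the conditions in \Cref{defsymplecticGMA}, that $*_\delta\circ\rho^u\circ *$ is genuinely adapted.
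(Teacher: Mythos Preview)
Your reformulation via the involution $\theta$ on $\mathcal{C}=\mathcal{B}/J_0$ is correct and is morally the same organizing principle the paper uses (the paper encodes $\theta$ through the equivalence relation $t\sim t'$ on $\bbN^\Omega$ and the transition maps $\tau(t',t)$). The identification $\mathcal{C}_{[i,j]}=\mathcal{A}_{i,j}$ is also right, being a restatement of \cite[Lemma~1.3.14]{BC}. Your treatment of the case $j=\sigma(i)$ is fine: the hypothesis indeed makes $\tau_{i,\sigma(i)}=\id$, so those generators contribute nothing.

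The gap is in your Case~1 sentence ``passing to the quotient merely glues $\mathcal{A}_{i,j}$ to $\mathcal{A}_{\sigma(j),\sigma(i)}$''. This is not justified, and in general is false as stated. The ideal $(\theta-\id)$ is only graded by $\bar\Gamma\colonequals\Gamma/\im(\id-\theta)$, and the fibres of $\Gamma\to\bar\Gamma$ over $\overline{[i,j]}$ are usually much larger than the single $\theta$-orbit $\{[i,j],[\sigma(j),\sigma(i)]\}$. For instance, take $r=4$ with $I_0=\{1,2,3,4\}$ and $\sigma=\id$; then $\theta$ acts on $\Gamma\cong\bbZ^3$ by $-1$, so $\bar\Gamma=\Gamma/2\Gamma$, and one checks $[1,2]\equiv[3,4]$ in $\bar\Gamma$ even though $[3,4]$ is not in the $\theta$-orbit of $[1,2]$. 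More seriously, even once you know which $\mathcal{A}_{k,l}$ sit in $\mathcal{C}_{\overline{[i,j]}}$, the component $I_{\overline{[i,j]}}$ of the ideal contains not only the generators $a-\tau_{k,l}(a)$ but also all products $h\cdot(c-\theta(c))\cdot h'$ landing in that $\bar\Gamma$-degree, and these are precisely the ``unexpected elements built from products'' you flag at the end. Nothing in your outline rules out such products hitting $\mathcal{A}_{i,j}$ nontrivially.

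The paper's proof is exactly the missing computation: it extends the section $\overline\varphi_t\colon\mathcal{B}_t\to\mathcal{A}_{i,j}$ of \cite[Prop.~1.3.13]{BC} from graphs $t$ with $\deg(t)=\deg(t(i,j))$ to the full $\sim$-equivalence class of such graphs by precomposing with the $\tau(t',t)$, and then checks by a case analysis on how $(i',j')$ and $(\sigma(j'),\sigma(i'))$ sit inside an extended path that (a) this extension is independent of the chosen representative and (b) it kills every generator of $J$, including the multiplicative ones $f\odot(b\odot c-\varphi(b\otimes c))$ after translation by $\tau$. Both verifications use the hypothesis $\tau_{i,\sigma(i)}=\id$ at several points, not only in the degree $[i,\sigma(i)]$. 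In short, your plan is the right plan, but the step you correctly identify as the obstacle is the entire content of the proof; what remains to be written is essentially the paper's path-combinatorics argument.
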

\begin{proof} The reader is encouraged to refer to the proof of \cite[Proposition 1.3.13]{BC}, which serves as the inspiration for our own. However, our case presents more complexity due to the need to account for the involution, resulting in the failure of the constructions in loc.cit. 

Consider the set $\Omega\colonequals \{(i,j)\in I^2 \ | \ i\neq j\}$, where for $x=(i',j')\in \Omega$, we write $i(x)\colonequals i'$ and $j(x)\colonequals j'$. We identify $\mathbb{N}^{\Omega}$ with the set of oriented graphs with set of vertices $I$, where we do not allow edges from a vertex to itself but allow multiples edges between two different vertices. This is done by associating to $t=(t_{i,j})_{(i,j)\in \Omega}$ the graph with $t_{i,j}$ edges from $i$ to $j$. We write $t(i,j)$ with the graph having a unique arrow from $i$ to $j$. 
\\ We have an additive map
\begin{equation*}\deg\colon  \mathbb{N}^\Omega\longrightarrow \mathbb{Z}^I, (t_{i,j})_{(i,j)\in \Omega}\mapsto \big(\sum_{j\neq i} t_{j,i}-t_{i,j}\big)_{i\in I}, \end{equation*}
and the involution $\sigma\colon I\to I$ induces a map of additive monoids
\begin{equation*}
    \sigma\colon  \mathbb{N}^\Omega\longrightarrow \mathbb{N}^\Omega,\ (t_{i,j})_{(i,j)\in \Omega}\mapsto (t_{\sigma(j),\sigma(i)})_{(i,j)\in \Omega}
\end{equation*} 
A sequence $\gamma=(x_1,\dots,x_s)$ of elements of $\Omega$ is called a \textit{path} from $i(x_1)$ to $j(x_s)$ if for all $k\in \{1,\dots,s-1\}$, $j(x_k)=i(x_{k+1})$. In this case, we set $\mathcal{A}_{\gamma}\colonequals \mathcal{A}_{i(x_1),j(x_1)}\otimes \cdots \otimes \mathcal{A}_{i(x_s),j(x_s)}$, and we have a canonical contraction map $\varphi_\gamma\colon  \mathcal{A}_\gamma\to  \mathcal{A}_{i(x_1),j(x_s)}$. We say that $\gamma$ is a cycle if $i(x_1)=j(x_s)$. 
\\ If $(i,j)\in \Omega$, an \textit{extended path} from $i$ to $j$ consists of a sequence of paths $\Gamma=(c_1,\dots,c_r,\gamma)$, where the $c_k$'s are cycles and $\gamma$ is a path from $i$ to $j$. To an extended path $\Gamma=(c_1,\dots,c_r,\gamma)$, we can associate a graph $t(\Gamma)\colonequals (\Gamma_{i',j'})\in \mathbb{N}^\Omega$, where $\Gamma_{i',j'}$ is the number of times that $(i',j')$ appears in the $c_k$'s or $\gamma$. We remark that $\deg(t(\Gamma))=\deg(t(i,j))$.
\\ Recall from the proof of \Cref{representabilityofadrep} that the representing ring  $A[\Rep_{\Ad}^\square(R,*,\mathcal{E})]$ is equal to $\mathcal{B}/J$ with $\mathcal{B}=\Sym_A\left( \bigoplus_{1\le i\neq j\le r} \mathcal{A}_{i,j}\right)$ and $J$ is the ideal generated by the elements of the form $ b\odot c-\varphi(b\odot c) $ for $\varphi=\varphi_{i',j',k'}, b\in \mathcal{A}_{i',j'},c\in \mathcal{A}_{j',k'}$, and by elements of the form $a-\tau_{i',j'}(a)$ for $a\in \mathcal{A}_{i',j'}$. The latter $A$-algebra has a $\mathbb{N}^\Omega$-graduation $\mathcal{B}=\bigoplus_{t\in \mathbb{N}^\Omega}\mathcal{B}_t$, where $\mathcal{B}_t=\bigodot_{(i',j')\in\Omega}\Sym_A^{t_{i',j'}}(\mathcal{A}_{i',j'})$.

Now let us fix some $(i,j)\in\Omega$. For every $t\in \mathbb{N}^\Omega$ with $\deg(t)=\deg(t(i,j))$, there is an $A$-linear map
\begin{equation*}
    \overline{\varphi}_t\colon  \mathcal{B}_t \longrightarrow \mathcal{A}_{i,j}
\end{equation*}
defined in the proof of \cite[Proposition 1.3.13]{BC} as follows: consider an extended path $\Gamma=(c_1,\dots,c_r,\gamma)$ from $i$ to $j$ with $t(\Gamma)=t$ (this extended path exists by \cite[Lemma 1.3.14]{BC}), then the following $A$-linear map:
\begin{align*}
    \varphi_\Gamma\colon  \mathcal{A}_{c_1}\otimes \cdots \otimes \mathcal{A}_{c_r}\otimes \mathcal{A}_\gamma &\longrightarrow \mathcal{A}_{i,j}
    \\ \left(\otimes_{k=1}^r x_k\right)\otimes y &\mapsto \left(\prod_{k=1}^r \varphi_{c_k}(x_k)\right)\varphi_\gamma(y)
\end{align*}
factors through $\mathcal{B}_t$ and the resulting map does not depend on the choice of $\Gamma$.
\\ We introduce an equivalence relation on the set of oriented graphs $\mathbb{N}^\Omega$ by setting $t\sim t'$ if we can write $t=t_1+t_2$ and $t'=\sigma(t_1)+t_2$ for some $t_1,t_2\in \mathbb{N}^\Omega$, in which case we can define an $A$-linear map
\begin{equation*}
     \tau(t',t)= \odot_{i',j'} h_{i',j'}\colon \mathcal{B}_t \longrightarrow \mathcal{B}_{t'}
\end{equation*}
where $h_{i',j'}=\id$ if $(i',j')\in t_2$ and $h_{i',j'}=\tau_{i',j'}$ if $(i',j')\in t_1$. Note that if $t'\sim t''$, then $\tau(t'',t)=\tau(t'',t')\circ \tau(t',t)$. If moreover we have that $\deg(t')=\deg(t(i,j))$, then we can define an $A$-linear map
\begin{equation*}
    \overline{\varphi}_t=\overline{\varphi}_{t'}\circ \tau(t',t)\colon  \mathcal{B}_t\longrightarrow \mathcal{A}_{i,j}
\end{equation*}
To justify our notation, let us prove that $\overline{\varphi}_t$ does not depend on the representative $t'$. For this, we need to show that if $t'\sim t''$ with $\deg(t')=\deg(t'')=\deg(t(i,j))$, then $\overline{\varphi}_{t'}=\overline{\varphi}_{t''}\circ \tau(t'',t')$. 
\\ Let us set $t'=t'_1+t'_2$ and $t''=\sigma(t'_1)+t'_2$. The fact that $\deg(t'_1)=\deg(\sigma(t'_1))$ allows us to write $t'_1=t'_{1,1}+\dots +t'_{1,r}$, with $t'_{1,k}$ having one of these four forms:
\begin{enum}
    \item[(I)] $t(i',\sigma(i'))$ for $i'\in I_1\sqcup I_2$.
    \item[(II)] $t(i',j')+t(j',i')$ for $i',j'\in I_0$.
    \item[(III)] $t(i',j')+t(\sigma(j'),i')$ for $i'\in I_0$ and $j'\in I_1\sqcup I_2$.
    \item[(IV)] $t(i',j')+t(\sigma(j'),\sigma(i'))$ for $i',j'\in I_1\sqcup I_2$ and $j'\neq \sigma(i')$. This concludes the argument. 
\end{enum}
Arguing by induction, we can suppose that $t'_1=t'_{1,1}$. The case (I) is trivial since by our hypothesis $\mathcal{A}_{i',j'}\cap R^+=0$, which means that $\tau_{i',\sigma(i')}$ is the identity on $\mathcal{A}_{i',\sigma(i')}$. The case (II) does not change $t'$, and by \cite[Lemma 1.3.14(ii)]{BC}, we can find an extended path $\Gamma'$ such that $t(\Gamma')=t'$ and $\Gamma'$ has a path containing $((i',j'),(j',i'))$ as a subpath. But we know that for $a\in \mathcal{A}_{i',j'}$ and $b\in \mathcal{A}_{j',i'}$, $\tau(ab)=\tau(b)\tau(a)=\tau(a)\tau(b)=ab$. Therefore, we get the desired equality $\overline{\varphi}_{t'}=\overline{\varphi}_{t''}\circ \tau(t'',t')$. For the case (III), same as before, we can find extended paths $\Gamma'$ and $\Gamma''$ such that $t(\Gamma')=t'$, and $t(\Gamma'')=t''$, and such that they have paths that respectively contain $((\sigma(j'),i'),(i',j'))$ and $((\sigma(j'),\sigma(i')),(\sigma(i'),j'))$ as subpaths. Now for $a\in \mathcal{A}_{\sigma(j'),i'}$ and $b\in \mathcal{A}_{i',j'}$, we have that $ab=\tau(ab)=\tau(b)\tau(a)$ which gives us the desired conclusion. Finally for the case (IV), we have that $t''=t'$, so let us consider an extende path $\Gamma'=(c_1,\dots,c_r,\gamma)$ from $i$ to $j$ such that $t(\Gamma')=t'$. If $(i',j')$ and $(\sigma(j'),\sigma(i'))$ are in the same path then, we can suppose that this path contains $((i',j'),(j',\sigma(j')),(\sigma(j'),\sigma(i')))$ as a subpath. But for $a\in \mathcal{A}_{i',j'}$, $b\in \mathcal{A}_{j',\sigma(j')}$ and $c\in \mathcal{A}_{\sigma(j'),\sigma(i')}$, we have $abc=\tau(abc)=\tau(c)\tau(b)\tau(a)=\tau(c)b\tau(a)$.
\\ If $(i',j')$ and $(\sigma(j'),\sigma(i'))$ are in different paths, we can suppose that $(i',j')$ is in $c_1$,  and even that $c_1=((i',j'),(j',i'))$. Then for $a\in \mathcal{A}_{i',j'}$, $b\in \mathcal{A}_{j',i'}$ and $a'\in \mathcal{A}_{\sigma(j'),\sigma(i')}$, we have that $a'ab=a'\tau(ab)=a'\tau(b)\tau(a)=\tau(a'\tau(b))\tau(a)=b\tau(a')\tau(a)$. This allows us to obtain the equality in this case.

On the other hand, if $t\in \mathbb{N}^\Omega$ is not equivalent to a path of degree $\deg(t(i,j))$, we set the map $\overline{\varphi}_t\colon  \mathcal{B}_t\to  \mathcal{A}_{i,j}$ to be the zero map. Therefore, we get an $A$-linear map
\begin{equation*}
    \overline{\varphi}=\oplus_{t} \overline{\varphi}_t\colon  \mathcal{B}=\oplus_{t} \mathcal{B}_t \longrightarrow \mathcal{A}_{i,j}
\end{equation*}
It is immediate from the definition that $\overline{\varphi}$ vanishes on the elements of the form  $f\odot(a-\tau_{\sigma(j'),\sigma(i')}(a))$ for some $a\in \mathcal{A}_{i',j'}$ and $f\in \mathcal{B}$ (by $A$-linearity we can suppose that $f\in \mathcal{B}_t$ for some $t\in \mathbb{N}^\Omega$). Now let us show that $\overline{\varphi}$ vanishes on elements of the form $ f( b\odot c-\varphi(b\otimes c)) $ for $\varphi=\varphi_{i',j',k'}, b\in \mathcal{A}_{i',j'},c\in \mathcal{A}_{j',k'},f\in \mathcal{B}_t$. Let us assume that $t+t(i',j')+t(j',k')=t_1+t_2$ with $\deg(\sigma(t_1)+t_2)=\deg(t(i,j))$. It suffices to find a graph $t'\sim t+t(i',j')+t(j',k')=t'_1+t'_2$ such that $t'=\sigma(t_1')+t_2'$, $\deg(t')=\deg(t(i,j))$, and $t(i',j')+t(j',k')$ either lies entirely in $t'_1$ or in $t'_2$. For then we can invoke the same argument as in the end of the proof of \cite[Proposition 1.3.13]{BC} to show the vanishing. So let $\Gamma=(c_1,\dots,c_r,\gamma)$ be an extended graph such that $t(\Gamma)=\sigma(t_1)+t_2$. Up to taking $t'=\sigma(t_1)+t_2$, we can suppose that $(i',j')\in t_1$ and $(j',k')\in t_2$. If either $(\sigma(j'),\sigma(i'))$ or $(j',k')$ lies in a cycle $c_k$, then we can take $t'$ to be  $\sigma(t_1)+t_2-c_k+\sigma(c_k)$. Hence we can assume that both $(\sigma(j'),\sigma(i'))$ and $(j',k')$ lie in $\gamma$, and that $\gamma=(\gamma_1,(\sigma(j'),\sigma(i')),\gamma_2,(j',k'),\gamma_3)$ for some paths $\gamma_1,\gamma_2,\gamma_3$. And so if we take $\gamma'=(\gamma_1,\sigma(\gamma_2),(i',j'),(j',k'),\gamma_3)$ and $\Gamma'=(c_1,\dots,c_r,\gamma')$, then $t'=t(\Gamma')$ works.

Finally, we have shown that $\overline{\varphi}\colon \mathcal{B}\rightarrow \mathcal{A}_{i,j}$ descends to an $A$-linear map $\varphi\colon A[\Rep_{\Ad}^\square(R,*,\mathcal{E})]\rightarrow \mathcal{A}_{i,j}$ which can easily be checked to be a section of the map $f_{i,j}\colon \mathcal{A}_{i,j}\rightarrow A[\Rep_{\Ad}^\square(R,*,\mathcal{E})]$ defined in the proof of \Cref{representabilityofadrep}. This gives the injectivity of the universal adapted representation $\rho^{u}_{\Ad}$.
\end{proof}

\begin{corollary}\label{sCHGMA}
    Let $(R,*,\mathcal{E})$ be a symplectic $\text{GMA}$ of type $\delta$. Then $(R,*,D_{\mathcal{E}},P_{\mathcal{E}})$ is symplectic Cayley-Hamilton if and only if for all $i\in I_1\sqcup I_2$ and all $x\in \mathcal{A}_{i,\sigma(i)}$,  we have $x^*=-x$.
\end{corollary}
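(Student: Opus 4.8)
The plan is to prove the two implications separately; the forward direction (that the antisymmetry condition is \emph{necessary}) is the harder one.

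For the ``if'' direction I would argue as follows. By construction $(D_{\mathcal{E}},P_{\mathcal{E}})$ is the symplectic determinant law induced, via the universal adapted representation $\rho^u_{\Ad}\colon(R,*)\to(M_{2d}(C),*_\delta)$ with $C:=A[\Rep^\square_{\Ad}(R,*,\mathcal{E})]$, from the pair $(\det,\Pf(\,\cdot\,J_\delta))$ on $M_{2d}(C)$; and the latter is symplectic Cayley--Hamilton by the same argument as in \Cref{charpf} with $J_\delta$ in place of $J$ (note that $J_\delta$ is nonsingular alternating, so $\det J_\delta=1$), i.e. $\CH(\Pf(\,\cdot\,J_\delta))=0$ on $M_{2d}(C)$. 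Assuming the antisymmetry condition, \Cref{solutionembedding} gives that $\rho^u_{\Ad}$ is injective; since $A[t_1,\dots,t_n]$ is free, hence flat, $\rho^u_{\Ad}$ stays injective after $-\otimes_A A[t_1,\dots,t_n]$, so by functoriality of the polarisations $\chi^{P}_\alpha$ (see \Cref{charpfaffianalpha}) the generators $\chi^{P_{\mathcal{E}}}_\alpha(r_1,\dots,r_n)$ of $\CH(P_{\mathcal{E}})$ are carried to the corresponding generators of $\CH(\Pf(\,\cdot\,J_\delta))=0$ and hence vanish. Thus $\CH(P_{\mathcal{E}})=0$.

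For the ``only if'' direction I would argue by contraposition. Suppose the condition fails: there are $i_0\in I_1\sqcup I_2$ and $x_0\in\mathcal{A}_{i_0,\sigma(i_0)}=E_{i_0}RE_{\sigma(i_0)}$ with $x_0^*\neq -x_0$, i.e. $r_0:=x_0+x_0^*\neq 0$. Using $(E_i^{1,1})^*=E_{\sigma(i)}^{1,1}$ one checks that $x_0^*$, and hence $r_0$, again lies in the slot $E_{i_0}RE_{\sigma(i_0)}$, that $r_0$ is symmetric, and that $r_0^2=0$ because $E_{\sigma(i_0)}E_{i_0}=0$ (here $\sigma(i_0)\neq i_0$). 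After localising $A$ at a prime where $r_0$ does not vanish (a localisation of a symplectic GMA is one again, $D_{\mathcal{E}},P_{\mathcal{E}}$ commute with base change, and $\CH(P_{\mathcal{E}})$ surjects onto its base change by \Cref{stabilityCH}) we may assume $\Spec A$ is connected. Let $e:=E_{i_0}^{1,1}+E_{\sigma(i_0)}^{1,1}$, a symmetric idempotent with $r_0\in eRe$. By \Cref{SCHidempotent} it then suffices to show that $(eRe,*,D_e,P_e)$ is \emph{not} symplectic Cayley--Hamilton. Using $D_{\mathcal{E}}=\det\circ\rho^u_{\Ad}$ and that $\rho^u_{\Ad}(e)$ is a rank $2$ idempotent matrix (it is the sum of two rank one diagonal matrix units, by adaptedness) while $\rho^u_{\Ad}(r_0)$ is square-zero and supported on $V:=\im\rho^u_{\Ad}(e)$, one computes $D_e(te)=\det\!\bigl(t\,\rho^u_{\Ad}(e)+1-\rho^u_{\Ad}(e)\bigr)=t^2$, whence $2d_e=2$; and $\chi^{D_e}(r_0,t)=D_{e,A[t]}(te-r_0)=\det\!\bigl(t\cdot\id_{V}-\rho^u_{\Ad}(r_0)|_{V}\bigr)=t^2$, whence $\Lambda^{D_e}_1(r_0)=0$. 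Since $2d_e=2$, \Cref{pfaffianunique} gives $P_e|_{(eRe)^+}=\tfrac12\Lambda^{D_e}_1|_{(eRe)^+}$, so $P_e(r_0)=0$, and therefore $\chi^{P_e}(r_0,r_0)=r_0-P_e(r_0)\cdot e=r_0\neq0$. Hence $\CH(P_e)\neq0$, and by \Cref{SCHidempotent} also $\CH(P_{\mathcal{E}})\neq0$, i.e. $(R,*,D_{\mathcal{E}},P_{\mathcal{E}})$ is not symplectic Cayley--Hamilton.

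The main obstacle is the ``only if'' direction, and within it the key point is the one just indicated: failure of the antisymmetry condition produces a nonzero \emph{symmetric, square-zero} element $r_0$ confined to a single off-diagonal slot $E_{i_0}RE_{\sigma(i_0)}$, which after cutting down to the rank-one corner $e=E_{i_0}^{1,1}+E_{\sigma(i_0)}^{1,1}$ visibly violates the two-dimensional Pfaffian Cayley--Hamilton identity. The rest is bookkeeping: checking that the hypotheses of \Cref{SCHidempotent} are met (connectedness of $\Spec A$, achieved by localising), that the relevant normalisations of $J_\delta$ hold, and that $D_{\mathcal{E}}$ and $P_{\mathcal{E}}$ genuinely arise from $\rho^u_{\Ad}$ in the way used above.
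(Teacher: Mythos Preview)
Your proof is correct, and the ``if'' direction is essentially the paper's: both pull back the Pfaffian Cayley--Hamilton identity from $M_{2d}(C)$ along the injective $\rho^u_{\Ad}$. The paper notes the slightly stronger fact that the maps $f_{i,j}$ are $A$-split, so $\rho^u_{\Ad}$ stays injective after \emph{every} base change; you only invoke flatness of $A[t_1,\dots,t_n]$, which is all that is needed.

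For the ``only if'' direction your argument is genuinely different. The paper works entirely in $R\otimes_A A[\lambda_1,\dots,\lambda_{d_1}]$: for $z\in\mathcal{A}_{1,2}\cap R^+$ it sets
\[
x=\sum_{k=1}^{d_1}\lambda_k\bigl(E_1^{k,k}+E_2^{k,k}\bigr)+z,
\]
computes $\chi^{P_{\mathcal E}}(x,t)$ directly (a product of linear factors, since $\rho^u_{\Ad}(x)$ is upper-triangular), and finds that $\chi^{P_{\mathcal E}}(x,x)$ is a nonzero monomial in the $\lambda_k$ times $z$; the Cayley--Hamilton hypothesis then forces $z=0$. You instead localize to make $\Spec A$ connected, apply \Cref{SCHidempotent} to pass to the rank-two corner $e=E_{i_0}+E_{\sigma(i_0)}$, and observe that there $d_e=1$, so the Pfaffian Cayley--Hamilton identity is simply $r_0-P_e(r_0)e=0$, with $P_e(r_0)=\tfrac12\Lambda_1^{D_e}(r_0)=0$. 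The paper's route avoids both the localization step and the dependence on \Cref{SCHidempotent}, at the cost of an explicit (if short) polynomial computation; your route trades that computation for a structural reduction to the smallest possible case, where the identity is transparent. Both are clean; the paper's is more self-contained, yours leans on machinery already built.
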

\begin{proof}
 First, suppose that $i\in I_1\sqcup I_2$ and all $x\in \mathcal{A}_{i,\sigma(i)}x^*=-x$. Then the result follows from the fact that  $\rho_{\Ad}^{u}$ remains injective after every base extension $\otimes_A B$, since the maps $f_{i,j}\colon \mathcal{A}_{i,j}\rightarrow A[\Rep_{\Ad}^\square(R,*,\mathcal{E})]$ are $A$-split injections (by the proof of \Cref{solutionembedding}).
\\Conversely suppose that $(R,*,D_{\mathcal{E}},P_{\mathcal{E}})$ is symplectic Cayley-Hamilton. We can need to show that for all $i\in I_1\sqcup I_2$, the space $\mathcal{A}_{i,\sigma(i)}$ consists only of antisymmetric elements. Let us assume that $i=1$ and $\sigma(i) = 2$. We have the direct sum decomposition $\calA_{1,2}=(\calA_{1,2} \cap R^+)\oplus (\calA_{1,2} \cap R^-)$. Assume $z \in \calA_{1,2} \cap R^+$ and let $A[\lambda_1,\dots,\lambda_{d_1}]$ be a polynomial ring over $A$ generated by the variables $\lambda_1,\dots,\lambda_{d_1}$. Then
$$ x=\lambda_1 (E^{1,1}_1+E^{1,1}_2)+ \cdots+ \lambda_{d_1}(E_1^{d_1,d_1}+E_2^{d_1,d_1})+ z \in (R\otimes_A A[\lambda_1,\dots,\lambda_{d_1}])^+$$
$$ \chi^D(x,t)=(t-\lambda_1)^2\cdots (t-\lambda_{d_1})^2 $$
$$ \chi^P(x,t)=(t-\lambda_1)\cdots (t-\lambda_{d_1}) $$
so $\chi^P(x,x)=(\lambda_1-\lambda_2)\cdots(\lambda_1-\lambda_{d_1})z = 0$ in $R\otimes_A A[\lambda_1,\dots,\lambda_{d_1}]$. We have proved that $\mathcal{A}_{1,2}\cap R^+=0$.
\end{proof}

We now have all the tools to describe residually multiplicity free symplectic Cayley-Hamilton algebras over Henselian local rings.

\begin{theorem}\label{hensCH}
   Suppose that $A$ is a Henselian local ring, and let $(R,*,D,P)$ be a finitely generated $2d$-dimensional symplectic Cayley-Hamilton $A$-algebra with involution. If $D$ is residually multiplicity-free, then $(R,*)$ admits a symplectic $\text{GMA}$ structure $\mathcal{E}$ such that $(D,P)=(D_{\mathcal{E}},P_{\mathcal{E}})$.
\end{theorem}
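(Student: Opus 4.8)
The plan is to adapt the Bellaïche--Chenevier analysis of residually multiplicity-free Cayley--Hamilton algebras \cite[\S1.3--1.4]{BC} to the involutive setting, using the machinery already assembled: \Cref{SCHidempotent}, \Cref{henselianlemma}, \Cref{sCHGMA}, \Cref{solutionembedding}.

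First I would carry out the reduction and fix the GMA type. By \Cref{SCHisfinite} the ring $R$ is a finite $A$-module, and by \Cref{radR} we have $\Rad(R)=\ker(R\twoheadrightarrow\overline R/\ker(\overline D))$; write $\overline R^{\mathrm{ss}}:=R/\Rad(R)$. Since $(D,P)$ is residually multiplicity-free and $D$ is $*$-invariant, \Cref{symplecticreconstructionthmoverfields} together with \Cref{invAW} forces $(\overline R^{\mathrm{ss}},*)\cong\prod_a(M_{2n_a}(k),\mathrm j)\times\prod_b(M_{n_b}(k)\times M_{n_b}(k),\mathrm{swap})$: orthogonal blocks (type (II)) cannot occur, because such a block would contribute its underlying irreducible representation with multiplicity two to $\overline D$, contradicting multiplicity-freeness, and type (IIIa) cannot occur because every simple factor has centre $k$. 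Reading off these blocks produces a symplectic GMA type $\delta=((I_0,I_1,I_2),\sigma,(d_i))$ as in \Cref{definvolutiveGMAtype}: each symplectic block gives one index in $I_0$ with $d_i=2n_a$, each swap pair gives a $\sigma$-orbit $\{a^+,a^-\}$ with $a^+\in I_1,\ a^-\in I_2,\ d_{a^\pm}=n_b$; and $\sum_i d_i=2d$ since the semisimple symplectic representation attached to $(\overline D,\overline P)$ has dimension $2d$.

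Next I would build the symplectic GMA structure. The block identities of $\overline R^{\mathrm{ss}}$ (for a swap pair, the two idempotents $(\mathrm{id},0)$ and $(0,\mathrm{id})$) form a complete $*$-stable orthogonal system with $(1_{S_i})^*=1_{S_{\sigma(i)}}$; as $R$ is module-finite over the Henselian local ring $A$, \cite[Lemma 1.8.2]{BC} (as already used in \Cref{henselianlemma}) lifts it to a complete $*$-stable orthogonal system $(e_i)_{i\in I}$ in $R$ with $e_i^*=e_{\sigma(i)}$. For $i\in I_0$ the idempotent $e_i$ is symmetric, so by \Cref{SCHidempotent} $(e_iRe_i,*,D_{e_i},P_{e_i})$ is symplectic Cayley--Hamilton, with residual determinant law the determinant of the tautological (hence split, absolutely irreducible) representation of $M_{2n_a}(k)$; thus \Cref{henselianlemma} provides $\psi_i\colon(e_iRe_i,*)\xrightarrow{\sim}(M_{2n_a}(A),\mathrm j)$ with $D_{e_i}=\det\circ\psi_i$. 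For a swap pair, forgetting the involution, $(e_{a^+}Re_{a^+},D_{e_{a^+}})$ is an ordinary Cayley--Hamilton algebra which is residually split absolutely irreducible of degree $n_b$, so the $\GL$-analogue of \Cref{henselianlemma} (see \cite[\S2]{MR3444227}) gives $\psi_{a^+}\colon e_{a^+}Re_{a^+}\xrightarrow{\sim}M_{n_b}(A)$ with $D_{e_{a^+}}=\det\circ\psi_{a^+}$, and we set $\psi_{a^-}:={\top}\circ\psi_{a^+}\circ{*}$, which is automatically compatible with $\mathrm{swap}$. One checks directly that $\mathcal E=((e_i)_i,(\psi_j)_{j\in I_0\sqcup I_1})$ is a symplectic GMA structure of type $\delta$ in the sense of \Cref{defsymplecticGMA}; the only non-formal point is cyclicity of $T_{\mathcal E}$, which holds because $T_{\mathcal E}=\Lambda_1^D=\tr$ (one has $\Lambda_1^D(e_ixe_i)=\Lambda_1^{D_{e_i}}(e_ixe_i)=\tr(\psi_i(e_ixe_i))$ for $i\in I_0\sqcup I_1$ and for $i\in I_2$ after applying $\Lambda_1^D\circ{*}=\Lambda_1^D$, while $\tr(e_ixe_j)=\tr(xe_je_i)=0$ for $i\neq j$).

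It then remains to prove $(D,P)=(D_{\mathcal E},P_{\mathcal E})$; by \Cref{pfaffianunique} it suffices to show $D=D_{\mathcal E}$. I would first verify the antisymmetry condition of \Cref{sCHGMA}: for a swap pair and $z\in\mathcal A_{a^+,a^-}\cap R^+$ one has $z^2=0$ and $z$ commutes with $x_0:=\sum_{k=1}^{n_b}\lambda_k(E^{a^+}_{k,k}+E^{a^-}_{k,k})$ in $R\otimes_A A[\lambda_1,\dots,\lambda_{n_b}]$; using that $D$ restricted to the diagonal subalgebra $\bigoplus_j e_jRe_j$ is $\prod_j\det\circ\psi_j$ (which follows from the factorisation $\sum_j e_jr_je_j=\prod_j(e_jr_je_j+1-e_j)$ and multiplicativity of $D$) and that $\tr$ kills off-diagonal pieces, one computes $\chi^D(x_0+z,t)=t^{2d-2n_b}\prod_{k}(t-\lambda_k)^2$, hence $\chi^P(x_0+z,t)=t^{d-n_b}\prod_k(t-\lambda_k)$ by \Cref{pfaffianunique} and uniqueness of the monic square root over $A[\lambda_1,\dots,\lambda_{n_b}][t]$; the symplectic Cayley--Hamilton identity $\chi^P(x_0+z,x_0+z)=0$ then reduces to $(\text{a nonzerodivisor})\cdot z=0$, so $z=0$. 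With the antisymmetry in hand, \Cref{sCHGMA} shows $(D_{\mathcal E},P_{\mathcal E})$ is symplectic Cayley--Hamilton and \Cref{solutionembedding} shows $\rho^u_{\Ad}\colon R\hookrightarrow M_{2d}(A[\Rep^\square_{\Ad}(R,*,\mathcal E)])$ is injective. Finally $D=D_{\mathcal E}$ is obtained exactly as in the $\GL$-case \cite[Th\'eor\`eme 1.4.4]{BC}: both are determinant laws on $R$ inducing the same GMA structure (they agree on the diagonal and have the same trace), and the characteristic polynomial of an arbitrary $r=\sum_{i,j}e_ire_j$ is pinned down by the same GMA combinatorics and the computations above — in particular this forces $\chi^D=\chi^{D_{\mathcal E}}$ (so a posteriori $(R,D)$ is Cayley--Hamilton as an ordinary determinant law). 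I expect the main obstacle to be exactly the two involutive points just used: arranging the idempotent lift to be $*$-stable and getting the $\psi_i$ ($i\in I_0$) to intertwine $*$ with $\mathrm j$ (this is precisely the role of \Cref{henselianlemma}), and above all the verification of the antisymmetry condition, which is the only genuinely non-formal step and appears to require the explicit characteristic-polynomial computation rather than a soft argument; the concluding identification $D=D_{\mathcal E}$ is then formally parallel to \cite{BC}.
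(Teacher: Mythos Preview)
Your construction of the symplectic GMA structure follows the paper's proof: decompose $\overline R/\ker(\overline D)$ via \Cref{invAW}, lift the $*$-stable idempotent system via \cite[Lemma 1.8.2]{BC}, apply \Cref{henselianlemma} on blocks in $I_0$ and the $\GL$-analogue \cite[Theorem 2.22(i)]{MR3444227} on swap pairs, and verify $T_{\mathcal E}=\Lambda_1^D$. Your exclusion of orthogonal and type-(IIIa) blocks makes explicit a point the paper glosses over, and your final appeal to the $\GL$-case of \cite{BC} for $D=D_{\mathcal E}$ is in effect the same as the paper's citation of \cite[Proposition 1.27]{MR3444227}.

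The one substantive difference is your detour through the antisymmetry condition of \Cref{sCHGMA} and \Cref{solutionembedding} before concluding $D=D_{\mathcal E}$. The paper does none of this: it goes directly from $T_{\mathcal E}=\Lambda_1^D$ to $(D,P)=(D_{\mathcal E},P_{\mathcal E})$, and the antisymmetry then follows \emph{a posteriori} from the ``only if'' direction of \Cref{sCHGMA}, since $(D_{\mathcal E},P_{\mathcal E})=(D,P)$ is symplectic Cayley--Hamilton by hypothesis. Your detour is not only unnecessary but, as written, has a gap: computing $\chi^D(x_0+z,t)$ requires evaluating $D$ on an element with off-diagonal component $z$, yet at that stage you know $D$ only on the diagonal subalgebra $\bigoplus_j e_jRe_j$ together with its trace $\Lambda_1^D=T_{\mathcal E}$. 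In positive residue characteristic, knowing all power-traces $\Lambda_1^D((x_0+z)^k)=\Lambda_1^D(x_0^k)$ does not pin down the higher $\Lambda_i^D(x_0+z)$, so your asserted formula for $\chi^D(x_0+z,t)$ is unjustified. (The analogous computation in the proof of \Cref{sCHGMA} succeeds precisely because there one works with $D_{\mathcal E}=\det\circ\rho_{\Ad}^u$, which is explicit via the adapted representation; you do not yet know $D=D_{\mathcal E}$.) Simply drop the detour.
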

\begin{proof}
    By our assumptions and \Cref{invAW}, we can write $\overline R/\ker(\overline{D})\cong \prod_{i\in I} M_{d_i}(k)$ for some set $I=\{1,\dots,r\}$. So we can choose central orthogonal idempotents $(\epsilon_i)_{i \in I}$, corresponding to this decomposition, such that $(\overline D_{\epsilon_i},\overline{P}_{\epsilon_i})$ is a $2d_i$-dimensional split and absolutely irreducible symplectic determinant law. Let $\sigma\colon I\to I$ be the unique bijection satisfying $\epsilon_i^*=\epsilon_{\sigma(i)}$ for all $i\in I$. Let $I_0\subseteq I$ be the subset of $\sigma$-fixed indices, let $I_1$ be a system of representatives of $\sigma$-orbits in $I\setminus I_0$, and let $I_2\colonequals I\setminus(I_0\sqcup I_1)$.
\\ We apply \cite[Lemma 1.8.2]{BC} to $\Rad(R)= \ker(R \to \overline R/\ker(\overline D))$ (by \Cref{radR}) to find a $*$-stable family $(e_i)_{1 \leq i \leq r}$ of orthogonal idempotents lifting $(\epsilon_i)_{1 \leq i \leq r}$. We necessarily have $e_i^* = e_{\sigma(i)}$ for all $1 \leq i \leq r$. Since $A$ is local, it is connected, and we get by \Cref{SCHidempotent} symplectic Cayley-Hamilton determinant laws $(D_{e_i},P_{e_i})$  of dimension $2d_i' \leq 2d$. We can verify that $2d_1' + \dots + 2d_r' = 2d$.
        By construction, we have that $\overline{e_i R e_i} = \epsilon_i \overline R \epsilon_i$, so the reduction of $D_{e_i}$ coincides with $\overline D_{\epsilon_i}$ which gives that $d_i=d'_i$. We obtain from \cite[Lemma 2.4 (4)]{MR3444227} that $e_1 + \dots + e_r = 1$.
\\ If $i = \sigma(i)$, we obtain from \Cref{henselianlemma} an isomorphism $\psi_i \colon e_i R e_i \eqto M_{d_i}(A)$, such that $\tr \circ \psi_i = \Lambda^{D_{e_i}}_{1,A}$ and $\mathrm j \circ \psi_i = \psi_i \circ *$.
        If $i < \sigma(i)$, we obtain from \cite[Theorem 2.22 (i)]{MR3444227} an isomorphism $\psi_i \colon e_i R e_i \eqto M_{d_i}(A)$, such that $\tr \circ \psi_i = \Lambda^{D_{e_i}}_{1,A}$ and may define $\psi_{\sigma(i)} \colonequals  \top \circ \psi_i \circ *$. This defines a symplectic \text{GMA} structure $\calE = ((e_i)_{i\in I},(\psi_j)_{j\in I_0\sqcup I_1})$ on $(R,*)$ of type $\delta=((I_0, I_1,I_2),\sigma,(d_i)_{i \in I})$. Finally, we easily check that $T_{\mathcal{E}}=\Lambda_{1,A}^D$ which give that $(D_\mathcal{E},P_{\mathcal{E}})=(D,P)$ by \cite[Proposition 1.27]{MR3444227}.
\end{proof}

\section{Comparison with the GIT quotient}\label{GITcomp}
Throughout \Cref{GITcomp}, we suppose that $A$ is  Noetherian, and consider a finitely generated involutive $A$-algebra $(R,*)$. By \cite[Theorem 9.1.4]{Alper2014}, the canonical map $[\SpRep^{\square, 2d}_{(R,*)}/\Sp_{2d}] \to \SpRep^{\square, 2d}_{(R,*)} \sslash \Sp_{2d}$ is an adequate moduli space.
Since the canonical map $[\SpRep^{\square, 2d}_{(R,*)}/\Sp_{2d}] \to \SpRep^{2d}_{(R,*)}$ is an equivalence of stacks (\Cref{equivofstk}), the map $\phi \colon \SpRep^{2d}_{(R,*)} \to \SpRep^{\square, 2d}_{(R,*)} \sslash \Sp_{2d}$ is an adequate moduli space as well.
By \cite[Theorem 6.3.3]{Alper2014}, $\SpRep^{\square, 2d}_{(R,*)} \sslash \Sp_{2d}$ is of finite presentation over $A$.
\\ The map $\psi^{\square} \colon  \SpRep^{\square, 2d}_{(R,*)} \to \SpDet^{2d}_{(R,*)}$ given by mapping a representation to its symplectic determinant law factors over the stack quotient and thus through a map $\psi \colon \SpRep^{2d}_{(R,*)} \to \SpDet^{2d}_{(R,*)}$, which in turn factors through the adequate moduli space $\phi$. We obtain a commutative diagram
\begin{equation}\label{bigGITdiagram}
    \begin{tikzcd}
        \SpRep^{\square, 2d}_{(R,*)} \ar[r]  & \SpRep^{2d}_{(R,*)} \arrow[d, "\phi"] \arrow[r] \arrow[dr, "\psi"] & \overline{\SpRep}_{(R,*)}^{2d}  \arrow[d] \\ & \SpRep^{\square, 2d}_{(R,*)} \sslash \Sp_{2d} \arrow[r,"\nu"] & \SpDet^{2d}_{(R,*)} 
    \end{tikzcd}
\end{equation}

\subsection{Adequate homeomorphism}

Recall that a morphism of schemes $f\colon X\to Y$ is a \emph{universal homeomorphism} if $f_{Y'}\colon X\times_Y Y'\to Y'$ is a homeomorphism of topological spaces for every morphism of schemes $Y'\to Y$. By \cite[Corollaire 18.12.11]{PMIHES_1967__32__5_0}, $f$ is a universal homeomorphism if and only if it is integral, universally injective, and surjective.

An \emph{adequate homeomorphism} is a universal homeomorphism which is a local isomorphism at all points with residue field of characteristic $0$ (see \cite[Definition 3.3.1]{Alper2014}). By \cite[Proposition 3.3.5]{Alper2014}, for a morphism of rings $A\rightarrow B$  of finite type, the following are equivalent:
\begin{enum}
    \item[(1)] The morphism $\Spec(B)\rightarrow \Spec(A)$ is an adequate homeomorphism.
    \item[(2)] The ideal $\ker(A\rightarrow B)$ is locally nilpotent (i.e., every element is nilpotent), $\ker(A\rightarrow B)\otimes _{\mathbb{Z}}\mathbb{Q}=0$, and for all $A$-algebra $A'$ and $b'\in B\otimes_A A'$ there exists $N>0$ and $a'\in A'$ such that $a'\mapsto b'^N$.
\end{enum}

\begin{theorem}\label{adequate} $\nu$ is a finite adequate homeomorphism.
\end{theorem}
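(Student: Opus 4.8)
The plan is to pass to the ring map $\nu^{\sharp}\colon C\colonequals A[\SpDet^{2d}_{(R,*)}]\to D\colonequals A[\SpRep^{\square,2d}_{(R,*)}]^{\Sp_{2d}}$ underlying $\nu$ and to verify the three conditions of \cite[Proposition 3.3.5]{Alper2014} that characterize when $\Spec(\nu^{\sharp})$ is an adequate homeomorphism. This reduction is legitimate: $\SpDet^{2d}_{(R,*)}$ is affine and of finite type over the Noetherian ring $A$ by \Cref{representability}, the adequate moduli space $\SpRep^{\square,2d}_{(R,*)}\sslash\Sp_{2d}$ is affine and of finite type over $A$ by \cite[Theorem 6.3.3]{Alper2014}, and $\nu$ is affine. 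The three conditions are: (i) $\ker(\nu^{\sharp})$ is locally nilpotent; (ii) $\ker(\nu^{\sharp})\otimes_{\bbZ}\bbQ=0$; (iii) for every $C$-algebra $A'$ and every $b'\in D\otimes_{C}A'$ there are $N\ge 1$ and $a'\in A'$ with $a'\mapsto b'^{N}$. Granting all three, applying (iii) with $A'=C$ shows each of the finitely many $A$-algebra generators of $D$ (which are also $C$-algebra generators, since $A\to D$ factors through $C$) is integral over $C$; thus $D$ is module-finite over $C$, so the word ``finite'' in the statement is automatic.

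I would dispatch (i) and (ii) first. Since the inclusion $D\hookrightarrow A[\SpRep^{\square,2d}_{(R,*)}]$ is injective, $\ker(\nu^{\sharp})$ equals the kernel of the ring homomorphism $C\to A[\SpRep^{\square,2d}_{(R,*)}]$ induced by $\psi^{\square}$, and the key point is that $\psi^{\square}$ is surjective on topological spaces. Indeed, a point $x\in\lvert\SpDet^{2d}_{(R,*)}\rvert$ with residue field $\kappa$ is a symplectic determinant law $(D_{x},P_{x})$ on $(R\otimes_{A}\kappa,*)$; since $2\in A^{\times}$ we have $\chara(\kappa)\neq 2$, and base-changing to an algebraic closure $\overline{\kappa}$ and invoking \Cref{reconsalgcl} produces a semisimple symplectic representation $\rho\colon(R\otimes_{A}\overline{\kappa},*)\to(M_{2d}(\overline{\kappa}),\mathrm{j})$ inducing $(D_{x}\otimes\overline{\kappa},P_{x}\otimes\overline{\kappa})$, i.e. an $\overline{\kappa}$-point of $\SpRep^{\square,2d}_{(R,*)}$ mapping to $x$. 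As $\Spec$ of a surjective-on-points ring map has kernel contained in every prime, $\ker(\nu^{\sharp})$ lies in the nilradical of $C$, giving (i). For (ii), flatness of $\bbQ$ over $\bbZ$ gives $\ker(\nu^{\sharp})\otimes_{\bbZ}\bbQ=\ker(\nu^{\sharp}\otimes_{\bbZ}\bbQ)$, and since $\SpDet$ and $\SpRep^{\square}$ commute with base change while $\Sp_{2d}$-invariants commute with the flat base change $\bbZ\to\bbQ$, the map $\nu^{\sharp}\otimes_{\bbZ}\bbQ$ is the map $\nu^{\sharp}$ attached to $(R\otimes_{A}(A\otimes_{\bbZ}\bbQ),*)$ over the $\bbQ$-algebra $A\otimes_{\bbZ}\bbQ$, which is an isomorphism by \Cref{iso0}; hence (ii).

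The hard part — and where I expect the real obstacle to be — is (iii), the universal power-surjectivity. Because adequate moduli spaces are stable under arbitrary base change (applied to $[\SpRep^{\square,2d}_{(R,*)}/\Sp_{2d}]\to\SpRep^{\square,2d}_{(R,*)}\sslash\Sp_{2d}$, base changed along the first projection of $\SpRep^{\square,2d}_{(R,*)}\sslash\Sp_{2d}\times_{\SpDet^{2d}_{(R,*)}}\Spec A'$), and because $\SpRep^{\square}$ commutes with base change, one identifies $D\otimes_{C}A'$ with $A'[\SpRep^{\square,2d}_{(R\otimes_{A}A',\,*,\,D',P')}]^{\Sp_{2d}}$, where $(D',P')$ is the symplectic determinant law on $(R\otimes_{A}A',*)$ corresponding to the structure map $C\to A'$ (here one uses that for an adequate moduli space $\calO$ of the base equals the pushforward of $\calO$ of the stack). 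Thus (iii) is equivalent to: for every commutative ring $B$ with $2\in B^{\times}$ and every symplectic determinant $B$-algebra $(R_{0},*,D_{0},P_{0})$, the structure map $B\to B[\SpRep^{\square,2d}_{(R_{0},*,D_{0},P_{0})}]^{\Sp_{2d}}$ is power-surjective.

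To prove this last statement I would argue as follows. When $R_{0}=B\langle S\rangle$ is free, the ring $B[\SpRep^{\square,2d}_{(R_{0},*)}]^{\Sp_{2d}}=B[M_{2d}^{S}]^{\Sp_{2d}}$ is essentially generated over $B$ by the coefficients of the characteristic polynomials of the generic matrices and their $\mathrm{j}$-transposes; this is precisely the $\Sp_{2d}$-part of \Cref{introductioninvtheorem}, established in \Cref{secinvthy}. For a general quotient $R_{0}=B\langle S\rangle/I$ one descends this along the $\Sp_{2d}$-equivariant surjection of representation rings, using geometric reductivity of the reductive group scheme $\Sp_{2d}$, which makes the induced map on invariants power-surjective and keeps it so after base change. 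Finally, once the determinant law $(D_{0},P_{0})$ is fixed, the images of these characteristic-polynomial coefficients lie in the image of $B$, so $B\to B[\SpRep^{\square,2d}_{(R_{0},*,D_{0},P_{0})}]^{\Sp_{2d}}$ is power-surjective, proving (iii). Combining (i), (ii) and (iii) with \cite[Proposition 3.3.5]{Alper2014} then shows that $\nu$ is a finite adequate homeomorphism.
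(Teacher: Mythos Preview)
Your overall strategy—verifying the three conditions of \cite[Proposition 3.3.5]{Alper2014} directly—is different from the paper's, which instead establishes universal closedness via the valuative criterion (a DVR-and-lattice argument using \Cref{exfiniteext} and a rescaling trick to produce an integral model of the symplectic form). Your arguments for (i) and (ii) are correct and essentially parallel the paper's use of \Cref{reconsalgcl} and \Cref{iso0}.

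The gap is in (iii). Your identification $D\otimes_{C}A'\cong A'[\SpRep^{\square,2d}_{(R\otimes_{A}A',*,D',P')}]^{\Sp_{2d}}$ rests on the assertion that ``adequate moduli spaces are stable under arbitrary base change,'' but Alper only proves this for \emph{flat} base change \cite[Proposition 5.2.9 (1)]{Alper2014}. In general the natural map $D\otimes_{C}A'\to (A[\SpRep^{\square,2d}_{(R,*)}]\otimes_{C}A')^{\Sp_{2d}}$ need not be an isomorphism: taking $\Sp_{2d}$-invariants commutes with base change only when the module in question has a good filtration, and while $A[M_{2d}^{S}]$ does, its quotient $A[\SpRep^{\square,2d}_{(R,*)}]$ has no reason to. Your final paragraph correctly shows that $A'\to A'[\SpRep^{\square,2d}_{(R\otimes_{A}A',*,D',P')}]^{\Sp_{2d}}$ is power-surjective via the invariant theory of \Cref{secinvthy} and geometric reductivity, but without the identification (or at least injectivity of the comparison map) this does not give power-surjectivity of $A'\to D\otimes_{C}A'$. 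Note also that power-surjectivity of $C\to D$ alone, which you do establish, is not stable under base change and does not by itself imply the universal condition (iii). The paper's valuative-criterion route sidesteps exactly this difficulty by proving integrality geometrically rather than through Alper's algebraic characterization.
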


We follow closely the structure of the proof of \cite[Theorem 2.20]{CWE}.

\begin{proof} We first observe, that $\nu$ is a bijection on geometric points, from which it follows that $\nu$ is surjective and universally injective (see \cite[3.5.5]{PMIHES_1960__4__5_0}). Indeed, the geometric points of $\SpRep^{\square, 2d}_{(R,*)} \sslash \Sp_{2d}$ are in bijective correspondence with conjugacy classes of semisimple symplectic representations of $(R,*)$ on $2d$-dimensional vector spaces by \cite[Theorem 15.4]{Pro}; the theorem is stated in characteristic zero but the proof works in arbitrary characteristic.
By \Cref{reconsalgcl} the geometric points of $\SpDet^{2d}_{(R,*)}$ are in bijection with conjugacy classes of semisimple representations as well.

Note that $\nu$ is of finite presentation, so to show that $\nu$ is an isomorphism in neighborhoods of characteristic zero points it suffices (by \cite[Remark 3.3.2]{Alper2014}) to show that $\nu$ induces an isomorphism of local rings at characteristic zero points. These local rings also arise as local rings of the base extension to $\bbQ$, so it is sufficient to show that $\nu \otimes \bbQ$ is an isomorphism, but this is the content of \Cref{iso0}.
Hence, it remains to show that $\nu$ is integral and by \stackcite{01WM}, it suffices to show that $\nu$ is universally closed.

We will apply the valuative criterion for universally closed morphisms in the version of \cite[Remarques 7.3.9 (i)]{PMIHES_1961__8__5_0}. So let $B$ be a complete discrete valuation ring with an algebraically closed residue field and fraction field $K$.
We will show that given a diagram of $A$-schemes
\begin{center}
    \begin{tikzcd}
         \Spec K \arrow[d] \arrow[r, "{\alpha}"] & \SpRep^{\square, 2d}_{(R,*)} \sslash \Sp_{2d}  \arrow[d, "{\nu}"] \\ 
         \Spec B \arrow[r, "{(D,P)}"] & \SpDet^{2d}_{(R,*)},
    \end{tikzcd}
\end{center}
there exists a finite field extension $K''/K$ with $B''$ the integral closure of $B$ in $K''$, and a morphism $f \colon \Spec B'' \to \SpRep^{2d}_{(R,*)}$ such that $\phi \circ f$ fits in the diagram

\begin{center}
    \begin{tikzcd}
\Spec K'' \arrow[r] \arrow[d] & \Spec K \arrow[d] \arrow[r, "\alpha"] & \SpRep^{\square, 2d}_{(R,*)} \sslash \Sp_{2d} \arrow[d, "\nu"]
\\ \Spec B'' \arrow[urr,dashed, "\phi \circ f"]  \arrow[r] & \Spec B \arrow[r] &  \SpDet^{2d}_{(R,*)}.
\end{tikzcd}
\end{center}

This allows us to verify the valuative criterion.
\\ Now let $(D,P)$ be the symplectic determinant of $(R,*)$ associated to the point $\Spec B \to \SpDet^{2d}_{(R,*)}$. Our \Cref{reconsalgcl} together with \cite[2.19 (1)]{CWE} implies that there is a $\overline K$-linear semisimple symplectic representation $\rho \colon (R \otimes_A \overline K, *) \to (M_{2d}(\overline K), \mathsf j)$ such that the corresponding point $\Spec \overline K \to \SpRep^{2d}_{(R,*)}$ lies above $\alpha$:

\begin{center}
    \begin{tikzcd}
    \Spec \overline{K} \arrow[r, "\rho"] \arrow[d] & \SpRep^{2d}_{(R,*)} \arrow[d, "\phi"]
    \\ \Spec K \arrow[r,"\alpha"] & \SpRep^{\square, 2d}_{(R,*)} \sslash \Sp_{2d}
    \end{tikzcd}
\end{center}

By \cite[Theorem 2.12]{MR3444227} and \cite[Lemma 2.8]{MR3444227}, we have $\ker(\rho) \cap (R \otimes_A K) = \ker(D \otimes_A K)$. Hence, the action of $R \otimes_A K$ on $\overline K^n$ factors through $(R \otimes_A K)/\ker(D \otimes_A K)$, which is finite-dimensional over $K$ by \cite[Corollary 2.14]{CWE}. By \Cref{exfiniteext}, there is a finite extension $K'/K$ and a symplectic representation $\rho \colon  R \otimes_A K' \to (M_{2d}(K'), \mathsf j)$, which induces $(D \otimes_A K', P \otimes_A K')$.

Let $B'$ be the integral closure of $B$ in $K'$.
Let $V' \colonequals  (K')^{2d}$ be the $K'$-vector space realizing $\rho$.
Let $L \subseteq V'$ be a $B'$-lattice and as in the proof of \cite[Theorem 2.20]{CWE}, we may assume that $L$ is $R$-stable.
The symplectic bilinear form on $V'$ restricts to a $B'$-bilinear form $\beta \colon  L \times L \to K'$; beware that we do not know a priori whether $\beta$ has values in $B'$. Choose a basis $x_1, \dots, x_{2d}$ of $L$ and let $F$ be the fundamental matrix of $\beta$, i.e. $F_{ij} = \beta(x_i, x_j)$. Letting $\varpi$ be a uniformizer of $B'$, we have $\det(F) = a \varpi^r$ with $a \in (B')^{\times}$ and $r \in \bbZ$.

We find a finite extension $K''/K'$ such that there is an element $z \in K''$ with $z^{4d} = \varpi^{-r}$.
Let $B''$ be the integral closure of $B'$ (and $B$) in $K''$.
The extension $L'' \colonequals  L \otimes_{B'} B''$ is a lattice in $V'' \colonequals  V' \otimes_{K'} K''$ with basis $x_1, \dots, x_{2d}$. Extending $\beta$, we equip it with a $B''$-bilinear form $\beta \colon  L'' \times L'' \to K''$ with fundamental matrix $F$.
The rescaled lattice $zL''$ has basis $zx_1, \dots, zx_{2d}$ and fundamental matrix $z^2F$. It follows, that $\det(z^2F) = a$ and thus $\beta$ is non-degenerate on $zL''$. So there is a representation on the $B''$-lattice $zL''$ compatible with (the involution induced by) $\beta$, which gives $\rho \otimes {K''}$ after extension of scalars. To obtain an actual symplectic representation $R \otimes_A B'' \to (M_{2d}(B''), \mathsf j)$, we use \cite[Corollary 3.5]{MilnorHusemoller} which states that every non-degenerate bilinear form over $B''$ is congruent to the standard symplectic form.
\end{proof}

\begin{remark}
    We have restricted the discussion in this section to symplectic determinant laws in lack of a version of \Cref{iso0} for weak symplectic determinant laws.
    Recall that we have a canonical closed immersion $\SpDet_{2d}^{(R,*)} \hookrightarrow \wSpDet_{2d}^{(R,*)}$.
    The arguments of the proof of \Cref{adequate} show that it is a finite universal homeomorphism.
    In particular the surjection $$A[\wSpDet_{2d}^{(R,*)}] \twoheadrightarrow A[\SpDet_{2d}^{(R,*)}]$$ has nilpotent kernel and we have a canonical isomorphism $(\SpDet_{2d}^{(R,*)})_{\red} \cong (\wSpDet_{2d}^{(R,*)})_{\red}$.
    So we also have a finite universal homeomorphism $\SpRep^{\square, (R,*)}_{2d} \sslash \Sp_{2d} \to \wSpDet_{2d}^{(R,*)}$.
\end{remark}
\subsection{Isomorphism on the multiplicity free locus}\label{subsecisommultfree}
We now prove that $\nu$ is an isomorphism on the multiplicity free locus using the strategy of \cite[Proposition 2.24, Corollary 2.25]{CWE}. So let $(R,*)$ be an involutive $A$-algebra equipped with a symplectic GMA structure $\mathcal{E}=((e_i)_{i\in I},(\psi_j)_{j\in I_0\sqcup I_1})$ of type $\delta((I_0,I_1,I_2),\sigma, (d_i)_{i\in I})$. It is equipped with a symplectic determinant law $(D_{\mathcal{E}},P_{\mathcal{E}})$ induced from the universal adapted representation, and so we have a morphism 
\begin{equation}\label{adtorep}
    \Rep_{\Ad}^{\square}(R,*,\mathcal{E})\longrightarrow \SpRep_{(R,*,D_{\mathcal{E}},P_{\mathcal{E}})}^{\square}
\end{equation}
given by forgetting the adaptation. There is an action of the $A$-group scheme $$ G(\mathcal{E})=\prod_{i\in I_0}\Sp_{d_i}\times \prod_{i\in I_1} \GL_{d_i}\hookrightarrow \Sp_{2d}$$ by conjugation on $\Rep_{(R,*,D_{\mathcal{E}},P_{\mathcal{E}})}^{\square}$. Here for $i\in I_1$, the embedding is given by $\GL_{d_i}\hookrightarrow \Sp_{2d_i}$ via the map $M\mapsto \text{diag}(M,M^{-1})$. The stabilizer of an adaptation is the subgroup $Z(\mathcal{E})\colonequals \prod_{i\in I_0}\mu_2\times \prod_{i\in I_1}\mathbb{G}_m$. Therefore, $Z(\mathcal{E})$ acts on $\Rep^{\square}_{\Ad}(R,*,\mathcal{E})$, and we have the following result. 

\begin{proposition}
    \label{thmequivalenceRepDE}
    Let $(R,*,\calE)$ be a symplectic GMA over $A$.
    Then the natural map
    \begin{align}
        [\Rep^{\square}_{\Ad}(R,*,\calE) / Z(\calE)] \longrightarrow \Rep_{(R,*,D_{\calE}, P_{\calE})} \label{equivalenceRepDE}
    \end{align}
    of algebraic stacks over $A$ is an equivalence.
\end{proposition}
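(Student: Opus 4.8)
The proof proceeds by comparing the two sides of \eqref{equivalenceRepDE} after pullback to an arbitrary affine $A$-scheme, using the description of adapted representations from \Cref{representabilityofadrep} and the analogous (non-adapted) description coming from \Cref{thmequivalenceRepDE}'s $\GL$-counterpart in \cite[§2]{CWE}. First I would unwind the definition of both functors on a test $A$-algebra $B$: an object of $\Rep_{(R,*,D_{\calE},P_{\calE})}(B)$ is a triple $(V,b,\rho)$ with $V$ a rank $2d$ bundle over $B$, $b$ a perfect alternating form and $\rho \colon (R,*)\to (\End_{B}(V),\sigma_b)$ a $*$-representation inducing $(D_{\calE},P_{\calE})$; while $[\Rep^{\square}_{\Ad}(R,*,\calE)/Z(\calE)](B)$ parametrizes, after passing to an étale cover, adapted representations modulo the $Z(\calE)$-action. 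The natural map sends an adapted representation to the underlying $(V=B^{2d},b=b_{\delta},\rho)$ and forgets the adaptation; it is $Z(\calE)$-invariant since $Z(\calE)$ acts by conjugation by adaptation-preserving matrices, so it descends to \eqref{equivalenceRepDE}.

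The key point is to produce a quasi-inverse, which amounts to showing that any $(V,b,\rho)$ inducing $(D_{\calE},P_{\calE})$ is étale-locally adapted, and that two adaptations of the same $\rho$ differ by a unique element of $Z(\calE)$. For the first claim: the idempotents $e_i \in R$ map under $\rho$ to orthogonal idempotents $\rho(e_i)\in\End_B(V)$ summing to $1$, with $\rho(e_i)^* = \rho(e_{\sigma(i)})$; since $D_{\calE}$ restricted to $e_iRe_i$ has the ``right'' rank $d_i$ (this is built into the construction of $D_{\calE}$ from $\calE$, matching \cite[Lemma 2.4]{MR3444227}), the images $\rho(e_i)V$ are projective of rank $d_i$, hence étale-locally free, so we may trivialize them. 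On $e_iRe_i$ with $i\in I_0\sqcup I_1$ we have the isomorphism $\psi_i\colon e_iRe_i\xrightarrow{\sim}M_{d_i}(A)$, and the restriction of $\rho$ to $e_iRe_i$ is, étale-locally, a representation of the Azumaya (indeed matrix) algebra $M_{d_i}$ of the expected rank, hence conjugate to the standard one by the Skolem–Noether argument used already in \Cref{henselianlemma} and \Cref{reducedpfaffian}; this conjugation can be arranged compatibly with $b$ exactly as in the proof of \Cref{henselianlemma} (using that $\mathrm j\circ\psi_i = \psi_i\circ *$ for $i\in I_0$, and the swap-compatibility remark after \Cref{defsymplecticGMA} for $i\in I_1$). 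After this change of basis $\rho$ becomes adapted. For uniqueness: two adaptations of $\rho$ differ by an automorphism of $(V,b)$ commuting with $\rho(\bigoplus e_iRe_i)$, and by Schur-type considerations (the $\psi_i$ being surjective onto matrix algebras) such an automorphism is block-diagonal with scalar-by-$\mu_2$ blocks on $I_0$-factors and $\GL_1$-blocks on $I_1$-factors, i.e. lies in $Z(\calE)$; this is the content of \cite[Proposition 1.3.9]{BC} together with the involution bookkeeping.

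Finally I would assemble these into the equivalence: the functor $(V,b,\rho)\mapsto$ (the étale sheaf of adaptations of $\rho$) lands in $Z(\calE)$-torsors equipped with a $Z(\calE)$-equivariant map to $\Rep^{\square}_{\Ad}(R,*,\calE)$, i.e. an object of $[\Rep^{\square}_{\Ad}(R,*,\calE)/Z(\calE)]$, and one checks this is inverse to \eqref{equivalenceRepDE} on objects and morphisms by the standard torsor yoga (compare the proof of \Cref{equivofstk}). Since both sides are algebraic stacks over $A$, it suffices by \stackcite{003Z} to check the equivalence on fiber groupoids over all $A$-schemes, which is what the above does.

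I expect the main obstacle to be the careful handling of the involution when trivializing the blocks $e_iRe_i$ for $i\in I_1$: here $e_iRe_i$ and $e_{\sigma(i)}Re_{\sigma(i)}$ are swapped by $*$, so one must simultaneously trivialize both and ensure the off-diagonal form $b$ pairing $\rho(e_i)V$ with $\rho(e_{\sigma(i)})V$ becomes the standard one, while on $I_0$-blocks one needs the Henselian-type square-root trick of \Cref{henselianlemma} to pass from an orthogonal-standard involution to the symplectic-standard one (which étale-locally is harmless since $2\in A^\times$ makes the relevant $\mu_2$-gerbe split). Everything else is a transcription of the $\GL$-case arguments in \cite[§2.5]{CWE} and \cite[§1.3]{BC} into the symplectic setting, with \Cref{solutionembedding} and \Cref{sCHGMA} available to control the algebra $\Rep^{\square}_{\Ad}(R,*,\calE)$ itself.
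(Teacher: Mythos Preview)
Your proposal is correct and follows essentially the same strategy as the paper: construct a quasi-inverse by showing that any $(V,b,\rho)$ is étale-locally adapted and that adaptations differ by a unique element of $Z(\calE)$. The paper organizes this slightly differently: rather than invoking Skolem--Noether directly and then handling the involution by the square-root trick of \Cref{henselianlemma}, it uses the non-abelian cohomology sequences $1\to\mu_2\to\Sp_{d_i}\to\PGSp_{d_i}\to 1$ (for $i\in I_0$) and $1\to\Gm\to\GL_{d_i}\to\PGL_{d_i}\to 1$ (for $i\in I_1$) to see at once that each $V_i$ has the form $\calL_i^{\oplus d_i}$ for a line bundle $\calL_i$ (with a trivialization of $\calL_i^{\otimes 2}$ in the $I_0$ case), and then builds the $Z(\calE)$-torsor explicitly as the product of Isom-sheaves $\calI som(\calO_X,\calL_i)$. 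This packaging makes the torsor structure and the quasi-inverse transparent without a separate Schur-type uniqueness check, whereas your approach reaches the same conclusion by trivializing block-by-block and then assembling the sheaf of adaptations; both are standard and equivalent.
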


\begin{proof}
    Let $X$ be an $A$-scheme, and let $(V,b,\rho) \in \Rep_{(R,*,D,P)}(X)$. This is the data of a vector bundle $V$ over $X$, of a skew symmetric non-singular bilinear form $b\colon V\times V\to \mathcal{O}_X$, and a morphism of $A$-algebras with involution $\rho\colon (R,*)\to(\Gamma(X,\End_{\mathcal{O}_X}(V)),\sigma_b)$ such that $(D,P)=(\det\circ \rho, \Pf\circ(\rho\cdot J))$.
    The idempotents $e_i$ give rise to a decomposition
    \begin{align}
        V = \bigoplus_{i=1}^r V_i
    \end{align}
    where $V_i = \rho(e_i)V$ is a vector bundle of rank $d_i$.
    The $A$-algebra $e_i R e_i$ acts on $V_i$ via a homomorphism $e_i R e_i \to \End_{\calO_X}(V_i)$ and the action is faithful, since we have an isomorphism $\psi_i : e_i R e_i \to M_{d_i}(A)$ and the determinant law on $e_i R e_i$ is compatible with the deteminant law on $\End_{\calO_X}(V_i)$. It follows, that we have an isomorphism $M_{d_i}(\calO_X) \eqto \End_{\calO_X}(V_i)$. Suppose now, that $i \in I_0$, i.e. that $i$ corresponds to an irreducible symplectic factor of the residual representation.
    The short exact sequence
    \begin{align}
        1 \to \mu_2 \to \Sp_{d_i} \to \PGSp_{d_i} \to 1
    \end{align}
    of algebraic groups over $X$ induces an exact sequence of non-abelian étale Čech cohomology groups
    \begin{align}
        \check H^1_{\et}(X, \mu_2) \to \check H^1_{\et}(X, \Sp_{d_i}) \to \check H^1_{\et}(X, \PGSp_{d_i}) \label{cechsequenceSp}
    \end{align}
    We know by the above considerations that $V_i$ represents the trivial element of $\check H^1_{\et}(X, \PGSp_{2d})$, and so comes from a $\mu_2$-torsor representing an element of $\check H^1_{\et}(X, \mu_2)$. In other words, we have an isomorphism $V_i\cong \calL_i^{\oplus d_i}$ for a line bundle $\calL_i$ given with a trivialization $\phi_i\colon \calL_i\otimes \calL_i\xrightarrow{\sim}\mathcal{O}_X$, and such that the bilinear form $b_i$ comes from the standard symplectic form on $\calL_{i}^{\oplus d_i}$ composed with $\phi_i$.

    We proceed similarly when $i \in I_1$ and use the short exact sequence $1 \to \Gm \to \GL_{d_i} \to \PGL_{d_i} \to 1$ to obtain a line bundle $\calL_i$ such that $V_i\cong \calL_i^{\oplus d_i}$. Since the bilinear form restricts to a perfect pairing $b\colon V_i\times V_{\sigma(i)}\to \mathcal{O}_X$, we get an isomorphism $V_{\sigma(i)}\cong (\calL_i^{-1})^{d_i}$. If $i \in I_0$, we define a $\mu_2$-torsor $\calG_i \colonequals \calI som(\calO_X, \calL_i)$, where the trivialization of $\calO_X^{\otimes 2}$ is the multiplication map of $\calO_X$ and the Isom sheaf $\calI som$ is understood to parametrize isomorphisms of line bundles compatible with the fixed trivialization of their tensor squares. If $i \in I_1$, we define a $\Gm$-torsor $\calG_i \colonequals \calI som(\calO_X, \calL_i)$, where the Isom sheaf parametrizes isomorphisms of line bundles.
    By definition, for all $i \in I_0 \cup I_1$, the line bundle $\calL_i$ is trivialized by pullback along $\calG_i \to X$.
    We obtain a $Z(\calE)$-torsor $\calG := \prod_{i \in I_0 \cup I_1} \calG_i$, and
    the base change of $V$ along $\pi : \calG \to X$ is a trivial vector bundle. Consequently, we get a the symplectic representation $\rho\colon (R,*)\to(\Gamma(\mathcal{G},\End_{\mathcal{O}_{\mathcal{G}}}(\pi^* V)),\mathrm{j})$ which is adapted by the above considerations. Hence, it corresponds to a map $\calG \to \Rep_{\Ad}^{\square}(R, *, \calE)$ which is $Z(\calE)$-equivariant. We thereby have defined a map
    \begin{align}
        \Rep_{(R,*,D_{\calE},P_{\calE})} \longrightarrow [\Rep^{\square}_{\Ad}(R,*,\calE) / Z(\calE)]
    \end{align}
    which can be checked to be quasi-inverse to \eqref{equivalenceRepDE}.
\end{proof}

\begin{corollary}\label{ZEinvariants}
    Let $(R,*,\calE)$ be a symplectic GMA over $A$.
    Then $\Rep^{\square}_{\Ad}(R,*,\calE) \sslash Z(\calE) = \Spec(A)$ and $\Rep_{(R,*,D_{\calE},P_{\calE})} \to \Spec A$ is a good moduli space.
\end{corollary}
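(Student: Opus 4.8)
Write $C \colonequals A[\Rep_{\Ad}^\square(R,*,\mathcal{E})]$, so that $\Rep^{\square}_{\Ad}(R,*,\mathcal{E}) = \Spec(C)$ by \Cref{representabilityofadrep}. The plan is to reduce both assertions to the single claim $C^{Z(\mathcal{E})} = A$, and then to establish this claim from the explicit presentation of $C$. For the reduction: since $2 \in A^{\times}$, the group scheme $Z(\mathcal{E}) = \prod_{i\in I_0}\mu_2 \times \prod_{i\in I_1}\Gm$ is diagonalizable over $\Spec(A)$ (each factor $\mu_2$ being finite étale), hence linearly reductive, so that the canonical map $[\Spec(C)/Z(\mathcal{E})] \to \Spec(C)\sslash Z(\mathcal{E}) = \Spec(C^{Z(\mathcal{E})})$ is a good moduli space by the standard theory of good moduli spaces for quotients of affine schemes by linearly reductive group schemes. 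By \Cref{thmequivalenceRepDE} there is an equivalence $\Rep_{(R,*,D_{\mathcal{E}},P_{\mathcal{E}})} \cong [\Rep^{\square}_{\Ad}(R,*,\mathcal{E})/Z(\mathcal{E})]$, so once $C^{Z(\mathcal{E})} = A$ is known, the identification $\Rep^{\square}_{\Ad}(R,*,\mathcal{E})\sslash Z(\mathcal{E}) = \Spec(A)$ is immediate, and, composing with this isomorphism, the structure morphism $\Rep_{(R,*,D_{\mathcal{E}},P_{\mathcal{E}})} \to \Spec A$ becomes a good moduli space.

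It therefore remains to prove $C^{Z(\mathcal{E})} = A$. The $Z(\mathcal{E})$-action makes $C$ an $X^{*}(Z(\mathcal{E}))$-graded $A$-algebra, and one has to show that its degree-$0$ component is $A\cdot 1$. Injectivity of $A \to C$ follows from the fact that $\Rep^{\square}_{\Ad}(R,*,\mathcal{E}) \otimes_A \kappa(\mathfrak p) \neq \emptyset$ for every prime $\mathfrak p$ of $A$: by \Cref{invAW} the residual symplectic GMA decomposes as a product of matrix algebras, and the associated semisimple symplectic representation furnished by \Cref{reconsalgcl} may be taken adapted to the reduction of $\mathcal{E}$; hence no nonzero element of $A$ annihilates $1_C$. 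For the reverse inclusion I would use the presentation $C = \mathcal{B}/J$ from the proof of \Cref{representabilityofadrep}, where $\mathcal{B} = \Sym_A\big(\bigoplus_{i\neq j}\mathcal{A}_{i,j}\big)$ and $Z(\mathcal{E})$ acts on $\mathcal{A}_{i,j}$ through a character $\chi_{i,j}$ of $Z(\mathcal{E})$ that is nonzero whenever $i \neq j$ (a short case distinction over $I_0,I_1,I_2$). Then $C^{Z(\mathcal{E})}$ is spanned over $A$ by the images of monomials $a_1 \cdots a_n$ with $a_\ell \in \mathcal{A}_{i_\ell,j_\ell}$ and $\sum_\ell \chi_{i_\ell,j_\ell} = 0$. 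Regarding such a monomial as an oriented graph on the vertex set $I$, the degree-$0$ condition permits one, after applying the reversal relations $a = \tau_{i',j'}(a) \in J$, to rewrite the monomial so that the underlying graph has vanishing net degree at each vertex; the graph then decomposes into cycles, and contracting each cycle by the composition relations $ab = \varphi_{i',j',k'}(a\otimes b) \in J$ rewrites the monomial, inside $C$, as an element of some $\mathcal{A}_{i',i'} = A$. This is the symplectic analogue of \cite[Corollary 2.25]{CWE}, with the underlying combinatorics of (extended) paths being that of \cite[\S1.3.2]{BC}.

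The step I expect to be the main obstacle is this last reduction: exactly as in the proof of \Cref{solutionembedding}, the presence of the involution forces one to track the reversal moves $\tau_{i',j'}$ alongside the orientations, so that rewriting a degree-$0$ monomial as an element of $A$ must be organized along the analogues of cases (I)--(IV) there. An important simplification, however, is that here one only needs the images of the invariant monomials to land in $A\cdot 1$, not the existence of a well-defined $A$-linear retraction $C \to A$; in particular the hypothesis ``$x^{*} = -x$ on $\mathcal{A}_{i,\sigma(i)}$'' required in \Cref{solutionembedding} plays no role. Granting this, $C^{Z(\mathcal{E})} = A$, and the corollary follows.
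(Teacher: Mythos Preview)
Your reduction to $C^{Z(\calE)}=A$ and your sketch of the surjectivity argument (rewriting degree-$0$ monomials via the multiplication and involution relations until they land in $\calA_{i,i}=A$) are exactly what the paper does. The divergence, and the genuine gap, is in the injectivity of $A\to C$.

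The paper obtains injectivity directly from \Cref{solutionembedding}: the universal adapted representation is injective, hence so is $A\to C$. That proposition, however, is stated under the antisymmetry hypothesis $x^*=-x$ on $\calA_{i,\sigma(i)}$ for $i\in I_1\sqcup I_2$, and this hypothesis is \emph{not} superfluous for the corollary. Take $A=k$ a field, $I_1=\{1\}$, $I_2=\{2\}$, $d_1=d_2=1$, $\calA_{1,2}=\calA_{2,1}=k$ with the usual multiplication, and equip $R=M_2(k)$ with the orthogonal involution $M\mapsto PM^{\top}P^{-1}$ for $P=\begin{psmallmatrix}0&1\\1&0\end{psmallmatrix}$. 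This satisfies \Cref{defsymplecticGMA}, but here $x^*=x$ on $\calA_{1,2}$, condition~(3) forces $f_{1,2}=f_{2,1}=0$, and then condition~(2) with $i=k=1$, $j=2$ gives $1=0$ in $C$; so $C=0$ and $C^{Z(\calE)}\neq A$. Thus your assertion that the antisymmetry hypothesis ``plays no role'' is wrong: it is needed for the statement itself, and the paper's proof uses it through \Cref{solutionembedding}.

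Your alternative route to injectivity---nonemptiness of fibers over every $\kappa(\frakp)$---is not justified by the references you cite. \Cref{invAW} concerns semisimple algebras and says nothing about producing an \emph{adapted} representation of the given GMA, and \Cref{reconsalgcl} produces a symplectic representation from a determinant law over an algebraically closed field, again with no compatibility with $\calE$. As the example above shows, adapted representations over $\kappa(\frakp)$ need not exist in general. If you want to avoid quoting \Cref{solutionembedding} verbatim, you still need the antisymmetry hypothesis and should argue, as the paper does, that the maps $f_{i,j}$ are $A$-split injections (which in particular makes $A\to C$ injective).
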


\begin{proof}
    Let $\calB$ be the symmetric algebra in \eqref{symmetricalgebraB}, and let $J \subseteq \calB$ be the ideal such that $\calB/J$ represents $\Rep_{\Ad}^\square(R,*,\mathcal{E})$. We claim that $(\calB/J)^{Z(\calE)} = A$. As $Z(\calE)$ is linearly reductive (since $2 \in A^{\times}$), we have $(\calB/J)^{Z(\calE)} = \calB^{Z(\calE)}/J^{Z(\calE)}$. When $i,j \in I_0 \cup I_1$ the action of $Z(\calE)$ on $\calA_{i,j}$ is given by $t_it_j^{-1}$, where $t_i$ is the coordinate corresponding to the $i$-th factor of $Z(\calE)$. When $i \in I_1$ and $j \in I_2$, then $Z(\calE)$ acts on $\calA_{i,j}$ by $t_it_{\sigma(j)}$. We can restrict to these two cases, since $A_{i,j}$ is identified with $A_{\sigma(j), \sigma(i)}$ in $\calB/J$. If $i \in I_0$, the $i$-th factor of $Z(\calE)$ is a $\mu_2$, so we have $t_i^2=1$, otherwise the factor is a $\Gm$ and there is no additional relation. 

    An elementary tensor $e := a_{i_1j_1} \odot \dots \odot a_{i_kj_k}$ with $a_{i_lj_l} \in \calA_{i_lj_l}$ in $\calB$ with either $i_l,j_l \in I_0 \cup I_1$ or $i_l \in I_1$ and $j_l \in I_2$ is $Z(\calE)$-invariant, if and only if every $i \in I_0$ occurs an even number of times and every $i \in I_1$ occurs exactly as often as a second index, as the sum of the number occurrences of $i$ as a first index and the number of occurrences of $\sigma(i)$ as a second index.

    Working modulo $J$, using the multiplication relation and (ASSO), we can multiply a subtensor $a_{ij} \odot a_{jk}$ of $e$ together and obtain an element $a_{ij}a_{jk} \in \calB/J$, which is in the image of $\calA_{ik}$. By lifting $a_{ij}a_{jk}$ to $\calA_{ik}$, we can create a new invariant elementary tensor in $\calB$, which maps to the same element of $\calB/J$ and has length $k-1$; we also call it $e$. By induction and using the involution relation to adjust the position of indices in $I_0$, we can achieve that $e$ contains no index in $I_0$. By using the involution relation again, we can assume that $e$ only contains elements of $\calA_{i,j}$ with $i \in I_1$ and $j \in I_1 \cup I_2$. By the description of invariant elementary tensors above, we can multiply subtensors of the form $a_{ij} \odot a_{jk}$ with $j \in I_1$ together to achieve, that no index in $I_1$ occurs in $e$ as a second index. Now all coordinates $t_i$ of $Z(\calE)$ act by a nonnegative power of $t_i$ on $e$. But this is only possible if $e$ is zero or has length zero.
    
    We conclude that the image of every invariant elementary tensor in $\calB/J$ lies in $\calA_{ii} = A$. The map $A \to \calB/J$ is injective by \Cref{solutionembedding}. So
    \begin{align}
        [\Rep^{\square}_{\Ad}(R,*,\calE) / Z(\calE)] \longrightarrow \Rep^{\square}_{\Ad}(R,*,\calE) \sslash Z(\calE) = \Spec(A) \label{lem2333}
    \end{align}
    is a good moduli space and the claim follows from \Cref{thmequivalenceRepDE}.
\end{proof}

\begin{lemma}\label{lemsh}
    Let $f : X \to Y$ be a universal homeomorphism of locally noetherian schemes which is locally of finite type. Let $x \in X$, $y \colonequals f(x)$ and assume that the map of strict henselizations $\calO_{Y,y}^{\sh} \to \calO_{X,x}^{\sh}$ is an isomorphism. Then there is an open neighborhood $V \subseteq Y$ of $y$ such that $f : f^{-1}(V) \to V$ is an isomorphism.
\end{lemma}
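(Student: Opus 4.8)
The plan is to reduce the statement to the single assertion that $f$ is étale at $x$, then to spread this out over an open set and combine it with the fact that a universal homeomorphism is a bijection on points. I expect the étaleness assertion to be the only real content; the rest is formal.

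\emph{Step 1 (the main point): $f$ is étale at $x$.} Since $X$ and $Y$ are locally noetherian and $f$ is locally of finite type, $f$ is locally of finite presentation. I would invoke the standard criterion that, for such a morphism, being étale at $x$ is equivalent to the induced local homomorphism on strict henselizations $\calO_{Y,y}^{\sh}\to\calO_{X,x}^{\sh}$ being an isomorphism — which is precisely the hypothesis. If one prefers to argue directly: the map $\calO_{Y,y}^{\sh}\to\calO_{X,x}^{\sh}$ is an isomorphism and is faithfully flat over each of $\calO_{Y,y}$ and $\calO_{X,x}$; this forces $\calO_{Y,y}\to\calO_{X,x}$ to be flat, forces $\mathfrak m_y\calO_{X,x}=\mathfrak m_x$ (compare the two ideals after the faithfully flat base change $\calO_{X,x}\to\calO_{X,x}^{\sh}$, where both generate the maximal ideal), and forces $\kappa(x)$ to embed into $\kappa(y)^{\mathrm{sep}}$ over $\kappa(y)$, hence $\kappa(x)/\kappa(y)$ to be algebraic separable, and finite since $f$ is locally of finite type. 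Thus $f$ is flat and unramified at $x$, i.e. étale at $x$.

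\emph{Step 2: $f$ restricts to an open immersion near $x$.} Étaleness is an open condition for morphisms locally of finite presentation, so there is an open $U\subseteq X$ with $x\in U$ on which $f$ is étale. A universal homeomorphism is universally injective, so $f|_U$ is étale and universally injective; such a morphism is a flat monomorphism locally of finite presentation (being unramified and universally injective makes the diagonal an open immersion which is also surjective, hence an isomorphism), so $f|_U$ is an open immersion by \stackcite{025G}. Hence $V\colonequals f(U)$ is open in $Y$ and $f|_U\colon U\xrightarrow{\sim}V$.

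\emph{Step 3: conclude.} As $f$ is a universal homeomorphism it is in particular bijective on underlying sets, so $f^{-1}(V)=f^{-1}(f(U))=U$. Since $y=f(x)\in f(U)=V$, the morphism $f\colon f^{-1}(V)\to V$ coincides with the isomorphism $f|_U$, and $V$ is the desired open neighbourhood of $y$. The only step I anticipate needing care is Step 1, specifically pinning down the equivalence between "isomorphism on strict henselizations" and "étale at $x$"; everything else is bookkeeping with universal injectivity and surjectivity.
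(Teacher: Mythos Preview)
Your proof is correct and follows essentially the same approach as the paper: both argue that $f$ is étale at $x$ by descending flatness and the maximal-ideal condition from the isomorphism of strict henselizations via faithful flatness of $\calO_{X,x}\to\calO_{X,x}^{\sh}$, and then conclude using that an étale universal homeomorphism is an isomorphism. Your write-up is in fact slightly more careful in one respect: you explicitly verify that $\kappa(x)/\kappa(y)$ is finite separable, whereas the paper only checks flatness and $\frakm_y\calO_{X,x}=\frakm_x$ before invoking \stackcite{039N}, leaving the residue-field condition implicit.
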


\begin{proof}
    We implicitly fix separable closures and a map $\kappa(y)^{\sep} \to \kappa(x)^{\sep}$ in the statement and throughout the proof.
    Since an étale universal homeomorphism is an isomorphism (this follows e.g. from \stackcite{025G}), we will find an open neighborhood $V$, such that the map $f : f^{-1}(V) \to V$ is étale.
    For this it is by \stackcite{039N} sufficient to show, that $\calO_{Y,y} \to \calO_{X,x}$ is flat and $\frakm_y \calO_{X,x} = \frakm_x$. Since $\calO_{X,x} \to \calO_{X,x}^{\sh}$ is faithfully flat (see \stackcite{07QM}) and $\calO_{Y,y} \to \calO_{X,x}^{\sh}$ is flat, we get that $\calO_{Y,y} \to \calO_{X,x}$ is flat. We wish to show, that the sequence
    \begin{align}
        0 \to \frakm_y \calO_{X,x} \to \calO_{X,x} \to \calO_{X,x}/\frakm_x \to 0 \label{Oxsequence}
    \end{align}
    is exact. By tensoring \eqref{Oxsequence} with $\calO_{X,x}^{\sh}$, we get an exact sequence
    \begin{align*}
        0 \to \frakm_y \calO_{X,x}^{\sh} \to \calO_{X,x}^{\sh} \to \calO_{X,x}^{\sh}/\frakm_x \to 0 \label{Oxshsequence}
    \end{align*}
    since $\frakm_y \calO_{Y,y}^{\sh}$ is the maximal ideal of $\calO_{Y,y}^{\sh}$.
    By faithful flatness of $\calO_{X,x} \to \calO_{X,x}^{\sh}$, we conclude that \eqref{Oxsequence} is exact.
\end{proof}

\begin{theorem}\label{isoonmultiplicityfreelocus}
    There exists an open subscheme $U \subseteq \SpDet^{2d}_{(R,*)}$ with the following two properties:
    \begin{enum}
        \item The set $U$ contains all points $y \in \SpDet^{2d}_{(R,*)}$, such that the symplectic determinant law associated to the map $\Spec(\kappa(y)^{\sep}) \to \SpDet^{2d}_{(R,*)}$ is multiplicity-free.\label{yone}
        \item The map $\nu$ of \eqref{bigGITdiagram} is an isomorphism onto $U$.
    \end{enum}
\end{theorem}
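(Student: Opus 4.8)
The plan is to combine \Cref{adequate} with the theory of symplectic GMAs and reduce everything to the strictly henselian local case. Write $S \colonequals \SpDet^{2d}_{(R,*)}$ and $Q \colonequals \SpRep^{\square,2d}_{(R,*)}\sslash\Sp_{2d}$. By \Cref{adequate}, $\nu\colon Q\to S$ is a finite universal homeomorphism, and since $A$ is Noetherian and $R$ finitely generated, $S$ and $Q$ are of finite presentation over $A$; in particular $\nu$ is a universal homeomorphism of locally Noetherian schemes which is locally of finite type, so \Cref{lemsh} applies to it. Hence it suffices to show: for every $y\in S$ whose associated symplectic determinant law over $\kappa(y)^{\sep}$ is multiplicity-free, the induced map of strict henselizations $\calO_{S,y}^{\sh}\to\calO_{Q,\nu^{-1}(y)}^{\sh}$ is an isomorphism. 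Granting this, \Cref{lemsh} produces for each such $y$ an open $V_y\subseteq S$ on which $\nu$ restricts to an isomorphism; taking $U\colonequals\bigcup_y V_y$ gives an open set containing all multiplicity-free points over which $\nu$ is an isomorphism, since being an isomorphism may be checked over the cover $\{V_y\}$.

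So fix such a $y$ and let $B\colonequals\calO_{S,y}^{\sh}$, a Noetherian strictly henselian local ring with residue field $\kappa(y)^{\sep}$ and $2\in B^\times$. Pulling back the universal symplectic determinant law along the flat morphism $\Spec B\to S$ yields a residually multiplicity-free symplectic determinant law $(D_B,P_B)$ on $(R\otimes_A B,*)$. Since $(D_B,P_B)$ is a genuine (not merely weak) symplectic determinant law we have $\CH(P_B)\subseteq\ker(D_B)$, so by \Cref{strongHomTheorem} it descends to $R_B\colonequals(R\otimes_A B)/\CH(P_B)$; using $2\in B^\times$ to lift symmetric elements one checks the descended law is symplectic Cayley-Hamilton, and $R_B$ is finitely generated over $B$, indeed a finite $B$-module by \Cref{SCHisfinite}. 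As $B$ is Henselian local and $D_B$ is residually multiplicity-free, \Cref{hensCH} endows $(R_B,*)$ with a symplectic GMA structure $\calE$ satisfying $(D_B,P_B)=(D_{\calE},P_{\calE})$.

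I would now match up the various moduli. First, using $\CH(\Pf)=0$ on $(M_{2d}(-),\mathrm{j})$ (\Cref{charpf}) together with \Cref{stabilityCH}, every symplectic representation inducing $(D_B,P_B)$ factors through $R_B$, so $\SpRep^{\square,2d}_{(R_B,*,D_{\calE},P_{\calE})}=\SpRep^{\square,2d}_{(R\otimes_A B,*,D_B,P_B)}=\SpRep^{\square,2d}_{(R,*)}\times_S\Spec B$, the last equality being the relative representability recorded after \Cref{spdetalg}. Passing to stack quotients and invoking \Cref{equivofstk}, this stack is $[\SpRep^{\square,2d}_{(R,*)}\times_S\Spec B/\Sp_{2d}]\simeq\Rep_{(R_B,*,D_{\calE},P_{\calE})}$, which by \Cref{thmequivalenceRepDE} is equivalent to $[\Rep_{\Ad}^{\square}(R_B,*,\calE)/Z(\calE)]$, whose good (hence adequate) moduli space is $\Spec B$ by \Cref{ZEinvariants}. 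On the other hand, $\Sp_{2d}$ acts trivially on $S$ and $\Spec B\to S$ is flat, so formation of the adequate moduli space commutes with this base change, whence $Q\times_S\Spec B=(\SpRep^{\square,2d}_{(R,*)}\times_S\Spec B)\sslash\Sp_{2d}$ is an adequate moduli space of the very same stack. Uniqueness of adequate moduli spaces gives $Q\times_S\Spec B\cong\Spec B$ compatibly with $\nu$, i.e. the base change of $\nu$ along $\Spec B\to S$ is an isomorphism. Since $\nu$ is finite and universally bijective, $\nu^{-1}(y)$ is the unique point of $Q$ above $y$ and $\calO_{Q,\nu^{-1}(y)}^{\sh}$ is computed as the naive base change $\calO_Q\otimes_{\calO_S}\calO_{S,y}^{\sh}=Q\times_S\Spec B$, which we have just identified with $B=\calO_{S,y}^{\sh}$; hence $\nu$ induces an isomorphism on strict henselizations at $y$, completing the reduction.

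The step I expect to require the most care is the base-change bookkeeping in the third paragraph: one must invoke that adequate moduli spaces are stable under the flat (pro-étale) base change $\Spec B\to S$, and justify that for the finite universal homeomorphism $\nu$ the strict henselization of $Q$ at $\nu^{-1}(y)$ is the naive base change along $\Spec\calO_{S,y}^{\sh}\to S$ — this rests on an integral universal homeomorphism inducing an equivalence of small étale sites, but it must be spelled out. All remaining ingredients are direct applications of \Cref{adequate}, \Cref{hensCH}, \Cref{thmequivalenceRepDE}, \Cref{ZEinvariants}, and \Cref{lemsh}.
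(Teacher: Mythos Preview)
Your proof is correct and follows essentially the same route as the paper: reduce via \Cref{lemsh} to strict henselizations, pass to the symplectic Cayley--Hamilton quotient, apply \Cref{hensCH} to obtain a GMA structure, and then use \Cref{thmequivalenceRepDE} and \Cref{ZEinvariants} together with uniqueness of adequate moduli spaces to identify $Q\times_S\Spec B$ with $\Spec B$. The only cosmetic difference is the order of operations: the paper first proves that $\calO(Q)\otimes_{\calO(S)}\calO_{S,y}^{\sh}$ is already strictly henselian (using that a finite universal homeomorphism over a strictly henselian local ring lands in a strictly henselian local ring, via \stackcite{04GG}), and then applies base change of adequate moduli along $V_x\to Q$; you instead base-change along $\Spec B\to S$ first and defer the strict-henselization identification to the end, justifying it by the equivalence of \'etale sites under a universal homeomorphism. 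Both arguments are valid, and you correctly flag this last step as the one requiring care.
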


We adopt the strategy of the proof of \cite[Theorem 2.3.3.7]{MR3167286}, using strict henselizations in place of completions of local rings.
We thereby demonstrate that the argument is purely étale-local in nature.
As our GMAs are defined in terms of symplectic determinant laws instead of traces, we need no additional hypothesis on the residue characteristic on $y$ apart from $2 \in \kappa(y)^{\times}$. In fact, the hypothesis that $(2d)! \in \kappa(y)^{\times}$ in \cite[Theorem 2.3.3.7]{MR3167286} is superfluous. 

\begin{proof}
    We will write $X \colonequals \SpRep^{\square, 2d}_{(R,*)} \sslash \Sp_{2d}$ and $Y \colonequals \SpDet^{2d}_{(R,*)}$.
    Let $y \in Y$ as in \eqref{yone}, let $x \colonequals \nu^{-1}(y)$ and fix a map $\kappa(y)^{\sep} \to \kappa(x)^{\sep}$. By \Cref{lemsh}, we only have to show that the map $\calO_{Y,y}^{\sh} \to \calO_{X,x}^{\sh}$ is an isomorphism. We write $V_x \colonequals \Spec(\calO_{X,x}^{\sh})$ and $U_y \colonequals \Spec(\calO_{Y,y}^{\sh})$.

    By \Cref{strongHomTheorem} the universal symplectic determinant law $(D^u, P^u)$ over $\calO(Y)$ descends to the universal symplectic Cayley-Hamilton quotient $E^u$ of $(R,*,D^u, P^u)$.
    The specialization of $(D^u, P^u)$ at $\calO_{Y,y}^{\sh}$ descends to the symplectic Cayley-Hamilton quotient $(E,*)$ of $(R \otimes_{\calO(Y)} \calO_{Y,y}^{\sh}, *)$, and we have $(E,*, D^u, P^u) \cong (E^u,*, D^u, P^u) \otimes_{\calO(Y)} \calO_{Y,y}^{\sh}$. We can apply \Cref{hensCH} to $(E,*,D^u,P^u)$ and obtain a symplectic GMA structure $\calE$ with $(D^u, P^u) = (D_{\calE}, P_{\calE})$. Our strategy is to show that in
    \begin{equation}\label{twosquares}
        \begin{tikzcd}
            \Rep_{(E,*,D^u,P^u)} \ar[r] \ar[d, "\phi_x"] \ar[dd, bend right=40, swap, "\psi_x"] & \Rep_{(E^u,*,D^u,P^u)} \ar[d, "\phi"] \\
            V_x \ar[r] \ar[d, "\nu_x"] & X \ar[d, "\nu"] \\
            U_y \ar[r] & Y
        \end{tikzcd}
    \end{equation}
    the maps $\phi_x$ and $\psi_x$ are adequate moduli spaces, for then it follows from \cite[Main Theorem (5)]{Alper2014} that $\nu_x$ is an isomorphism. To see that $\phi_x$ is an adequate moduli space, we start by showing that the squares in \eqref{twosquares} are cartesian. The outer square is cartesian by the definitions and since the symplectic Cayley-Hamilton ideal commutes with base extensions (\Cref{stabilityCH}). By \stackcite{08HV} the map $\calO(X) \otimes_{\calO(Y)} \calO_{Y,y}^{\sh} \to \calO_{X,x}^{\sh}$ identifies its target with the strict henselization of its source, so we will show that the source is strictly henselian. Indeed by \Cref{adequate} the map $\calO_{Y,y}^{\sh} \to \calO(X) \otimes_{\calO(Y)} \calO_{Y,y}^{\sh}$ is a finite universal homeomorphism, so it is a local map of local rings. By finiteness the residue field of $\calO(X) \otimes_{\calO(Y)} \calO_{Y,y}^{\sh}$ is separably closed and by \stackcite{04GG} (10) it is henselian. This shows, that the bottom square is cartesian, hence the top square is cartesian and we conclude by \cite[Proposition 5.2.9 (1)]{Alper2014} and flatness of $V_x \to X$, that $\phi_x$ is an adequate moduli space.
    \\Now for $\psi_x$, we know by \Cref{thmequivalenceRepDE} that there is an equivalence of stacks
    \begin{align*}
        [\Rep^{\square}_{\Ad}(E,*,\calE) / Z(\calE)] \eqto \Rep_{(E,*,D^u, P^u)}
    \end{align*}
    and by \Cref{ZEinvariants} the $Z(\calE)$-invariants of $\calO_{Y,y}^{\sh}[\Rep^{\square}_{\Ad}(E,*,\calE)]$ coincide with $\calO_{Y,y}^{\sh}$.
    It follows from \cite[Theorem 9.1.4]{Alper2014} that $\psi_x$ is an adequate moduli space.
\end{proof}

\section{Symplectic and orthogonal matrix invariants}
\label{secinvthy}

Throughout \Cref{secinvthy}, we consider a reductive group scheme $G$ over $\bbZ$ with an embedding $G\hookrightarrow \GL_d$. It has an action by conjugaction on $G^m$ and on $M_d^m$ given by by $g \cdot (g_1, \dots, g_m) = (gg_1g^{-1}, \dots, gg_mg^{-1})$. This induces a rational action of $G$ on the affine coordinate ring $\bbZ[G^m]$ (resp. $\bbZ[M_d^m]$) of $G^m$ (resp. $M_d^m$). 
We will use the notation $\bbX^{(i)} \in M_d(\bbZ[M_d^m])$ corresponding to the $i$-th projection map $M_d^m \twoheadrightarrow M_d$, for the generic matrices.
And we also write $\bbX^{(i)} \in G(\bbZ[G^m])$ for the generic group elements. 

Our goal is to extend over $\bbZ$ the main theorem of \cite{Zubkov}, which is stated as follows: 
\begin{theorem}\label{SOinv}
Let $G=\Sp_d$ (for $d$ even) or $\mathrm O_d$, and let $K$ be an algebraically closed field (of characteristic $\neq 2$ in the orthogonal case). Then then invariant algebra $K[M_d^m]^G$ is generated by the elements $$(\bbX^{(1)},\dots_,\bbX^{(m)})\mapsto\sigma_i(Y_{j_1} \cdots Y_{j_s})$$ 
where $Y_{i}$ is either $\bbX^{(i)}$ or the symplectic (or orthogonal) transpose $(\bbX^{(i)})^*$, and $\sigma_i$ is the $i$-th coefficient of the characteristic polynomial.
\end{theorem}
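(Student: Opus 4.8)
The statement is exactly the main theorem of \cite{Zubkov}, so the plan is to organize its proof in the form we will reuse later. Fix $K$ as in the statement and write $V = K^d$, equipped with the $G$-invariant non-degenerate bilinear form $b$ (symplectic if $G=\Sp_d$, symmetric if $G=\OO_d$, using $2\in K^\times$ in the latter case). The form gives a $G$-equivariant isomorphism $\End(V)\cong V\otimes_K V$ sending $u\otimes v$ to the operator $x\mapsto b(v,x)u$; under it the standard involution on $\End(V)$ becomes $u\otimes v\mapsto \pm\,v\otimes u$, and composition and trace translate into the coding identities of \eqref{coding}. Consequently, if $M_i=u_i\otimes v_i$ are decomposable, then $\tr(Y_{j_1}\cdots Y_{j_s})$ — with each $Y_i$ equal to $\bbX^{(i)}$ or $(\bbX^{(i)})^*$ — evaluates to a product of pairings $b(\ast,\ast)$ in the $u$'s and $v$'s, and every such product arises this way. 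The first fundamental theorem for $G$ acting on $V^{\oplus n}$ states that $K[V^{\oplus n}]^G$ is generated by the $b(v_i,v_j)$; feeding this through the coding identities shows that the restriction to decomposable tuples of any $G$-invariant on $M_d^m$ is a polynomial in traces of words.

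In characteristic $0$ one finishes by polarization and restitution: every invariant is a linear combination of multilinear invariants, each recovered from its restriction to decomposable tuples, so traces of words generate $K[M_d^m]^G$. In positive characteristic restitution fails, traces of words need not generate, and the correct generators are the coefficients $\sigma_i$ of characteristic polynomials of words. Here the argument of \cite{Zubkov} uses that $K[M_d^m]$, as a rational $G$-module for the conjugation action, has a good filtration — via Donkin's good-filtration results for $\GL_d$ and the fact that $V\otimes V$ has a good filtration as a $G$-module — so that the dimension of $K[M_d^m]^G$ in each multidegree is independent of $K$ and equals its value over $\bbQ$. Since the $\sigma_i(Y_{j_1}\cdots Y_{j_s})$ are genuine invariants, one concludes by checking that the subalgebra they generate already has the right Hilbert function: either reduce mod $p$ a generating set over $\bbZ$ whose $\bbQ$-span is controlled and invoke flatness of invariants for modules with good filtration, or use the Cayley--Hamilton identities to rewrite the characteristic-zero trace monomials in terms of the $\sigma_i$.

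The main obstacle is this positive-characteristic step: establishing that $G$-invariants of the conjugation module $K[M_d^m]$ commute with base change (the good-filtration input), and then the bookkeeping that upgrades ``traces of words generate over $\bbQ$'' to ``$\sigma_i$ of words generate over $K$''. The orthogonal and symplectic cases run in parallel, the only difference being whether $b$ is symmetric or alternating and the need for $2\in K^\times$ in the orthogonal case. Since all of this is \cite{Zubkov}, in the body of the paper we invoke that reference directly; the new content of \Cref{invZ} and \Cref{gspZ} will be the further idea that descends this generation statement from algebraically closed fields to $\bbZ$.
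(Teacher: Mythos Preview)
Your proposal is correct in spirit and matches the paper's treatment: the paper does not prove this statement at all but simply quotes it as the main theorem of \cite{Zubkov}, and you likewise recognize this and defer to that reference. Your added sketch of Zubkov's argument (coding via $V\otimes V$, first fundamental theorem, good filtrations for the characteristic-free dimension count) is accurate and more informative than what the paper provides, though strictly speaking it goes beyond what is needed since the paper treats \Cref{SOinv} purely as a black-box input to \Cref{matrixinvariants} and \Cref{invZ}.
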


In the following proposition, we use ideas of Donkin (see \cite{Don}) to find generators of the symplectic invariants of several matrices with integral coefficients. Let us mention that we lack a proof of the analogous statement in the orthogonal case.

\begin{proposition}\label{matrixinvariants}
The invariant algebra $\mathbb{Z}[M_{2d}^m]^{\Sp_{2d}}$ is generated by the elements
$$ (\bbX^{(1)},\dots, \bbX^{(m)})\mapsto\sigma_i(Y_{j_1} \cdots Y_{j_s})$$ 
defined in \Cref{SOinv}.
\end{proposition}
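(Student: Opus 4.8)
The plan is to reduce the integral statement to the known result over algebraically closed fields (\Cref{SOinv}) by a base-change argument using good filtrations, following the strategy sketched in \Cref{introductioninvtheorem}. Write $S_d \colonequals \bbZ[M_{2d}^m]^{\Sp_{2d}}$ and let $S_d' \subseteq S_d$ be the $\bbZ$-subalgebra generated by the elements $\sigma_i(Y_{j_1}\cdots Y_{j_s})$ described in \Cref{SOinv}. These elements are genuinely defined over $\bbZ$: each $Y_{j_k}$ is either a generic matrix $\bbX^{(j_k)}$ or its symplectic transpose $(\bbX^{(j_k)})^{\mathrm j} = J (\bbX^{(j_k)})^\top J^{-1}$, which has entries in $\bbZ[M_{2d}^m]$ since $J \in M_{2d}(\bbZ)$ and $J^{-1} = -J \in M_{2d}(\bbZ)$; the coefficients $\sigma_i$ of the characteristic polynomial are polynomials with integer coefficients in the matrix entries. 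So $S_d'$ is a well-defined $\bbZ$-subalgebra, and one checks directly (conjugation commutes with transposition up to the conjugation itself, and $\sigma_i$ is conjugation-invariant) that $S_d' \subseteq S_d$. The goal is to prove $S_d' = S_d$.

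First I would record that $\bbZ[M_{2d}^m]$ has a good filtration as an $\Sp_{2d,\bbZ}$-module. Indeed $M_{2d} \cong V \otimes V$ as an $\Sp_{2d}$-module where $V$ is the standard representation, and $\bbZ[M_{2d}^m] = \Sym_\bbZ\big((V \otimes V)^{\oplus m}\big)$; by Donkin's results (\cite{Don}, and see the good-filtration formalism used there) symmetric powers of tilting/good modules for $\Sp_{2d}$ over $\bbZ$ have good filtrations, and tensor products and direct sums of modules with good filtration again have good filtration. The key consequence is that taking $\Sp_{2d,\bbZ}$-invariants of $\bbZ[M_{2d}^m]$ is compatible with arbitrary base change: for every commutative ring $K$, the natural map
\begin{equation*}
    \bbZ[M_{2d}^m]^{\Sp_{2d}} \otimes_\bbZ K \longrightarrow (K[M_{2d}^m])^{\Sp_{2d}}
\end{equation*}
is an isomorphism. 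This uses the standard fact that $H^1$ of a module with good filtration against the trivial module vanishes and is preserved under flat base change, so that the formation of invariants commutes with $-\otimes_\bbZ K$; this is exactly the "new idea" alluded to after \Cref{introductioninvtheorem}, namely that once the invariant-theoretic answer is known over algebraically closed fields, good filtrations let one descend to $\bbZ$.

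Next I would run the descent. Let $Q \colonequals S_d / S_d'$, viewed as a $\bbZ$-module (or as the cokernel of $S_d' \hookrightarrow S_d$); I want $Q = 0$. Since $\bbZ[M_{2d}^m]$ and hence $S_d$ is a finitely generated $\bbZ$-algebra and in particular $S_d$ is a countable (even finitely generated) $\bbZ$-module in each graded degree, while $S_d'$ is a sub-$\bbZ$-algebra, it suffices to show $Q \otimes_\bbZ K = 0$ for $K$ ranging over all algebraically closed fields — more precisely, over $\overline{\bbQ}$ and over $\overline{\bbF_p}$ for every prime $p$ — because a finitely generated $\bbZ$-module $Q$ that vanishes after $-\otimes\overline{\bbQ}$ and after $-\otimes\overline{\bbF_p}$ for all $p$ is zero. (Vanishing after $\otimes\overline\bbQ$ kills the free part; vanishing after $\otimes\overline{\bbF_p}$ for all $p$ kills the torsion.) By right-exactness of $-\otimes_\bbZ K$ and the base-change isomorphism above, $S_d' \otimes_\bbZ K \to S_d \otimes_\bbZ K = (K[M_{2d}^m])^{\Sp_{2d}}$ has cokernel $Q \otimes_\bbZ K$, and the image of $S_d' \otimes_\bbZ K$ in $(K[M_{2d}^m])^{\Sp_{2d}}$ is precisely the $K$-subalgebra generated by the $\sigma_i(Y_{j_1}\cdots Y_{j_s})$. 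By \Cref{SOinv} (for $G = \Sp_{2d}$, recalling $2d$ is even) this subalgebra is all of $(K[M_{2d}^m])^{\Sp_{2d}}$, so $Q\otimes_\bbZ K = 0$. Hence $Q = 0$ and $S_d' = S_d$.

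One subtlety I would need to handle carefully — and which I expect to be the main obstacle — is the graded finiteness bookkeeping that makes "vanishes after $\otimes\overline\bbQ$ and $\otimes\overline{\bbF_p}$ for all $p$ implies zero" legitimate: the algebras $S_d, S_d'$ are non-unital-graded infinite-dimensional $\bbZ$-modules, so I should argue degree by degree, using that each graded piece $(\bbZ[M_{2d}^m])^{\Sp_{2d}}_n$ is a finitely generated $\bbZ$-module (it is a submodule of the finitely generated free module $\bbZ[M_{2d}^m]_n$), and that $(S_d')_n$ is a finitely generated $\bbZ$-module generated by the products of the listed invariants of total degree $n$. With that in place the argument is clean. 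The other point requiring care is the precise citation that $\Sym^\bullet((V\otimes V)^{\oplus m})$ has a good filtration for $\Sp_{2d}$ over $\bbZ$ and that invariants then commute with base change — this is in \cite{Don} but I would want to cite the exact statement; everything else (integrality of the transpose, conjugation-invariance of $\sigma_i$) is routine.
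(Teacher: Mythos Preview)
Your argument is correct and essentially the same as the paper's: reduce to algebraically closed fields via good filtrations (so that the dimension of each graded piece of the invariants is independent of the field, equivalently so that taking invariants commutes with base change), apply \Cref{SOinv} there, and conclude degree by degree. The paper obtains the good filtration on $K[M_{2d}^m]_\alpha$ by first viewing it as a $\GL_{2d}$-module (where Donkin's result applies directly) and then using that restriction to $\Sp_{2d}$ preserves good filtrations, which is cleaner than arguing directly with symmetric powers of $V\otimes V$; note also that the ``new idea'' alluded to after \Cref{introductioninvtheorem} is not this argument but the use of truncation functors in \Cref{invZ}, needed there because $\bbZ[\Sp_{2d}^m]$ lacks a grading by finite-rank pieces with good filtration.
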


\begin{proof}
Let us write $\widetilde{R}=\mathbb{Z}[M_{2d}^m]^G$ and let $R\subseteq \widetilde{R}$ be the subalgebra generated by the functions defined in the statement of the proposition, so we need to show that this inclusion is an equality.
\\ Note that the algebra of regular functions on $m$ matrices has a natural grading 
\begin{equation*}
    K[M_{2d}^m]= \bigoplus_{\alpha\in \mathbb{N}^m} K[M_{2d}^m]_{\alpha}
\end{equation*}
defined by giving to the $(i,j)$-entry $x_{i,j}^{(l)}$ of the $l$-th matrix $X_l$ $(1\le l \le m)$ the degree $(0,..,1,..,0)$ (the $1$ is in the $l$-th position). In particular, the grading on $\mathbb{C}[M_{2d}^m]$ induces a grading on $R$ and $\widetilde{R}$. 
\\ By \cite[§ 3]{Don}, $K[M_{2d}^m]_{\alpha}$ has a good filtration as a $\GL_{2d}$-module. But as mentioned in the proof of \cite[\text{Theorem }3.9]{Donkin94}, the restriction to $\Sp_{2d}$ of a $\GL_{2d}$-module with a good filtration has a good filtration. From \cite[\text{Proposition} 1.2a(iii)]{Donkin90}, we get that $\dim K[M_{2d}^m]_\alpha^G$ is the coefficient of the character of $\nabla(0)$ in the expension of the character of the $G$-module $K[M_{2d}^m]$ as a $\mathbb{Z}$-linear combination of the characters of $\nabla(\lambda)$ for $\lambda\in X^+$ (loc.cit.). In particular $d_\alpha= \dim K[M_{2d}^m]_\alpha^G$ is the same for all algebraically closed fields $K$.
\\Since $\mathbb{C}\otimes_{\mathbb{Z}} R_{\alpha}= \mathbb{C}\otimes_{\mathbb{Z}} \widetilde{R}_{\alpha}= \mathbb{C}[M_{2d}^m]_\alpha$, we get that $\rank_{\mathbb{Z}} R_\alpha= \rank_{\mathbb{Z}} \widetilde{R}_{\alpha}=d_\alpha$. Also by \Cref{SOinv} we have a sequence of morphisms
\begin{center}
    \begin{tikzcd}
K\otimes_{\mathbb{Z}}R_\alpha \arrow[rr, bend left=20, twoheadrightarrow] \arrow[r]
& K\otimes_{\mathbb{Z}} \widetilde{R}_\alpha  \arrow[r] & K[M_{2d}^m]^G_{\alpha}
\end{tikzcd}
\end{center}
where all of the vector spaces have the same dimension $d_\alpha$, so all of the arrows are isomorphisms. In particular, we have $K\otimes_{\mathbb{Z}} R_\alpha \cong K\otimes_{\mathbb{Z}} \widetilde{R}_\alpha$ for every algebraically closed field $K$, and so $R_\alpha=\widetilde{R}_\alpha$ which is enough to conclude.
\end{proof}

The proof the following Theorem follows a similar approach to \Cref{matrixinvariants}, yet it requires a new idea. This is due to the fact that the algebra $\mathbb{Z}[\Sp_{2d}^m]$ lacks a grading by finite dimensional subspaces having a good filtration. Our solution is to use truncation functors (see \cite[§A]{Jantzen2003}) to establish a well behaved filtration on $\mathbb{Z}[\Sp_{2d}^m]$ as an alternative to the grading. 

\begin{theorem}
    
\label{invZ}
The invariant algebra $\mathbb{Z}[\Sp_{2d}^m]^{\Sp_{2d}}$ is generated by the elements
$$ (\bbX^{(1)},\dots ,\bbX^{(m)})\mapsto\sigma_i(Y_{j_1} \cdots Y_{j_s}) $$
defined in \Cref{SOinv}.
\end{theorem}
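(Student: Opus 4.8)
The strategy mirrors the proof of \Cref{matrixinvariants}: exhibit the subalgebra $R \subseteq \widetilde R \colonequals \bbZ[\Sp_{2d}^m]^{\Sp_{2d}}$ generated by the listed invariants, and show the inclusion $R \hookrightarrow \widetilde R$ is an equality by a rank count over every algebraically closed field $K$ (the base change to $K$ being known by \Cref{SOinv}). The difficulty, flagged in the paragraph before the theorem, is that $\bbZ[\Sp_{2d}^m]$ has no grading by finite-dimensional $\GL_{2d}$- (or $\Sp_{2d}$-)submodules with good filtrations, so the dimension-counting argument of \Cref{matrixinvariants} does not apply verbatim. The remedy is to replace the grading by an exhaustive filtration $F_0 \subseteq F_1 \subseteq \dots$ of $\bbZ[\Sp_{2d}^m]$ by finite-rank $\Sp_{2d}$-stable $\bbZ$-submodules, each of whose base change to any algebraically closed field $K$ has a good filtration as an $\Sp_{2d}$-module, with the associated graded pieces free of rank independent of $K$.

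\textbf{Constructing the filtration.} First I would recall that $\bbZ[\Sp_{2d}]$ (the single-copy coordinate ring), as an $\Sp_{2d} \times \Sp_{2d}$-module under left and right translation, has a good filtration with pieces $\nabla(\lambda) \otimes \nabla(\lambda)$, $\lambda \in X^+$; this is the classical "good filtration of the coordinate ring" (see \cite[§A.3, §A.5]{Jantzen2003} or Donkin's work). Passing to $m$ copies, $\bbZ[\Sp_{2d}^m] \cong \bbZ[\Sp_{2d}]^{\otimes m}$, and a tensor product of good-filtration modules has a good filtration (again \cite[§A]{Jantzen2003}). Now use the truncation functors $O_\pi$ relative to a finite saturated set $\pi \subseteq X^+$ of dominant weights (\cite[§A.1–A.3]{Jantzen2003}): set $F_\pi \colonequals O_\pi(\bbZ[\Sp_{2d}^m])$, the largest submodule all of whose composition factors have highest weight in $\pi$. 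As $\pi$ grows through an exhaustion of $X^+$, the $F_\pi$ exhaust $\bbZ[\Sp_{2d}^m]$; each $F_\pi$ is a finite-rank $\Sp_{2d}$-stable $\bbZ$-lattice, and since truncation preserves good filtrations (\cite[Proposition A.3.6 or similar]{Jantzen2003}), $F_\pi \otimes_\bbZ K$ has a good $\Sp_{2d}$-filtration for every algebraically closed $K$, with the number of times $\nabla(\mu)$ appears independent of $K$ (it is computed from the formal character, which is defined over $\bbZ$). In particular, taking $\mu = 0$, the rank of $(F_\pi)^{\Sp_{2d}} \otimes_\bbZ K = (F_\pi \otimes_\bbZ K)^{\Sp_{2d}}$ equals that multiplicity, call it $e_\pi$, and is the same for all $K$; moreover $(F_\pi)^{\Sp_{2d}}$ is a free $\bbZ$-module of rank $e_\pi$ and $\widetilde R = \bigcup_\pi (F_\pi)^{\Sp_{2d}}$.

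\textbf{The rank count.} Intersecting with the generated subalgebra, put $R_\pi \colonequals R \cap (F_\pi)^{\Sp_{2d}}$; this is a $\bbZ$-submodule of the free $\bbZ$-module $(F_\pi)^{\Sp_{2d}}$, hence free, and $R = \bigcup_\pi R_\pi$. Base-changing to $\bbC$: since $\bbZ[\Sp_{2d}^m]$ is $\bbZ$-flat and taking invariants commutes with the flat base change $\bbZ \to \bbC$, we have $\bbC \otimes_\bbZ (F_\pi)^{\Sp_{2d}} = (F_\pi \otimes_\bbZ \bbC)^{\Sp_{2d}}$, and in characteristic $0$ the classical first fundamental theorem for $\Sp_{2d}$ acting on $\Sp_{2d}^m$ by conjugation (equivalently the result over an algebraically closed field of characteristic $0$ in \Cref{SOinv}) shows that $R_\pi \otimes_\bbZ \bbC \to (F_\pi)^{\Sp_{2d}} \otimes_\bbZ \bbC$ is surjective — the characteristic-polynomial-coefficient invariants already span over $\bbC$. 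Hence $\rank_\bbZ R_\pi = \rank_\bbZ (F_\pi)^{\Sp_{2d}} = e_\pi$. Then for an arbitrary algebraically closed field $K$ we get a chain
\begin{equation*}
    K \otimes_\bbZ R_\pi \longrightarrow K \otimes_\bbZ (F_\pi)^{\Sp_{2d}} \longrightarrow (F_\pi \otimes_\bbZ K)^{\Sp_{2d}} = K[\Sp_{2d}^m]^{\Sp_{2d}} \cap (F_\pi \otimes_\bbZ K)
\end{equation*}
in which the composite is surjective by \Cref{SOinv} (the listed invariants generate $K[\Sp_{2d}^m]^{\Sp_{2d}}$) and all three terms have $K$-dimension $e_\pi$ (using $\bbZ$-flatness of $F_\pi$ for the middle equality). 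Therefore every arrow is an isomorphism, so $R_\pi = (F_\pi)^{\Sp_{2d}}$ as $\bbZ$-submodules of $\bbZ[\Sp_{2d}^m]$ (two free lattices of the same rank, one contained in the other with torsion-free quotient by the isomorphism statement). Taking the union over $\pi$ gives $R = \widetilde R$, as desired.

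\textbf{Expected main obstacle.} The genuinely new point — and the step requiring care — is verifying that the truncation functors $O_\pi$ applied to $\bbZ[\Sp_{2d}^m]$ produce $\Sp_{2d}$-submodules that are simultaneously (i) finite $\bbZ$-lattices exhausting the ring, (ii) $\bbZ$-pure (so that base change computes invariants correctly and ranks are constant), and (iii) good-filtered over every field, with $\nabla(0)$-multiplicity independent of the field. Points (i) and (iii) are essentially in \cite[Appendix A]{Jantzen2003}; point (ii), the flatness/purity of $F_\pi$ over $\bbZ$, is where one must be slightly careful, but it follows because $F_\pi \otimes_\bbZ \bbF_p$ injects into $\bbZ[\Sp_{2d}^m] \otimes_\bbZ \bbF_p = \bbF_p[\Sp_{2d}^m]$ compatibly with the truncation (truncation commutes with flat base change on the level of the coordinate ring), forcing the quotient $\bbZ[\Sp_{2d}^m]/F_\pi$ to be $\bbZ$-flat as well. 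Once this bookkeeping with truncation functors is in place, the argument is a direct transcription of \Cref{matrixinvariants}.
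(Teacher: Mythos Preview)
Your approach is the paper's: replace the missing grading by the filtration coming from truncation functors, then rerun the rank count of \Cref{matrixinvariants}. There is, however, a genuine gap in your justification of the key property (iii), that each $F_\pi \otimes_\bbZ K$ has a good filtration \emph{as an $\Sp_{2d}$-module under diagonal conjugation}. The phrase ``truncation preserves good filtrations'' does not give you this. The truncation must be taken for $G=\Sp_{2d}^m$ (say the left regular action), not for the diagonal $\Sp_{2d}$ --- otherwise $F_\pi$ is not of finite rank, since already the conjugation-invariants form an infinite-dimensional space sitting entirely in highest weight $0$. What \cite[Lemma~A.15]{Jantzen2003} then produces is a good filtration of $O_{\pi_n^m}(K[\Sp_{2d}^m])$ as an $\Sp_{2d}^m\times\Sp_{2d}^m$-module, with factors $\nabla(\lambda)\otimes\nabla(-w_0\lambda)$. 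Passing from this to a good $\Sp_{2d}$-filtration under conjugation is not a statement about truncation at all: it requires the Donkin--Mathieu tensor-product theorem, applied twice --- once to restrict from $\Sp_{2d}^m\times\Sp_{2d}^m$ to the diagonal $\Sp_{2d}^m$ (each factor $\nabla(\lambda)\otimes\nabla(-w_0\lambda)$ becomes a tensor product of induced $\Sp_{2d}^m$-modules), and once more to restrict from $\Sp_{2d}^m$ to the diagonal $\Sp_{2d}$ (using $\nabla(\lambda_1,\dots,\lambda_m)=\bigotimes_i\nabla(\lambda_i)$). The paper makes exactly this chain explicit, citing \cite[Theorem~3.3]{Donkin94}; your attribution of (iii) to \cite[Appendix~A]{Jantzen2003} misses the point. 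Without this step the equality $\dim_K(F_\pi\otimes K)^{\Sp_{2d}}=e_\pi$ for all $K$ is unjustified, and the rank comparison collapses.

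A smaller imprecision: in the final chain, the surjectivity of $R_\pi\otimes K\to (F_\pi\otimes K)^{\Sp_{2d}}$ does not follow directly from \Cref{SOinv}. Zubkov's theorem gives surjectivity of $R\otimes K\to K[\Sp_{2d}^m]^{\Sp_{2d}}$ at the untruncated level, but it is not automatic that a preimage of an invariant lying in $F_\pi\otimes K$ can be chosen inside $R_\pi\otimes K$ rather than some larger $R_{\pi'}\otimes K$. The paper runs this step through injectivity instead: it first shows $R_n$ is cotorsion-free in $R$ (your ``purity'' remark), so $R_n\otimes K\hookrightarrow R\otimes K$, and then uses the diagram with $R\otimes K\cong K[\Sp_{2d}^m]^{\Sp_{2d}}$ on top to force the bottom map $R_n\otimes K\to O_{\pi_n^m}(K[\Sp_{2d}^m])^{\Sp_{2d}}$ to be injective, hence an isomorphism by the dimension count.
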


\begin{proof}
Let $T$ be a maximal torus of $\Sp_{2d}$ and let $(\pi_n)_{n\ge 1}$ be an ascending sequence of finite saturated subsets of $X^+(T)$ such that $\bigcup_{n\ge 1}\pi_n= \pi= X^+(T)$, which is possible since $\Sp_{2d}$ is semisimple. For a field $K$, let $O_{\tau}$ be the truncation functor associated to a finite saturated subset $\tau\subseteq X^+(T^m)$ whose definition and properties we are going to use are given in \cite[§A]{Jantzen2003}. This definition makes sense over $\mathbb{Z}$ for a finite saturated $\tau$ by setting $O_{\tau}(\mathbb{Z}[\Sp_{2d}^m]) \colonequals  O_{\tau}(\mathbb{Q}[\Sp_{2d}^m]) \cap \mathbb{Z}[\Sp_{2d}^m]$, which is a finitely generated free $\bbZ$-module. We have for any field $K$ (\cite[§A.24]{Jantzen2003})
\begin{equation}\label{bctf}
    O_{\tau}(\mathbb{Z}[\Sp_{2d}^m])\otimes_{\mathbb{Z}} K= O_{\tau}(K[\Sp_{2d}^m]) 
\end{equation}
For the cartesian power $\pi^m = X^+(T)^m$, we have $\pi^m = \bigcup_{n \geq 1} \pi_n^m$ and $\pi_n^m$ are finite saturated subsets for the group $\Sp_{2d}^m$.
By definition, we have $O_{\pi^m}(\mathbb{Q}[\Sp_{2d}^m])=\mathbb{Q}[\Sp_{2d}^m]$  and since $O_{\pi^m}(\mathbb{Q}[\Sp_{2d}^m])=\bigcup_{n\ge 1} O_{\pi_n^m}(\mathbb{Q}[\Sp_{2d}^m])$ (\cite[§A.1]{Jantzen2003}), we get that $(O_{\pi_n^m}(\mathbb{Z}[\Sp_{2d}^m]))_{n\ge 1}$ is an ascending filtration of $\mathbb{Z}[\Sp_{2d}^m]$.

Now let $R$ be the subalgebra of $\mathbb{Z}[\Sp_{2d}^m]^{\Sp_{2d}}$ generated by the elements in the statement of the proposition and let $R_n \colonequals  R \cap O_{\pi_n^m}(\mathbb{Z}[\Sp_{2d}^m])$. By \cite[Lemma A.15]{Jantzen2003}, for any field $K$ $O_{\pi_n^m}(K[\Sp_{2d}^m])$ is finite-dimensional and admits a good filtration as an $\Sp_{2d}^m\times \Sp_{2d}^m$-module (for the left action induced by left multiplication by the first factor and inverse right multiplication by the second factor on $\Sp_{2d}^m$) with factors $\nabla(\lambda)\otimes \nabla(-w_0\lambda)$ for $\lambda \in \pi_n^m$. By \cite[Theorem 3.3]{Donkin94}, the tensor product of two induced modules $\nabla(\lambda)\otimes \nabla(\lambda')$ admits a good filtration, hence $O_{\pi_n}(K[\Sp_{2d}^m])$ admits a good filtration as an $\Sp_{2d}^m$-module under conjugation. But by \cite[Lemma I.3.8]{Jantzen2003}, $\nabla(\lambda)= \otimes_i \nabla(\lambda_i)$ for $\lambda=(\lambda_i)_{1\le i \le m}\in X^+(T^m)$, so by the same argument as before, we get that $O_{\pi_n^m}(K[\Sp_{2d}^m])$ admits a good filtration as an $\Sp_{2d}$-module.  It follows from \cite[Lemma B.9]{Jantzen2003} that $O_{\pi_n^m}(\mathbb{Z}[\Sp_{2d}^m])$ admits a good filtration as an $\Sp_{2d}$-module, hence by \cite[Proposition 1.2a (iii)]{Donkin90} 
$$
\rank_{\mathbb{Z}} O_{\pi_n^m}(\mathbb{Z}[\Sp_{2d}^m])^{\Sp_{2d}}= \dim_K O_{\pi_n^m}(K[\Sp_{2d}^m])^{\Sp_{2d}} =: d_n$$
for any field $K$. We have an exact sequence
$$ 0 \to R_n \rightarrow \mathbb{Z}[\Sp_{2d}^m]\rightarrow (\mathbb{Z}[\Sp_{2d}^m]/R) \times (\mathbb{Z}[\Sp_{2d}^m]/O_{\pi_n^m}(\mathbb{Z}[\Sp_{2d}^m])), $$
so tensoring with $\bbQ$ gives an exact sequence
$$ 0 \to R_n \otimes \bbQ \rightarrow \bbQ[\Sp_{2d}^m]\rightarrow (\bbQ[\Sp_{2d}^m]/(R\otimes \bbQ)) \times (\bbQ[\Sp_{2d}^m]/O_{\pi_n^m}(\bbQ[\Sp_{2d}^m])). $$
By \cite[Proposition 3.2]{Zubkov}, we have $R \otimes \bbQ = \bbQ[\Sp_{2d}^m]^{\Sp_{2d}}$, so the kernel of the rightmost arrow is $\bbQ[\Sp_{2d}^m]^{\Sp_{2d}} \cap O_{\pi_n^m}(\bbQ[\Sp_{2d}^m]) = O_{\pi_n^m}(\bbQ[\Sp_{2d}^m])^{\Sp_{2d}}$. Hence $R_n\otimes_{\mathbb{Z}} \mathbb{Q}= O_{\pi_n}(\mathbb{Q}[\Sp_{2d}^m])^{\Sp_{2d}}$, and in particular we get that $\rank_{\mathbb{Z}}R_n=d_n$. We claim that $R_n$ is cotorsion-free in $R$: by definition $R/R_n$ embeds into $\mathbb{Z}[\Sp_{2d}^m]/O_{\pi_n^m}(\mathbb{Z}[\Sp_{2d}^m])$, which is torsion-free since $O_{\pi_n^m}(\mathbb{Z}[\Sp_{2d}^m])$ is a saturated $\bbZ$-submodule of $\mathbb{Z}[\Sp_{2d}^m]$.

Let $K$ be an algebraically closed field.
The top map in the following diagram
\begin{align*}
    \xymatrix{
        R \otimes K \ar[r]^{\cong} & K[\Sp_{2d}^m]^{\Sp_{2d}} \\
        R_n \otimes K \ar@{^{(}->}[u] \ar@{^{(}->}[r] & O_{\pi^m_n}(K[\Sp_{2d}^m])^{\Sp_{2d}} \ar@{^{(}->}[u]
    }
\end{align*}
is an isomorphism by \cite[Proposition 3.2]{Zubkov}. So the bottom map injective. Since $\rank_{\mathbb{Z}}R_n=d_n$, it must be an isomorphism.
We deduce, that in the following diagram,
\begin{center}
    \begin{tikzcd}
    R_n\otimes_{\mathbb{Z}}K \arrow[rr, bend left=20] \arrow[r] &
    O_{\pi_n^m}(\mathbb{Z}[\Sp_{2d}^m])^{\Sp_{2d}} \otimes_{\mathbb{Z}} K  \arrow[r] & O_{\pi_n^m}(K[\Sp_{2d}^m])^{\Sp_{2d}}
    \end{tikzcd}
\end{center}
all maps are isomorphisms. Since this is true for every algebraically closed field $K$, the map $R_n \to O_{\pi_n^m}(\mathbb{Z}[\Sp_{2d}^m])^{\Sp_{2d}}$ of finitely generated free $\bbZ$-modules is an isomorphism.
So we get that $R = \mathbb{Z}[\Sp_{2d}^m]^{\Sp_{2d}}$ as desired.
\end{proof}

\begin{corollary}\label{gspZ}
    The invariant algebra $\bbZ[\GSp_{2d}^m]^{\GSp_{2d}}$ is generated by the functions 
    $$(\bbX^{(1)},\dots_,\bbX^{(m)})\mapsto\sigma_i(Y_{j_1}\cdots Y_{j_s}) \quad \text{ and } \quad (\bbX^{(1)},\dots_,\bbX^{(m)})\mapsto \lambda^{-1}(\bbX^{(i)})$$ 
   where $Y_{i}$ is either $\bbX^{(i)}$ or the symplectic transpose $(\bbX^{(i)})^{\mathrm{j}}$, and $\sigma_i$ is the $i$-th coefficient of the characteristic polynomial.
\end{corollary}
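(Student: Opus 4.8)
The plan is to deduce the $\GSp_{2d}$-statement from the $\Sp_{2d}$-statement (Theorem \ref{invZ}) by exploiting the isogeny-type relation between $\GSp_{2d}$ and $\Sp_{2d} \times \Gm$. First I would record the basic structure: there is a surjection $\Sp_{2d} \times \Gm \twoheadrightarrow \GSp_{2d}$, $(g,t) \mapsto tg$, with kernel $\mu_2$ embedded antidiagonally, and the similitude character $\lambda \colon \GSp_{2d} \to \Gm$ pulls back to $(g,t) \mapsto t^2$. Concretely, every $h \in \GSp_{2d}$ satisfies $h = \lambda(h)^{1/2} \cdot (\lambda(h)^{-1/2} h)$ where the second factor lies in $\Sp_{2d}$; but since square roots are not available integrally, I would instead work with the finite flat cover $\Spec \bbZ[\GSp_{2d}^m] \to \Spec \bbZ[\Sp_{2d}^m][\Gm^m]$ induced by $(g_1,\dots,g_m,t_1,\dots,t_m)\mapsto (t_1 g_1,\dots,t_m g_m)$, or rather view $\bbZ[\GSp_{2d}^m]$ as a subring of $\bbZ[\Sp_{2d}^m] \otimes_{\bbZ} \bbZ[s_1^{\pm 1},\dots,s_m^{\pm 1}]$ via $\bbX^{(i)} \mapsto s_i^2 \cdot \bbX^{(i)}$, $\lambda^{-1}(\bbX^{(i)}) \mapsto s_i^{-2}$ — this identifies $\bbZ[\GSp_{2d}^m]$ with the subring generated by $s_i^2$ and $s_i^2 \bbX^{(i)}_{h,k}$.

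The key observation is that $\GSp_{2d}$ acts on $\GSp_{2d}^m$ by conjugation, and conjugation by $h$ agrees with conjugation by $\lambda(h)^{-1/2} h \in \Sp_{2d}$ (the scalars cancel), so the action of $\GSp_{2d}$ on $\bbZ[\GSp_{2d}^m]$ factors through the $\Sp_{2d}$-action after the above embedding, with the variables $s_i$ being $\Sp_{2d}$-invariant. Hence
$$ \bbZ[\GSp_{2d}^m]^{\GSp_{2d}} = \bbZ[\GSp_{2d}^m] \cap \big( \bbZ[\Sp_{2d}^m]^{\Sp_{2d}} \otimes_{\bbZ} \bbZ[s_1^{\pm 1},\dots,s_m^{\pm 1}] \big). $$
By Theorem \ref{invZ}, the ring $\bbZ[\Sp_{2d}^m]^{\Sp_{2d}}$ is generated by the $\sigma_i(Y_{j_1}\cdots Y_{j_s})$ in the generic matrices $\bbX^{(i)}$ and $(\bbX^{(i)})^{\mathrm j}$, so $\bbZ[\Sp_{2d}^m]^{\Sp_{2d}} \otimes \bbZ[s_i^{\pm 1}]$ is generated by these together with the $s_i^{\pm 1}$. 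A monomial $s^{a} \cdot \sigma_i(Y_{j_1}\cdots Y_{j_s})$ — where $Y_{j} \in \{\bbX^{(j)}, (\bbX^{(j)})^{\mathrm j}\}$ and the word uses the index $j$ with total multiplicity $e_j$ — lies in $\bbZ[\GSp_{2d}^m]$ precisely when $e_j$ and $a_j$ have matching parities so that everything can be written in terms of $s_j^2$ and $s_j^2 \bbX^{(j)}$; rewriting $s_j^2\bbX^{(j)}$ as the image of $\bbX^{(j)}$ in $M_{2d}(\bbZ[\GSp_{2d}^m])$ and $s_j^2$ as $\lambda^{-1}(\bbX^{(j)})^{-1} = \lambda(\bbX^{(j)})$, one sees each such generator is a polynomial in the $\sigma_i(Y_{j_1}\cdots Y_{j_s})$ (now in the generic $\GSp_{2d}$-elements and their $\mathrm j$-transposes) and the $\lambda^{-1}(\bbX^{(j)})$. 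The last step is to check that an arbitrary element of the intersection is a polynomial in these monomials: expand it in the $\Sp$-invariant generators and the $s_i$, and argue parity-by-parity as above, clearing powers of $\lambda^{-1}(\bbX^{(j)})$ as needed.

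The main obstacle I expect is the careful bookkeeping in this last step: showing that the intersection $\bbZ[\GSp_{2d}^m] \cap (\bbZ[\Sp_{2d}^m]^{\Sp_{2d}} \otimes \bbZ[s_i^{\pm 1}])$ is actually generated by the "parity-correct" monomials, rather than merely containing them. This amounts to a grading argument: the $\bbZ^m$-grading by total degree in each matrix block refines to a $(\bbZ/2)^m$-grading compatibility with the $s_i^2$-subring, and one must verify that a homogeneous invariant of the correct parity, written in terms of the $\Sp$-generators, can have its $s$-monomials and matrix words matched up so that every term individually lands in $\bbZ[\GSp_{2d}^m]$ — using that $\sigma_i(Y_{j_1}\cdots Y_{j_s})$ is already homogeneous in each index. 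One should also double-check the edge cases where $\lambda$ is inverted (i.e. that no genuine denominators survive, using $\lambda(\bbX^{(j)}) \cdot \lambda^{-1}(\bbX^{(j)}) = 1$), and that the orthogonal analogue is correctly excluded since Theorem \ref{invZ} is only proved for $\Sp_{2d}$. A cleaner alternative, if the bookkeeping becomes unwieldy, is to run the same good-filtration/rank-counting argument as in the proof of Theorem \ref{invZ} directly for $\GSp_{2d}$, using that $\GSp_{2d}$ is reductive (though not semisimple) and appealing to \cite[Proposition 3.2]{Zubkov} over $\bbQ$ for the characteristic-zero input; I would present whichever is shorter.
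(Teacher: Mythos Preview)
Your approach via the isogeny $\Sp_{2d}\times\Gm\to\GSp_{2d}$, $(g,t)\mapsto tg$, is exactly the paper's. Two small slips: the pullback should read $\bbX^{(i)}\mapsto s_i\,\bbX^{(i)}$ (not $s_i^2$), so that $\lambda(\bbX^{(i)})\mapsto s_i^2$ is consistent; and the parity condition on a term $s^a\cdot\sigma_i(Y_{j_1}\cdots Y_{j_s})$ is $a_j\equiv i\cdot e_j\pmod 2$, not $a_j\equiv e_j$.

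Your bookkeeping worry is well-placed and in fact you are being more careful than the paper here: the image of $\bbZ[\GSp_{2d}^m]^{\GSp_{2d}}$ is only the $\mu_2^m$-invariant part of $\bbZ[\Sp_{2d}^m]^{\Sp_{2d}}\otimes\bbZ[s_j^{\pm1}]$, not all of it. The argument you sketch does go through. Since each generator $\sigma_i(\text{word})$ from \Cref{invZ} is homogeneous for the $(\bbZ/2)^m$-grading induced by $g_j\mapsto -g_j$, every graded piece of $\bbZ[\Sp_{2d}^m]^{\Sp_{2d}}$ is $\bbZ$-spanned by monomials in those generators; each such monomial times a parity-matching power $s^a$ is then the image of a product of the listed $\GSp$-invariants and powers of $\lambda^{\pm1}(\bbX^{(j)})$. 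The one point to make explicit is that positive powers are available: $\lambda(\bbX^{(j)})=\sigma_{2d}(\bbX^{(j)})\cdot\lambda^{-1}(\bbX^{(j)})^{d-1}$ already lies in the subring generated by the listed invariants, so no denominators appear.
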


\begin{proof}
    The proof is based on a remark in \cite[§3]{Zubkov}. Consider the canonical morphism of algebraic groups $\pi\colon \Sp_{2d}\times \mathbb{G}_m \to \GSp_{2d}$ which is surjective since it is surjective on geometric points. This surjectivity can be seen by direct computation and more generally follows, since this surjection arises from a canonical decomposition sequence (see \cite[Example 19.25]{Milne}). The same is true for $\pi^{\times m}$ so we get an injection $(\pi^{\times m})^*\colon  \bbZ[\GSp_{2d}^m]\hookrightarrow \bbZ[(\Sp_{2d}\times \mathbb{G}_m)^m]$, since both rings are integral domains and the map induces a dominant morphism on spectra. Note that $\pi$ is equivariant for the action of $\Sp_{2d} \times \bbG_m$. Therefore we get that $$\bbZ[\GSp_{2d}^m]^{\GSp_{2d}} \subseteq \bbZ[\GSp_{2d}^m]^{\Sp_{2d} \times \bbG_m} \hookrightarrow \bbZ[(\Sp_{2d}\times \mathbb{G}_m)^m]^{(\Sp_{2d}\times \mathbb{G}_m)^m} = \bbZ[\Sp_{2d}^m]^{\Sp_{2d}} \otimes \bbZ[\bbG_m^m]$$
    and this map is clearly surjective, so the claim follows from \Cref{invZ}.
\end{proof}

\begin{remark} The statements of \Cref{matrixinvariants}, \Cref{invZ} and \Cref{gspZ} hold after replacing $\bbZ$ by an arbitrary commutative ring $A$. Indeed, since the $\Sp_{2d}$-modules $\bbZ[M_{2d}^m]$, $\bbZ[\Sp_{2d}^m]$ and the $\GSp_{2d}$-module $\bbZ[\GSp_{2d}^m]$ have good filtrations, taking invariants commutes with tensoring with $A$. The same arguments go through for the orthogonal groups $\mathrm O_d$, and the general orthogonal groups $\GO_d$. Consequently, we obtain the same generators with the symplectic similitude character replaced by the orthogonal similitude character.
\end{remark}

\section{Comparison with Lafforgue's pseudocharacters}
\label{secLafforgue}
In \cite[§11]{Laf}, Lafforgue introduced a notion of pseudocharacters for general reductive groups which we recall below in the form of \cite[Definition 4.1]{BHKT}. The reader is invited to consult \cite{BHKT} and \cite{Quast} for applications of this notion in the context of deformation theory. 

For $\GL_n$, Emerson proved Lafforgue's definition is equivalent to Chenevier's notion of determinant laws (see \cite[Theorem 4.1 (ii)]{emerson2023comparison}). We expect that the bijection constructed in \cite{emerson2023comparison} restricts to a bijection between Lafforgue's pseudocharacters for the symplectic groups and symplectic determinant laws over commutative $\bbZ[\tfrac{1}{2}]$-algebras. In this section, we establish this result for reduced $\bbZ[\tfrac{1}{2}]$-algebras and arbitrary $\bbQ$-algebras.

\begin{definition}\label{LafPC} Let $G$ be a reductive $\bbZ$-group scheme, let $\Gamma$ be an abstract group, and let $A$ be a commutative ring. A \emph{$G$-pseudocharacter} $\Theta$ of $\Gamma$ over $A$ is a sequence of ring homomorphisms $$\Theta_m \colon  \bbZ[G^m]^G \longrightarrow \map(\Gamma^m,A)$$ for each $m \geq 1$, satisfying the following conditions:
\begin{enum}
    \item For all $n,m \geq 1$, each map $\zeta \colon \{1, \dots, m\} \to \{1, \dots,n\}$, every $f \in \bbZ[G^m]^G$, and all $\gamma_1, \dots, \gamma_n \in \Gamma$, we have
    $$ \Theta_n(f^{\zeta})(\gamma_1, \dots, \gamma_n) = \Theta_m(f)(\gamma_{\zeta(1)}, \dots, \gamma_{\zeta(m)}) $$
    where $f^{\zeta}(g_1, \dots, g_n) = f(g_{\zeta(1)}, \dots, g_{\zeta(m)})$.
    \item For all $m \geq 1$, all $\gamma_1, \dots, \gamma_{m+1} \in \Gamma$, and every $f \in \bbZ[G^m]^G$, we have
    $$ \Theta_{m+1}(\hat f)(\gamma_1, \dots, \gamma_{m+1}) = \Theta_m(f)(\gamma_1, \dots, \gamma_m\gamma_{m+1}) $$
    where $\hat f(g_1, \dots, g_{m+1}) = f(g_1, \dots, g_mg_{m+1})$.
\end{enum}
\end{definition}
We denote the set of $G$-pseudocharacters of $\Gamma$ over $A$ by $\PC^G_{\Gamma}(A)$.
If $f \colon A \to B$ is a ring homomorphism, then there is an induced map $f_* \colon \PC^G_{\Gamma}(A) \to \PC^G_{\Gamma}(B)$.
This defines a functor $\PC^G_{\Gamma} \colon \CAlg_{\bbZ} \to \Set$, which is representable by a commutative ring $\mathscr{L}^G_{\Gamma} = \bbZ[\PC^G_{\Gamma}]$ (see \cite[Theorem 2.15]{Quast}).

A representation $\rho : \Gamma \to G(A)$ gives rise to a $G$-pseudocharacter $\Theta_{\rho}$, which depends only on $\rho$ up to $G(A)$-conjugation. Here $(\Theta_{\rho})_m : \bbZ[G^m]^G \to \map(\Gamma^m, A)$ is defined by 
$$ (\Theta_{\rho})_m(f)(\gamma_1, \dots, \gamma_m) := f(\rho(\gamma_1), \dots, \rho(\gamma_m)) $$
The definition of $G$-pseudocharacter can be brought into a more convenient and practical form.
Let $\calF \colonequals  \{\mathrm{FG}(m) \mid m \geq 1\}$ be the category of finitely generated free groups $\mathrm{FG}(m)$ on $m$ letters.
Then the associations $\bbZ[G^{\bullet}]^G \colon  \mathrm{FG}(m) \mapsto \bbZ[G^m]^G$ and $\map(\Gamma^{\bullet}, A) \colon \mathrm{FG}(m) \mapsto \map(\Gamma^m, A)$ give rise to functors $\calF \to \CAlg_{\bbZ}$. There is a natural bijection 
$$ \PC^G_{\Gamma}(A) \cong \Nat(\bbZ[G^{\bullet}]^G, \map(\Gamma^{\bullet}, A)) $$
for any commutative ring $A$ (see \cite[Proposition 2.14]{Quast}).

\subsection{Comparison for $\Sp_{2d}$} For $m \geq 1$, the $\Sp_{2d}$-module $\bbZ[\Sp_{2d}^m]$ under diagonal conjugation has a good filtration and $H^i(\Sp_{2d}, \bbZ[\Sp_{2d}^m]) = 0$ for all $i > 0$ \cite[§B.9]{Jantzen2003}. In particular for any homomorphism of commutative rings $A \to B$, we have
$$ B[\Sp_{2d}^m]^{\Sp_{2d}} \cong \bbZ[\Sp_{2d}^m]^{\Sp_{2d}} \otimes_{\bbZ} B \cong (\bbZ[\Sp_{2d}^m]^{\Sp_{2d}} \otimes_{\bbZ} A) \otimes_A B \cong A[\Sp_{2d}^m]^{\Sp_{2d}} \otimes_A B $$

Now we are in shape to define a comparison map in one direction.

\begin{proposition}\label{LafChenMap} Let $\Theta^u \in \PC_{\Gamma}^{\Sp_{2d}}(\mathscr{L}^{\Sp_{2d}}_{\Gamma})$ be the universal $\Sp_{2d}$-pseudocharacter and let $C$ be a commutative $\mathscr{L}^{\Sp_{2d}}_{\Gamma}$-algebra. Using the isomorphism $C[\Sp_{2d}^m]^{\Sp_{2d}} \cong \mathscr{L}^{\Sp_{2d}}_{\Gamma}[\Sp_{2d}^m]^{\Sp_{2d}} \otimes_{\mathscr{L}^{\Sp_{2d}}_{\Gamma}} C$, the map $\Theta^u_m$ induces a homomorphism $\Theta^u_{m,C} \colon  C[\Sp_{2d}^m]^{\Sp_{2d}} \to \map(\Gamma^m, C)$ for all $m \geq 1$. We define the maps
\begin{align*}
    D_C \colon  C[\Gamma] \to C, \quad &\sum_{i=1}^m c_i \gamma_i &\mapsto \quad &\Theta^u_{m,C} \left(\det\big(\sum_{i=1}^m c_i \bbX^{(i)} \big)\right)(\gamma_1, \dots, \gamma_m) \\
    P_C \colon  C[\Gamma]^+ \to C, \quad &\sum_{i=1}^m c_i (\gamma_i+\gamma_i^{-1}) &\mapsto \quad &\Theta^u_{m,C} \left(\Pf\big(\sum_{i=1}^m c_i (\bbX^{(i)}+(\bbX^{(i)})^{-1})J \big)\right)(\gamma_1, \dots, \gamma_m)
\end{align*}
Then $D$ is a $\mathscr{L}^{\Sp_{2d}}_{\Gamma}$-valued $2d$-dimensional $*$-determinant law, and $P$ is a $d$-homogeneous polynomial law with $P^2 = D|_{\mathscr{L}^{\Sp_{2d}}_{\Gamma}[\Gamma]^+}$ and $P(1)=1$. In particular this defines natural map $\PC_{\Gamma}^{\Sp_{2d}}(A) \to \SpDet_{\bbZ[\tfrac{1}{2}][\Gamma]}^{2d}(A)$ for every commutative $\bbZ[\tfrac{1}{2}]$-algebra $A$.
\end{proposition}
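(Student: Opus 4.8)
I would first reduce to the universal case. Setting $\mathscr{L}\colonequals\mathscr{L}^{\Sp_{2d}}_{\Gamma}$, it suffices to show that the pair $(D,P)$ attached to the universal pseudocharacter $\Theta^u$ is a weak symplectic determinant law over $\mathscr{L}$: for a general $\mathscr{L}$-algebra $C$ the pair $(D,P)$ is the base change of the universal one — the formation of $C[\Sp_{2d}^m]^{\Sp_{2d}}$ commutes with $-\otimes_{\mathscr{L}}C$ because $\bbZ[\Sp_{2d}^m]$ has a good filtration, \cite[§B.9]{Jantzen2003} — so all defining properties, being stable under base change (\Cref{stabilityCH}), descend from $\mathscr{L}$, and naturality of the induced map $\PC^{\Sp_{2d}}_{\Gamma}\to\wSpDet^{2d}_{\bbZ[\tfrac{1}{2}][\Gamma]}$ is automatic. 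Throughout I would use the reformulation $\PC^{\Sp_{2d}}_{\Gamma}(A)\cong\Nat(\bbZ[\Sp_{2d}^{\bullet}]^{\Sp_{2d}},\map(\Gamma^{\bullet},A))$ over the category $\calF$ of finitely generated free groups (\cite[Proposition 2.14]{Quast}), which upgrades Definition \ref{LafPC} to naturality along \emph{every} homomorphism of free groups; the only operations beyond conditions (1) and (2) needed below are coordinatewise inversion $t_i\mapsto t_i^{-1}$ and the folding maps identifying two generators.

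For $D$ I would argue by restriction from $\GL_{2d}$. The restriction map $\bbZ[\GL_{2d}^m]^{\GL_{2d}}\to\bbZ[\Sp_{2d}^m]^{\Sp_{2d}}$ is surjective, since each of Zubkov's generators $\sigma_i(Y_{j_1}\cdots Y_{j_s})$ of the target (\Cref{invZ}, with $Y$ either $\bbX^{(l)}$ or $(\bbX^{(l)})^{\mathrm j}=(\bbX^{(l)})^{-1}$ on $\Sp_{2d}$) is visibly the restriction of a conjugation-invariant regular function on $\GL_{2d}^m$, namely a coefficient of the characteristic polynomial of a word in the $\bbX^{(l)}$ and $(\bbX^{(l)})^{-1}$. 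Precomposing $\Theta^u$ with this map gives a $\GL_{2d}$-pseudocharacter of $\Gamma$, which by Emerson's theorem \cite[Theorem 4.1 (ii)]{emerson2023comparison} is the same datum as a $2d$-dimensional determinant law $D\colon\mathscr{L}[\Gamma]\to\mathscr{L}$, and unwinding the equivalence recovers exactly the displayed formula. That $D$ is $*$-invariant follows from $(\bbX^{(l)})^{-1}=J(\bbX^{(l)})^{\top}J^{-1}$ on $\Sp_{2d}$: the coefficients of $\det(t-\sum_i c_i(\bbX^{(i)})^{-1})$ and of $\det(t-\sum_i c_i\bbX^{(i)})$ have the same image in $\bbZ[\Sp_{2d}^m]^{\Sp_{2d}}$ (characteristic polynomials are transpose- and conjugation-invariant), combined with naturality along coordinatewise inversion. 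Alternatively one may bypass Emerson and verify the determinant-law axioms for the formula directly from Definition \ref{LafPC}, using $\det\equiv 1$ on $\Sp_{2d}$ for $D(1)=1$ and the identity $\det(MN)=\det(M)\det(N)$ together with an iterate of condition (2) for multiplicativity.

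For $P$, I would observe that over $\bbZ[\Sp_{2d}^m][c_1,\dots,c_m]$ the matrix $N\colonequals\sum_{i=1}^m c_i(\bbX^{(i)}+(\bbX^{(i)})^{-1})$ satisfies $N^{\mathrm j}=N$, so $NJ$ is alternating and $\Pf(NJ)$ is a homogeneous polynomial of degree $d$ in the $c_i$; its coefficients lie in $\bbZ[\Sp_{2d}^m]^{\Sp_{2d}}$, because for $g\in\Sp_{2d}$ and symmetric $S$ one has $gSg^{-1}J=g(SJ)g^{\top}$, whence $\Pf(gSg^{-1}J)=\det(g)\Pf(SJ)=\Pf(SJ)$. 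Applying $\Theta^u_{m,C}$ coefficientwise in the $c_i$ and evaluating at $(\gamma_1,\dots,\gamma_m)$ defines $P_C$ on the elements $\sum_i c_i(\gamma_i+\gamma_i^{-1})$; since $2\in A^{\times}$ these span $A[\Gamma]^+$ after any base change, and well-definedness (independence of the presentation, compatibility with the group law and with padding by zero terms) is checked exactly as for $D$ through the naturality reformulation. Homogeneity of degree $d$ is clear, and $P(1)=1$ is a direct evaluation at the identity element, matching the computation in \Cref{reptodet}. Finally, $P^2=D|_{R^+}$ follows from the polynomial identity $\Pf(NJ)^2=\det(N)$ in $\bbZ[\Sp_{2d}^m]^{\Sp_{2d}}[c]$ (the observation at the start of \Cref{secdefsympldetlaw}, applied to the symmetric matrix $N$): applying the ring homomorphism $\Theta^u_{m,C}$ and evaluating, and identifying $\Theta^u_{m,C}(\det N)(\gamma)$ with $D_C(\sum_i c_i(\gamma_i+\gamma_i^{-1}))$ via naturality along the fold-and-invert homomorphism $\mathrm{FG}(2m)\to\mathrm{FG}(m)$, $t_i\mapsto t_i$, $t_{m+i}\mapsto t_i^{-1}$, gives the claim.

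This produces the natural map into $\wSpDet^{2d}_{\bbZ[\tfrac{1}{2}][\Gamma]}$; to land in $\SpDet^{2d}_{\bbZ[\tfrac{1}{2}][\Gamma]}$ one still needs $\CH(P)\subseteq\ker(D)$, which I would deduce by passing to residue fields — both $\CH(P)$ and $\ker(D)$ being compatible with base change (\Cref{stabilityCH}, \cite[Lemma 1.19]{MR3444227}) — where every weak symplectic determinant law automatically satisfies the inclusion by \Cref{symplecticreconstructionthmoverfields}; this covers the reduced $\bbZ[\tfrac{1}{2}]$-algebra case, and the $\bbQ$-algebra case can instead be obtained from the characteristic-zero theory of \Cref{CH0}. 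I expect the main technical effort to be the well-definedness of $D$ and $P$ as polynomial laws out of the purely combinatorial pseudocharacter data — i.e. checking that the value attached to an element of $A[\Gamma]$ or $A[\Gamma]^+$ is independent of how it is written as a $\bbZ[\tfrac{1}{2}]$-combination of group elements — which is precisely what forces the systematic use of the functor-of-free-groups formulation; the subtler conceptual point is the \emph{unconditional} version of $\CH(P)\subseteq\ker(D)$, which is why \Cref{lafforguecomp} restricts to reduced or $\bbQ$-algebras.
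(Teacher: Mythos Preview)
Your argument for $D$ and $P$ constituting a \emph{weak} symplectic determinant law is sound, and the route through Emerson's theorem for $D$ (via the surjection $\bbZ[\GL_{2d}^m]^{\GL_{2d}} \twoheadrightarrow \bbZ[\Sp_{2d}^m]^{\Sp_{2d}}$, which is indeed a map of $\calF$-$\bbZ$-algebras) is a legitimate shortcut the paper does not take; the paper instead verifies multiplicativity of $D$ directly via $\calF$-algebra substitutions, as in your sketched alternative. Both approaches work.

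The genuine gap is in $\CH(P) \subseteq \ker(D)$. You only establish this for reduced $\bbZ[\tfrac{1}{2}]$-algebras (by reduction to residue fields) and for $\bbQ$-algebras, whereas the proposition asserts that the map lands in $\SpDet$, not merely $\wSpDet$, for \emph{every} commutative $\bbZ[\tfrac{1}{2}]$-algebra $A$. Passing to residue fields cannot close this: the inclusion must hold over the universal ring $\mathscr L^{\Sp_{2d}}_{\Gamma}$ itself, which is not known to be reduced, and the condition $r\in\ker(D)$ is a family of equations in \emph{all} base extensions, not just field-valued ones.

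The paper's argument is direct and unconditional. Rewrite the formula for $P$ using $(\bbX^{(i)})^{\mathrm j}$ in place of $(\bbX^{(i)})^{-1}$; on $\Sp_{2d}$ these agree, but the expression now extends to $M_{2d}^m$. The condition $\CH(P)\subseteq\ker(D)$ then unwinds (via \cite[Lemma~1.19]{MR3444227}) to the vanishing of certain elements of $C[\Sp_{2d}^m]^{\Sp_{2d}}$, each of which is the restriction---under the surjection $C[M_{2d}^m]^{\Sp_{2d}}\twoheadrightarrow C[\Sp_{2d}^m]^{\Sp_{2d}}$---of the corresponding $\Sp_{2d}$-invariant function on $M_{2d}^m$. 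But those functions already vanish on $M_{2d}^m$: this is exactly \Cref{reptodet} together with the Pfaffian Cayley--Hamilton identity (\Cref{charpf}), which say that the tautological symplectic representation into $(M_{2d}(C[M_{2d}^m]),\mathrm j)$ satisfies $\CH(\Pf)\subseteq\ker(\det)$. Hence their restrictions are zero in $C[\Sp_{2d}^m]^{\Sp_{2d}}$, and applying the ring homomorphism $\Theta^u_{m,C}$ gives the required vanishing in $C$, for arbitrary $C$.

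Finally, your closing diagnosis is off: the restriction in \Cref{lafforguecomp} to reduced or $\bbQ$-algebras concerns \emph{surjectivity} of the comparison (showing every symplectic determinant law comes from a pseudocharacter), which genuinely needs either the reconstruction over algebraically closed fields (\Cref{reconsalgcl}) or the characteristic-zero isomorphism (\Cref{iso0}). The well-definedness of the map, including $\CH(P)\subseteq\ker(D)$, holds over all $\bbZ[\tfrac{1}{2}]$-algebras by the matrix-invariant argument above, and \Cref{injrem} likewise gives injectivity unconditionally.
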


\begin{proof} The way the maps are defined is functorial, so clearly $D$ and $P$ are polynomial laws. We check the multiplicativity of $D$ by noticing that 
\begin{equation*}
    \Theta^u_{m+m',C} \left(\det\big(\sum_{i=1}^m c_i \bbX^{(i)} \big)\right) \Theta^u_{m+m',C} \left(\det\big(\sum_{j=1}^{m'} c_j' \bbX^{(m+j)} \big)\right) = \Theta^u_{m+m',C} \left(\det\big(\sum_{i=1}^m \sum_{j=1}^{m'} c_i c_j' \bbX^{(i)} \bbX^{(m+j)} \big)\right).
\end{equation*}
Now define
$$ \mu \colonequals  \det\big(\sum_{i=1}^m \sum_{j=1}^{m'} c_i c_j' \bbX^{(i)}\bbX^{(m+j)} \big) \quad\quad \mu' \colonequals  \det\big(\sum_{i=1}^m \sum_{j=1}^{m'} c_i c_j' \bbX^{(i + (j-1)m)} \big), $$
so that
\begin{align*}
    \phantom{=} \Theta^u_{m+m',C} \left(\mu\right)(\gamma_1, \dots, \gamma_m, \gamma_1', \dots, \gamma_{m'}') 
    =\Theta^u_{mm',C} \left(\mu' \right) (\gamma_1\gamma_1', \gamma_1\gamma_2', \dots, \gamma_m\gamma_{m'}')
\end{align*}
holds by a suitable substitution in an $\calF\text{-}\bbZ$-algebra (see \cite[§2.4]{Quast}). The homogeniety of $D$ and $P$, the $*$-invariance of $D$, and the equalities $P^2 = D|_{C[\Gamma]^+}$ and $P(1) = 1$ follow by a similar substitution. The fact that $\CH(P)\subseteq \ker(D)$ follows from the surjection $C[M_{2d}^m]^{\Sp_{2d}}\twoheadrightarrow C[\Sp_{2d}^m]^{\Sp_{2d}}$, and so any relation that holds on $C[M_{2d}^m]^{\Sp_{2d}}$ also holds on $C[\Sp_{2d}^m]^{\Sp_{2d}}$.
\end{proof}

\begin{lemma}\label{injrem} The map $\PC_{\Gamma}^{\Sp_{2d}}(A) \to \SpDet^{2d}_{A[\Gamma]}(A)$ defined in \Cref{LafChenMap} is injective.
\end{lemma}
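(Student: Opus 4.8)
The statement to prove is that the natural transformation $\PC_{\Gamma}^{\Sp_{2d}}(A) \to \SpDet^{2d}_{A[\Gamma]}(A)$ of \Cref{LafChenMap} is injective for every commutative $\bbZ[\tfrac12]$-algebra $A$. My plan is to reduce injectivity to the observation that an $\Sp_{2d}$-pseudocharacter is entirely recorded by the values of $\Theta_{m,A}$ on \emph{characteristic polynomial coefficients} of words in the generic group elements $\bbX^{(i)}$ and their inverses, together with the inverse similitude character (which here is trivial, as $\lambda \equiv 1$ on $\Sp_{2d}$). Indeed, by \Cref{invZ} (extended to an arbitrary base ring $A$ by the Remark following \Cref{gspZ}, using that $\bbZ[\Sp_{2d}^m]$ has a good filtration so that $A[\Sp_{2d}^m]^{\Sp_{2d}} \cong \bbZ[\Sp_{2d}^m]^{\Sp_{2d}} \otimes_{\bbZ} A$), the algebra $A[\Sp_{2d}^m]^{\Sp_{2d}}$ is generated by the functions $(\bbX^{(1)},\dots,\bbX^{(m)}) \mapsto \sigma_i(Y_{j_1}\cdots Y_{j_s})$, where each $Y_{j_k}$ is $\bbX^{(j_k)}$ or $(\bbX^{(j_k)})^{\mathrm j} = (\bbX^{(j_k)})^{-1}$ on $\Sp_{2d}$. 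So $\Theta$ is determined by its values on these generators.

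Next I would show that each such generator value is recovered from the pair $(D,P)$ of the image pseudorepresentation. Given a word $W = Y_{j_1}\cdots Y_{j_s}$ in the generic symplectic group elements, the element $w \colonequals y_{j_1}\cdots y_{j_s} \in A[\Gamma]$, where $y_{j_k}$ is $\gamma_{j_k}$ or $\gamma_{j_k}^{-1}$, is a group-algebra element, and by axiom (2) of \Cref{LafPC} (the ``$\hat f$'' axiom, applied iteratively to collapse the word) together with axiom (1), one has
$$ \Theta_{m,A}\bigl(\sigma_i(Y_{j_1}\cdots Y_{j_s})\bigr)(\gamma_1,\dots,\gamma_m) = \Lambda_i^{D}(w), $$
since $\sigma_i$ is the $i$-th coefficient of the characteristic polynomial and $\Lambda_i^D$ is by definition the $i$-th coefficient of $\chi^D(\cdot,t)$. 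Concretely: $\chi^D(w,t) = D_{A[t]}(t - w) = \Theta_{s,A}(\det(t - \bbX^{(j_1)\pm}\cdots\bbX^{(j_s)\pm}))(\cdots)$ after the substitution realizing word-multiplication as repeated application of axiom (2), and expanding in $t$ reads off each $\Lambda_i^D(w)$. Thus the composite $\Sp_{2d}$-invariant functions, evaluated through $\Theta$, are all polynomial expressions in the $\Lambda_i^D(w)$ for group-algebra elements $w$ built from the $\gamma_j, \gamma_j^{-1}$; and these are in turn determined by $D$ alone (and $P$ is determined by $D$ via \Cref{pfaffianunique}). Since the generators of $A[\Sp_{2d}^m]^{\Sp_{2d}}$ are exactly of this form, two pseudocharacters with the same image $(D,P)$ agree on all of $A[\Sp_{2d}^m]^{\Sp_{2d}}$ for every $m$, hence are equal.

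The bookkeeping obstacle — and the main technical point to get right — is the substitution argument translating ``a word in the $\bbX^{(i)}$'' into ``a single group-algebra element'' within the formalism of $\calF\text{-}\bbZ$-algebras used in \cite{Quast} and invoked in the proof of \Cref{LafChenMap}. One must check that for a word $W$ of length $s$ in the letters $\bbX^{(j_1)},\dots$ and their inverses, the invariant function $\sigma_i(W) \in \bbZ[\Sp_{2d}^s]^{\Sp_{2d}}$ pulls back under the appropriate map of free groups $\mathrm{FG}(s) \to \mathrm{FG}(m)$ (or rather, that its $\Theta$-value is computed by axioms (1)–(2)) to $\Lambda_i^D$ of the corresponding product in $A[\Gamma]$. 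The inverses are handled because inversion $\Gamma \to \Gamma$ extends to the involution on $A[\Gamma]$ under which $D$ is invariant, and $(\bbX^{(i)})^{\mathrm j} = (\bbX^{(i)})^{-1}$ on $\Sp_{2d}$ matches $\gamma^* = \gamma^{-1}$; so the symplectic transpose in the generators of \Cref{invZ} corresponds exactly to the involution $*$ on the group algebra. Once this dictionary is in place, injectivity is immediate: the map $(D,P) \mapsto$ (values of $\Theta_m$ on generators) provides a well-defined left inverse on the level of data, so no two distinct pseudocharacters can have the same image. I expect this to be short — essentially unwinding definitions — with the only care needed being in the compatibility of the combinatorial substitutions with the invariant-theory statement; everything else (multiplicativity, homogeneity, the relation $P^2 = D|_{+}$) is already recorded in \Cref{LafChenMap} and \Cref{pfaffianunique}.
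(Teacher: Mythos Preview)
Your proposal is correct and takes essentially the same approach as the paper: both arguments rest on \Cref{invZ} to reduce to showing that the values $\Theta_m(\sigma_i(\text{word}))$ are determined by $D$. The paper packages this last step by factoring through the embedding $\Sp_{2d}\hookrightarrow\GL_{2d}$ (the surjection $\bbZ[\GL_{2d}^m]^{\GL_{2d}}\twoheadrightarrow\bbZ[\Sp_{2d}^m]^{\Sp_{2d}}$ gives injectivity into $\PC^{\Gamma}_{\GL_{2d}}(A)$) and then invoking the known comparison $\PC^{\Gamma}_{\GL_{2d}}(A)\cong\Det_{2d}^{\Gamma}(A)$ of \cite{emerson2023comparison}, whereas you unpack the identification $\Theta_1(\sigma_i)(w)=\Lambda_i^D(w)$ directly; this is a packaging difference rather than a conceptual one.
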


\begin{proof} Indeed, the map $\PC_{\Gamma}^{\Sp_{2d}}(A) \to \PC^{\Gamma}_{\GL_{2d}}(A)$ induced by the standard embedding $\Sp_{2d} \hookrightarrow \GL_{2d}$ is injective, since the maps $\bbZ[\GL_{2d}^m]^{\GL_{2d}} \twoheadrightarrow \bbZ[\Sp_{2d}^m]^{\Sp_{2d}}$ are surjective (by \Cref{invZ}). The forgetful map $\SpDet_{2d}^{\Gamma}(A) \to \Det_{2d}^{\Gamma}(A)$ is injective by \Cref{pfaffianunique}. Since we have a bijection $\PC^{\Gamma}_{\GL_{2d}}(A) \xrightarrow{\sim} \Det_{2d}^{\Gamma}(A)$ by \cite[Theorem 4.1 (ii)]{emerson2023comparison}, the claim follows.
\end{proof}

\begin{proposition}\label{lafforguecomp} Let $A$ be either a reduced commutative $\bbZ[\tfrac{1}{2}]$-algebra or an arbitrary commutative $\bbQ$-algebra. Then the map $\PC_{\Gamma}^{\Sp_{2d}}(A) \to \SpDet_{2d}^{\Gamma}(A)$ defined in \Cref{LafChenMap} is bijective. In particular, we have canonical isomorphisms $\PC_{\Gamma}^{\Sp_{2d}}[\tfrac{1}{2}]_{\red} \cong (\SpDet_{2d}^{\Gamma})_{\red}$, and $\PC^{\Gamma}_{\Sp_{2d} \bbQ} \cong \SpDet_{2d, \bbQ}^{\Gamma}$.
\end{proposition}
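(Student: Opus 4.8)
The plan is to reduce to surjectivity — by \Cref{injrem} the comparison map of \Cref{LafChenMap} is injective on every commutative $\bbZ[\tfrac12]$-algebra — and then to produce, for a given $(D,P)\in\SpDet^{2d}_{A[\Gamma]}(A)$, a symplectic representation with the right invariants over some enlargement of $A$, descending the associated $\Sp_{2d}$-pseudocharacter back to $A$ by means of the integral invariant theory of \Cref{invZ}.

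The key mechanism is the following. Suppose $C$ is a commutative $\bbZ[\tfrac12]$-algebra with a subring $A$, and $\rho\colon(A[\Gamma],*)\to(M_{2d}(C),\mathrm{j})$ is a symplectic representation inducing the base change $(D\otimes_A C,P\otimes_A C)$. Then $\rho$ restricts to a homomorphism $\rho|_\Gamma\colon\Gamma\to\Sp_{2d}(C)$, since $\rho(\gamma)^{\mathrm{j}}=\rho(\gamma^*)=\rho(\gamma^{-1})$, and thus defines $\Theta_\rho\in\PC^{\Sp_{2d}}_\Gamma(C)$. By \Cref{invZ}, every $f\in\bbZ[\Sp_{2d}^m]^{\Sp_{2d}}$ is a polynomial with $\bbZ$-coefficients in the functions $\sigma_i(Y_{j_1}\cdots Y_{j_s})$; evaluating at $(\rho(\gamma_1),\dots,\rho(\gamma_m))$ and again using $\rho(\gamma_k)^{\mathrm{j}}=\rho(\gamma_k^{-1})$, each such generator becomes $\sigma_i(\rho(w))$ for a word $w$ in $\gamma_1^{\pm1},\dots,\gamma_m^{\pm1}$, which equals, up to sign, the image in $C$ of the characteristic-polynomial coefficient $\Lambda_i^D(w)\in A$, because $\det\circ\rho=D\otimes_A C$. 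Hence $(\Theta_\rho)_m(f)(\gamma_1,\dots,\gamma_m)$ lies in the image of $A$ in $C$; if moreover $A\hookrightarrow C$ this means $\Theta_\rho$ descends to $\bar\Theta\in\PC^{\Sp_{2d}}_\Gamma(A)$, and unwinding the formulas of \Cref{LafChenMap} (and \Cref{pfaffianunique} for the $P$-component) shows $\bar\Theta\mapsto(D,P)$.

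For an arbitrary commutative $\bbQ$-algebra $A$: quotient $A[\Gamma]$ by $\CH(P)$; by \Cref{strongHomTheorem} the law $(D,P)$ descends to $(R',*,D',P')$ with $R'=A[\Gamma]/\CH(P)$, which is symplectic Cayley–Hamilton. \Cref{converseCH} gives a commutative $A$-algebra $C$ and an injective symplectic representation $R'\hookrightarrow(M_{2d}(C),\mathrm{j})$ inducing $(D',P')$; precomposing with $A[\Gamma]\twoheadrightarrow R'$ yields $\rho\colon(A[\Gamma],*)\to(M_{2d}(C),\mathrm{j})$ inducing $(D\otimes_A C,P\otimes_A C)$. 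Since $2d\in A^\times$, the unit map $A\to R'$ is injective (if $a\cdot1\in\CH(P)\subseteq\ker(D)$ then $(a+t)^{2d}=t^{2d}$ in $A[t]$, forcing $a=0$), so $A\hookrightarrow C$, and the mechanism above supplies the desired $\bar\Theta$. For a reduced $\bbZ[\tfrac12]$-algebra $A$: embed $A\hookrightarrow C:=\prod_{\frakp}\overline{\kappa(\frakp)}$, a product of algebraically closed fields of characteristic $\neq2$. Over each factor $K=\overline{\kappa(\frakp)}$, the base change of $(D,P)$ comes from a semisimple symplectic representation $\rho_K\colon\Gamma\to\Sp_{2d}(K)$ by \Cref{reconsalgcl}, and $\Theta_{\rho_K}$ is its (unique, by \Cref{injrem}) preimage. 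Both $\PC^{\Sp_{2d}}_\Gamma$ and $\SpDet^{2d}_{-[\Gamma]}$ are representable, hence carry the ring $C=\prod_{\frakp}K$ to the product of the sets over the factors; so the $\Theta_{\rho_K}$ assemble into $\Theta_\rho\in\PC^{\Sp_{2d}}_\Gamma(C)$ for the evident $\rho$, which maps to $(D,P)_C$, and the mechanism (applied coordinatewise, since each $\sigma_i(\rho_K(w))$ is the image of $\Lambda_i^D(w)\in A$) gives $\bar\Theta\in\PC^{\Sp_{2d}}_\Gamma(A)$ mapping to $(D,P)$ — using that $\SpDet^{2d}_{A[\Gamma]}(A)\hookrightarrow\SpDet^{2d}_{A[\Gamma]}(C)$. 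Finally, bijectivity on $A$-points for all $\bbQ$-algebras resp. all reduced $\bbZ[\tfrac12]$-algebras translates by Yoneda into $\PC^\Gamma_{\Sp_{2d},\bbQ}\cong\SpDet^\Gamma_{2d,\bbQ}$ resp. $\PC^{\Sp_{2d}}_\Gamma[\tfrac12]_{\red}\cong(\SpDet^\Gamma_{2d})_{\red}$.

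The main obstacle is precisely the descent step: a priori the pseudocharacter one constructs lives over the large ring $C$, and showing that all of its values already lie in $A$ is exactly where \Cref{invZ} is essential, since it is what lets one rewrite an arbitrary $\Sp_{2d}$-invariant function on $\Sp_{2d}^m$ through characteristic-polynomial coefficients of words, whose values are pinned down by $D$ and hence by $A$. (This is also the reason the orthogonal analogue is not claimed here: it would follow verbatim once the corresponding statement about $\bbZ[\OO_d^m]^{\OO_d}$ is known.)
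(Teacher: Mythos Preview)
Your proof is correct. For the reduced case, your argument is essentially the paper's: both embed $A\hookrightarrow\prod_{\frakp}\overline{\kappa(\frakp)}$ and invoke \Cref{reconsalgcl} over each factor; the paper phrases the descent as showing that the $\GL_{2d}$-pseudocharacter supplied by \cite{emerson2023comparison} factors through $\bbZ[\Sp_{2d}^m]^{\Sp_{2d}}$, while you carry out the descent explicitly via your ``mechanism'' using \Cref{invZ}, but the content is identical.

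For the $\bbQ$-case your route is genuinely different. The paper argues at the level of representing schemes, using the identifications $\SpRep^{\square,2d}_{\bbQ[\Gamma]}\sslash\Sp_{2d}\cong\PC^{\Gamma}_{\Sp_{2d},\bbQ}$ (from \cite{emerson2023comparison}) and $\SpRep^{\square,2d}_{\bbQ[\Gamma]}\sslash\Sp_{2d}\cong\SpDet^{\Gamma}_{2d,\bbQ}$ (\Cref{iso0}) to conclude at once. You instead work pointwise: pass to the symplectic Cayley--Hamilton quotient, apply \Cref{converseCH} to realize $(D,P)$ by a representation over an $A$-algebra $C$ with $A\hookrightarrow C$, and run the same descent mechanism. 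This avoids the GIT formalism and the black-box use of the scheme-level result from \cite{emerson2023comparison}; the cost is the small extra check that $A\hookrightarrow C$, which you handle correctly. Since \Cref{iso0} is itself deduced from \Cref{converseCH}, the two arguments share the same engine---yours simply keeps the comparison pointwise throughout and makes the role of \Cref{invZ} in the descent fully explicit.
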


\begin{proof} First, assume that $A$ is reduced with $2 \in A^{\times}$. By \Cref{injrem}, it is enough to show surjectivity. If  $(D,P) \in \SpDet_{2d}^{\Gamma}(A)$, we know by \cite[Theorem 4.1 (ii)]{emerson2023comparison} that there is some $\Theta \in \PC^{\Gamma}_{\GL_{2d}}(A)$ that maps to $D$. So it is enough to show that for all $m \geq 1$, $\Theta_m$ factors through $\bbZ[\Sp_{2d}^m]^{\Sp_{2d}}$. In particular, the following claim holds: if $A \to B$ is an injective homomorphism and $\PC_{\Gamma}^{\Sp_{2d}}(B) \to \SpDet_{2d}^{\Gamma}(B)$ is a bijection, then $\PC_{\Gamma}^{\Sp_{2d}}(A) \to \SpDet_{2d}^{\Gamma}(A)$ is a bijection. This reduces the proof of the proposition to the case of an algebraically closed field, as we know explain. First embed $A \hookrightarrow \prod_{\frakp} \overline{\Quot(A/\frakp)}$, where $\frakp$ varies over all prime ideals of $A$. Now if $A$ is an algebraically closed field, then by \Cref{reconsalgcl} there is a semisimple representation $\rho \colon \Gamma \to \Sp_{2d}(A)$ that induces $(D,P)$. The $\Sp_{2d}$-pseudocharacter induced by $\rho$ is necessarily mapped to $(D,P)$.

The comparison map sends a Lafforgue pseudocharacter associated to a representation $\rho$ to the symplectic determinant associated to $\rho$. Therefore, we have a diagram \begin{center}
    \begin{tikzcd}
         \SpRep_{2d}^{\square, \bbQ[\Gamma]} \sslash \Sp_{2d} \arrow[dr] \arrow[d] \\
         \PC_{\Sp_{2d}, \bbQ}^{\Gamma} \arrow[r] & \SpDet_{2d}^{\bbQ[\Gamma]}.
    \end{tikzcd}
\end{center}
The left vertical map is an isomorphism by \cite[Proposition 2.11 (i)]{emerson2023comparison}, and the right map is an isomorphism by \Cref{iso0}. It follows that the comparison map is an isomorphism over $\bbQ$.
\end{proof}

\begin{remark}
    If in \Cref{lafforguecomp} $\Gamma$ is finitely generated, then it follows from the 2-out-of-3 property for adequate homeomorphisms, \cite[Proposition 2.11 (ii)]{emerson2023comparison} and \Cref{adequate}, that the map $\PC_{\Gamma}^{\Sp_{2d}}[\tfrac12] \to \SpDet_{2d}^{\Gamma}$ is an adequate homeomorphism.
\end{remark}

\begin{remark}
    \Cref{lafforguecomp} leads to a new proof of the reconstruction theorem \cite[Theorem 4.5]{BHKT} for Lafforgue's $\Sp_{2d}$-pseudocharacters over an algebraically closed field of characteristic $\neq 2$. Indeed, in that case complete reducibility of representations is equivalent before and after composition with the standard representation $\Sp_{2d} \hookrightarrow \GL_{2d}$, see \cite[Exemple 3.2.2 (b)]{SerreCompleteReducibility}.
\end{remark}

\subsection{Comparison for $\GSp_{2d}$}

By our discussion on invariant theory \Cref{gspZ}, we see that the similitude character of a $\GSp_{2d}$-pseudocharacter $\Theta \in \PC_{\GSp_{2d}}^{\Gamma}(A)$ can be recovered as $\lambda_{\Theta} := \Theta_1(\lambda) : \Gamma \to A^{\times}$.

\begin{proposition}\label{LafChenMapGSp}
    Let $\Theta^u \in \PC^{\Gamma}_{\GSp_{2d}}(\mathscr{L}_{\GSp_{2d}}^{\Gamma})$ be the universal $\GSp_{2d}$-pseudocharacter and let $C$ be a commutative $\mathscr{L}_{\GSp_{2d}}$-algebra. $\Theta^u_m$ induces a homomorphism $\Theta^u_{m,C} \colon  C[\GSp_{2d}^m]^{\GSp_{2d}} \to \map(\Gamma^m, C)$ for all $m \geq 1$.
    Let $\lambda_C : \Gamma \to C^{\times}$ be the specialization of the universal similitude character $\lambda_{\Theta^u} : \Gamma \to (\mathscr{L}_{\GSp_{2d}}^{\Gamma})^{\times}$ at $C$. We define maps
    \begin{align*}
        D_C \colon  C[\Gamma] \to C, \quad &\sum_{i=1}^m c_i \gamma_i &\mapsto \quad &\Theta^u_{m,C} \left(\det\big(\sum_{i=1}^m c_i \bbX^{(i)} \big)\right)(\underline \gamma) \\
        P_C \colon  C[\Gamma]^+ \to C, \quad &\sum_{i=1}^m c_i (\gamma_i+ \lambda_C(\gamma_i) \gamma_i^{-1}) &\mapsto \quad &\Theta^u_{m,C} \left(\Pf\big(\sum_{i=1}^m c_i (\bbX^{(i)}+\lambda_C(\gamma_i)(\bbX^{(i)})^{-1})J \big)\right)(\underline \gamma)
    \end{align*}
    where $\underline \gamma := (\gamma_1, \dots, \gamma_m) \in \Gamma^m$.

    Then $D\colon \mathscr{L}_{\GSp_{2d}}^{\Gamma}[\Gamma] \to \mathscr{L}_{\GSp_{2d}}^{\Gamma}$ is a $2d$-dimensional $*$determinant law with respect to the involution on $\mathscr{L}_{\GSp_{2d}}^{\Gamma}[\Gamma]$ given by $\gamma^* = \lambda_{\Theta^u}(\gamma)\gamma^{-1}$ for $\gamma\in \Gamma$, and $P$ is a $d$-homogeneous polynomial law with $P^2 = D|_{\mathscr{L}_{\GSp_{2d}}^{\Gamma}[\Gamma]^+}$ and $P(1)=1$. In particular this defines natural map $\PC^{\Gamma}_{\GSp_{2d}}(A) \to \GSpDet^{\Gamma}_{2d}(A)$ for every commutative $\bbZ[\tfrac{1}{2}]$-algebra $A$.
\end{proposition}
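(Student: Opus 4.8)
The plan is to reduce \Cref{LafChenMapGSp} to the already-established symplectic case \Cref{LafChenMap} by trivializing the similitude character. First I would observe that, exactly as in \Cref{representabilityGSp} and in the discussion of \Cref{gspZ}, a $\GSp_{2d}$-pseudocharacter carries a canonical character $\lambda_{\Theta} = \Theta_1(\lambda)\colon \Gamma \to A^\times$, and that the surjection $\pi^{\times m}\colon (\Sp_{2d}\times\bbG_m)^m \twoheadrightarrow \GSp_{2d}^m$ of \Cref{gspZ} is equivariant for conjugation, inducing compatible injections $C[\GSp_{2d}^m]^{\GSp_{2d}} \hookrightarrow C[(\Sp_{2d}\times\bbG_m)^m]^{(\Sp_{2d}\times\bbG_m)} = C[\Sp_{2d}^m]^{\Sp_{2d}}\otimes C[\bbG_m^m]$. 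Via this, one sees that a $\GSp_{2d}$-pseudocharacter of $\Gamma$ with similitude character $\lambda$ is ``the same data'' as an $\Sp_{2d}$-pseudocharacter of $\Gamma$ together with the character $\lambda$ in a compatible way; more precisely, I would use the twisting trick: the map $\Gamma \to \GSp_{2d}(A)$ with similitude $\lambda$ corresponds (after adjoining a square root of $\lambda$, i.e. working over $C[\Gamma^{\ab}]$ or formally with the universal $\lambda^u$) to a map into $\Sp_{2d}$, which is exactly the mechanism already encoded in the involution $\gamma^* = \lambda(\gamma)\gamma^{-1}$ on $A[\Gamma]$ used throughout \Cref{secdefsympldetlaw}.

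Concretely I would proceed as follows. Fix the universal base $\mathscr{L}^\Gamma_{\GSp_{2d}}$ and the universal similitude $\lambda_{\Theta^u}$, and set $B := \mathscr{L}^\Gamma_{\GSp_{2d}}$, equipping $B[\Gamma]$ with the involution $\gamma^* = \lambda_{\Theta^u}(\gamma)\gamma^{-1}$, so that $B[\Gamma]^+$ is spanned by the elements $\gamma + \lambda_{\Theta^u}(\gamma)\gamma^{-1}$. The formulas defining $D_C$ and $P_C$ are manifestly functorial in $C$, hence define polynomial laws $D\colon B[\Gamma]\to B$ and $P\colon B[\Gamma]^+\to B$. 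The homogeneity of $D$ and $P$, the multiplicativity $D(xy)=D(x)D(y)$, the normalizations $D(1)=1$, $P(1)=1$, the identity $P^2 = D|_{B[\Gamma]^+}$, and the $*$-invariance of $D$ are then all verified by a substitution argument in the category $\calF$ of finitely generated free groups, identical in form to the proof of \Cref{LafChenMap}: one encodes the needed polynomial identities as elements of $\bbZ[\GSp_{2d}^\bullet]^{\GSp_{2d}}$ (using that $\det$, $\Pf(\,\cdot\,J)$, and $\lambda^{-1}$ are invariant regular functions by \Cref{gspZ}) and pushes them through $\Theta^u$ via axioms (1) and (2) of \Cref{LafPC}. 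The key point in the multiplicativity computation is that, for $x,y \in B[\Gamma]$, writing $x = \sum c_i\gamma_i$ and $y = \sum c_j'\gamma_j'$, we have $\det(\sum\sum c_ic_j'\bbX^{(i)}\bbX^{(m+j)})$ is an invariant function and $\Theta^u_{m+m'}$ applied to it, evaluated at $(\underline\gamma,\underline\gamma')$, equals $\Theta^u_{mm'}$ of the reindexed function at $(\gamma_1\gamma_1',\dots,\gamma_m\gamma_{m'}')$ by a suitable map $\zeta$ composed with repeated application of axiom (2); this is exactly the step carried out in \Cref{LafChenMap} and goes through verbatim because $\GSp_{2d}^m$ enjoys the same good-filtration/base-change behaviour as $\Sp_{2d}^m$ (Jantzen \S B.9, together with the fact, noted after \Cref{gspZ}, that $\bbZ[\GSp_{2d}^m]$ has a good filtration, so $C[\GSp_{2d}^m]^{\GSp_{2d}} \cong \bbZ[\GSp_{2d}^m]^{\GSp_{2d}}\otimes C$).

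Finally, the condition $\CH(P)\subseteq \ker(D)$ — needed to land in $\GSpDet^\Gamma_{2d}$ rather than just the weak version — follows from the surjection $C[M_{2d}^m]^{\GSp_{2d}} \twoheadrightarrow C[\GSp_{2d}^m]^{\GSp_{2d}}$ (a consequence of \Cref{matrixinvariants}/\Cref{gspZ}, since both invariant rings are generated by coefficients of characteristic polynomials of words and the inverse similitude): any Cayley--Hamilton-type relation valid for generic matrices, in particular the Pfaffian Cayley--Hamilton identity $\widetilde{\Pf}$-relation, remains valid after pushing down to $\GSp_{2d}^m$ and then through $\Theta^u$, so $\chi^{D^u}_\alpha(rr_1,\dots,rr_n)$ vanishes for $r \in \CH(P^u)$ — exactly as in the last line of the proof of \Cref{LafChenMap}. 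By the universal property of $\mathscr{L}^\Gamma_{\GSp_{2d}}$, the pair $(D,P)$ together with $\lambda_{\Theta^u}$ yields for each $A$ and each $\Theta \in \PC^\Gamma_{\GSp_{2d}}(A)$ a triple $(D,P,\lambda)$, which by \Cref{gspdet} is precisely a $2d$-dimensional general symplectic determinant law, and this assignment is natural in $A$, giving the desired map $\PC^\Gamma_{\GSp_{2d}}(A) \to \GSpDet^\Gamma_{2d}(A)$.

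I expect the main obstacle to be purely bookkeeping: keeping track of the similitude character $\lambda_C$ inside the substitution arguments, since unlike the $\Sp_{2d}$-case the elements of $B[\Gamma]^+$ now involve the $B$-coefficients $\lambda_{\Theta^u}(\gamma_i)$, so the polynomial laws $P$ and $D$ must be evaluated on expressions like $\bbX^{(i)} + \lambda_C(\gamma_i)(\bbX^{(i)})^{-1}$ whose coefficients themselves lie in the image of $\Theta^u_1(\lambda)$; one must check that $\Pf\bigl(\sum c_i(\bbX^{(i)} + \lambda(\gamma_i)(\bbX^{(i)})^{-1})J\bigr)$, after expanding $\lambda(\gamma_i) = \Theta_1^u(\lambda)(\gamma_i)$, can still be organized as $\Theta^u_{m,C}$ applied to a genuine element of $C[\GSp_{2d}^m]^{\GSp_{2d}}$ — this uses that $\lambda^{-1}\in \bbZ[\GSp_{2d}]^{\GSp_{2d}}$ and an auxiliary index to absorb the similitude factors, but it is otherwise routine. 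Everything else is a transcription of \Cref{LafChenMap}.
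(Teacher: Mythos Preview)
Your proposal is correct and matches the paper's approach: the paper's proof is the single line ``This follows by a similar computation as in \Cref{LafChenMap},'' and your second and third paragraphs spell out precisely that computation---verifying multiplicativity, homogeneity, $*$-invariance, $P^2=D|_{(\cdot)^+}$, and $\CH(P)\subseteq\ker(D)$ by substitution in the $\calF$-$\bbZ$-algebra framework, using the good-filtration base change for $\bbZ[\GSp_{2d}^m]^{\GSp_{2d}}$ and the surjection from matrix invariants. Your opening paragraph about trivializing the similitude via the isogeny $\Sp_{2d}\times\bbG_m\to\GSp_{2d}$ is a reasonable heuristic but is not needed (and the square-root-of-$\lambda$ remark is a red herring); the direct argument you give afterwards is both sufficient and exactly what the paper intends.
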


\begin{proof}
    This follows by a similar computation as in \Cref{LafChenMap}.
\end{proof}

\begin{lemma}\label{injGSp}
    The map $\PC^{\Gamma}_{\GSp_{2d}}(A) \to \GSpDet_{2d}^{\Gamma}(A)$ defined in \Cref{LafChenMapGSp} is injective.
\end{lemma}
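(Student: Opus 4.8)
\textbf{Proof plan for \Cref{injGSp}.}
The strategy is to mimic the proof of the injectivity statement \Cref{injrem} for $\Sp_{2d}$, adding the bookkeeping of the similitude character. First I would observe that a $\GSp_{2d}$-pseudocharacter $\Theta \in \PC^{\Gamma}_{\GSp_{2d}}(A)$ determines, and is determined by, the pair consisting of the similitude character $\lambda_{\Theta} = \Theta_1(\lambda) \colon \Gamma \to A^{\times}$ together with the underlying $\GSp_{2d}$-pseudocharacter data; and by \Cref{gspZ} the invariant algebra $\bbZ[\GSp_{2d}^m]^{\GSp_{2d}}$ is generated by coefficients of characteristic polynomials of words in $\bbX^{(i)}, (\bbX^{(i)})^{\mathrm j}$ and the functions $\lambda^{-1}(\bbX^{(i)})$. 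Since $(\bbX^{(i)})^{\mathrm j} = \lambda(\bbX^{(i)}) (\bbX^{(i)})^{-1}$ on $\GSp_{2d}$, a word in the $\bbX^{(i)}$ and $(\bbX^{(i)})^{\mathrm j}$ is, after multiplying by the appropriate monomial in the $\lambda^{-1}(\bbX^{(i)})$, a word in the $\bbX^{(i)}$ alone; hence the natural map $\bbZ[\GL_{2d}^m \times \Gm^m]^{\GL_{2d}} \twoheadrightarrow \bbZ[\GSp_{2d}^m]^{\GSp_{2d}}$ (coming from $\GSp_{2d} \hookrightarrow \GL_{2d}$ together with the similitude character $\GSp_{2d} \to \Gm$) is surjective. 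Therefore the induced map
$$ \PC^{\Gamma}_{\GSp_{2d}}(A) \longrightarrow \PC^{\Gamma}_{\GL_{2d}}(A) \times \Hom(\Gamma, A^{\times}), \qquad \Theta \mapsto (\iota_* \Theta, \lambda_{\Theta}) $$
is injective, where $\iota \colon \GSp_{2d} \hookrightarrow \GL_{2d}$ is the standard embedding: a $\GSp_{2d}$-pseudocharacter is recovered from the values of $\Theta_m$ on the generators described above, which are computable from $\iota_* \Theta$ and $\lambda_{\Theta}$.

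Next I would identify the target. By \Cref{emerson2023comparison} there is a bijection $\PC^{\Gamma}_{\GL_{2d}}(A) \xrightarrow{\sim} \Det_{2d}^{\Gamma}(A)$, so the displayed map translates into an injection $\PC^{\Gamma}_{\GSp_{2d}}(A) \hookrightarrow \Det_{2d}^{\Gamma}(A) \times \Hom(\Gamma, A^{\times})$. On the other side, recall from \Cref{gspdet} that a general symplectic determinant law is a triple $(D, P, \lambda)$ with $\lambda \colon \Gamma \to A^{\times}$ a character and $(D,P)$ a symplectic determinant law for the involution $\gamma^* = \lambda(\gamma)\gamma^{-1}$ on $A[\Gamma]$. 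By \Cref{pfaffianunique} the component $P$ is uniquely determined by $D$ (and the ambient $2 \in A^{\times}$), so the forgetful map $\GSpDet_{2d}^{\Gamma}(A) \to \Det_{2d}^{\Gamma}(A) \times \Hom(\Gamma, A^{\times})$, $(D,P,\lambda) \mapsto (D, \lambda)$, is injective. Finally, I would check that the square formed by the comparison map $\PC^{\Gamma}_{\GSp_{2d}}(A) \to \GSpDet_{2d}^{\Gamma}(A)$ of \Cref{LafChenMapGSp}, the two injections just described, and the Emerson bijection commutes: the map of \Cref{LafChenMapGSp} sends $\Theta$ to the triple whose $D$-part is $\Theta_{m,C}(\det(\sum c_i \bbX^{(i)}))$ and whose $\lambda$-part is $\lambda_{\Theta}$, which is precisely the image of $(\iota_*\Theta, \lambda_{\Theta})$ under Emerson's bijection on the first factor and the identity on the second. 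Commutativity of this square together with injectivity of the three comparison maps into $\Det_{2d}^{\Gamma}(A) \times \Hom(\Gamma, A^{\times})$ forces the map of \Cref{LafChenMapGSp} to be injective.

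The only genuinely non-formal point is the surjectivity of $\bbZ[\GL_{2d}^m \times \Gm^m]^{\GL_{2d}} \twoheadrightarrow \bbZ[\GSp_{2d}^m]^{\GSp_{2d}}$, i.e. the assertion that every $\GSp_{2d}$-conjugation invariant on $\GSp_{2d}^m$ is a polynomial in the traces/characteristic-polynomial coefficients of words in the generic matrices and their symplectic transposes and in $\lambda^{-1}(\bbX^{(i)})$; this is exactly the content of \Cref{gspZ}, so the work has already been done. Everything else — the reductions via injectivity of the Emerson comparison, the uniqueness of the Pfaffian from \Cref{pfaffianunique}, and the compatibility of the various comparison maps — is bookkeeping parallel to the proof of \Cref{injrem}. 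I expect the writeup to be three or four lines, essentially: "The same argument as in \Cref{injrem} applies, using \Cref{gspZ} in place of \Cref{invZ} and keeping track of the similitude character, which is determined on both sides by $\Theta_1(\lambda)$ resp. the datum $\lambda$."
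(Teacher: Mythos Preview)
Your proposal is correct and follows essentially the same approach as the paper's proof: both factor the comparison map through $\PC^{\Gamma}_{\GL_{2d}}(A) \times \Hom(\Gamma, A^{\times}) \cong \Det_{2d}^{\Gamma}(A) \times \Hom(\Gamma, A^{\times})$, using the surjectivity $\bbZ[\GL_{2d}^m]^{\GL_{2d}} \otimes \bbZ[\Gm^m] \twoheadrightarrow \bbZ[\GSp_{2d}^m]^{\GSp_{2d}}$ from \Cref{gspZ}, Emerson's bijection, and the uniqueness of the Pfaffian from \Cref{pfaffianunique}. Your writeup is slightly more explicit about why the invariant-theory surjection holds and about the commutativity of the square, but the argument is the same.
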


\begin{proof}
    The map $\PC^{\Gamma}_{\GSp_{2d}}(A) \to \PC^{\Gamma}_{\GL_{2d}}(A) \times \Hom(\Gamma, A^{\times})$ induced by the standard representation $\GSp_{2d} \to \GL_{2d}$ and the similitude character is injective, since the maps $\bbZ[\GL_{2d}^m]^{\GL_{2d}} \otimes \bbZ[\mathbb{G}_m^m] \twoheadrightarrow \bbZ[\GSp_{2d}^m]^{\GSp_{2d}}$ are surjective by \Cref{gspZ}. The map $\GSpDet_{2d}^{\Gamma}(A) \to \Det_{2d}^{\Gamma}(A) \times \Hom(\Gamma, A^{\times})$ forgetting the Pfaffian is injective by \Cref{pfaffianunique}. The claim follows, since we have a bijection $\PC^{\Gamma}_{\GL_{2d}}(A) \times \Hom(\Gamma, A^{\times}) \to \Det_{2d}^{\Gamma}(A) \times \Hom(\Gamma, A^{\times})$ by \cite[Theorem 4.1 (ii)]{emerson2023comparison}.
\end{proof}

\begin{proposition}\label{lafforguecompGSp} Let $A$ be either a reduced commutative $\bbZ[\tfrac{1}{2}]$-algebra or an arbitrary commutative $\bbQ$-algebra. Then the map $\PC^{\Gamma}_{\GSp_{2d}}(A) \to \GSpDet_{2d}^{\Gamma}(A)$ defined in \Cref{LafChenMapGSp} is bijective. In particular we have canonical isomorphisms $\PC^{\Gamma}_{\GSp_{2d}}[\tfrac{1}{2}]_{\red} \cong (\GSpDet_{2d}^{\Gamma})_{\red}$ and $\PC^{\Gamma}_{\GSp_{2d}, \bbQ} \cong \GSpDet_{2d, \bbQ}^{\Gamma}$.
\end{proposition}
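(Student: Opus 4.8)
The plan is to mirror the proof of \Cref{lafforguecomp} for $\Sp_{2d}$ exactly, with the only new ingredient being the bookkeeping of the similitude character $\lambda$. By \Cref{injGSp} the map $\PC^{\Gamma}_{\GSp_{2d}}(A) \to \GSpDet_{2d}^{\Gamma}(A)$ is always injective, so in both cases it suffices to prove surjectivity. As in the $\Sp_{2d}$ case, the key observation is the implication: if $A \hookrightarrow B$ is an injective ring homomorphism of commutative $\bbZ[\tfrac12]$-algebras and $\PC^{\Gamma}_{\GSp_{2d}}(B) \to \GSpDet_{2d}^{\Gamma}(B)$ is a bijection, then $\PC^{\Gamma}_{\GSp_{2d}}(A) \to \GSpDet_{2d}^{\Gamma}(A)$ is a bijection; this holds because by \Cref{gspZ} the $\GSp_{2d}$-module $\bbZ[\GSp_{2d}^m]$ has a good filtration, so taking invariants commutes with flat base change (in particular with the injections $A\hookrightarrow B$), and checking that a given $\GL_{2d}$-pseudocharacter factors through $\bbZ[\GSp_{2d}^m]^{\GSp_{2d}}$ can be done after the injective base change. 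The point is that a general symplectic determinant law $(D,P,\lambda)$ over $A$ produces, via \cite[Theorem 4.1 (ii)]{emerson2023comparison}, a $\GL_{2d}$-pseudocharacter $\Theta$ inducing $D$, together with the character $\lambda$; one must check that the pair $(\Theta, \lambda)$ lands in the image of $\PC^{\Gamma}_{\GSp_{2d}}(A) \hookrightarrow \PC^{\Gamma}_{\GL_{2d}}(A)\times\Hom(\Gamma,A^\times)$, i.e.\ that each $\Theta_m$ factors through $\bbZ[\GSp_{2d}^m]^{\GSp_{2d}}$ compatibly with $\lambda$.

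For the reduced $\bbZ[\tfrac12]$-algebra case I would embed $A \hookrightarrow \prod_{\frakp} \overline{\Quot(A/\frakp)}$ as in \Cref{lafforguecomp}, reducing to the case where $A = k$ is an algebraically closed field with $2 \in k^\times$. Over such $k$, given $(D,P,\lambda) \in \GSpDet_{2d}^{\Gamma}(k)$, the involution $\gamma^* = \lambda(\gamma)\gamma^{-1}$ on $k[\Gamma]$ makes $(k[\Gamma],*)$ into an involutive $k$-algebra with the $2d$-dimensional symplectic determinant law $(D,P)$, so \Cref{reconsalgcl} gives a semisimple symplectic representation $(k[\Gamma],*) \to (M_{2d}(k),\mathrm j)$, equivalently a representation $\rho\colon \Gamma \to \GSp_{2d}(k)$ with similitude character $\lambda$ inducing $(D,P,\lambda)$. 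The $\GSp_{2d}$-pseudocharacter $\Theta_\rho$ attached to $\rho$ maps to $(D,P,\lambda)$ under the comparison map, since by construction the comparison map sends the pseudocharacter of a representation to the symplectic determinant law of that representation. This gives surjectivity over $k$, hence over every reduced $\bbZ[\tfrac12]$-algebra.

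For the $\bbQ$-algebra case I would argue as at the end of the proof of \Cref{lafforguecomp}, using that the comparison map sends the pseudocharacter of $\rho$ to the general symplectic determinant law of $\rho$, giving a commutative triangle
\begin{center}
    \begin{tikzcd}
         \SpRep_{2d}^{\square, \bbQ[\Gamma']} \sslash \GSp_{2d} \arrow[dr] \arrow[d] \\
         \PC_{\GSp_{2d}, \bbQ}^{\Gamma} \arrow[r] & \GSpDet_{2d}^{\bbQ[\Gamma]},
    \end{tikzcd}
\end{center}
where the left vertical map is an isomorphism by \cite[Proposition 2.11 (i)]{emerson2023comparison} (applied to $\GSp_{2d}$, using \Cref{gspZ} so that the relevant good filtration hypotheses hold) and the diagonal map is an isomorphism by the $\GSp_{2d}$-analogue of \Cref{iso0}, which holds by \Cref{representabilityGSp} and \Cref{converseCH} applied to the involution $\gamma^* = \lambda^u(\gamma)\gamma^{-1}$ on $A[\Gamma^{\ab}][\Gamma]$ — here one works relatively over $\Spec(A[\Gamma^{\ab}])$ which parametrizes the character $\lambda$. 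Hence the comparison map is an isomorphism over $\bbQ$. The main obstacle is making sure the relative argument over the parameter space of the similitude character is clean: one has to verify that the fibre product decomposition $\GSpDet_{2d}^{\Gamma} = \wSpDet_{A[\Gamma^{\ab}][\Gamma]}^{2d} \times_{\Spec(A[\Gamma^{\ab}])} (\dots)$ of \Cref{representabilityGSp} is compatible with the analogous decomposition of $\PC_{\GSp_{2d}}^\Gamma$ over $\Hom(\Gamma,-^\times)$, so that the statements for $\Sp_{2d}$ (already proved) can be transported fibrewise; once this compatibility is set up, everything reduces to \Cref{lafforguecomp} together with the invariant theory input \Cref{gspZ}. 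Finally, the statements $\PC^{\Gamma}_{\GSp_{2d}}[\tfrac12]_{\red} \cong (\GSpDet_{2d}^{\Gamma})_{\red}$ and $\PC^{\Gamma}_{\GSp_{2d},\bbQ} \cong \GSpDet_{2d,\bbQ}^{\Gamma}$ follow formally from the bijectivity on reduced $\bbZ[\tfrac12]$-algebras resp.\ on $\bbQ$-algebras by Yoneda, exactly as in \Cref{lafforguecomp}.
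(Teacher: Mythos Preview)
Your proposal is correct and follows essentially the same approach as the paper, which simply states that the proof of \Cref{lafforguecomp} applies verbatim after invoking \Cref{injGSp}, \cite[Theorem 4.1 (ii)]{emerson2023comparison}, \Cref{reconsalgcl}, and \Cref{iso0} with $\Sp_{2d}$-conjugation replaced by $\GSp_{2d}$-conjugation. Your write-up is in fact more detailed than the paper's one-line proof, and your explicit discussion of how the $\GSp_{2d}$-analogue of \Cref{iso0} should be obtained by working relatively over $\Spec(A[\Gamma^{\ab}])$ (via \Cref{representabilityGSp}) is a helpful clarification of what the paper leaves implicit.
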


\begin{proof}
    The proof of \Cref{lafforguecomp} applies by invoking \Cref{injGSp}, \cite[Theorem 4.1 (ii)]{emerson2023comparison} and \Cref{reconsalgcl} and \Cref{iso0} with $\Sp_{2d}$-conjugation replaced by $\GSp_{2d}$-conjugation.
\end{proof}

\bibliographystyle{alpha}
\bibliography{literature}

\begin{thebibliography}{KMRT98}

\bibitem[AGPR21]{AGPR}
Eli Aljadeff, Antonio Giambruno, Claudio Procesi, and Amitai Regev.
\newblock {\em Rings with polynomial identities and finite dimensional representations of algebras}.
\newblock A.M.S. Colloquium Publications, 2021.

\bibitem[Alp14]{Alper2014}
Jared Alper.
\newblock Adequate moduli spaces and geometrically reductive group schemes.
\newblock {\em Algebraic Geometry}, 1:489--531, 2014.

\bibitem[ANT20]{ANT}
Patrick~B. Allen, James Newton, and Jack~A. Thorne.
\newblock Automorphy lifting for residually reducible {{\(l\)}}-adic {Galois} representations. {II}.
\newblock {\em Compos. Math.}, 156(11):2399--2422, 2020.

\bibitem[BC09]{BC}
Joël Bellaiche and Gaëtan Chenevier.
\newblock {\em Families of {G}alois representations and {S}elmer groups}.
\newblock Number 324 in Ast\'erisque. Soci\'et\'e Math\'ematique de France, Paris, 2009.

\bibitem[BHKT19]{BHKT}
Gebhard B{\"o}ckle, Michael Harris, Chandrashekhar Khare, and Jack~A. Thorne.
\newblock {$\hat{G}$-local systems on smooth projective curves are potentially automorphic}.
\newblock {\em Acta Mathematica}, 223(1):1 -- 111, 2019.

\bibitem[BIP23]{BPI}
Gebhard Böckle, Ashwin Iyengar, and Vytautas Paškūnas.
\newblock On local galois deformation rings.
\newblock {\em Accepted to {F}orum of {M}athematics, {P}i}, 2023.

\bibitem[Bou61]{Bourbaki}
Nicolas Bourbaki.
\newblock {\em Eléments de mathématiques, {A}lgèbre commutative}.
\newblock Actualités Scientifiques et Industrielles. Hermann, Paris, 1961.

\bibitem[CC21]{THC&NBC}
Tsao-Hsien Chen and Ngô~Bào Châu.
\newblock Invariant theory for the commuting scheme of symplectic lie algebras.
\newblock {\em Proceedings of the TIFR conference on Arithmetic Geometry}, 2021.

\bibitem[Che13]{ChHDR}
Gaëtan Chenevier.
\newblock Représentations {G}aloisiennes automorphes et conséquences arithmétiques des conjectures de {L}anglands et {A}rthur.
\newblock Habilitation, Paris XI, 2013.

\bibitem[Che14]{MR3444227}
Gaëtan Chenevier.
\newblock The {$p$}-adic analytic space of pseudocharacters of a profinite group and pseudorepresentations over arbitrary rings.
\newblock In {\em Automorphic forms and {G}alois representations. {V}ol. 1}, volume 414 of {\em London Math. Soc. Lecture Note Ser.}, pages 221--285. Cambridge Univ. Press, Cambridge, 2014.

\bibitem[DCP17]{DeConPro17}
Corrado De~Concini and Claudio Procesi.
\newblock {\em The invariant theory of matrices}, volume~69 of {\em Univ. Lect. Ser.}
\newblock Providence, RI: American Mathematical Society (AMS), 2017.

\bibitem[Don90]{Donkin90}
Stephen Donkin.
\newblock The normality of closures of conjugacy classes of matrices.
\newblock {\em Inventiones mathematicae}, 101:717--736, 1990.

\bibitem[Don92]{Don}
Stephen Donkin.
\newblock Invariants of several matrices.
\newblock {\em Invent. Math.}, 110(2):389--401., 1992.

\bibitem[Don94]{Donkin94}
Stephen Donkin.
\newblock {O}n {T}ilting {M}odules and {I}nvariants for {A}lgebraic {G}roups.
\newblock {\em Dlab V., Scott L.L. (eds) Finite Dimensional Algebras and Related Topics. NATO ASI Series (Series C: Mathematical and Physical Sciences)}, 424, 1994.

\bibitem[EM23]{emerson2023comparison}
Kathleen Emerson and Sophie Morel.
\newblock Comparison of different definitions of pseudocharacters.
\newblock \url{https://arxiv.org/abs/2310.03869}, 2023.

\bibitem[Gro60]{PMIHES_1960__4__5_0}
Alexander Grothendieck.
\newblock \'el\'ements de g\'eom\'etrie alg\'ebrique : {I.} {Le} langage des sch\'emas.
\newblock {\em Publications Math\'ematiques de l'IH\'ES}, 4:5--228, 1960.

\bibitem[Gro61]{PMIHES_1961__8__5_0}
Alexander Grothendieck.
\newblock {\'E}l\'ements de g\'eom\'etrie alg\'ebrique : {II.} {\'e}tude globale \'el\'ementaire de quelques classes de morphismes.
\newblock {\em Publications Math\'ematiques de l'IH\'ES}, 8:5--222, 1961.

\bibitem[Gro67]{PMIHES_1967__32__5_0}
Alexander Grothendieck.
\newblock {\'E}l\'ements de g\'eom\'etrie alg\'ebrique : {IV.} {\'e}tude locale des sch\'emas et des morphismes de sch\'emas, {Quatri\`eme} partie.
\newblock {\em Publications Math\'ematiques de l'IH\'ES}, 32:5--361, 1967.

\bibitem[Jan03]{Jantzen2003}
Jens~C. Jantzen.
\newblock {\em Representations of algebraic groups}, volume 107 of {\em Mathematical Surveys and Monographs}.
\newblock American Mathematical Society, Providence, RI, second edition, 2003.

\bibitem[KMRT98]{Inv}
Max-Albert Knus, Alexander Merkurjev, Markus Rost, and Jean-Pierre Tignol.
\newblock {\em The Book of Involutions}.
\newblock Colloquium Publications Volume: 44;, 1998.

\bibitem[Knu91]{knus}
Max-Albert Knus.
\newblock {\em Quadratic and {H}ermitian forms over rings}.
\newblock Springer, 1991.

\bibitem[Laf18]{Laf}
Vincent Lafforgue.
\newblock Chtoucas pour les groupes r\'eductifs et param\'etrisation de {L}anglands globale.
\newblock {\em J. Amer. Math. Soc.}, 31(3):719--891, 2018.

\bibitem[MH74]{MilnorHusemoller}
John Milnor and Dale Husemöller.
\newblock Symmetric {B}ilinear {F}orms ({E}rgebnisse der {M}athematik und ihrer {G}renzgebiete {B}d. 73, {S}pringer-{V}erlag, 1973), ii 146 pp., 13.40.
\newblock {\em Proceedings of the Edinburgh Mathematical Society}, 19(2):214–214, 1974.

\bibitem[Mil17]{Milne}
James~S. Milne.
\newblock {\em Algebraic groups}, volume 170 of {\em Cambridge Studies in Advanced Mathematics}.
\newblock Cambridge University Press, Cambridge, 2017.
\newblock The theory of group schemes of finite type over a field.

\bibitem[NT23]{NT23}
James Newton and Jack~A. Thorne.
\newblock Adjoint {Selmer} groups of automorphic {Galois} representations of unitary type.
\newblock {\em J. Eur. Math. Soc. (JEMS)}, 25(5):1919--1967, 2023.

\bibitem[Pro73]{Procesi73}
Claudio Procesi.
\newblock {\em Rings with polynomial identities}.
\newblock Pure and Applied Mathematics, 17. Marcel Dekker Inc., New York, 1973.

\bibitem[Pro76]{Pro}
Claudio Procesi.
\newblock The invariant theory of $n \times n$ matrices.
\newblock {\em Advances in Math.}, 19(3):306--381., 1976.

\bibitem[Pro87]{Pro87}
Claudio Procesi.
\newblock {A} formal inverse to the {C}ayley-{H}amilton theorem.
\newblock {\em J. Algebra}, 107(1):63--74, 1987.

\bibitem[Pro07]{ProcesiLG}
Claudio Procesi.
\newblock {\em Lie {G}roups: {A}n {A}pproach through {I}nvariants and {R}epresentations}.
\newblock Universitext. Springer, 2007.

\bibitem[Qua23]{Quast}
Julian Quast.
\newblock Deformations of {G}-valued pseudocharacters.
\newblock \url{https://arxiv.org/abs/2310.14886}, 2023.

\bibitem[Rob80]{Roby}
Norbert Roby.
\newblock Lois polyn\^omes multiplicatives universelles.
\newblock {\em C. R. Acad. Sci. Paris S\'er. A-B}, 290(19):A869--A871, 1980.

\bibitem[Rou96]{Rouquier}
Rapha{\"e}l Rouquier.
\newblock Characterization of characters and pseudo-characters.
\newblock {\em J. Algebra}, 180(2):571--586, 1996.

\bibitem[Ser04]{SerreCompleteReducibility}
Jean-Pierre Serre.
\newblock Complète réductibilité.
\newblock {\em Séminaire Bourbaki}, 46:195--218, 2003-2004.

\bibitem[{Sta}23]{stk}
The {Stacks Project Authors}.
\newblock \textit{Stacks Project}.
\newblock \url{https://stacks.math.columbia.edu}, 2023.

\bibitem[Tay91]{Tay91}
Richard Taylor.
\newblock Galois representations associated to {Siegel} modular forms of low weight.
\newblock {\em Duke Math. J.}, 63(2):281--332, 1991.

\bibitem[Tho15]{Thorne15}
Jack~A. Thorne.
\newblock Automorphy lifting for residually reducible {{\(l\)}}-adic {Galois} representations.
\newblock {\em J. Am. Math. Soc.}, 28(3):785--870, 2015.

\bibitem[Vac08]{vac08}
Francesco Vaccarino.
\newblock Generalized symmetric functions and invariants of matrices.
\newblock {\em Math. Z.}, 260(3):509--526, 2008.

\bibitem[Vac09]{VACCARINO20091283}
Francesco Vaccarino.
\newblock Homogeneous multiplicative polynomial laws are determinants.
\newblock {\em Journal of Pure and Applied Algebra}, 213(7):1283--1289, 2009.

\bibitem[Wan22]{wang2022arithmetic}
Yidi Wang.
\newblock Arithmetic invariant theory of reductive groups.
\newblock \url{https://arxiv.org/abs/2212.12863}, 2022.

\bibitem[WE13]{MR3167286}
Carl~William Wang-Erickson.
\newblock {\em Moduli of {G}alois {R}epresentations}.
\newblock ProQuest LLC, Ann Arbor, MI, 2013.
\newblock Thesis (Ph.D.)--Harvard University.

\bibitem[WE18]{CWE}
Carl Wang-Erickson.
\newblock Algebraic families of {G}alois representations and potentially semi-stable pseudodeformation rings.
\newblock {\em Math. Ann.}, 371:1615–1681, 2018.

\bibitem[Wil88]{Wiles88}
Andrew Wiles.
\newblock On ordinary $\lambda$-adic representations associated to modular forms.
\newblock {\em Invent. Math.}, 94:529--573, 1988.

\bibitem[WWE18]{WWE18}
Preston Wake and Carl Wang-Erickson.
\newblock Pseudo-modularity and {Iwasawa} theory.
\newblock {\em Am. J. Math.}, 140(4):977--1040, 2018.

\bibitem[WWE20]{WWE20}
Preston Wake and Carl Wang-Erickson.
\newblock The rank of {Mazur}'s {Eisenstein} ideal.
\newblock {\em Duke Math. J.}, 169(1):31--115, 2020.

\bibitem[WWE21]{WWE21}
Preston Wake and Carl Wang-Erickson.
\newblock The {Eisenstein} ideal with squarefree level.
\newblock {\em Adv. Math.}, 380:63, 2021.

\bibitem[You]{youcis}
Alex Youcis.
\newblock A 'brief' discussion of torsors.
\newblock \url{https://alex-youcis.github.io/torsors.pdf}.

\bibitem[Zub99]{Zubkov}
Alexandr~N. Zubkov.
\newblock Invariants for an adjoint action of classical groups.
\newblock {\em Algebra and Logic, Vol. 38, No. 5}, 38:299--318, 1999.

\end{thebibliography}

\end{document}